\numberwithin{equation}{section}
\theoremstyle{plain}
\newtheorem{theorem}[equation]{Theorem}
\newtheorem{proposition}[equation]{Proposition}
\newtheorem{corollary}[equation]{Corollary}
\newtheorem{lemma}[equation]{Lemma}
\newtheorem{question}[equation]{Question}
\theoremstyle{definition}
\newtheorem{definition}[equation]{Definition}
\newtheorem{example}[equation]{Example}
\newtheorem{remark}[equation]{Remark}
\newcommand{\hdet}{\operatorname{hdet}}
\newcommand{\beq}{\begin{equation}}
\newcommand{\eeq}{\end{equation}}
\DeclareMathOperator{\id}{{id}}
\DeclareMathOperator{\Hom}{{Hom}}
\DeclareMathOperator{\RHom}{RHom}
\DeclareMathOperator{\End}{{End}}
\DeclareMathOperator{\Ext}{{Ext}}
\DeclareMathOperator{\Tor}{Tor}
\DeclareMathOperator{\im}{im}
\DeclareMathOperator{\GK}{GKdim}
\DeclareMathOperator{\soc}{soc}
\DeclareMathOperator{\ann}{ann}
\newcommand{\grHom}{\underline{\Hom}}
\newcommand{\grExt}{\underline{\Ext}}
\DeclareMathOperator{\RgrHom}{R\grHom}
\DeclareMathOperator{\grtimes}{\underline{\otimes}}
\DeclareMathOperator{\pdim}{pdim}
\DeclareMathOperator{\grpdim}{gr.pdim}
\DeclareMathOperator{\gldim}{gldim}
\DeclareMathOperator{\grgldim}{gr.gldim}
\newcommand{\mc}{\mathcal}
\newcommand{\Z}{\mathbb{Z}}
\newcommand{\mb}{\mathbb}
\newcommand{\Vhat}{\widehat{V}}
\DeclareMathOperator{\rgr}{gr-\!}
\DeclareMathOperator{\lgr}{\!-gr}
\DeclareMathOperator{\lGr}{\!-Gr}
\DeclareMathOperator{\rGr}{Gr-\!}
\DeclareMathOperator{\lMod}{\!-Mod}
\DeclareMathOperator{\rMod}{Mod-\!}
\newcommand{\on}{\operatorname}
 \newcommand{\M}{\mathbb{M}}
 \newcommand{\N}{\mathbb{N}}
 \newcommand{\separate}{\bigskip}
\newcommand{\X}{\mathcal{X}}
\newcommand{\Y}{\mathcal{Y}}
 \newcommand{\op}{^{\mathrm{op}}}
\begin{document}
\title[Graded twisted CY algebras are generalized AS regular]{Graded twisted Calabi-Yau algebras are generalized Artin-Schelter regular} 
\author{Manuel L. Reyes}
\address{University of California,  Irvine\\ Department of Mathematics\\
340 Rowland Hall\\ Irvine, CA 92697-3875\\ USA}
\email{mreyes57@uci.edu}

\author{Daniel Rogalski}
\address{University of California, San Diego\\ Department of Mathematics\\ 9500 Gilman Dr. \# 0112 \\
La Jolla, CA 92093-0112\\ USA}
\email{drogalski@ucsd.edu}

\date{\today}

\thanks{Reyes was partially supported by the NSF grant DMS-1407152.
Rogalski was partially supported by the NSF grant 
DMS-1201572 and the NSA grant H98230-15-1-0317.}

\subjclass[2010]{
Primary: 
16E65, 
16S38; 
Secondary: 
16P40, 
16W50. 
} 
\keywords{Twisted Calabi-Yau algebra, AS regular algebra, graded algebra, noetherian algebra,
GK dimension}

\begin{abstract}  
This is a general study of twisted Calabi-Yau algebras that are $\N$-graded and locally finite-dimensional, with the
following major results.  We prove that a locally finite graded algebra is twisted Calabi-Yau if and only if 
it is separable modulo its graded radical and satisfies one of several suitable generalizations of the Artin-Schelter regularity property,
adapted from the work of Martinez-Villa as well as Minamoto and Mori.
We characterize twisted Calabi-Yau algebras of dimension~0 as separable $k$-algebras, and we
similarly characterize graded twisted Calabi-Yau algebras of dimension~1 as tensor algebras of certain
invertible bimodules over separable algebras. 
Finally, we prove that a graded twisted Calabi-Yau algebra of dimension~2 is noetherian if and only if it has
finite GK dimension.
\end{abstract}

 \maketitle
 
 \setcounter{tocdepth}{1} 
 \tableofcontents

\section{Introduction}
\label{sec:intro}

Throughout this paper we let $k$ denote a field, on which we place no further assumptions.
By a \emph{graded} ring  we always mean an $\mathbb{N}$-graded ring 
$A = \bigoplus_{n=0}^\infty A_n$. A graded $k$-algebra $A$ is \emph{locally finite} if each 
$A_n$ is a finite-dimensional $k$-vector space, and it is \emph{connected} if $A_0 = k$.

%
%

%
%
%

A familiar theme in noncommutative algebraic geometry is to identify a class of noncommutative
algebras that are well-behaved for geometrically motivated reasons. One of the most famous
such classes is that of \emph{Artin-Schelter regular algebras}~\cite{AS}, which are connected graded
algebras with suitably nice projective resolutions of $k = A/A_{\geq 1}$ as a left and right $A$-module.
This class is known to include all connected graded Auslander-regular algebras~\cite{Levasseur}, and thus includes 
many quantum groups and related quantized algebras~\cite[I.15]{BG}.
Another class of (not necessarily graded) algebras that has rapidly come to the forefront of such
study is that of \emph{Calabi-Yau algebras}~\cite{G}, which have a suitably nice resolution of $A$ 
by projective bimodules.
These include preprojective algebras~\cite{Bo, G} and their higher versions~\cite{Keller:deformed, HIO, AmiotOppermann},
as well as quivers with potentials arising from dimer models~\cite{Broomhead}. 

These two classes of algebras are unified by the class of \emph{twisted} (or ``\emph{skew}'') Calabi-Yau
algebras; indeed, it was shown in~\cite[Lemma~1.2]{RRZ1} that a connected graded algebra $A$ is 
twisted Calabi-Yau if and only if it is Artin-Schelter regular.  However, many interesting examples of
(graded) Calabi-Yau algebras are not connected. In this paper we undertake a careful study of graded 
twisted Calabi-Yau algebras that are not necessarily connected, with the goal of explaining their
relationship to a suitable generalization of Artin-Schelter regular algebras.

We now recall the definitions of these
classes of algebras, beginning with Artin-Schelter regular algebras. We emphasize that in contrast to
various other authors, we do not require regular algebras to have finite Gefland-Kirillov (GK)-dimension.  
\begin{definition}
\label{def:connected AS regular}
Let $A$ be a connected graded algebra and let $k = A/A_{\geq 1}$ be the trivial module.  Then $A$ is \emph{Artin-Schelter (AS) regular of dimension~$d$} if it has
graded global dimension~$d$ and satisfies
\[
\Ext^i_A(k,A) \cong 
\begin{cases}
0, & i \neq d \\
k, & i = d
\end{cases} \quad \mbox{and} \quad
\Ext^i_{A\op}(k,A) \cong 
\begin{cases}
0, & i \neq d \\
k, & i = d
\end{cases}
\]
in $\rMod A$ and $A \lMod$, respectively.
\end{definition}

Next we recall the definition of twisted Calabi-Yau algebras. In some sources, this term refers to a twist by
an automorphism as in condition~(iii) below (for instance, see~\cite{BSW}); we will consider the more general
twist by an invertible bimodule defined, for instance, in~\cite[Definition~3.7.9]{Schedler}. 
For a $k$-algebra $A$ with opposite $A\op$, we write $A^e$ for its \emph{enveloping algebra} 
$A \otimes_k A\op$.  Then a $k$-central $(A, A)$-bimodule $M$ is also a left $A^e$-module or a right 
$A^e$-module, where $(a \otimes b\op) \cdot m = amb = m \cdot (b \otimes a\op)$.  In this way we 
can identify the category of $(A, A)$-bimodules with either $A^e \lMod$ or $\rMod A^e$.  Given an 
automorphism $\mu: A \to A$, we write ${}^1 A^\mu$ for the $(A, A)$-bimodule structure on $A$ 
where $b \cdot a \cdot c = ba\mu(c)$.  Recall in addition 
that a left $A$-module $M$ is called \emph{perfect} if it has a finite length projective resolution 
consisting of finitely generated projective modules.

\begin{definition}\label{def:twisted CY}
Let $A$ be a $k$-algebra. We say that:
\begin{enumerate}[label=(\roman*)]
\item $A$ is \emph{homologically smooth over~$k$} (or a \emph{homologically smooth $k$-algebra}) 
if $A$ has a resolution of finite length by finitely generated projective left $A^e$-modules 
(that is, if $A$ is perfect as a left $A^e$-module);
\item $A$ is \emph{twisted Calabi-Yau (of dimension $d$)} if it is homologically smooth over~$k$ and 
if there is an invertible $k$-central $(A,A)$-bimodule $U$ such that
\[
\Ext^i_{A^e}(A,A^e) \cong 
\begin{cases}
0, & i \neq d \\
U, & i = d
\end{cases}
\]
as $(A,A)$-bimodules, where each $\Ext^i_{A^e}(A,A^e)$ is considered as a right $A^e$-module
via the right $A^e$-structure of $A^e$;
\item $A$ has \emph{Nakayama automorphism} $\mu$ if the isomorphism in~(ii) holds with $U = {}^1 A^\mu$.
\end{enumerate}
\end{definition}

The use of the term ``dimension'' is justified in Lemma~\ref{lem:derived form} below, where
it is shown that the dimension $d$ of a twisted Calabi-Yau algebra is equal to the projective dimension of $A$ as a left $A^e$-module.
We call the module $U = \Ext^d_{A^e}(A,A^e)$ the \emph{Nakayama bimodule} for $A$.
It is well-known that the Nakayama automorphism of a twisted Calabi-Yau algebra, if it exists,
is unique up to multiplication by an inner automorphism of $A$. 
A \emph{Calabi-Yau algebra} is a twisted Calabi-Yau algebra as above for which the Nakayama
bimodule is $U \cong A$; equivalently, it is a twisted Calabi-Yau algebra with an inner Nakayama 
automorphism.

The main motivation leading to the present paper is the problem of classifying twisted Calabi-Yau 
algebras $A$ of dimension~$d$ (for small values of~$d$) that are graded homomorphic images of a 
path algebra $kQ$ of a quiver $Q$, so that they are not necessarily connected.
In our work with twisted Calabi-Yau algebras of dimension~2 and~3, in order to compute potential 
Hilbert series, it became useful to work with projective resolutions of the \emph{left} module
$S = A/J(A)$, rather than resolutions of the \emph{bimodule} $A$. 
In the case of connected graded algebras, the aforementioned equivalence between the AS regular and 
twisted Calabi-Yau properties means that in order to check the twisted Calabi-Yau property, it is 
sufficient to compute a particular left module resolution. 
Thus for non-connected algebras of the form $A = kQ/I$, we were led to wonder whether the twisted 
Calabi-Yau property might similarly be equivalent to a suitable generalization of the AS~regular 
property, defined in terms of projective resolutions 
of $S$ as a left or right $A$-module.   Our main theorem, which we present as Theorem~\ref{thm:twisted CY equivalence} below, 
will give a precise result along these lines.

Another goal of this paper is to give full details of the proofs of some basic properties of homologically smooth algebras 
and twisted Calabi-Yau algebras, in particular the stability of these properties under common constructions, especially in the graded case.  
Some of these results may be folklore, but it seems useful to have them written down in one place.   We begin in Section~\ref{sec:preliminary} 
with a review of preliminary results on locally finite graded algebras 
and their modules, including information about minimal projective resolutions, idempotent decompositions, and some isomorphisms in their derived categories of modules.  Then in Sections~\ref{sec:smooth} and~\ref{sec:twisted CY} we study homological smoothness and the twisted Calabi-Yau property, respectively.  In both cases, we show that these properties are stable 
under finite direct sum of algebras, tensor product of algebras, base field extension, and Morita equivalence.  We also review several of the different ``Serre duality" formulas that hold for modules over twisted Calabi-Yau algebras.

In case $A$ is graded with $\dim_k A_0 < \infty$, we let $J(A)$ be its graded Jacobson radical; then 
$A_{\geq 1} \subseteq J(A)$ and $S = A/J(A) = A_0/J(A_0)$ is a finite-dimensional semisimple algebra.   
In our study of homological smoothness in Section~\ref{sec:smooth}, we obtain the following novel 
characterization of homological smoothness for graded $k$-algebras.  Recall that a $k$-algebra 
$B$ is \emph{separable} (over $k$) if $B \otimes_k K$ is semisimple for all field extensions 
$k \subseteq K$. 
\begin{theorem}
\label{thm:main1} 
(Theorem~\ref{thm:homologically smooth})
Let $A$ be graded with $\dim_k A_0 < \infty$ as above.  Then the following are equivalent:
\begin{enumerate}
\item $A$ is homologically smooth over $k$;
\item $S$ is separable as a $k$-algebra and perfect as a left (or equivalently, right) $A$-module.
\end{enumerate}
\end{theorem}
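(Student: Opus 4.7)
My plan is to prove the two implications separately. For (1)$\Rightarrow$(2) regarding the perfectness of $S$: take a finite resolution $P_\bullet \to A$ by finitely generated projective $A^e$-modules. Since $A^e = A \otimes_k A\op$ is free as a right $A$-module (through the second tensor factor), each $P_i$ is projective (hence flat) as a right $A$-module, and the resolution remains exact in $\rmod A$. Applying the functor $- \otimes_A S$ preserves this exactness, and each $P_i \otimes_A S$ is a summand of $(A^e)^{n_i} \otimes_A S \cong (A \otimes_k S)^{n_i}$, which is finitely generated free as a left $A$-module since $\dim_k S < \infty$. This yields the required finite projective left $A$-resolution of $S$. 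The equivalence of left and right perfectness follows from $k$-duality, as $S$ is finite-dimensional semisimple, hence symmetric Frobenius, giving $S \cong \Hom_k(S, k)$ as $S$-bimodules and thus as $A$-bimodules.

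The separability claim in (1)$\Rightarrow$(2) is the main obstacle. My plan is to use the minimal projective $A^e$-resolution of $A$, which is finite with finitely generated terms under the smoothness hypothesis, via the locally finite graded machinery developed in Section~\ref{sec:preliminary}. Tensoring this minimal resolution with $S^e$ on the left via the ring map $A^e \to S^e$ gives a finite complex $S^e \otimes_{A^e} P_\bullet$ of finitely generated projective $S^e$-modules, whose homology computes $\Tor^{A^e}_*(S^e, A) \cong \Tor^A_*(S, S)$. Dually, perfectness of $A$ forces $\RHom_{A^e}(A, S^e)$ to be a perfect complex of $S^e$-modules. Careful tracking of the minimal resolution's layers, exploiting the identification $A^e/J(A^e) = S^e/J(S^e)$ and the fact that the multiplication map $\mu \colon S^e \to S$ carries $J(S^e)$ into $J(S) = 0$, should force the Jacobson radical $J(S^e)$ to vanish, so that $S \otimes_k S\op$ is semisimple and $S$ is separable. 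Making this last step precise --- ruling out a finite bimodule resolution in the presence of a nonzero $J(S^e)$ --- is the key technical difficulty.

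For (2)$\Rightarrow$(1), assume $S$ is separable and perfect as a left $A$-module. Separability provides two essential tools: by Wedderburn--Malcev, $S$ lifts to a subalgebra of $A_0$ with $A_0 = S \oplus J(A_0)$, and $S$ is projective as an $S^e$-module, so that $A \otimes_S V \otimes_S A$ is a finitely generated projective $A^e$-module for any finite-dimensional $S^e$-module $V$. The plan is to construct, by iteratively taking projective covers, a minimal projective $A^e$-resolution of $A$ of the form
\[
\cdots \to A \otimes_S V_i \otimes_S A \to \cdots \to A \otimes_S V_0 \otimes_S A \to A \to 0,
\]
with $V_i \cong \Tor^A_i(S, S)$ finite-dimensional by local finiteness of $A$. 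Finiteness of the resolution then follows because $S$ is perfect as a left $A$-module, forcing $\Tor^A_i(S, S) = 0$ for $i \gg 0$. Hence $A$ admits a finite projective $A^e$-resolution, i.e., $A$ is homologically smooth.
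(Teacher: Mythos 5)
The perfectness half of (1)$\Rightarrow$(2) and your direction (2)$\Rightarrow$(1) are essentially the paper's argument: the former is exactly Lemma~\ref{lem:smooth resolution}(2), and the latter — building a minimal $A^e$-resolution of $A$ with terms $A \otimes_S V_i \otimes_S A$, using the Wedderburn--Malcev splitting and projectivity of $S$ over $S^e$ — is a constructive rephrasing of the paper's route via Lemma~\ref{lem:enveloping radical}, the identification $\Tor^A_i(S,S) \cong \Tor^{A^e}_i(S^e,A)$ of Lemma~\ref{lem:homologically smooth}(1), and Lemma~\ref{lem:fg term} applied to $A^e$. (One small correction there: the finite-dimensionality of $V_i \cong \Tor^A_i(S,S)$ comes from ${}_A S$ being perfect, i.e.\ from finite generation of the terms of its minimal resolution, not from local finiteness of $A$ alone.)

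The genuine gap is the separability claim in (1)$\Rightarrow$(2), which you yourself flag as "the key technical difficulty" and for which you offer only a hope ("careful tracking of the minimal resolution's layers \dots should force $J(S^e)$ to vanish") rather than an argument. There is no known elementary argument of this kind over $k$: the obstruction is that a nonzero $J(S^e)$ does not visibly prevent $A$ from admitting a finite resolution by finitely generated projective $A^e$-modules, and in particular $A^e/J(A^e) \cong S^e/J(S^e)$, so tensoring the minimal resolution with $S^e$ sees only the semisimple quotient and cannot by itself detect $J(S^e)$. The paper's proof (due to Rickard, Theorem~\ref{thm:Rickard}) first reduces to the finite-dimensional algebra $A_0$ via Lemma~\ref{lem:degree zero resolution} and Lemma~\ref{lem:homologically smooth}(2), then base-changes to $K = \overline{k}$, uses the structure result Lemma~\ref{lem:Rickard} (that $S \otimes K$ is a sum of matrix rings over commutative local $K$-algebras, proved via the separable closure and triviality of Brauer groups of separably closed fields) to kill $\Ext^1$ between non-isomorphic simples, and then invokes Igusa's no-loops theorem applied to the finite-global-dimension algebra $A \otimes K$ (finiteness of $\gldim$ coming from Lemma~\ref{lem:fd smooth}) to rule out self-extensions of simples, concluding that $S \otimes K$ is semisimple. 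The base change and the no-loops theorem are essential external inputs that your sketch does not supply, so as written the implication (1)$\Rightarrow$(2) is not proved.
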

\noindent
The proof that homological smoothness implies separability of $S$ is due to Jeremy Rickard, and we thank him for allowing
us to include it here.

An important precedent for the study of generalized AS~regular properties for locally finite graded
algebras that are not necessarily connected was set in the work of Martinez-Villa and
Solberg in~\cite{MV, MVS} and of Minamoto and Mori in~\cite{MM}; we recall these definitions in 
Section~\ref{sec:AS regular}.   Another possibility we would like to highlight is the following definition, 
which is formally similar to the original definition of AS~regular algebras and tends to be one of the 
easier ones to work with technically.
\begin{definition}
\label{def:Sreg}
Let $A$ be a locally finite graded $k$-algebra, with $S = A/J(A)$.  We say that $A$ is 
\emph{generalized AS~regular of dimension $d$} if $A$ has graded global dimension $d$ and 
there is a $k$-central invertible $(S,S)$-bimodule $V$ such that
\[
\Ext_A^i(S, A) \cong \begin{cases} 0, & i \neq d \\ V, & i = d \end{cases}
\]
as $(S, S)$-bimodules.
\end{definition} 
In Section~\ref{sec:AS regular}, we show that this definition, the definition of Martinez-Villa 
and Solberg, and a slightly modified version of the definition of Minamoto and Mori, all give equivalent 
notions.   Indeed, in Theorem~\ref{thm:reg char} we show that these and even several additional 
slight variations are equivalent.  For example, it is equivalent to require the isomorphism in 
Definition~\ref{def:Sreg} to hold as right $S$-modules only.

Consequently we arrive at the main theorem of the paper, which shows that the twisted Calabi-Yau condition is 
related to generalized AS~regularity in a precise way (and often they are equivalent).
Our hope is that this result will allow for an interplay between results in noncommutative
Calabi-Yau geometry and the theory of AS regular algebras for algebras that are not necessarily
connected. 
\begin{theorem} (Theorem~\ref{thm:twisted CY equivalence})
Let $A$ be a locally finite graded algebra, and denote $S = A_0/J(A_0)$.
Then the following are equivalent:
\begin{enumerate}[label=\textnormal{(\alph*)}]
\item $A$ is twisted Calabi-Yau of dimension~$d$;
\item $A$ is generalized AS regular of dimension~$d$, and $S$ is a separable $k$-algebra;
\item For every field extension $K$ of $k$, the $K$-algebra $A \otimes K$ is generalized AS regular
of dimension~$d$.
\end{enumerate}
\end{theorem}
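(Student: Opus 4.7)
The plan is to prove (a) $\Leftrightarrow$ (b) as the core equivalence and then deduce (a) $\Leftrightarrow$ (c) from base-change considerations. The central tool is a Van den Bergh-style duality
\[
\RHom_A(M, A) \simeq \RHom_{A^e}(A, A^e) \otimes^L_A M,
\]
valid for homologically smooth $A$ and perfect left modules $M$, which converts bimodule Ext into module Ext.

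For (a) $\Rightarrow$ (b), Theorem~\ref{thm:main1} immediately delivers separability of $S$ from homological smoothness. Applying the duality with $M = S$ yields $\Ext^i_A(S, A) \cong \Tor^A_{d - i}(U, S)$, where $U = \Ext^d_{A^e}(A, A^e)$ is the Nakayama bimodule. Invertibility of $U$ makes it flat on each side, so all positive Tor terms vanish, giving $\Ext^d_A(S, A) \cong V := U \otimes_A S$ and $\Ext^i_A(S, A) = 0$ otherwise. Invertibility of $U$ also forces $V$ to be an invertible $(S, S)$-bimodule with inverse $S \otimes_A U^{-1}$, establishing generalized AS regularity of dimension $d$ in the sense of Definition~\ref{def:Sreg}.

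For (b) $\Rightarrow$ (a), separability of $S$ together with the finite graded projective resolution of $S$ (furnished by generalized AS regularity) makes $S$ perfect as a left $A$-module, so Theorem~\ref{thm:main1} gives homological smoothness of $A$. To compute $\Ext^i_{A^e}(A, A^e)$, the plan is to invoke the Wedderburn--Malcev splitting $S \hookrightarrow A_0$ supplied by separability, which makes $A$ into an $S$-algebra. The relative bar complex $\cdots \to A \otimes_S A \otimes_S A \to A \otimes_S A \to A$ then provides a resolution of $A$ by $A^e$-modules projective on both sides (using that $S^e$ is semisimple), and this reduces the bimodule Ext computation to the known module computation, producing a candidate invertible bimodule $U$ satisfying $U \otimes_A S \cong V$. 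I expect the main obstacle to be verifying honest bimodule invertibility of $U$ rather than merely an identification in the derived category; the multiple equivalent formulations supplied by Theorem~\ref{thm:reg char} should offer useful flexibility here.

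For (a) $\Rightarrow$ (c), combine (a) $\Rightarrow$ (b) with the base-change stability of twisted Calabi--Yau established in Section~\ref{sec:twisted CY}: $A \otimes K$ remains twisted CY over $K$ for every extension $K$, and hence generalized AS regular. For (c) $\Rightarrow$ (b), setting $K = k$ yields generalized AS regularity of $A$ directly, while separability of $S$ follows by contradiction: if $S \otimes_k K$ failed to be semisimple for some $K$, then the semisimple quotient of $A \otimes K$ would be a proper further quotient of $S \otimes_k K$, and comparing the Ext calculations over $K$ with those over $k$ via flat base change would be incompatible with the generalized AS regular hypothesis on $A \otimes K$ producing an invertible bimodule over the (strictly smaller) semisimple quotient.
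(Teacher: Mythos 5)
Your outline reproduces the easy parts of the argument but leaves gaps at precisely the two places where the real work lies. First, in (b)$\implies$(a), introducing the relative bar resolution over the Wedderburn--Malcev copy of $S$ does give a resolution of $A$ by $A^e$-modules that are projective on both sides, but it does not by itself "reduce the bimodule Ext computation to the known module computation." What is actually needed is (i) an identity linking the unknown complex to the one-sided data, namely $\RHom_{A^e}(A,A^e)\otimes^L_A S \cong \RHom_{A\op}(S^*,A)$ and its mirror (Lemma~\ref{lem:technical}); (ii) an argument that the complex $\RHom_{A^e}(A,A^e)$ has cohomology concentrated in the single degree $d$ --- in the paper this requires replacing it, as a complex of one-sided graded projectives, by a minimal complex (Lemma~\ref{lem:Cartan-Eilenberg}) and then applying graded Nakayama after tensoring with $S$; and (iii) the upgrade from "$U\otimes_A S$ and $S\otimes_A U$ are invertible over $S$ and $U$ is projective on both sides" to honest invertibility of $U$ over $A$ (Lemma~\ref{lem:invertible}, a progenerator-plus-Nakayama argument). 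You explicitly flag (iii) as an expected obstacle without resolving it, and (i)--(ii) are not supplied at all; so the hard implication is not proved. (A smaller issue: your "central tool" $\RHom_A(M,A)\cong \RHom_{A^e}(A,A^e)\otimes^L_A M$ is false as stated --- test $M=A$; the correct statements are Lemma~\ref{lem:moving tensor}, Proposition~\ref{prop:duality} and Lemma~\ref{lem:technical}, which happen to agree with your formula for $M=S$ only because $S^*\cong S$ and $S\otimes_A U\cong U\otimes_A S$.)

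Second, in (c)$\implies$(b) the separability of $S$ does not follow from the sketched "flat base change incompatibility." If $S\otimes K$ is not semisimple, then the semisimple quotient $S'$ of $A\otimes K$ is a proper quotient of $S\otimes K$, but nothing in a direct comparison of $\Ext^d_{A^K}(S\otimes K, A^K)\cong V\otimes K$ with $\Ext^d_{A^K}(S',A^K)$ produces a contradiction: both can a priori be invertible bimodules over the respective algebras. The actual argument is homological and deeper: generalized AS regularity of $A\otimes K$ for all $K$ forces $\gldim\bigl((A_0)^K\bigr)\le d$ for all $K$ (Lemma~\ref{lem:degree zero dimension}), hence $A_0$ is homologically smooth (Lemma~\ref{lem:fd smooth}), and then separability of $S$ follows from Rickard's Theorem~\ref{thm:Rickard}, whose proof invokes the no-loops theorem. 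Your proposed elementary comparison cannot substitute for this chain, so (c)$\implies$(b) is also unproven as written. The directions you do carry out --- (a)$\implies$(b) via Serre duality applied to $M=S$, and (a)$\implies$(c) via base-change stability of the twisted Calabi--Yau property --- coincide with the paper's argument.
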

\noindent In particular, the result above shows that if one is working over a perfect base field $k$ (for 
example, an algebraically closed field or a field of characteristic~$0$) then the twisted Calabi-Yau and 
generalized AS~regular conditions are equivalent for locally finite graded algebras. This also recovers
the equivalence~\cite[Lemma~1.2]{RRZ1} of the two properties in the case where $A$ is connected
graded, because if $A$ is connected then $S \cong k$ is separable over $k$. 

The following diagram illustrates the relationships between several of the properties
studied in this paper for locally finite graded $k$-algebras $A$ with $S = A/J(A)$. The implications all follow from the
theorems above, with the exception of the far right arrow which follows from the proof of Theorem~\ref{thm:reg char}.
\begin{center}
\begin{tikzcd}
\textnormal{twisted CY} \ar[r, Leftrightarrow]  \ar[d, Rightarrow] 
 & \textnormal{\begin{tabular}{c} generalized AS~regular \\ and $S$ separable\end{tabular}} \ar[r, Rightarrow] \ar[d, Rightarrow]
 & \textnormal{\begin{tabular}{c} generalized \\ AS~regular \end{tabular}} \ar[d, Rightarrow] \\
 
\textnormal{\begin{tabular}{c} homologically \\ smooth \end{tabular}} \ar[r, Leftrightarrow] 
 & \textnormal{\begin{tabular}{@{}c@{}} ${}_A S$ perfect \\ and $S$ separable\end{tabular}} \ar[r, Rightarrow] 
 & \textnormal{${}_A S$ perfect}
\end{tikzcd}
\end{center}

In the final section of the paper, Section~\ref{sec:applications}, we apply our earlier results to obtain 
information about twisted Calabi-Yau algebras of dimension at most two. 
In dimension zero, we have a simple characterization of (not necessarily graded) twisted Calabi-Yau algebras as follows.
\begin{theorem} (Theorem~\ref{thm:CY 0})
\label{thm:CY0 intro} 
For an algebra $A$, the following are equivalent:
\begin{enumerate}[label=\textnormal{(\alph*)}]
\item $A$ is twisted Calabi-Yau of dimension~$0$;
\item $A$ is twisted Calabi-Yau and a finite-dimensional algebra;
\item $A$ is Calabi-Yau of dimension~$0$;
\item $A$ is a separable $k$-algebra.
\end{enumerate} 
\end{theorem}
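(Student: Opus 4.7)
The plan is to establish the cycle $(c) \Rightarrow (a) \Rightarrow (d) \Rightarrow (c)$ for the equivalence of $(a)$, $(c)$, and $(d)$, and then derive $(a) \Leftrightarrow (b)$ separately.  The implication $(c) \Rightarrow (a)$ is tautological, since Calabi-Yau is defined as twisted Calabi-Yau with Nakayama bimodule isomorphic to $A$.  For $(a) \Rightarrow (d)$: being twisted Calabi-Yau of dimension $0$ forces $\pd_{A^e}(A) = 0$, meaning $A$ is a finitely generated projective left $A^e$-module, which is precisely the definition of separability of $A$ over $k$.

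The main computational step is $(d) \Rightarrow (c)$.  Given a separability idempotent $e \in A^e$ satisfying $\mu(e) = 1$ and $(a \otimes 1)e = (1 \otimes a\op)e$ for all $a \in A$, the assignment $a \mapsto (a \otimes 1)e$ splits the multiplication map $\mu \colon A^e \to A$ as a bimodule map, exhibiting $A$ as a direct summand of $A^e$ as a bimodule.  A short calculation using the separability relation together with $\mu(e) = 1$ shows that $e$ itself is an idempotent and that $A^e e$ is precisely the isomorphic image of $A$ under the splitting; in particular $\Ext^i_{A^e}(A, A^e) = 0$ for $i > 0$.  For the remaining identification $\Hom_{A^e}(A, A^e) \cong A$ as bimodules (with bimodule structure on the Hom inherited from right $A^e$-multiplication on $A^e$), I would use evaluation at $e$ to identify $\Hom_{A^e}(A, A^e)$ with the right ideal $eA^e \subseteq A^e$.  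Since $A\op$ is also separable and separability is preserved under tensor products of $k$-algebras, $A^e$ is separable and hence semisimple; a dimension count in this semisimple setting, combined with an explicit bimodule map built from the idempotent, yields $eA^e \cong A$.

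For $(a) \Leftrightarrow (b)$: the direction $(a) \Rightarrow (b)$ uses the classical fact that separable $k$-algebras are finite-dimensional.  This is proved directly from the finite expansion $e = \sum_{i=1}^n p_i \otimes q_i\op$: the separability relations force $A$ to be spanned over $k$ by the products $\{p_i q_j\}$, giving $\dim_k A \leq n^2$.  The converse $(b) \Rightarrow (a)$ is the subtlest implication and, I expect, the main obstacle.  My plan is to reduce to the case that $k$ is algebraically closed by base change—invoking stability of the twisted Calabi-Yau property under base extension, to be proved in Section~\ref{sec:twisted CY}—so that separability coincides with semisimplicity of finite-dimensional algebras.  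Then one argues that $A$ finite-dimensional and twisted Calabi-Yau of dimension $d$ is impossible for $d \geq 1$: since the Picard group of a finite-dimensional algebra over an algebraically closed field is controlled by automorphisms, the Nakayama bimodule $U$ has the form ${}^1 A^\mu$ for some automorphism $\mu$, endowing $A$ with a twisted Frobenius structure which in turn makes $A$ self-injective; combined with the finiteness of global dimension inherited from homological smoothness, a finite-dimensional self-injective algebra of finite global dimension must be semisimple, forcing $\pd_{A^e}(A) = 0$ and hence $d = 0$.  The hardest part of the entire proof is thus establishing that the twisted Calabi-Yau condition for finite-dimensional $A$ really does produce a Frobenius/self-injective structure.
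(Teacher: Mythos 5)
Your cycle (c)$\Rightarrow$(a)$\Rightarrow$(d) coincides with the paper's, but your (d)$\Rightarrow$(c) is sketchy at the one point where the content sits: identifying $\Hom_{A^e}(A,A^e)$ with $A$ \emph{as an $(A,A)$-bimodule}. The splitting $a\mapsto(a\otimes 1)e$ gives $A\cong A^e e$ as \emph{left} $A^e$-modules, whence $\Hom_{A^e}(A,A^e)\cong eA^e$ as right $A^e$-modules; but the bimodule $A$, viewed as a right $A^e$-module, corresponds to the flipped idempotent $\tau(e)$ (where $\tau\colon a\otimes b\op\mapsto b\otimes a\op$ is the swap), not to $e$. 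Since $A^e$ is in general semisimple with several simple components, a dimension count cannot by itself give $eA^e\cong \tau(e)A^e$, and the multiplication map $\mu$ is not right $A^e$-linear, so the ``explicit bimodule map built from the idempotent'' must be chosen with care (for instance $a\mapsto\tau(e)(a\otimes 1)$, split by $\mu\circ\tau$, which identifies $A$ with $\tau(e)A^e$ as right $A^e$-modules). This is fixable, but as written it is an assertion rather than a proof; the paper gets the bimodule isomorphism cleanly from the symmetric Frobenius property $A^*\cong A$ of semisimple algebras (Lemma~\ref{lem:Sstar}) and the adjunction $\Hom_{A^e}(A,\Hom_k(A^e,k))\cong\Hom_k(A,k)$.

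The genuine gap is (b)$\Rightarrow$(a), which rests on one false claim and one unproven one. The assertion that over an algebraically closed field the Picard group of a finite-dimensional algebra is controlled by automorphisms is false in general: Example~\ref{ex:weirdbimod} of this paper exhibits, for $A=k\oplus\M_2(k)$, a $k$-central invertible bimodule not of the form ${}^1A^\sigma$ (at best you would have to pass to a basic algebra first, using Morita invariance as in Proposition~\ref{prop:CY Morita}). More importantly, the step you yourself flag as the hard one --- that a finite-dimensional twisted Calabi-Yau algebra of dimension $d$ carries a twisted Frobenius/self-injective structure --- is never argued, so the only nontrivial implication of the theorem is missing from the proposal. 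The paper's route avoids it entirely: Serre duality (Proposition~\ref{prop:duality}) yields the socle formula $\Tor_d^A(S,M)\cong U^{-1}\otimes_A\soc(M)$ of Proposition~\ref{prop:socle}; taking $M=A$ with $d>0$ gives $\Tor_d^A(S,A)=0$ by flatness, hence $\soc(A)=0$, which is impossible for a nonzero artinian algebra (Corollary~\ref{cor:fd CY}(2)). If you want to keep your Serre-functor intuition, note that no automorphism is needed: Corollary~\ref{cor:Serre duality} with $M=N=A$ gives $A^*\cong\Ext^d_A(U,A)$, which vanishes for $d\geq 1$ because the invertible bimodule $U$ is projective as a left $A$-module, again forcing $d=0$; but some such argument has to be supplied.
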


In dimension one, we are able to characterize graded twisted Calabi-Yau algebras as certain tensor 
algebras and to prove that they are noetherian. 
\begin{theorem} (Theorem~\ref{thm:CY1}, Corollary~\ref{cor:noetherian dim 1})
\label{thm:CY1 intro}
Let $A$ be a locally finite graded algebra, and suppose that the algebra $S = A_0/J(A_0)$ is separable.
Then the following are equivalent:
\begin{enumerate}[label=\textnormal{(\alph*)}]
\item $A$ is twisted Calabi-Yau of dimension~$1$;
\item There is an invertible, nonnegatively graded, finite-dimensional $k$-central 
$(S,S)$-bimodule $V$ such that $T_S(V) \cong A$.
\end{enumerate}
Furthermore,  every algebra $A$ satisfying the conditions above is noetherian.
\end{theorem}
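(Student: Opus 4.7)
My plan is to apply Theorem~\ref{thm:twisted CY equivalence}, which under the separability of $S$ makes the twisted Calabi-Yau condition of dimension~$1$ equivalent to generalized AS regularity of dimension~$1$. For the direction (b)~$\Rightarrow$~(a), given $A = T_S(V)$ with $V$ invertible, I would use the standard bimodule resolution
\[
0 \to A \otimes_S V \otimes_S A \to A \otimes_S A \to A \to 0.
\]
Separability of $S$ makes $S^e$ semisimple, so both nontrivial terms are finitely generated projective $A^e$-modules; this establishes homological smoothness and $\pdim_{A^e} A \leq 1$. Applying $\Hom_{A^e}(-,A^e)$ and using invertibility of $V$ to identify the resulting cochain complex yields vanishing $\Ext^0_{A^e}(A, A^e)$ and an invertible $\Ext^1_{A^e}(A, A^e)$, verifying the twisted Calabi-Yau condition.

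For (a)~$\Rightarrow$~(b) I would argue in three steps. First, I would show $A_0 = S$: since $\gldim A = 1$, the minimal projective resolution of $S$ as a graded left $A$-module has the form $0 \to J(A) \to A \to S \to 0$ with $J(A) = J(A_0) \oplus A_{\geq 1}$ projective. Any graded projective decomposes as $\bigoplus_j A e_j(-d_j)$, whose degree-$0$ component is a sum of $A_0 e_j$'s and therefore contains nonzero idempotents; since $J(A_0)$ contains no nonzero idempotents, every $d_j$ must be positive, forcing $J(A_0) = 0$. Second, setting $V := A_{\geq 1}/A_{\geq 1}^2$, the semisimplicity of $S^e$ furnishes an $(S,S)$-bimodule splitting $V \hookrightarrow A_{\geq 1}$, and the minimal resolution becomes
\[
0 \to A \otimes_S V \to A \to S \to 0.
\]
Injectivity of the multiplication map on the left, together with a degree-by-degree induction, then identifies the canonical algebra map $T_S(V) \to A$ as an isomorphism. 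Third, applying $\Hom_A(-, A)$ to the resolution identifies $\Ext^1_A(S, A)$ with (a graded shift of) $V^* = \Hom_S(V, S)$; since the hypothesis makes this Ext an invertible $(S,S)$-bimodule, $V^*$ and hence $V$ is invertible.

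For the noetherian claim, I would use the fact that over a separable $S$, every invertible $(S,S)$-bimodule becomes, after a suitable graded Morita reduction, of the form ${}^1 S^\sigma$ for a $k$-algebra automorphism $\sigma$; thus $T_S(V)$ is graded Morita equivalent to a skew polynomial ring $R[t;\sigma]$ over a finite direct product of division rings $R$, and such Ore extensions are noetherian by the classical graded Hilbert basis theorem. Noetherianity is preserved under graded Morita equivalence, which finishes the argument. I expect the main technical obstacle to be Step~3 of the reverse direction: identifying $V$ as invertible requires careful tracking of bimodule structures and graded shifts when $V$ is not concentrated in a single degree. A secondary concern is carrying out the Morita-theoretic reduction for noetherianity cleanly while respecting the grading.
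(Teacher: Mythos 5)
Your Step 1 in the direction (a)$\implies$(b) --- the claim that $A_0 = S$ --- is false, and this is where the proposal genuinely breaks. Concretely, let $Q$ be the quiver with two vertices and arrows $a\colon 1\to 2$, $b\colon 2\to 1$, and grade $A = kQ$ by $\deg a = 0$, $\deg b = 1$. Then $A \cong T_S(V)$ with $S = k\times k$ and $V = ka \oplus kb \cong {}^{\sigma}S$ ($\sigma$ the swap), an invertible bimodule with $V_0 = ka \neq 0$ and $V^{\otimes 2}$ positively graded, so $A$ is locally finite and, by the (b)$\implies$(a) direction, twisted Calabi-Yau of dimension~$1$ with $S$ separable; yet $A_0 = ke_1 \oplus ke_2 \oplus ka$ is the path algebra of the $A_2$ quiver, so $J(A_0) \neq 0$ and $A_0 \neq S$. (This is exactly why the theorem only asserts that $V$ is \emph{nonnegatively} graded rather than positively graded.) The flaw in your argument is the inference that a summand of $J(A)$ of the form $Ae_j$ with shift $d_j = 0$ would force $J(A_0)$ to ``contain nonzero idempotents'': a left ideal that is merely \emph{isomorphic} to $A_0e_j$ as a module need not contain an idempotent element unless it is a direct summand of $A_0$. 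In the example, $J(A_0) = ka$ is a nilpotent ideal isomorphic as a module to a simple projective $A_0e$, and indeed $J(A) \cong Ae'\oplus Ae''(-1)$ has a summand generated in degree $0$. Consequently Step 2's choice $V := A_{\geq 1}/A_{\geq 1}^2$ recovers the wrong bimodule in general; the correct choice is $J(A)/J(A)^2$, split back into $J(A)$ as an $(S,S)$-sub-bimodule using semisimplicity of $S^e$, together with a Wedderburn-Malcev splitting $S \hookrightarrow A_0$ (both consequences of separability) in order to even define the map $T_S(V)\to A$. With those corrections, your induction on tensor powers via the resolution $0 \to A\otimes_S V \to A \to S \to 0$ is essentially the paper's argument.

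The remaining pieces are workable but differ from the paper, modulo the repair above. For (b)$\implies$(a) the paper does not use the bimodule complex $0 \to A\otimes_S V\otimes_S A \to A\otimes_S A \to A \to 0$; it instead computes $\Ext^1_A(S,A)$ from the one-sided resolution to show $T_S(V)$ is generalized AS regular of dimension~$1$ (Theorem~\ref{thm:tensor algebra}) and then applies Theorem~\ref{thm:twisted CY equivalence}; your direct verification of the bimodule condition is a reasonable alternative, though identifying $\Ext^1_{A^e}(A,A^e)$ as an invertible bimodule still requires some care. For the noetherian claim, the paper's proof is structural and does not use the tensor-algebra form at all: by Lemma~\ref{lem:noetherian test} it suffices that every graded noetherian module is finitely presented, and the last term of the minimal resolution is controlled via the socle duality of Proposition~\ref{prop:base case}. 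Your route --- reduce by Morita equivalence to a basic semisimple $S'$, identify the invertible bimodule with ${}^1 (S')^{\sigma}$, and conclude $T_{S'}(V') \cong S'[t;\sigma]$ is noetherian --- is legitimate (note the identification with ${}^1(S')^{\sigma}$ need not respect the grading, but ungraded noetherianity is all you need), but it presupposes the classification (a)$\implies$(b), so it only stands once Step 1 is fixed.
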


In future work, we hope to present a fuller picture of the structure of locally finite graded twisted 
Calabi-Yau algebras of dimension~$2$.  Here we will focus solely on the question of when such algebras have
the noetherian property. In noncommutative algebraic geometry, it is generally expected that 
noncommutative graded algebras of a geometric nature will exhibit the best ring-theoretic properties 
when they have finite GK-dimension.  In keeping with this theme, we prove the following. 
\begin{theorem} (Theorem~\ref{thm:noetherian dim 2})
\label{thm:CY2 intro}
Let $A$ be a locally finite graded twisted Calabi-Yau algebra of dimension~2.  Then $A$ is noetherian 
if and only if $A$ has finite GK-dimension.
\end{theorem}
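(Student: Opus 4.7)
The plan is to combine the length-$2$ bimodule resolution supplied by the twisted Calabi-Yau hypothesis with an analysis of the matrix-valued Hilbert series over the semisimple base $S = A/J(A)$.  By Theorem~\ref{thm:twisted CY equivalence}, $A$ is generalized AS~regular of dimension~$2$, so both $S_A$ and ${}_A S$ admit minimal graded projective resolutions of length exactly $2$.  Writing these resolutions in matrix form indexed by the primitive idempotents of $S$ yields a functional equation for the matrix-valued Hilbert series $H_A(t)$ of the shape
\[ H_A(t)^{-1} \;=\; I - P(t) + Q(t), \]
where $P(t)$ records the generating $(S,S)$-bimodule $V = A_{\geq 1}/A_{\geq 1}^2$ and $Q(t)$ records the bimodule of relations, with shifts dictated by the Nakayama bimodule.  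Finite GK-dimension is then the condition that the entries of $H_A(t)$ be rational functions with poles only on $|t|=1$.

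For \emph{finite GK-dimension $\Rightarrow$ noetherian}, the polynomial growth of the coefficient matrices of $H_A(t)$ tightly constrains $P$ and $Q$.  My plan is to leverage these constraints to present $A$ concretely as a ``quantum plane over $S$'': essentially a tensor algebra $T_S(V)$ modulo a single nondegenerate bimodule of relations that pairs $V$ with itself via the Nakayama data.  From such a presentation one can produce a regular normal element in low degree whose quotient algebra has strictly smaller GK-dimension, and the noetherian property then follows by induction, using an Ore/filtered-lifting argument together with the base case provided by the dimension~$1$ classification in Theorem~\ref{thm:CY1 intro}.

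For the converse \emph{noetherian $\Rightarrow$ finite GK-dimension}, I would argue by contrapositive.  If $A$ has infinite GK-dimension, then the functional equation forces some entry of $H_A(t)$ to have a pole strictly inside the unit disk, giving exponential growth of suitable graded pieces.  The plan is then to exploit the quiver combinatorics coming from the decomposition of $S$ and the left/right symmetry supplied by the twisted Calabi-Yau duality to locate a free subalgebra on two generators inside an appropriate corner $e A e$.  Since a free algebra on two generators is not right noetherian, neither is $A$; this gives the desired contradiction.

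The main obstacle is this second direction: extracting a free subalgebra from a Hilbert-series growth condition alone is subtle in the non-connected graded setting, because $S$ may carry nontrivial bimodule structure and the usual connected-graded free-subalgebra constructions (in the spirit of Stephenson--Zhang and their successors) must be adapted to matrix-valued Hilbert series and quiver combinatorics.  Here the twisted Calabi-Yau duality between left and right minimal resolutions is what should compensate for the absence of a connected grading: it supplies the symmetric growth bounds on both sides of the algebra needed to guarantee that sufficiently many ``non-trivial'' monomials survive to build the free pair.
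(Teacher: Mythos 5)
Your outline has two genuine gaps, one in each direction. In the forward direction (finite GK $\Rightarrow$ noetherian), everything hinges on presenting $A$ as $T_S(V)$ modulo a relation bimodule and then producing a regular normal element in low degree whose quotient feeds an induction down to dimension~$1$. The presentation step is reasonable (versions of it are developed in \cite{RR2}), but the normal-element step is unsupported: in the non-connected setting --- for instance preprojective algebras of non-Dynkin quivers regraded as in Example~\ref{ex:bigA0}, which are precisely the typical finite-GK examples here --- there is no a priori regular normal element in bounded degree, and even when one exists the quotient $A/xA$ need not be twisted Calabi-Yau of dimension~$1$, so Theorem~\ref{thm:CY1} cannot serve as the base case as stated. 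Making this strategy rigorous amounts to a classification of the dimension-$2$ algebras, which is exactly what a structural proof of this statement is meant to avoid. In the converse direction you correctly name the obstacle but do not overcome it: a pole of $H_A(t)$ strictly inside the unit disc does give exponential growth, but exponential growth alone does not contradict noetherianity (there is no general theorem that noetherian graded algebras must have subexponential growth), so the construction of a free subalgebra inside a corner $eAe$ is the entire content of that direction, and it is missing.

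For comparison, the paper's proof of Theorem~\ref{thm:noetherian dim 2} sidesteps both issues. Noetherianity is reduced to showing that every graded noetherian left module is finitely presented (Lemma~\ref{lem:noetherian test}, a Cohen-type maximality argument); for such a module $M$ the minimal graded resolution $0 \to P^{-2} \to P^{-1} \to P^0 \to M$ has $P^0$ finitely generated trivially, $P^{-2}$ finitely generated via the socle duality $\Tor_2^A(S,M) \cong U^{-1} \otimes_A \soc(M)$ (Propositions~\ref{prop:socle} and~\ref{prop:base case}), and $P^{-1}$ finitely generated by the sandwich Lemma~\ref{lem:same GK} quoted from \cite{RR2}, which is the only place finite GK-dimension enters. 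The converse (noetherian $\Rightarrow$ finite GK) is likewise quoted from \cite{RR2}. If you wish to pursue your route instead, the two results you would have to supply --- existence of a suitable regular normal element with a controllable quotient, and a free-subalgebra theorem adapted to matrix-valued Hilbert series --- are each substantial theorems in their own right and are not consequences of the material in this paper.
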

\noindent 
We wish to emphasize that the proofs of the results above stating that an algebra is noetherian are all
of a ``structural'' nature, and do not rely upon a  classification of such algebras to determine that they 
are noetherian.  

We note that while we have restricted our present study to graded algebras, it seems likely that a number of results
and techniques employed here should generalize to the setting of semilocal twisted Calabi-Yau algebras
$A$ that are ``locally finite'' in the sense that, for the Jacobson radical $J$, the $k$-algebras $A/J^n$ 
are finite-dimensional for all $n \geq 0$.

The companion paper~\cite{RR2} focuses on the GK-dimension of locally finite graded twisted 
Calabi-Yau algebras.  It includes fundamental results about the matrix Hilbert series of such 
algebras, the basic structure of the generators and relations for those of dimension 2 and 3, and 
techniques for distinguishing which of these algebras have finite GK-dimension.  We note that the 
proof of Theorem~\ref{thm:CY2 intro} depends on a technical result from \cite{RR2}.

Several routine proofs in this paper are sketched or omitted. Readers who wish to see full proofs may consult an earlier draft of this paper at \href{https://arxiv.org/abs/1807.10249}{\texttt{arXiv:1807.10249v1}}.

\textbf{Acknowledgements.}
We would like to thank James Zhang for helpful conversations and Stephan Weispfenning for helpful 
comments about earlier drafts of this paper.  We are particularly grateful to Jeremy Rickard 
for allowing us to use his ideas in the proof of Theorem~\ref{thm:main1}.
Finally, we thank the referees for their careful reading of this paper, providing a number of suggestions that have improved the readability of this paper as well as interesting suggestions for future problems.

\section{Preliminaries on graded algebras and their modules}
\label{sec:preliminary}

This section collects a number of preparatory results for our treatment of graded twisted Calabi-Yau
algebras with the goal of improving readability of proofs in later sections.  
Readers who feel so inclined are encouraged to browse through the next few paragraphs on notations and conventions and then proceed directly to Section~\ref{sec:smooth}, referring back to these preparatory results only as needed. 

In some parts we collect known results, and in such cases we point to proofs in the literature. 
In others, we provide generalizations of common arguments for connected graded algebras to locally finite algebras. While these generalizations are relatively routine, they involve some subtleties that are easily overlooked if one is accustomed to working only with connected graded algebras. 
We also require extensions of certain facts about modules to objects in the derived category. In such cases where proofs are not available in the literature, we include some of the shorter proofs but omit longer or more tedious ones.

\separate

We begin by discussing some conventions on rings, modules, and complexes.
All rings, homomorphisms, and modules are assumed to be unital.
For a ring $A$, we write $A \lMod$ for its category of left modules and $\rMod A$ for its category of right $A$-modules. 
Letting $A\op$ denote the opposite ring of $A$, we have $\rMod A = A\op \lMod$.
We use the notation $\Hom_A(-,-)$ for Hom-sets of \emph{left} $A$-modules,
and $\Hom_{A\op}(-,-)$ for \emph{right} modules. This will occasionally clarify
any (bi)module structures induced on Hom groups in which one or both arguments are
$(A,A)$-bimodules. (Recall that if ${}_A M_B$ and ${}_A N_C$ are bimodules for rings $B$ 
and $C$, then $\Hom_A(M_B,N_C)$ carries an induced structure of a $(B,C)$-bimodule,
which is most easily seen if one allows this $\Hom$ group to act on the right, opposite from
the scalars.  Similarly, if ${}_B P_A$ and ${}_C Q_A$ are bimodules, then $\Hom_{A\op}({}_B P, {}_C Q)$ 
is a $(C,B)$-bimodule.)   If $A$ and $B$ are $k$-algebras, then all $(A,B)$-bimodules $M$ considered 
below are assumed to be $k$-central (that is to say, the satisfy $\lambda m = m \lambda$ for every 
module element $m \in M$ and scalar $\lambda \in k$) unless explicitly stated otherwise.

It will be useful to us to phrase the proofs of some of our main results in terms of derived categories, 
so we recall the relevant notation.  Our convention is that all complexes will be cohomological 
complexes, as is standard when working with the derived category.  Let $A$ be a $k$-algebra. To 
indicate a  complex of $A$-modules
\[
\dots \to P^{i-1} \to P^i \to P^{i+1} \to \dots
\] 
we will use the notation $P^{\bullet}$ or the shorter form $P$.
As a consequence of this cohomological notation, for modules $M_A, L_A,$ and ${}_A N$, 
we have $\Ext^i_A(M,L) = H^i(\RHom_A(M,L))$ and $\Tor_i^A(M,N) = H^{-i}(M \otimes_A^L N)$.
For an abelian category $\mc{C}$, we write $D(\mc{C}), D^+(\mc{C}), D^-(\mc{C})$, and $D^b(\mc{C})$ 
to respectively denote the derived categories of complexes, bounded below 
complexes, bounded above complexes, and bounded complexes. We also write 
$D(A) = D(A \lMod)$ and $D(A\op) = D(\rMod A)$, and similarly for the other derived categories.
For references on derived categories and the use of $\RHom$ functors, we refer readers
to~\cite[Chapter~10]{We} and~\cite[Tag~0A5W]{StacksProject}.
Given two complexes $P^\bullet$ and $Q^\bullet$, we will regularly write $P^\bullet \cong Q^\bullet$
to denote an isomorphism in the derived category $D(A)$, which is to say a quasi-isomorphism;
we trust that this will not cause undue confusion, as we rarely care if two complexes are isomorphic
in the (homotopy) category of complexes $K(A)$.

Given a ring $R$, we say that a complex of left $R$-modules is \emph{perfect}
if it is quasi-isomorphic to a bounded complex of finitely generated projective
left $R$-modules~\cite[Tag~0656]{StacksProject}. A left $R$-module $M$ is called 
\emph{perfect} if it is perfect when considered as a complex with a single term in degree zero; 
this is equivalent to the existence of a projective resolution of finite length
\begin{equation}\label{eq:perfect}
0 \to P^{-n} \to \cdots \to P^{-1} \to P^0 \to M \to 0
\end{equation}
where all $P^i$ are finitely generated. 


\subsection{Graded algebras, modules, and resolutions}

We now turn to conventions and basic results on graded algebras, modules, and resolutions.
Unless explicitly indicated otherwise, by a \emph{graded ring (or algebra)} we mean an
$\N$-graded ring (or algebra). 
Suppose that $A = \bigoplus_{n=0}^\infty A_n$ is a graded algebra.   We write $\rGr A$ for the category of 
graded right $A$-modules, with morphisms the degree-preserving homomorphisms, which we refer to as \emph{graded} homomorphisms. 
 Similarly, we write $A \lGr$ for the category of graded left $A$-modules.  Given a graded module $M$ and $n \in \mb{Z}$, $M(n)$ will indicate the same module $M$ with the grading shifted so that $M(n)_m = M_{m+n}$ for all $m$.
Given graded left $A$-modules 
$M$ and $N$, let $\Hom_A^i(M,N)$ denote the vector space of left $A$-module homomorphisms
$\phi \colon M \to N$ that are homogeneous of degree~$i$ (that is, $\phi(M_n) \subseteq N_{n+i}$).
Continue to write $\Hom_A(M,N)$ for the usual Hom in the category $A\lMod$. Then we have 
the graded Hom-groups 
\[
\grHom_A(M,N) = \bigoplus_{i=-\infty}^\infty \Hom_A^i(M,N) \subseteq \Hom_A(M,N).
\]
It is well-known that if $M$ and $N$ are
graded left $A$-modules with $M$ finitely generated, then in fact $\grHom_A(M,N) = \Hom_A(M,N)$; 
see~\cite[Corollary~2.4.4]{NV2}, for instance.  We will extend this result to the derived category in Lemma~\ref{lem:graded equivalence} below.  We say that $M$ is \emph{graded perfect} if there is a graded projective resolution as in~\eqref{eq:perfect} with all $P^i$ finitely generated. We denote the right derived functors
of the graded Hom functor by $\grExt^i_A = R^i\grHom_A$, and similarly we denote the total
derived functor by $\RgrHom_A$.

Given graded modules $M_A$ and ${}_A N$ over the graded algebra $A$, we recall 
from~\cite[p.~12]{NV1} that the \emph{graded tensor product} $M \grtimes_A N$ is the graded 
$k$-vector space that has the same underlying vector space as the usual tensor product 
$M \otimes_A N$, with the grading induced by declaring each pure tensor of the form $m \otimes n$, 
where $m$ and $n$ are homogeneous, to be itself homogeneous of degree 
$\deg(m \otimes n) = \deg(m) + \deg(n)$.

For a graded ring $A$, we let $J(A)$ denote its \emph{graded Jacobson radical}, the
intersection of its maximal homogeneous left (equivalently, right) ideals.  If $I$ is a maximal 
homogeneous left ideal, then $1 \not \in I$; since $1 \in A_0$, we also have 
$1 \not \in I_0 + A_{\geq 1} = I + A_{\geq 1}$.  By maximality, $A_{\geq 1} \subseteq I$.  
It follows that
\[
J(A) = J(A_0) \oplus A_{\geq 1},
\] 
where $A_0$ is considered as a graded ring concentrated in degree zero, so that $J(A_0)$ is 
the usual Jacobson radical (see also ~\cite[Corollary~II.6.5]{NV1}).   
Thus we have $A/J(A) \cong A_0/J(A_0)$ as rings.
Note that if $A_0$ is semisimple, then $J(A) = A_{\geq 1}$ and $A/J(A) \cong A_0$.  We always 
write $S$ for the semisimple algebra $A/J(A)$.

We say that a graded $k$-algebra $A = \bigoplus_{n = 0}^\infty A_n$ is \emph{locally finite}
(elsewhere called \emph{finitely graded}) if each $A_n$ is finite-dimensional as a $k$-vector space.   
While this will eventually be our 
case of interest, we are able to develop many of our results assuming only that $A_0$ is
finite-dimensional.
If $A$ is a graded algebra with $A_0$ finite-dimensional, then every left $A_0$-module
has a projective cover~\cite[Proposition~24.12]{FC}. From this it follows that every 
graded left $A$-module $M$ that is bounded below (i.e., $M_n = 0$ for $n \ll 0$)
has a \emph{graded projective cover} 
$f \colon P \to M$, in the sense that $P$ is graded projective, $f$ is a graded surjective morphism,
and $\ker(f) \subseteq J(A)P$ (see also~\cite[p.~4064]{MM}).  We define a \emph{minimal graded projective resolution} of a graded left $A$-module to be a graded projective resolution $P^{\bullet} \to M$ such that each map $d^n: P^n \to P^{n-1}$
in the complex, as well as the augmentation map $\epsilon: P^0 \to M$, is a projective cover of its image.
This is equivalent to the conditions $\ker(d^n) \subseteq J(A) P^n$ for all $n \geq 1$ together with $\ker(\epsilon) \subseteq J(A) P^0$.
Alternatively, this can be expressed as the conditions that $\im(d^n) \subseteq J(A) P^{n-1}$ for all $n \geq 1$.

Continuing to assume that $\dim_k A_0 < \infty$, then as in~\cite[Lemma~2.6]{MM}, every graded projective left $A$-module
$P$ that is bounded below is of the form $P \cong A \otimes_{A_0} Q$ for some
graded projective left bounded left $A_0$-module $Q$. Furthermore, as every projective left module
over the finite-dimensional algebra $A_0$ is a direct sum of indecomposable projective modules,
each isomorphic to $A_0 e$ where $e$ is a primitive idempotent \cite[6.3]{P}, we have in fact
$P \cong \bigoplus Ae_i(l_i)$ for some primitive idempotents $e_i \in A_0$ and integers 
$l_i$.

We say that a graded module $M$ is \emph{graded-indecomposable} if one
cannot write $M = M_1 \oplus M_2$ for nonzero graded submodules $M_i$.
Note that if $e \in A_0$ is a primitive idempotent, then $Ae$ is a graded-indecomposable
projective. Thus every bounded below graded projective left $A$-module is (uniquely)
a direct sum of graded-indecomposable projectives.
It follows as in~\cite[Section~2]{MM} that any graded $A$-module that is bounded
below has a minimal graded projective resolution whose terms are bounded below. 
One may check using the graded Nakayama lemma below 
that such a minimal resolution is unique up to (a generally non-unique) isomorphism, 
as expected.

The following graded version of Nakayama's lemma is well-known.
A proof of~(1) can be found in~\cite[Lemma 2.1]{MM}, and the derivation of~(2) from~(1) is 
standard.  
\begin{lemma}\label{lem:Nakayama}
Let $A$ be a graded $k$-algebra whose degree zero part $A_0$ has finite $k$-dimension, 
and let $M$ be a bounded-below graded left $A$-module.
Let $I \subseteq A$ be a graded ideal contained in $J(A)$.
\begin{enumerate}
\item If $M = IM$, then $M = 0$.
\item Any lift of a generating set of $M/IM$ is a generating set of $M$.
\end{enumerate}
\end{lemma}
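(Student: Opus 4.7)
The plan is to use the standard minimal-degree argument, leveraging the fact that since $\dim_k A_0 < \infty$, the ring $A_0$ is Artinian and $J(A_0)$ is nilpotent.

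First, I would reduce part~(2) to part~(1) in the usual way. Given a generating set of $M/IM$, let $N \subseteq M$ be the graded submodule generated by a lift of that set, so that $N + IM = M$. The quotient $M/N$ is a bounded-below graded left $A$-module (since $M$ is bounded below, and $N$ is graded), and the equation $N + IM = M$ gives $M/N = I(M/N)$. Applying~(1) to $M/N$ yields $M/N = 0$, so $N = M$, as desired.

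For part~(1), suppose for contradiction that $M \neq 0$. Because $M$ is bounded below, we can choose the minimum integer $n_0$ with $M_{n_0} \neq 0$. Using the decomposition $J(A) = J(A_0) \oplus A_{\geq 1}$ from the discussion of the graded radical preceding the lemma, together with $I \subseteq J(A)$, we have $I = I_0 \oplus I_{\geq 1}$ with $I_0 \subseteq J(A_0)$. Then
\[
(IM)_{n_0} = I_0 M_{n_0} + \sum_{i \geq 1} I_i M_{n_0 - i} = I_0 M_{n_0} \subseteq J(A_0) M_{n_0},
\]
since $M_{n_0 - i} = 0$ for all $i \geq 1$ by minimality of $n_0$. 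Now the hypothesis $M = IM$ gives $M_{n_0} = J(A_0) M_{n_0}$, and iterating yields $M_{n_0} = J(A_0)^r M_{n_0}$ for every $r \geq 1$.

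The main (and only) subtle point is to conclude from this that $M_{n_0} = 0$. Since $A_0$ is a finite-dimensional $k$-algebra, it is left Artinian, and hence $J(A_0)$ is a nilpotent ideal: there exists $r \geq 1$ with $J(A_0)^r = 0$. Then $M_{n_0} = J(A_0)^r M_{n_0} = 0$, contradicting the choice of $n_0$. This forces $M = 0$ and completes the proof.
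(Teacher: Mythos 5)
Your proof is correct and is precisely the standard argument that the paper defers to by citation (deriving~(2) from~(1) via the quotient $M/N$, and proving~(1) by the minimal-degree argument, using that $J(A_0)$ is nilpotent because $A_0$ is finite-dimensional). The one point worth making explicit is that in~(2) the generating set of $M/IM$ and its lift should be taken to consist of homogeneous elements, so that $N$ really is a graded submodule and part~(1) applies to $M/N$ — your argument tacitly assumes this, and it is the intended reading of the statement.
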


%

The following generalizes a well-known characterization of finitely generated algebras from the
case where $A$ is connected (i.e., $A_0 = k$).

\begin{lemma}\label{lem:affine}
Let $A$ be a graded algebra with $A_0$ finite-dimensional, and set $S = A/J(A)$. Then the
following are equivalent:
\begin{enumerate}[label=\textnormal{(\alph*)}]
\item $A$ is finitely generated as a $k$-algebra;
\item $S$ is finitely presented as a left (equivalently, right) $A$-module; 
\item $\dim_k J(A)/J(A)^2 < \infty$.
\end{enumerate}
In particular, if $S$ is perfect as a left or right $A$-module, then $A$ is a finitely generated 
algebra.  In case the above conditions hold, $A$ is locally finite. 
\end{lemma}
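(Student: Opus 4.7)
The plan is to use condition (c) as a pivot, proving (a) $\Leftrightarrow$ (c) and (b) $\Leftrightarrow$ (c) separately. The key computation for (b) $\Leftrightarrow$ (c) is to identify $\Tor_1^A(S,S) \cong J(A)/J(A)^2$ by applying $S \otimes_A^L -$ to the short exact sequence $0 \to J(A) \to A \to S \to 0$ and using $\Tor_1^A(S,A) = 0$. Since $A_0$ and $S$ are finite-dimensional, the minimal projective cover $P^0 \twoheadrightarrow S$ is finitely generated, so $S$ is finitely presented precisely when the first syzygy $\Omega^1 S$ is finitely generated. By graded Nakayama (Lemma~\ref{lem:Nakayama}), this happens exactly when $\Omega^1 S / J(A)\Omega^1 S$ is finite-dimensional, and for a minimal resolution this quotient coincides with $\Tor_1^A(S,S) \cong J(A)/J(A)^2$. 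The right-module case of (b) follows by the same argument over $A\op$, using $J(A\op) = J(A)$.

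For (a) $\Leftrightarrow$ (c), the forward direction is direct: the positive-degree components of a finite set of homogeneous algebra generators span $W := A_{\geq 1}/A_{\geq 1}^2$ as an $A_0$-bimodule, forcing $W$ to be finite-dimensional; then $V_{\geq 1} := (J(A)/J(A)^2)_{\geq 1}$ is a quotient of $W$, while $V_0 = J(A_0)/J(A_0)^2$ is automatically finite-dimensional. For the converse, I would observe the identification $V_{\geq 1} \cong W/(J(A_0)W + WJ(A_0))$ and apply graded Nakayama to $W$ regarded as a bounded-below graded $A_0^e$-module (with $A_0^e = A_0 \otimes A_0\op$ concentrated in degree zero, and Jacobson radical $J(A_0)\otimes A_0 + A_0 \otimes J(A_0)$): finiteness of $V_{\geq 1}$ then forces $W$ to be finitely generated over $A_0^e$, hence finite-dimensional. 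Lifting a finite bimodule generating set to homogeneous elements of $A_{\geq 1}$, let $V' \subseteq A_{\geq 1}$ be the resulting finite-dimensional graded $A_0$-sub-bimodule they generate; by construction $V' + A_{\geq 1}^2 = A_{\geq 1}$. An induction on degree then shows that $A$ is generated as a $k$-algebra by $A_0$ together with $V'$: any $x \in A_n$ with $n \geq 1$ splits as $v + \sum_k a_k b_k$ with $v \in V'$ and $a_k, b_k \in A_{\geq 1}$ of strictly smaller degree, hence in the subalgebra by induction.

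The remaining clauses are short: a perfect module is finitely presented, yielding (b) and hence (a); and for local finiteness, writing the algebra generators as a $k$-basis of $A_0$ together with positive-degree homogeneous elements $y_1, \ldots, y_m$, each $A_n$ is spanned by expressions $a_0 y_{j_1} a_1 \cdots y_{j_l} a_l$ with $a_i \in A_0$ and $\sum_k \deg y_{j_k} = n$; since each $\deg y_{j_k} \geq 1$ forces $l \leq n$, only finitely many shapes occur, and each contributes a finite-dimensional subspace.

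The main obstacle I anticipate is the bookkeeping in (c) $\Rightarrow$ (a). A naive degree induction using homogeneous lifts of a $k$-basis of $J(A)/J(A)^2$ is blocked by the terms $J(A_0)\cdot A_n$ and $A_n \cdot J(A_0)$ inside $J(A)^2 \cap A_n$: these do not reduce to strictly lower degree, so the inductive step cannot close. The fix is to carry out the induction at the level of $W = A_{\geq 1}/A_{\geq 1}^2$ as an $A_0$-bimodule, so that the troublesome $J(A_0)$-multiplications are absorbed into the bimodule structure of $V'$, while transferring finiteness from $V_{\geq 1}$ back to $W$ via the $A_0^e$-version of graded Nakayama.
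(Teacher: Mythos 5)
Your proof is correct, and while your treatment of (b)$\Leftrightarrow$(c) and of (a)$\Rightarrow$(c) runs essentially parallel to the paper's (the paper phrases (b)$\Leftrightarrow$(c) via Schanuel's Lemma applied to $0 \to J \to A \to S \to 0$ together with graded Nakayama, which is the same content as your identification $\Tor_1^A(S,S) \cong J/J^2$ read off a minimal resolution), your handling of (c)$\Rightarrow$(a) and of local finiteness is genuinely different. The paper works with the radical filtration: it picks finite-dimensional subspaces $V \subseteq J$ with $V + J^2 = J$ and $X \subseteq A_0$ with $X + J = A$, proves $V^n + J^{n+1} = J^n$ by induction, and then uses nilpotency of $J(A_0)$ to get $J^{N} \subseteq A_{\geq 1}$, hence $J^{Nd} \subseteq A_{\geq d}$, so that in each fixed degree the tail $J^{k}$ eventually vanishes; this yields finite generation and local finiteness in one stroke. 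You instead descend to the $A_0$-bimodule $W = A_{\geq 1}/A_{\geq 1}^2$, recover finite-dimensionality of $W$ from that of $(J/J^2)_{\geq 1} \cong W/(J(A_0)W + WJ(A_0))$ via graded Nakayama over $A_0 \otimes A_0\op$, lift to a finite-dimensional graded sub-bimodule $V' \subseteq A_{\geq 1}$ with $V' + A_{\geq 1}^2 = A_{\geq 1}$, and close a clean induction on degree; local finiteness is then a separate, elementary word-counting argument. Your route sidesteps the $J(A_0)$-terms that block the naive induction (exactly the obstruction the paper's $J^{N} \subseteq A_{\geq 1}$ device is designed to absorb), at the cost of invoking Nakayama over the auxiliary ring $A_0^e$; the paper's route is shorter but gets local finiteness only as a byproduct of the same filtration estimate.

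One small correction: in your parenthetical you assert that the Jacobson radical of $A_0^e = A_0 \otimes A_0\op$ equals $J(A_0) \otimes A_0 + A_0 \otimes J(A_0)$. This identification fails in general; it requires $A_0/J(A_0)$ to be separable (this is precisely Lemma~\ref{lem:enveloping radical}, and Example~\ref{ex:inseparable} shows that for a semisimple but non-separable $A_0$ the displayed ideal is zero while $J(A_0^e) \neq 0$). Fortunately your argument only needs that this ideal, being nilpotent, is \emph{contained} in $J(A_0^e)$, and Lemma~\ref{lem:Nakayama} only assumes $I \subseteq J$; so the proof stands once the claim of equality is deleted.
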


\begin{proof} 
For a proof in the case where $A$ is connected, see~~\cite[Lemma~2.1.3]{Rogalski}. Because of the subtleties involved in extending to the case of a general graded algebra, we sketch a proof below. 

Set $J = J(A)$. The equivalence of~(b) and~(c) follows by considering the left (respectively, right) module presentation $0 \to J \to A \to S \to 0$ and applying Schanuel's Lemma along with Lemma~\ref{lem:Nakayama}(2) to see that $S$ is finitely presented if and only if $J/J^2$ is a finitely generated module over $A/J = S$. Since $S$ is finite-dimensional, this occurs if and only if $\dim_k J/J^2 < \infty$. 

To see that (c)$\implies$(a), it suffices to fix finite-dimensional subspaces $X \subseteq A$ and $V \subseteq J$ such that $A = X + J$ and $J = V + J^2$, and to verify that $A = X + V + V^2 + V^3 + \cdots$ so that $A$ is finite generated. One may also show that for each graded component $A_m$, there exists $r \gg 0$ such that $A_m \subseteq X + V + \cdots + V^r$, proving that $A$ is locally finite. 

To verify that (a)$\implies$(c), begin with a finite-dimensional graded subspace $W$ of $A$ that generates $A$ as a $k$-algebra. We may assume without loss of generality that $W$ is an $(A_0, A_0)$-bimodule. Then writing $W = A_0 + V$ for some $(A_0)^e$-submodule $V \subseteq A_{\geq 1}$ and verifying that $A = A_0 + V + J^2$, one concludes that $\dim_k(J/J^2) < \infty$.
\end{proof}

As mentioned earlier, if $A$ is a graded ring and $M$ and $N$ are graded left $A$-modules with $M$ finitely generated, 
then $\grHom_A(M,N) = \Hom_A(M,N)$.  We require the following derived version of this fact. 
\begin{lemma}\label{lem:graded equivalence}
Let $A$ be a graded ring and let $M \in A \lGr$. 
\begin{enumerate}
\item $M$ is perfect if and only if $M$ is graded perfect.
\item If $Q \in D^-(A \lGr)$ is a complex of finitely generated graded projectives, and $N \in D(A \lGr)$, then the natural map $\RgrHom_A(Q,N) \to \RHom_A(Q,N)$ is a quasi-isomorphism.  In particular, if $M, N \in A \lGr$ with $M$ perfect, then the natural inclusions $\grExt^i_A(M,N) \subseteq \Ext^i_A(M,N)$ are equalities, for all $i \geq 0$.
\end{enumerate}
\end{lemma}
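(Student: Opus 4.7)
The plan is to prove (2) first, as it is independent of (1), and then (1), closing with the $\Ext$ statement of (2).

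For (2), I would use that a bounded-above complex of projectives is K-projective in both $K(A\lGr)$ and $K(A\lMod)$. Consequently, $\RgrHom_A(Q,N)$ and $\RHom_A(Q,N)$ are both represented, without resolving $N$, by the total Hom complexes $\grHom_A^{\bullet}(Q,N)$ and $\Hom_A^{\bullet}(Q,N)$. The natural map between them is termwise the inclusion $\grHom_A(Q^i, N^j) \subseteq \Hom_A(Q^i, N^j)$, which is an equality since $Q^i$ is finitely generated graded (by the fact from \cite[Corollary~2.4.4]{NV2} cited earlier). Thus the natural map is an isomorphism of complexes, and in particular a quasi-isomorphism.

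For part (1), the implication graded-perfect $\Rightarrow$ perfect is immediate: a finitely generated graded projective is a direct summand of a finitely generated graded free module, which is finitely generated free as an ungraded module. For the converse, suppose $M$ is perfect with a chosen finite resolution $Q^{\bullet} \to M$ of length $d$ by finitely generated (ungraded) projectives. Then $M$ is ungraded finitely generated, and splitting a finite ungraded generating set into its homogeneous components shows $M$ is finitely generated as a graded module. I would then inductively build a graded resolution $\cdots \to F^{-1} \to F^0 \to M \to 0$ by finitely generated graded free modules: having constructed $F^{0}, \ldots, F^{-i+1}$, Schanuel's lemma applied to the two resolutions (truncated through stage $i{-}1$) implies that the current syzygy $\Omega^i := \ker(F^{-i+1} \to F^{-i+2})$ differs from the corresponding syzygy of $Q^{\bullet}$ by a finitely generated summand, so $\Omega^i$ is f.g.\ ungraded, and hence f.g.\ graded by the same homogeneous decomposition trick. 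Choosing a f.g.\ graded free cover $F^{-i} \twoheadrightarrow \Omega^i$ continues the induction.

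After $d$ steps, the same Schanuel comparison shows $\Omega^d$ is f.g.\ ungraded projective (since the corresponding $d$-th syzygy of $Q^{\bullet}$ is $Q^{-d}$, which is f.g.\ projective). The remaining step is to upgrade $\Omega^d$ to graded projectivity, and here I would use the (standard but worth spelling out) homogenizing-section argument: given any graded surjection $\pi: F \twoheadrightarrow \Omega^d$ with an ungraded section $\sigma$, the assignment $\sigma'(m) := \sigma(m)_{\deg m}$ on homogeneous elements extends to a graded $A$-linear section of $\pi$, with $A$-linearity verified degree-by-degree on homogeneous inputs. This yields a bounded resolution of $M$ by finitely generated graded projectives, proving graded perfectness. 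Finally, the $\Ext$ consequence of (2) is obtained by applying the quasi-isomorphism of (2) to the resolution $P^{\bullet} \to M$ furnished by (1), so that $\RgrHom_A(M,N) \cong \RHom_A(M,N)$ and hence $\grExt^i_A(M,N) = \Ext^i_A(M,N)$. The principal obstacle I anticipate is the careful Schanuel bookkeeping needed to maintain finite generation of each intermediate syzygy in the graded resolution; the homogenization that produces the graded section is short and clean.
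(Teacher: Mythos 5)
Your proposal is correct and follows essentially the same route as the paper: part (2) by representing both $\RHom$'s via the total Hom complex of the bounded-above complex of projectives and noting the termwise inclusions are equalities for finitely generated terms, and part (1) by an inductive construction of a finitely generated graded resolution using the generalized Schanuel's lemma to control finite generation of the syzygies. The only difference is that you spell out the terminal step (upgrading the final ungraded-projective syzygy to a finitely generated graded projective via the homogenized section), which the paper leaves implicit; that is a worthwhile bit of extra care rather than a divergence in method.
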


\begin{proof}
(1) If $M$ is graded perfect, then it is clearly perfect. Conversely, assume that $M$ 
has a projective resolution of finite type, say 
\[
0 \to P^{-n} \to \cdots \to P^{-2} \overset{d^{-2}}{\longrightarrow} P^{-1} \overset{d^{-1}}{\longrightarrow} P^0 \to M \to 0,
\]
so that $M$ is quasi-isomorphic to the perfect complex $P^{\bullet}$.
Then one may construct a graded resolution $Q^{\bullet}$ of $M$ consisting of finitely generated
graded projective modules.
Because $M$ is finitely generated, it has a finite set of homogeneous generators,
and thus there is a finitely generated graded projective module $Q^0$ with a
graded surjection $Q^0 \twoheadrightarrow M$. 
The rest of the graded resolution can now be constructed by an inductive argument, with the assistance of the generalized Schanuel's
lemma~\cite[Corollary~5.5]{LMR}.


(2) Since $Q$ is a bounded above complex of graded projectives, $\RHom_A(Q, N) = \Hom_A(Q,N)$ and $\RgrHom_A(Q,N) = \grHom_A(Q,N)$.  Since each $Q^i$ is finitely generated, the inclusion $\grHom_A(Q^i, N^j) \to \Hom_A(Q^i, N^j)$ is an equality for each $i, j$; it follows that the natural inclusion of complexes $\grHom_A(Q, N) \to \Hom_A(Q, N)$ is an equality as required.  

In particular, if $M$ is a perfect module, it is quasi-isomorphic to a bounded complex $Q$ of finitely generated graded projectives
and we have $\RHom_A(M, N) = \Hom_A(Q,N)$ and $\RgrHom_A(M,N) = \grHom_A(Q,N)$; thus the second statement arises from 
the first by taking cohomology.
\end{proof}

\begin{remark}
We will often invoke the derived $\Hom$ construction in the form $\RHom_A(M,N)$ 
where $M$ and $N$ are complexes of graded modules and $M$ is perfect. 
Thanks to 
Lemma~\ref{lem:graded equivalence} and the above remarks about graded tensor products, the 
graded and ungraded Hom and tensor functors, as well as their corresponding derived functors, 
are equal in almost all cases where we are concerned. Thus unless otherwise noted, we will 
use the notation of the usual ungraded functors (such as $\Hom$, $\Ext$, $-\otimes_R -$, 
$\Tor$, etc.) with the understanding that these objects carry a canonical grading when the 
arguments are graded. We trust that this slight abuse of notation will not cause confusion,
and it carries the added benefit that some proofs can be written for ungraded and graded
twisted Calabi-Yau algebras simultaneously; we will carefully indicate when this is not the case.
\end{remark}

The following generalizes a well-known argument from the setting of connected graded algebras
to detect finiteness in the minimal projective resolution of a graded module.
\begin{lemma}\label{lem:fg term}
Let $A$ be a graded $k$-algebra whose degree zero part $A_0$ has finite $k$-dimension, 
and set $S = A/J(A)$. 
Let $M$ be a bounded-below graded left $A$-module with minimal graded projective resolution
$P^\bullet \to M \to 0$.  Then for every integer $i \geq 0$ we have isomorphisms of right $S$-modules
\[
\Hom_S(\Tor_i^A(S,M),S) \cong \Ext^i_A(M,S),
\]
and the following are equivalent:
\begin{enumerate}[label=\textnormal{(\alph*)}]
\item $P^{-i}$ is finitely generated (respectively, zero);
\item the left $S$-module $\Tor_i^A(S, M)$ has finite $k$-dimension (respectively, is zero);
\item The right $S$-module $\Ext^i_A(M,S)$ has finite $k$-dimension (respectively, is zero).
\end{enumerate}
\end{lemma}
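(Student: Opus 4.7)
The plan is to exploit minimality of $P^\bullet$ to reduce the computation of both $\Tor$ and $\Ext$ to a term-by-term inspection of $P^{-i}$, and then to obtain the Hom-$\Tor$ duality via tensor-hom adjunction. First I would invoke the structural fact recalled earlier in the paper that any bounded-below graded projective left $A$-module decomposes as $\bigoplus_\lambda A e_\lambda(l_\lambda)$ for some set of primitive idempotents $e_\lambda \in A_0$ and integers $l_\lambda$; write this decomposition of $P^{-i}$ with indexing set $\Lambda_i$. Minimality is equivalent to $\im(d^n) \subseteq J(A) P^{n+1}$ for all relevant $n$, so applying $S \otimes_A -$ or $\Hom_A(-, S)$ produces a complex in which every differential is zero. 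Taking cohomology then gives
\[
\Tor_i^A(S, M) \cong S \otimes_A P^{-i} \cong \bigoplus_{\lambda \in \Lambda_i} S e_\lambda(l_\lambda), \qquad \Ext^i_A(M, S) \cong \Hom_A(P^{-i}, S) \cong \prod_{\lambda \in \Lambda_i} e_\lambda S(-l_\lambda),
\]
as a graded left (respectively, right) $S$-module.

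For the duality isomorphism I would apply the tensor-hom adjunction with respect to the $(S, A)$-bimodule $S$: for any left $A$-module $P$ one has
\[
\Hom_A(P, S) \cong \Hom_A(P, \Hom_S(S, S)) \cong \Hom_S(S \otimes_A P, S)
\]
as right $S$-modules. Specializing to $P = P^{-i}$ and combining with the identifications of the first paragraph produces the desired $\Ext^i_A(M, S) \cong \Hom_S(\Tor_i^A(S, M), S)$.

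For the equivalence of (a), (b), and (c), the crucial observation is that any nonzero primitive idempotent $e \in A_0$ has nonzero image in $S = A_0/J(A_0)$; otherwise $e$ would be a nonzero idempotent in the nilpotent ideal $J(A_0)$. Hence each $S e_\lambda$ and each $e_\lambda S$ is a nonzero direct summand of the finite-dimensional semisimple algebra $S$ and is in particular finite-dimensional. It then follows that $P^{-i} = 0$ iff $\Lambda_i = \emptyset$ iff $\Tor_i^A(S, M) = 0$ iff $\Ext^i_A(M, S) = 0$, and by the graded Nakayama lemma that $P^{-i}$ is finitely generated iff $P^{-i}/J(A) P^{-i} \cong \bigoplus_\lambda S e_\lambda(l_\lambda)$ is a finitely generated $S$-module, iff $\Lambda_i$ is finite, iff both $\Tor_i^A(S, M)$ and $\Ext^i_A(M, S)$ have finite $k$-dimension. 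The main technical subtlety is that $\Ext^i_A(M,S)$ is naturally a direct product (rather than a direct sum) over $\Lambda_i$ when this set is infinite; this is harmless for detecting finite-dimensionality, because an arbitrary product of nonzero $k$-vector spaces is finite-dimensional precisely when only finitely many factors are present, and likewise is insensitive for the abstract adjunction used in the duality step.
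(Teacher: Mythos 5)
Your proposal is correct and follows essentially the same route as the paper: minimality forces the differentials of $S \otimes_A P^\bullet$ and $\Hom_A(P^\bullet,S)$ to vanish, tensor-Hom adjunction gives the duality isomorphism, and the graded Nakayama lemma converts finite generation of $P^{-i}$ into finite-dimensionality of $P^{-i}/J(A)P^{-i}$. The only (cosmetic) difference is that you make the decomposition of $P^{-i}$ into indecomposable summands $Ae_\lambda(l_\lambda)$ explicit and count the index set, whereas the paper argues directly with $P^{-i}/J(A)P^{-i}$ as a module over the finite-dimensional algebra $S$ and notes that the duality functor $\Hom_S(-,S)$ preserves finite-dimensionality and vanishing; your handling of the product-versus-sum subtlety is a reasonable substitute for that last observation.
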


\begin{proof}
For notational convenience, we set $j = -i$ throughout this proof.
Minimality of $P^\bullet$ implies that the boundary operators of the tensored complex 
$S \otimes_A P^\bullet$ are all zero. 
Setting $J = J(A)$, the homology modules of this complex are equal to 
\[
\Tor^A_{-j}(S, M) = A/J \otimes_A P^j \cong P^j/JP^j.
\]
Because $M$ is bounded below, the same is true of each term $P^j$. 
Similarly, because $P^\bullet$ is minimal, the boundary operators of $\Hom_A(P^\bullet,S)$ are
all zero. Combining this observation with tensor-Hom adjointness yields
\begin{align*}
\Ext^{-j}_A(M,S) &\cong \Hom_A(P^j, S) \cong \Hom_A(P^j, \Hom_S(S,S)) \\
&\cong \Hom_S(S \otimes_A P^j, S) \cong \Hom_S(P^j/JP^j, S).
\end{align*}
A direct comparison of these expressions gives $\Hom_S(\Tor_i(S,M),S) \cong \Ext^i_A(M,S)$.

By Lemma~\ref{lem:Nakayama}(2), any generating set for $P^j/JP^j$ lifts to a generating set for $P^j$. 
Note that the action of $A$ on $P^j/JP^j$ factors through the finite-dimensional algebra $S = A/J$. 
Thus $P^j$ is finitely generated (respectively, zero) if and only if $P^j / JP^j \cong \Tor^A_{-j}(S,M)$ 
is finite-dimensional (respectively, zero), establishing (a)$\iff$(b). Because the left and right $S$-dual
functors $\Hom_S(-,S)$ and $\Hom_{S\op}(-,S)$ restrict to inverse equivalences between finite-dimensional graded left and right $S$-modules, we obtain (b)$\iff$(c). 
\end{proof}


\subsection{Idempotents and decomposition}

The next few results concern idempotents and their associated decompositions.
Recall that a ring is \emph{indecomposable} if it is not isomorphic to a direct product of two
nonzero rings, or equivalently, if it has no nontrivial central idempotents. Similarly, we say that
a graded ring is \emph{graded-indecomposable} if it is not isomorphic to a direct product
of two graded rings; this is equivalent to the property that $A$ has no nontrivial \emph{homogeneous}
central idempotents.   It is useful to know that in the case of interest to us we do not need to distinguish 
between these.

\begin{lemma}
\label{lem:central idem}
Let $A$ be a graded ring.  Then $A$ is indecomposable if and only if it is graded-indecomposable.
\end{lemma}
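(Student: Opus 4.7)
The forward direction is immediate: if $A$ has no nontrivial central idempotents whatsoever, then it has no nontrivial homogeneous central idempotents, so indecomposable implies graded-indecomposable. The content is in the converse, and the plan is to show that any central idempotent of a graded-indecomposable graded ring must in fact be homogeneous of degree zero, hence trivial.

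So assume $A$ is graded-indecomposable and let $e \in A$ be a central idempotent. Write $e = \sum_{i \geq 0} e_i$ with $e_i \in A_i$. The first step is to observe that each homogeneous component $e_i$ is itself central: for any homogeneous $a \in A_n$, the relation $ea = ae$ becomes an equality of graded elements, and matching the degree-$(n+i)$ components forces $e_i a = a e_i$. Next, comparing degree-zero components of $e^2 = e$ yields $e_0^2 = e_0$, so $e_0$ is a homogeneous central idempotent. Graded-indecomposability then forces $e_0 \in \{0, 1\}$.

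The main step, which does the real work, is to rule out all the higher-degree terms. Suppose $e_0 = 0$ and $e \neq 0$; let $n \geq 1$ be the smallest index with $e_n \neq 0$. Then every product $e_i e_j$ contributing to $e^2$ has total degree at least $2n$, so the degree-$n$ component of $e^2$ vanishes. But the degree-$n$ component of $e$ is $e_n \neq 0$, contradicting $e^2 = e$. Hence $e = 0$. If instead $e_0 = 1$, apply this same argument to the central idempotent $1-e$, whose degree-zero component is $0$; the conclusion $1-e = 0$ gives $e = 1$.

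The only potential subtlety, which is essentially automatic from the graded structure, is the preservation of centrality under passage to homogeneous components; once that is noted, the minimal-degree argument in the last paragraph is the whole engine of the proof, and no deeper ring-theoretic machinery is required.
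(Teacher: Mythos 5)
Your proof is correct and follows essentially the same route as the paper's: both reduce to showing that a nontrivial central idempotent must have nontrivial degree-zero component, via the minimal-degree computation $(e^2)_n = 0 \neq e_n$ when $e_0 = 0$. The only cosmetic difference is that you handle the case $e_0 = 1$ by passing to the complementary idempotent $1-e$, whereas the paper redoes the degree computation directly to get $e_d = 2e_d$; both are fine.
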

\begin{proof}
Let $e = e_0 + e_1 + \dots + e_n$ be a idempotent, where $e_i \in A_i$, and note that $e_0$ is a
homogeneous idempotent. Suppose that $e \neq 0,1$ is nontrivial; we will prove that $e_0$ is 
nontrivial. 
First suppose that $e_0 = 0$. Fix $d > 0$ minimal such that $e_d \neq 0$.
Then $0 \neq e_d  = (e^2)_d = 0$, a contradiction.  
Now suppose that $e_0 = 1$. As $e \neq 1$, let $d > 0$ be the minimal positive index such that 
$e_d \neq 0$.   Then $e_d = (e^2)_d = 2e_d$, yielding the contradiction $e_d = 0$.  
Thus $e_0$ is a homogeneous nontrivial idempotent. It is also easy to see that if $e$ is central, then 
so is $e_0$.  Thus if $A$ has a nontrivial central idempotent, then it has a nontrivial homogeneous central 
idempotent.  The converse is trivially true.
\end{proof}

We will need the following easy lemma on the interaction between idempotents and $\RHom$,
similar to~\cite[Proposition~21.6]{FC}.  
Note that for an idempotent $e$ in a ring $R$ and a complex $P \in D(R)$, we write $eP$ for the 
complex $eR \otimes_R^L P$, which may be obtained by multiplying all terms in the complex by $e$ on the left.
Right multiplication of a complex in $D(R\op)$ by an idempotent $e \in R$ is defined similarly.
\begin{lemma}\label{lem:central idempotent}
Let $R$, $S$, and $T$ be  rings.  Let $P$ be a bounded above complex of $(R, S)$-bimodules and 
$Q$  a complex of $(R, T)$-bimodules.
\begin{enumerate}
\item For any idempotents $e \in S$, $f \in T$ we have 
\[
\RHom_R(Pe, Qf) = e \RHom_R(P,Q) f
\]
as complexes of $(eSe, fTf)$-bimodules.
\item If $z \in Z(R)$ is a  central idempotent, then 
\begin{gather*}
z \RHom_R(P, Q) = \RHom_R(P, Q) z = \RHom_R(zP, Q) = \RHom_R(P, zQ) \\
= \RHom_R(z P, zQ) = \RHom_{zR}(zP, zQ).
\end{gather*}
as complexes of $(S, T)$-bimodules.  
\end{enumerate}
Further, if $R, S, T$ are graded rings, $P, Q$ are complexes of graded bimodules, and $e, f, z$ are homogeneous idempotents,
the same results hold with $\RHom$ replaced with $\RgrHom$, as equalities of complexes of graded bimodules.
\end{lemma}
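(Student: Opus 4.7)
The plan is to lift the derived-category identities to concrete equalities at the level of complexes by choosing a suitable resolution of $P$, and then to invoke the elementary module-theoretic identity $\Hom_R(Me,Nf) = e\Hom_R(M,N)f$ in each degree. First I would replace $P$ by a quasi-isomorphic bounded-above complex $\widetilde{P}$ of $(R,S)$-bimodules whose terms are projective as left $R$-modules (for instance via a bar-type construction, retaining the right $S$-action on the outer factor). Then $\RHom_R(P,Q)$ is computed as $\Hom_R(\widetilde{P},Q)$ with its canonical $(S,T)$-bimodule structure, and similarly for the other $\RHom$ terms appearing in the statement.

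For part~(1), the decomposition $S = Se \oplus S(1-e)$ of left $S$-modules shows that $Se$ is projective, and right multiplication by $e$ splits $\widetilde{P}$ as $\widetilde{P}e \oplus \widetilde{P}(1-e)$ at each degree; this is a decomposition of complexes of left $R$-modules, because right multiplication by $e$ commutes with the left $R$-action. Consequently the induced map $\widetilde{P}e \to Pe$ is a quasi-isomorphism whose source is a bounded-above complex of projective left $R$-modules, so $\RHom_R(Pe,Qf) = \Hom_R(\widetilde{P}e,Qf)$. The module-theoretic identity $\Hom_R(\widetilde{P}^i e, Q^j f) = e\Hom_R(\widetilde{P}^i, Q^j)f$, which is immediate once one writes down the $(eSe,fTf)$-bimodule structure of both sides in terms of composition with the projections $\widetilde{P}^i \twoheadrightarrow \widetilde{P}^i e$ and inclusions $Q^j f \hookrightarrow Q^j$, then gives part~(1).

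For part~(2), the central idempotent $z$ acts as a bimodule endomorphism of every $R$-module and yields orthogonal direct-sum decompositions $\widetilde{P} = z\widetilde{P} \oplus (1-z)\widetilde{P}$ and $Q = zQ \oplus (1-z)Q$. The resulting block decomposition of $\Hom_R(\widetilde{P},Q)$ identifies the $z$-summand simultaneously with $\Hom_R(z\widetilde{P},Q)$, $\Hom_R(\widetilde{P},zQ)$, and $\Hom_R(z\widetilde{P},zQ)$, since all ``mixed'' Hom-groups vanish; moreover $z$ acts the same way from the left and from the right on this $\Hom$-group because $z$ is central, giving $z\Hom_R(\widetilde{P},Q) = \Hom_R(\widetilde{P},Q)z$. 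Finally, the identification $\Hom_R(z\widetilde{P},zQ) = \Hom_{zR}(z\widetilde{P},zQ)$ holds because any $R$-linear map between $R$-modules annihilated by $1-z$ is automatically $zR$-linear.

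The graded case follows from the same argument, using a graded resolution of $P$ by graded $(R,S)$-bimodules whose underlying left $R$-complex is graded projective, and invoking the fact that a homogeneous idempotent gives a graded direct sum decomposition. The main obstacle I anticipate is not conceptual but bookkeeping: one must ensure that the chosen $\widetilde{P}$ really is a resolution of $(R,S)$-bimodules whose underlying left $R$-complex is K-projective, so that the induced $(S,T)$-bimodule structure on $\Hom_R(\widetilde{P},Q)$ is the one that models $\RHom_R(P,Q)$ in the derived category, and so that the direct summand $\widetilde{P}e$ inherits this structure cleanly. Once this setup is in place, all of the asserted equalities reduce to transparent identities at the level of ordinary modules.
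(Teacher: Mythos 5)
Your proposal is correct and follows essentially the same route as the paper: reduce to the elementary identity $\Hom_R(Me,Nf)=e\Hom_R(M,N)f$ in each degree, after replacing $P$ by a quasi-isomorphic bounded-above complex of bimodules whose terms are projective as left $R$-modules (the paper uses a projective $R\otimes S\op$-resolution rather than a bar-type one, but both serve only to make $\Hom_R$ compute $\RHom_R$ while carrying the bimodule structure, and both use that multiplying by $e$ preserves left projectivity and quasi-isomorphisms). The only cosmetic difference is in part~(2), where the paper deduces the equalities from part~(1) by regarding $z$ as an idempotent acting through $S\otimes Z(R)$ and $T\otimes Z(R)$, while you argue directly via the block decomposition and vanishing of mixed Hom groups; the underlying computation is the same.
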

 
\begin{proof}
(1) We prove the ungraded case; the proof in the graded case is similar.
For any $(R, S)$-bimodule $M$ and $(R, T)$-bimodule $N$, the proof that 
$\Hom_R(Me, Nf) = e \Hom_R(M,N) f$ as $(eSe, fTf)$-bimodules is routine.   Then for any complexes $P, Q$ as 
in the statement one immediately obtains $\Hom_R(Pe, Qf) = e \Hom_R(P,Q) f$ from the definition of 
the total Hom complex.  Considering $P$ as a complex of $R \otimes S\op$-modules, it may 
be replaced by a quasi-isomorphic bounded above complex of projective $R \otimes S\op$-modules, which are also projective as left $R$-modules.  The modules in $Pe$ are then also projective on the left, so we get
\[
\RHom_R(Pe, Qf) = \Hom_R(Pe, Qf) =  e \Hom_R(P,Q) f = e \RHom_R(P, Q)f
\]
as required.

(2)  Note that $P$ is a complex of $(R, S \otimes Z(R))$-bimodules and 
$Q$ a complex of $(R, T \otimes Z(R))$-bimodules.  Since $Pz = zP$ and $Qz = zQ$, 
most of the equalities follow immediately from part (1) and the others are similarly easy.
\end{proof}

\begin{lemma}\label{lem:product RHom}
If $A = \bigoplus_{i=1}^n A_{(i)}$ is a direct sum of algebras, then the following hold in $D(\rMod A^e)$:
\begin{enumerate}
\item $\RHom_{A^e}(A_{(i)},A^e) \cong \RHom_{A_{(i)}^e}(A_{(i)}, A_{(i)}^e)$;
\item $\RHom_{A^e}(A,A^e) \cong \bigoplus_{i=1}^n \RHom_{A_{(i)}^e}(A_{(i)}, A_{(i)}^e)$.
\end{enumerate}
Further, if $A = \bigoplus_{i=1}^n A_{(i)}$ is a direct sum of graded algebras, then
the corresponding quasi-isomorphisms above hold replacing $\RHom$ with $\RgrHom$.
\end{lemma}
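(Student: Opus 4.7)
The plan is to deduce both parts as direct consequences of the idempotent manipulations encoded in Lemma~\ref{lem:central idempotent}(2). Let $e_i \in A$ denote the identity of $A_i$, so that the $e_i$ form a complete set of pairwise orthogonal central idempotents with $A_i = e_i A = A e_i$. Form the corresponding elements $z_i = e_i \otimes e_i^{\op} \in A^e$, which are central idempotents in $A^e$. A direct computation shows that $z_i A^e = A_i \otimes A_i^{\op} = A_i^e$ as rings (with identity $z_i$), and that $z_i \cdot A_i = A_i$ as left $A^e$-modules, since $(1 - z_i)$ annihilates $A_i$.

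For part~(1), I would apply Lemma~\ref{lem:central idempotent}(2) with $R = A^e$, $z = z_i$, $P = A_i$, and $Q = A^e$. Using the identifications $z_i \cdot A_i = A_i$ and $z_i A^e = A_i^e$, the chain of equalities in that lemma yields
\[
\RHom_{A^e}(A_i, A^e) \;=\; \RHom_{A^e}(A_i, z_i A^e) \;=\; \RHom_{z_i A^e}(z_i A_i, z_i A^e) \;=\; \RHom_{A_i^e}(A_i, A_i^e),
\]
as complexes of right $A^e$-modules, where the right-hand side is regarded as a right $A^e$-module via the ring surjection $A^e \twoheadrightarrow A_i^e$ given by $\alpha \mapsto z_i \alpha$. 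Part~(2) is then immediate: writing $A = \bigoplus_{i=1}^n A_i$ as a finite direct sum of left $A^e$-modules and using that $\RHom$ converts finite direct sums in the first argument to direct sums gives
\[
\RHom_{A^e}(A, A^e) \;\cong\; \bigoplus_{i=1}^n \RHom_{A^e}(A_i, A^e) \;\cong\; \bigoplus_{i=1}^n \RHom_{A_i^e}(A_i, A_i^e).
\]

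The graded case is entirely analogous: since the identity of each graded algebra $A_i$ lies in $(A_i)_0 \subseteq A_0$, each $e_i$ is homogeneous of degree~$0$, whence $z_i$ is a homogeneous central idempotent in $(A^e)_0$. One then invokes the graded part of Lemma~\ref{lem:central idempotent}(2) with $\RgrHom$ in place of $\RHom$, and the argument proceeds verbatim. There is no serious obstacle: the only subtle point is to confirm that the right $A^e$-module structures match correctly through the projection to $A_i^e$, but this is handled automatically by Lemma~\ref{lem:central idempotent}(2), whose statement is designed precisely to track bimodule structures through idempotent cut-offs.
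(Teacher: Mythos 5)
Your proof is correct and follows essentially the same route as the paper: you use the same central idempotent $z_i = e_i \otimes e_i^{\op} \in A^e$, invoke Lemma~\ref{lem:central idempotent}(2) to identify $\RHom_{A^e}(A_i,A^e)$ with $\RHom_{A_i^e}(A_i,A_i^e)$ (including the right $A^e$-module structure via the projection $A^e \twoheadrightarrow A_i^e$), and then deduce (2) by pulling the finite direct sum out of the first argument, with the graded case handled verbatim by the $\RgrHom$ version of that lemma. This is precisely the paper's argument.
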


\begin{proof}
Let $z_i \in \bigoplus_{j=1}^n A_{(j)}$ denote the element whose $i$th entry is
the identity of $A_{(i)}$ and whose other entries are zero, so that $1 = \sum z_i$ is a 
sum of orthogonal central idempotents.

For~(1), consider the central idempotent $z = z_i \otimes z_i\op \in A^e$. Note
that as a left $A^e$-module we have $A_{(i)} = zA_{(i)}$, and also that $zA^e \cong A_{(i)}^e$
as algebras and as left $A^e$-modules. Then using 
Lemma~\ref{lem:central idempotent} we have  that 
\begin{align*}
\RHom_{A^e}(A_{(i)}, A^e) &= \RHom_{A^e}(zA_{(i)}, A^e) \\
&\cong \RHom_{zA^e}(zA_{(i)}, zA^e) \\
&\cong \RHom_{A_{(i)}^e}(A_{(i)}, A_{(i)}^e).
\end{align*}
Since $A^e$ is a $(A^e, A^e)$-bimodule, the isomorphisms above hold as complexes of right $A^e$-modules.  

We deduce~(2) as follows, where~(1) is invoked in the final isomorphism:
\begin{align*}
\RHom_{A^e}(A,A^e) &= \RHom_{A^e}\left( \bigoplus A_{(i)}, A^e \right)  \\
&\cong \bigoplus \RHom_{A^e}(A_{(i)}, A^e) \cong \bigoplus \RHom_{A_{(i)}^e}(A_{(i)}, A_{(i)}^e).
\end{align*}

In case the $A_i$ are graded algebras, analogous proofs to those above, using the $\RgrHom$ version of 
Lemma~\ref{lem:central idempotent}, yield the corresponding graded isomorphisms. 
\end{proof}

\separate

\subsection{Invertible bimodules}

Next we turn our attention to invertible bimodules over algebras and graded algebras.
Note that a $k$-central invertible $(A,A)$-bimodule $U$ induces a $k$-linear Morita equivalence
$U \otimes_A - \colon A \lMod \to A \lMod$. For this reason, a number of standard results
from Morita theory (such as those in~\cite[\S 18]{LMR}, for instance) apply to invertible bimodules,
if we impose the extra condition that the equivalences of categories are $k$-linear.

Suppose that $A$ is a graded algebra.
We say that a graded $(A, A)$-bimodule $U$ is \emph{graded-invertible} if there exists a graded bimodule
$V$ such that there are isomorphisms $U \grtimes_A V \cong A \cong V \grtimes_A U$ as graded bimodules.  
It is useful to note that a graded bimodule that is invertible in the ungraded sense is actually graded-invertible, as 
follows.

\begin{lemma}\label{lem:graded invertible}
Let $A$ be a graded $k$-algebra, and suppose that $U$ is an $A^e$-module that is both graded and invertible.  
Then $U$ is graded-invertible.
\end{lemma}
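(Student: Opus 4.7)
The plan is to produce a concrete graded candidate for the inverse and verify that the classical Morita isomorphisms automatically respect gradings. Take $V := \grHom_{A\op}(U, A)$, equipped with its natural graded $(A, A)$-bimodule structure given by $(a \phi b)(u) = a \phi(bu)$.

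Since $U$ is invertible as an ungraded bimodule, standard Morita theory implies that $U$ is finitely generated projective on each side, and that the ungraded bimodule $V_0 := \Hom_{A\op}(U, A)$ serves as a two-sided inverse under $\otimes_A$. Explicitly, the evaluation map $\epsilon \colon V_0 \otimes_A U \to A$, $\phi \otimes u \mapsto \phi(u)$, is a bimodule isomorphism, and the coevaluation $\eta \colon U \otimes_A V_0 \to A$ is obtained by composing the standard map $u \otimes \phi \mapsto (u' \mapsto u \phi(u'))$ into $\End_{A\op}(U)$ with the inverse of the left-multiplication map $A \to \End_{A\op}(U)$, the latter being a bimodule isomorphism precisely because $U$ is invertible. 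Since $U$ is finitely generated as a graded right $A$-module, the right-module analogue of \cite[Corollary~2.4.4]{NV2} gives $\grHom_{A\op}(U, A) = \Hom_{A\op}(U, A)$ as bimodules, so $V$ coincides with $V_0$ as an ungraded bimodule, now with the added structure of a compatible $\Z$-grading.

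It remains to check that $\epsilon$ and $\eta$ yield graded isomorphisms $V \grtimes_A U \to A$ and $U \grtimes_A V \to A$. For $\epsilon$, a homogeneous pure tensor $\phi \otimes u$ of degree $\deg(\phi) + \deg(u)$ maps to $\phi(u) \in A$, which lies in the component of the same degree by the very definition of a graded morphism. For $\eta$, the formula $u \otimes \phi \mapsto (u' \mapsto u \phi(u'))$ sends homogeneous pure tensors of degree $\deg(u) + \deg(\phi)$ to endomorphisms of $U$ of the same degree, and the isomorphism $A \to \End_{A\op}(U)$ given by left multiplication is patently graded. Both maps are therefore graded, and being already ungraded bimodule isomorphisms, they are graded bimodule isomorphisms; hence $V$ is a graded inverse for $U$. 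The main technical input is the identification of $V$ with $V_0$ via $\grHom_{A\op}(U,A) = \Hom_{A\op}(U,A)$, which rests on the finite generation of $U$ as a right $A$-module; once that is in place, grading-preservation of the explicit Morita evaluation and coevaluation maps is immediate from their formulas.
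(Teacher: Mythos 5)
Your proof is correct and follows essentially the same route as the paper: use ungraded Morita theory to see that $U$ is finitely generated projective on each side, identify $\grHom_{A\op}(U,A)$ with $\Hom_{A\op}(U,A)$ via finite generation, and observe that the standard evaluation/coevaluation isomorphisms preserve the grading. The only cosmetic difference is that you work with the single dual $V=\grHom_{A\op}(U,A)$ and verify the coevaluation through the graded isomorphism $A\cong\End_{A\op}(U)$, whereas the paper uses the two symmetric evaluation maps for $\grHom_{A\op}(U,A)$ and $\grHom_A(U,A)$; both are the same argument in substance.
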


\begin{proof}
Because the bimodule $U$ induces a Morita self-equivalence of $A$, it is finitely generated projective
(and a generator) on each side.  In particular, the natural map $\grHom_A(U,A) \to \Hom_A(U,A)$ is an
isomorphism, and similarly for $\grHom_{A\op}(U,A)$. 

Now invertibility of $U$ implies that the natural evaluation map 
\[
\Hom_{A\op}(U,A) \otimes_A U \to A.
\]
is an isomorphism of $(A, A)$-bimodules. But this map preserves grading, so we in fact have a graded isomorphism
\[
\grHom_{A\op}(U,A) \grtimes_A U \overset{\sim}{\longrightarrow} A.
\]
Allowing homomorphisms of left modules to act from the right (opposite of the scalars), the evaluation 
map $U \grtimes_A \grHom_A(U,A) \to A$ is also a graded isomorphism by symmetry. From 
this it is straightforward to deduce that $U$ is graded-invertible with inverse bimodule 
$\grHom_{A\op}(U,A) \cong \grHom_A(U,A)$.
\end{proof}

We will make use of the following manner in which a graded-invertible bimodule interacts well with
the graded Jacobson radical and semisimple quotient of a locally finite graded algebra. 

\begin{lemma}\label{lem:invertible commutes}
Let $A$ be a graded algebra such that $A_0$ is finite-dimensional. Denote $J = J(A)$ and $S = A/J$.
If $U$ is a graded-invertible $(A,A)$-bimodule, then the following hold:
\begin{enumerate}
\item $S \otimes_A U \cong U \otimes_A S$ as $(A,A)$-bimodules, and $UJ = JU$.
\item The bimodule $S \otimes_A U \cong U \otimes_A S$, considered as a graded $(S,S)$-bimodule
is graded-invertible.
\end{enumerate}
\end{lemma}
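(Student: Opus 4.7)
The plan is to prove (1) by first establishing the equality $JU = UJ$ of sub-bimodules of $U$, and then deriving the bimodule isomorphism $S \otimes_A U \cong U \otimes_A S$ as a common quotient; then (2) will follow by constructing an explicit graded inverse using the graded inverse $V$ of $U$.

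For the equality $JU = UJ$: since $U$ is graded-invertible (and hence invertible in the ungraded sense), the functor $U \otimes_A - \colon A\lMod \to A\lMod$ is a $k$-linear self-equivalence. Self-equivalences preserve semisimplicity, so the image $U \otimes_A S \cong U/UJ$ of the semisimple left $A$-module $S$ is again a semisimple left $A$-module. Since every semisimple left $A$-module is annihilated by $J = J(A)$, we obtain $J \cdot (U/UJ) = 0$, i.e.\ $JU \subseteq UJ$. The symmetric argument with the self-equivalence $- \otimes_A U \colon \rMod A \to \rMod A$ applied to the semisimple right $A$-module $S$ yields $UJ \subseteq JU$, so $JU = UJ$. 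Once this is established, the two $(A,A)$-bimodule quotients $U/JU$ and $U/UJ$ are literally equal with the same left and right $A$-actions, which moreover descend through $A \to S$ on both sides; the identity map on $U$ thus gives the desired bimodule isomorphism $S \otimes_A U \cong U \otimes_A S$.

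For (2), let $V$ be a graded inverse of $U$, so that $U \grtimes_A V \cong A \cong V \grtimes_A U$. Then $V$ is also graded-invertible, and part~(1) applies to $V$ to produce the $(S,S)$-bimodule $S \otimes_A V \cong V \otimes_A S$. I will show that this bimodule is a two-sided graded inverse of $S \otimes_A U$ over $S$. The key observation is that the right $A$-action on $S \otimes_A U$ and the left $A$-action on $S \otimes_A V$ both factor through $A \to S$, so the natural comparison map identifies
\[
(S \otimes_A U) \grtimes_S (S \otimes_A V) \;\cong\; (S \otimes_A U) \grtimes_A (S \otimes_A V).
\]
Using associativity, the identification $U \otimes_A S \cong S \otimes_A U$ from part~(1), the elementary identification $S \grtimes_A S \cong S$ (since the right $A$-action on $S$ already factors through $S$), and $U \grtimes_A V \cong A$, the right-hand side telescopes to $S \grtimes_A A \cong S$. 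An entirely analogous computation, using $V \grtimes_A U \cong A$ instead, yields $(S \otimes_A V) \grtimes_S (S \otimes_A U) \cong S$, so $S \otimes_A U$ is graded-invertible over $S$ with inverse $S \otimes_A V$.

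The main obstacle is the commutation relation $JU = UJ$ in part~(1); although the proof is short, it relies on the conceptual point that the Morita self-equivalence induced by an invertible bimodule preserves semisimplicity and that $J$ is characterized as the annihilator of the class of semisimple modules. Once this is in hand, part~(2) is purely formal: the module $S \otimes_A U$ is assembled from $U$ and its inverse $V$ by tensoring on either side with $S$, and the desired invertibility over $S$ is a direct consequence of the invertibility of $U$ over $A$ together with the collapse of $\otimes_A$ to $\otimes_S$ caused by the actions factoring through $S$.
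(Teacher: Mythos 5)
Your overall strategy coincides with the paper's: establish $JU=UJ$ via the self-equivalence induced by $U$, identify $S\otimes_A U$ and $U\otimes_A S$ as the common quotient $U/JU=U/UJ$, and then verify invertibility over $S$ by the telescoping computation $(U\otimes_A S)\otimes_S(S\otimes_A U^{-1})\cong U\otimes_A S\otimes_A U^{-1}\cong S\otimes_A U\otimes_A U^{-1}\cong S$. Part~(2) is fine as written and matches the paper's computation.

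The one genuine problem is in your justification of $JU\subseteq UJ$. You work with the ungraded self-equivalence $U\otimes_A-\colon A\lMod\to A\lMod$ and then invoke the principle that ``every semisimple left $A$-module is annihilated by $J=J(A)$.'' Since $J(A)$ here is the \emph{graded} Jacobson radical, that principle is false: for $A=k[x]$ one has $J(A)=(x)$, yet the simple module $k[x]/(x-1)$ is not annihilated by $x$. The graded radical is the common annihilator of the \emph{graded}-simple modules, not of all simple modules, so semisimplicity of $U\otimes_A S$ in the ungraded category does not by itself give $J\cdot(U\otimes_A S)=0$; indeed your closing remark that ``$J$ is characterized as the annihilator of the class of semisimple modules'' characterizes the ordinary Jacobson radical, which can be strictly smaller. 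The repair is exactly the paper's argument: $U\grtimes_A -$ is a self-equivalence of the \emph{graded} category $A\lGr$ (with quasi-inverse $U^{-1}\grtimes_A-$), so it carries the graded-semisimple module $S$ to a graded-semisimple module, and graded-semisimple modules are annihilated by $J$ by definition of the graded radical. (Alternatively, note that $U\otimes_A S\cong U/UJ$ is a finite-dimensional graded module, since $U$ is finitely generated on the right, and any finite-dimensional graded module that is semisimple in the ungraded sense is killed by $A_{\geq 1}$ and by $J(A_0)$, hence by $J$.) With this correction the rest of your proof goes through and is essentially identical to the paper's.
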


\begin{proof}
(1) Note that $- \grtimes_A U$ induces an autoequivalence of the category $\rGr A$, with quasi-inverse
given by $- \grtimes_A U^{-1}$. Because $S_A$ is a semisimple graded right $A$-module, the same is 
true of $S \otimes_A U$. Thus $(S \otimes_A U) J = 0$. Because $S \otimes_A U \cong U/JU$, this means
that $(U/JU) J = 0$ and thus $UJ \subseteq JU$. By a symmetric argument, we have $JU \subseteq UJ$,
giving $UJ = JU$. This implies that 
\[
S \otimes_A U \cong U/JU = U/UJ \cong U \otimes_A S.
\]

(2) Because $V = S \otimes_A U \cong U \otimes_A S$ satisfies $JV = VJ = 0$, we may consider it as
an $(S,S)$-bimodule. It is easy to deduce from~(1) that $U^{-1} \otimes_A S \cong S \otimes_A U^{-1}$ as well,
so that this bimodule can also be considered as an $(S,S)$-bimodule. We claim that
$V' = S \otimes_A U^{-1}$ is a graded inverse to $V$. Indeed,
\begin{align*}
V \otimes_S V' &\cong (U \otimes_A S) \otimes_S (S \otimes_A U^{-1}) \\
&\cong U \otimes_A S \otimes_A U^{-1} \\
&\cong S \otimes_A U \otimes_A U^{-1} \\
&\cong S,
\end{align*}
and similarly one may compute $V' \otimes_S V \cong S$. Thus $V$ is graded-invertible as an $(S,S)$-bimodule.
\end{proof}

Given a graded automorphism $\sigma$ of $A$ and an integer $l \in \Z$, it is clear that the twisted
bimodule ${}^1 A^\sigma(l)$ is graded-invertible, with inverse 
${}^1 A^{\sigma^{-1}}(-l) \cong {}^\sigma A^1(-l)$.   Suppose that $A$ is connected graded.  In this case 
all graded projective modules are free, so if $U$ is a graded invertible bimodule, $U$ is free and clearly of rank $1$ on both sides; 
it follows that $U$ is of the form ${}^1 A^\sigma(l)$ \cite[Lemma 2.9]{MM}.  However, for not necessarily connected 
graded algebras, an invertible bimodule need not have this form.  See Example~\ref{ex:weirdbimod} below for one simple example.
We defer more detailed results about the structure of graded invertible bimodules over graded algebras to the companion paper \cite{RR2}, since such information is not needed for our results in this paper.


\subsection{Manipulating objects in the derived category}

The final results of this section concern manipulation of objects in derived categories of modules.
Part~(2) of the next lemma is a slightly more general version of the lemma observed, for instance, 
in~\cite[Lemma~2.2]{YZ}.
\begin{lemma}\label{lem:moving tensor}
Fix (graded) algebras $A$ and $B$. Suppose that $L$ is a perfect complex of (graded) left 
$A$-modules, and that $M$ is a complex of (graded) left 
$A \otimes B\op$-modules.
\begin{enumerate}
\item There is a natural isomorphism in the derived category of (graded)  right $B$-modules
\[
\RHom_A(L,A) \otimes^L_A M \cong \RHom_A(L,M).
\]
\item If $N$ is bounded above complex of left $B \otimes C\op$-modules for a (graded) algebra $C$, then there is an isomorphism in the derived category of (graded) right $C$-modules
\[
\RHom_A(L,M) \otimes^L_B N \cong \RHom_A(L, M \otimes^L_B N).
\]
\end{enumerate}
\end{lemma}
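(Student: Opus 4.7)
The plan is to prove part~(1) by dévissage on $L$ using its perfection, and then to derive part~(2) as a formal consequence by applying~(1) twice.

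For~(1), I would first construct the natural evaluation map
$$\phi_L \colon \RHom_A(L,A) \otimes^L_A M \longrightarrow \RHom_A(L,M)$$
in the derived category of right $B$-modules, defined at the cochain level by $\psi \otimes m \mapsto (\ell \mapsto \psi(\ell) \cdot m)$. The class of $L \in D(A)$ for which $\phi_L$ is a quasi-isomorphism is a thick subcategory, since both $\RHom_A(-,A) \otimes^L_A M$ and $\RHom_A(-,M)$ are contravariant triangulated functors on $D(A)$ that send direct summands to direct summands and finite coproducts to finite products. For $L = A$, both sides reduce naturally to $M$ and $\phi_A$ is the identity. Thus $\phi_L$ is a quasi-isomorphism for every $L$ in the thick subcategory generated by $A$, which is precisely the subcategory of perfect complexes. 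This handles all $L$ satisfying our hypothesis.

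For~(2), since $N$ is bounded above, the derived tensor product $M \otimes^L_B N$ is a well-defined complex of left $A \otimes C\op$-modules. Applying~(1) twice---first with $M$ and then with $M \otimes^L_B N$ in place of $M$---together with the associativity of the derived tensor product yields the chain
$$\RHom_A(L, M) \otimes^L_B N \cong \bigl(\RHom_A(L,A) \otimes^L_A M\bigr) \otimes^L_B N \cong \RHom_A(L,A) \otimes^L_A \bigl(M \otimes^L_B N\bigr) \cong \RHom_A(L, M \otimes^L_B N),$$
with all isomorphisms natural in the relevant variables.

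The main obstacle will be tracking the right module structures (over $B$ in~(1), and over $C$ in~(2)) through each stage of the dévissage, so as to ensure $\phi_L$ is genuinely an isomorphism in the correct derived category rather than just as complexes of $k$-vector spaces. Once this bookkeeping is set up correctly on the base case $L = A$, the thick-subcategory argument automatically preserves these structures. The graded version requires no separate proof: by Lemma~\ref{lem:graded equivalence}, for perfect (graded) $L$ the graded derived $\Hom$ functor coincides with the ungraded one, so the same argument yields the graded isomorphism under the notational convention established in the preceding remark.
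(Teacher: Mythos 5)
Your proposal is correct and follows essentially the same route as the paper: part~(1) is exactly the dévissage argument of the Stacks Project reference the paper cites (reduce to $L = A$ via the thick subcategory generated by $A$), and part~(2) is obtained, as in the paper, by applying~(1) twice together with associativity of the derived tensor product. The paper simply outsources the details of~(1) to the reference rather than writing out the thick-subcategory argument.
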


\begin{proof}
Part~(1) is proved, for instance, in~\cite[Tag~07VI]{StacksProject},
and the same proof carries over  to the graded case.
Then~(2) follows directly from~(1) because
\[
\RHom_A(L,M) \otimes^L_B N \cong \RHom_A(L,A) \otimes^L_A M \otimes^L_B N
\cong \RHom_A(L,M \otimes^L_B N). \qedhere
\]
\end{proof}

It is well-known that one can use Hochschild cohomology groups of certain bimodules to recover 
the ``one-sided'' $\Tor$ and $\Ext$ groups over $A$, for $K_A$, ${}_A M$, and ${}_A N$ as follows 
(see~\cite[Corollary~IX.4.4]{CE}):
\begin{align*}
\Tor_n^A(K,N) &\cong \operatorname{HH}_n(A, N \otimes K) = \Tor_n^{A^e}(A, N \otimes K), \\
\quad \Ext^n_A(M,N) &\cong \operatorname{HH}^n(A, \Hom_k(M,N)) = \Ext^n_{A^e}(A,\Hom_k(M,N)).
\end{align*}
We require the following version of these identities, stated in the derived context.

\begin{lemma}\label{lem:derived Tor and Ext}
Let $A, B, C$ be  (graded) algebras.  Let $P$ be a bounded above complex of (graded) left $A^e$-modules, 
$K$ be a complex of (graded) $B \otimes A\op$-modules, $N$ a complex of (graded) 
left $A \otimes C\op$-modules, and $M$ a complex of (graded) $A \otimes B\op$-modules.  
\begin{enumerate}
\item If $K$ and $N$ are bounded above, then 
\[
K \otimes^L_A P \otimes^L_A N \cong P \otimes_{A^e}^L (N \otimes K) \cong (K \otimes N) \otimes^L_{A^e} P
\]
in the derived category of (graded) $B \otimes C\op$-modules.  
\item If $M$ is bounded above and $N$ is bounded below, then 
\[ \RHom_A(P \otimes^L_A M,N) \cong \RHom_{A^e}(P, \Hom_k(M,N)) 
\]
in the derived category of (graded) $B \otimes C\op$-modules.
\end{enumerate}
As usual, when $A, B, C$ are graded algebras, and $P, K, N, M$ are graded modules, the same results 
hold as isomorphisms of graded modules, with $\otimes$ replaced by $\grtimes$ and $\RHom$ by $\RgrHom$.
\end{lemma}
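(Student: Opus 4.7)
The plan is to reduce both statements to standard underived identities and then propagate them to the derived category via appropriate K-projective or K-injective resolutions. The main bookkeeping challenge is keeping track of the many simultaneous module structures: for each object, an inner $A$- or $A^e$-action together with outer $B$- and $C\op$-actions.

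For part (1), the underived starting point is the natural ``flip'' isomorphism
\[
K \otimes_A P \otimes_A N \;\cong\; P \otimes_{A^e}(N \otimes K),
\]
given by $k \otimes p \otimes n \mapsto p \otimes n \otimes k$; a direct check shows that the two $A$-balancing relations on the left correspond to the single $A^e$-balancing relation on the right, with an analogous argument handling the iso with $(K \otimes N) \otimes_{A^e} P$. Because this isomorphism is natural in $P$, I would propagate it to the derived level by choosing a quasi-isomorphism $\tilde P \to P$ with $\tilde P$ a bounded above complex of projective $A^e$-modules (which exists since $P$ is bounded above). A projective $A^e$-module is projective, and hence flat, both as a left and as a right $A$-module, so $\tilde P$ simultaneously computes each of $K \otimes^L_A P \otimes^L_A N$ and $P \otimes^L_{A^e}(N \otimes K)$; the underived flip then yields the desired derived isomorphism.

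For part (2), the corresponding underived statement is the standard tensor-Hom adjunction
\[
\Hom_A(P \otimes_A M, N) \;\cong\; \Hom_{A^e}(P, \Hom_k(M,N)),
\]
implemented by $f \leftrightarrow \bigl(p \mapsto (m \mapsto f(p \otimes m))\bigr)$; the verification that $A^e$-linearity on the right corresponds to $A$-linearity on the left, and that the map respects the $B$- and $C$-structures, is routine. For the derived version, I would choose a K-injective resolution $N \to I$ as a complex of $A \otimes C\op$-modules together with a K-projective resolution $\tilde P \to P$ as a complex of $A^e$-modules. The hypothesis that $M$ is bounded above and $N$ is bounded below ensures that $\Hom_k(M, I) \to \Hom_k(M, N)$ is a quasi-isomorphism (since $\Hom_k$ is exact over a field, and the boundedness makes each cohomological degree of the totalization a finite product). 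Combining with the underived adjunction then gives
\begin{align*}
\RHom_A(P \otimes^L_A M, N)
&\cong \Hom_A(\tilde P \otimes_A M, I) \\
&\cong \Hom_{A^e}(\tilde P, \Hom_k(M, I)) \\
&\cong \RHom_{A^e}(P, \Hom_k(M, N)).
\end{align*}
The graded case follows identically using graded K-projective and K-injective resolutions, which exist in the categories of bounded above (resp.\ bounded below) graded complexes; all of the maps constructed above clearly preserve grading, so the same arguments yield the corresponding quasi-isomorphisms in the derived category of graded bimodules. The hardest step is not any single calculation but rather confirming at each stage that the isomorphism is equivariant for all of the $A$-, $B$-, and $C$-actions at once.
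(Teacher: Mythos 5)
Your proof of part (1) is essentially the paper's argument: establish the underived flip isomorphism for bimodules, extend it termwise to complexes, and then replace $P$ by a bounded above complex $\tilde P$ of projective $A^e$-modules so that the underived tensor products compute the derived ones. One small refinement worth making explicit: for the iterated product $K \otimes^L_A P \otimes^L_A N$, flatness of each $\tilde P^i$ on either side is not quite the statement you need for the second tensor factor; what you need is that each term of $K \otimes_A \tilde P$ is K-flat (indeed projective) as a right $A$-module, which follows because $\tilde P^i$ is a summand of a free $A^e$-module, so $K \otimes_A \tilde P^i$ is a summand of copies of $K \otimes_k A$, free on the right --- this is exactly how the paper phrases the step. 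For part (2) you take the direct route: the underived tensor-Hom adjunction $\Hom_A(P \otimes_A M, N) \cong \Hom_{A^e}(P, \Hom_k(M,N))$ upgraded by a K-projective resolution of $P$ over $A^e$ and a bounded below injective resolution of $N$, with the boundedness of $M$ and $N$ guaranteeing that $\Hom_k(M,-)$ preserves the relevant quasi-isomorphism (finite products in each total degree). The paper notes that this direct argument is possible but instead deduces (2) from (1): writing $P \otimes^L_A M \cong (A \otimes M) \otimes^L_{A^e} P$ and then applying derived adjunction twice, which keeps everything on the side of projective resolutions of $P$ and avoids injective resolutions altogether. Your version is self-contained and arguably more transparent about where the boundedness hypotheses enter; the paper's version buys economy by reusing (1). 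If you keep your route, do note (as you implicitly use) that a bounded below complex of injective $A \otimes C\op$-modules restricts to a K-injective complex of $A$-modules, or simply observe that $\tilde P \otimes_A M$ is already a bounded above complex of projective left $A$-modules, so the injective resolution of $N$ can be dispensed with.
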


%
The notation in part (1) needs a bit of explanation.  Note that if ${}_A N$
and $K_A$ are modules, then $N \otimes K$ is naturally a left $A^e$-module via the ``outer'' action 
$(a \otimes b\op) \cdot (n \otimes k) = an \otimes kb$.  
Any left $A^e$-module is naturally also a right $A^e$-module via the anti-automorphism 
$A^e \to A^e$ given by $a \otimes b\op \mapsto b \otimes a\op$.
When we think of $N \otimes K$ as a right $A^e$-module in this way, we may also write it as 
$K \otimes N$ where the right action is the ``inner'' one given by
$(k \otimes n) \cdot (b \otimes a\op) = kb \otimes an$, so that the left $B \otimes C\op$-action 
is an outer action. In case $N$ and $K$ are complexes, we simply extend the ``inner'' and ``outer''
actions above to each term in the tensor product of complexes.

We omit the proof for the sake of brevity. 
The argument is a straightforward computation in the derived category, carried out after replacing $P$ by a quasi-isomorhpic bounded above complex of projective $A^e$-modules.

\section{Homologically smooth algebras and global dimension}
\label{sec:smooth}

Recall from Definition~\ref{def:twisted CY}(i) that a $k$-algebra $A$ is \emph{homologically smooth over~$k$}
if $A$ is perfect as a left $A^e$-module. 
This section is devoted to fundamental results on locally finite graded homologically smooth $k$-algebras.

\subsection{A characterization of homological smoothness}

Our first goal will be to find an alternative characterization of homological smoothness for locally finite graded algebras.

\begin{remark}
\label{rem:swap}
We note (as in~\cite[Remark~3.2]{Lu}) that the ``swap'' algebra  isomorphism
$A^e = A \otimes A\op \to A\op \otimes A \cong (A\op)^e$ induces an equivalence
between left $A^e$-modules and left $(A\op)^e$-modules that interchanges
$A$ and $A\op$ and preserves perfect modules. This makes it easy to see
that $A$ is homologically smooth if and only if $A\op$ is.   Thus the homologically smooth 
property is ``left-right symmetric.''   
\end{remark}

A projective $A^e$-module resolution of $A$ can be used to obtain projective resolutions
of \emph{every} left $A$-module as follows.

\begin{lemma}\label{lem:smooth resolution}
Let $A$ be a (graded) algebra.
\begin{enumerate}
\item Let $P^{\bullet} \to A \to 0$ be a (graded) $A^e$-projective resolution.
For any bounded above complex $N^{\bullet}$ of (graded) left $A$-modules, $P \otimes_A^L N \cong N$ 
as complexes of (graded) left $A$-modules.  
In particular, for any (graded) left $A$-module $M$, the complex $P^\bullet \otimes_A M$
is a (graded) projective resolution of $M$.
\item If $A$ is homologically smooth and $N$ is a bounded complex of (graded) finite-dimensional left $A$-modules, 
then $N$ is perfect.
\item If $A$ is graded and homologically smooth with $A_0$ finite-dimensional, then $A$ is a finitely
generated (hence locally finite) algebra.
\end{enumerate}
\end{lemma}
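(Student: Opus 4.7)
The plan is to prove parts (1), (2), and (3) in order, each building on the previous. For (1), the key observation is that each term $P^i$, being projective as a left $A^e$-module, is a direct summand of a free $A^e$-module of the form $A \otimes_k (A\op)^{(I)}$, and hence is projective (in particular flat) when viewed as a right $A$-module via the $A\op$-factor. Thus $P^\bullet$ is a bounded above complex of flat right $A$-modules, so $P \otimes^L_A N = P \otimes_A N$ in the derived category. Applying the derived tensor product to the quasi-isomorphism $P \to A$ (holding in particular in $D(A\op)$) then yields $P \otimes^L_A N \cong A \otimes^L_A N \cong N$ in $D(A)$ for any bounded above complex $N$. Specializing to a single module $M$ concentrated in degree zero, the complex $P^\bullet \otimes_A M$ is a projective resolution of $M$, since the identity $A^e \otimes_A M \cong A \otimes_k M$ as left $A$-modules exhibits each term $P^i \otimes_A M$ as a summand of a free left $A$-module.

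For (2), I would use homological smoothness to produce a bounded resolution $P \to A$ by finitely generated projective $A^e$-modules and form the total complex $\mathrm{Tot}(P \otimes_A N)$. Since both $P$ and $N$ are bounded, this total complex is bounded and quasi-isomorphic to $N$ by part (1). Each of its terms is a finite direct sum of pieces of the form $P^i \otimes_A N^j$; using the identity $(A^e)^n \otimes_A N^j \cong (A \otimes_k N^j)^n$ together with the finite-dimensionality of $N^j$, each such piece is a summand of a finitely generated free left $A$-module, hence finitely generated projective. The graded version goes through verbatim, producing a bounded complex of finitely generated graded projective $A$-modules quasi-isomorphic to $N$.

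For (3), I would apply the graded version of (2) to the finite-dimensional graded module $S = A/J(A) = A_0/J(A_0)$, obtaining a finite-length graded resolution of $S$ by finitely generated graded projective $A$-modules. In particular, $S$ is finitely presented as a left $A$-module, and the conclusion that $A$ is finitely generated (and hence locally finite) follows directly from Lemma~\ref{lem:affine}. Throughout, the point requiring the most care is the interaction between the two-sided projectivity of $P^i$ over $A^e$ and the one-sided flatness/projectivity used when tensoring over $A$; the verification of the quasi-isomorphism in (1) is the central technical step, while graded/ungraded compatibility is essentially bookkeeping, since all tensor products and summands preserve gradings when one starts with graded input.
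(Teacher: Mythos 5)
Your proposal is correct and follows essentially the same route as the paper: part (1) via projectivity/flatness of the $P^i$ as one-sided $A$-modules and the quasi-isomorphism $P\to A$, part (2) via the identification $(A^e)^n\otimes_A N^j\cong (A\otimes_k N^j)^n$ showing the terms of the (bounded) tensor complex are finitely generated projective, and part (3) by applying the graded case of (2) to $S=A/J(A)$ and invoking Lemma~\ref{lem:affine}. The only difference is that you spell out details (summand-of-free arguments, the total complex) that the paper states more tersely.
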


\begin{proof}
(1) This is well known. For a proof in the case where $N$ consists of a single nonzero module, see~\cite[p.~68]{G}.

(2)  Since $A$ is homologically smooth, we may fix a perfect $A^e$-projective resolution $P^{\bullet}$ 
of $A$. By part~(1), $P \otimes_A N$ is quasi-isomorphic to $N$.   Because each $N^j$ is 
finite-dimensional and each $P^i$ is a finitely generated projective $A^e$-module, $P^i$ is a summand
of some $(A^e)^n$, and then $P^i \otimes_A N^j$ is a summand of 
$(A^e)^n \otimes_A N^j \cong A \otimes_k N^j$, which is finitely generated free on the left.
So $P^i \otimes_A N^j$ is finitely generated projective.  Thus $P \otimes_A N$ is a complex of finitely generated projective left $A$-modules and so $N$ is perfect.

(3)  Let $J = J(A)$.  In this case, $S = A/J$ is perfect as a left $A$-module by part~(2), so that $A$ is finitely generated as
an algebra and locally finite according to Lemma~\ref{lem:affine}.
\end{proof}

When manipulating $k$-central bimodules over locally finite algebras that are not necessarily connected, it 
will become crucial to assume that $A$ is separable modulo its graded Jacobson radical.
Recall that a $k$-algebra $S$ is said to be \emph{separable} (over $k$) if $S \otimes K$ is semisimple for 
every field extension $K$ of $k$.  As is well-known, there are many equivalent formulations of this property, 
as follows.

\begin{lemma} 
\label{lem:sepdef}
The following conditions are all equivalent for 
a $k$-algebra $S$:
\begin{enumerate}
\item $S \otimes K$ is semisimple for every field extension $K$ of $k$, that is, $S$ is separable;
\item $S \otimes T$ is semisimple for every semisimple $k$-algebra $T$;
\item $S$ is projective as a left $S^e$-module;
\item $S^e$ is semisimple;
\item $S \cong \bigoplus_{i=1}^n \M_{r_i}(D_i)$ for finite-dimensional division $k$-algebras $D_i$ whose
centers $Z(D_i)$ are separable field extensions of $k$.
\end{enumerate}
\end{lemma}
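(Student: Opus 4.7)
Plan:

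These are all classical characterizations of separable $k$-algebras; complete treatments can be found in standard references such as Pierce's \emph{Associative Algebras} (Chapter~10) or DeMeyer--Ingraham's \emph{Separable Algebras over Commutative Rings}. The plan is to prove the equivalences by a cyclic chain of implications $(5)\Rightarrow(4)\Rightarrow(3)\Rightarrow(1)\Rightarrow(5)$, and to include~(2) via $(5)\Rightarrow(2)\Rightarrow(1)$, where the last step is trivial since every field extension is a semisimple $k$-algebra.

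For $(5)\Rightarrow(4)$ and $(5)\Rightarrow(2)$: writing $S \cong \bigoplus_i \M_{r_i}(D_i)$ and analogously decomposing $S\op$ (or any semisimple $T$ via Wedderburn--Artin), one finds that $S^e$ (respectively $S\otimes_k T$) is a direct sum of matrix algebras over tensor products of the form $D_i \otimes_k E_j$, where $E_j$ is a simple component of $S\op$ (respectively of $T$). Separability of $F_i = Z(D_i)$ over $k$ presents each $F_i \otimes_k F_j$ as a finite product of separable field extensions, reducing the semisimplicity of $D_i \otimes_k E_j \cong D_i \otimes_{F_i} (F_i \otimes_k F_j) \otimes_{F_j} E_j$ to the standard fact that tensor products of finite-dimensional central simple algebras over a common field are themselves central simple.

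The step $(4)\Rightarrow(3)$ is immediate since every module over a semisimple ring is projective. For $(3)\Rightarrow(1)$, projectivity of $S$ as an $S^e$-module yields a \emph{separability idempotent} $e \in S^e$ by splitting the multiplication map $\mu\colon S^e \twoheadrightarrow S$. For any field extension $K/k$, the element $e \otimes 1$ sits in $S^e \otimes_k K \cong (S \otimes_k K)^e$ and serves as a separability idempotent for $S \otimes_k K$. A standard averaging argument using this idempotent shows that every short exact sequence of $(S \otimes_k K)^e$-modules splits, so in particular $S \otimes_k K$ is semisimple.

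Finally, $(1)\Rightarrow(5)$ is the most substantial step. Taking $K = k$ gives that $S$ is semisimple, so $S \cong \bigoplus_i \M_{r_i}(D_i)$ by Wedderburn--Artin for division $k$-algebras $D_i$ with centers $F_i$. Taking $K = \bar k$ an algebraic closure, semisimplicity of $S \otimes_k \bar k$ combined with the equality $\dim_{\bar k}(S \otimes_k \bar k) = \dim_k S$ and the Wedderburn structure of semisimple $\bar k$-algebras (whose simple factors are finite in number and finite-dimensional over $\bar k$) forces each $D_i$ to be finite-dimensional over $k$. For separability of each $F_i/k$, apply $(1)$ with $K = F_i$ and observe that $F_i \otimes_k F_i$ embeds into $S \otimes_k F_i$ as a subalgebra, so any inseparable element of $F_i/k$ would produce nilpotents in this subalgebra, contradicting semisimplicity. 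The main technical obstacle is controlling the finite-dimensionality in this last step, which requires careful bookkeeping of the Wedderburn decomposition before and after tensoring with $\bar k$.
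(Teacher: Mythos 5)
The paper does not prove this lemma at all; it simply cites standard references (Bourbaki, Curtis--Reiner, DeMeyer--Ingraham, Weibel), so any genuine proof you give is "different" by default. Most of your chain is the standard one and is fine: $(5)\Rightarrow(4)$ and $(5)\Rightarrow(2)$ via the Wedderburn decomposition and central simple algebra arguments, $(4)\Rightarrow(3)$ trivially, and $(3)\Rightarrow(1)$ via the separability idempotent. (Two small glosses there: in $(5)\Rightarrow(2)$ the simple components of a semisimple $k$-algebra $T$ may be matrix rings over division rings that are \emph{infinite}-dimensional over $k$, so you cannot literally invoke "tensor products of finite-dimensional central simple algebras"; you need the general fact that a central simple algebra tensored with a simple algebra over the common center is simple, plus an Artinian-ness observation. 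And in your separability-of-centers argument, nilpotent \emph{elements} in a subalgebra do not contradict semisimplicity of the ambient ring; what saves you is that $F_i\otimes_k F_i$ is the \emph{center} of the simple factor $\M_{r_i}(D_i)\otimes_k F_i$, and the center of a semisimple ring is reduced.)

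The genuine gap is in your $(1)\Rightarrow(5)$ finite-dimensionality step. You claim that semisimplicity of $S\otimes_k \bar{k}$, together with the Wedderburn structure of semisimple $\bar{k}$-algebras "whose simple factors are \ldots finite-dimensional over $\bar{k}$," forces $\dim_k S<\infty$. That parenthetical is false: a semisimple $\bar{k}$-algebra can perfectly well have infinite-dimensional simple factors (any field extension of $\bar{k}$, or $\M_n$ of such, or the quotient division ring of a Weyl algebra in characteristic $0$). Worse, base change to $\bar{k}$ alone genuinely cannot detect infinite-dimensionality: for $S=k(t)$ one has $S\otimes_k\bar{k}\cong\bar{k}(t)$, a field, hence semisimple, even though $\dim_k S=\infty$ (of course $S=k(t)$ fails condition~(1) for $K=k(t)$, but your argument never looks at such $K$). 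The classical repair is either to test against large enough fields --- e.g.\ $K=F_i$ itself, whose tensor square is the center of the corresponding factor and must be Artinian and reduced, which forces $F_i/k$ finite separable; and then an algebraically closed field of cardinality exceeding $\dim_k S$, where the Amitsur-style argument (linear independence of $(d-a)^{-1}$ for $a\in K$ when $d$ is transcendental over $K$) shows every division component collapses to $K$, yielding $S\otimes K\cong\bigoplus \M_{n_i}(K)$ finite-dimensional --- or to route finite-dimensionality through condition~(3) instead (a Villamayor--Zelinsky-type argument). As written, your "careful bookkeeping before and after tensoring with $\bar{k}$" cannot be made to work, so $(1)\Rightarrow(5)$ is not established.
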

\begin{proof}
See~\cite[\S 13]{BourbakiVIII}, \cite[Section~71]{CR}, \cite[Chapter~II]{DI}, or~\cite[Section~9.2.1]{We}.
\end{proof}

\noindent From the characterization of separability in part (5) above, we see that a separable algebra is necessarily finite-dimensional 
over $k$; also, if $k$ is a perfect field (such as a field of characteristic~$0$, an algebraically closed 
field, or a finite field), then \emph{every} finite-dimensional semisimple $k$-algebra is separable.  

For finite-dimensional algebras, the property of being homologically smooth can be seen as
a higher-dimensional generalization of separability. 

\begin{lemma}\label{lem:fd smooth}
For a finite-dimensional $k$-algebra $A$, the following are equivalent:
\begin{enumerate}[label=\textnormal{(\alph*)}]
\item $A$ is homologically smooth;
\item $\pdim({}_{A^e} A) < \infty$;
\item $A^e$ has finite global dimension;
\item $A \otimes  K$ has finite global dimension for every field extension $K$ of $k$.
\end{enumerate}
\end{lemma}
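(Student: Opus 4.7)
The plan is to prove the cycle (a) $\Leftrightarrow$ (b) $\Rightarrow$ (d) $\Rightarrow$ (c) $\Rightarrow$ (b), which closes the logical loop.  The equivalence (a) $\Leftrightarrow$ (b) follows immediately from the finite-dimensionality of $A^e$: any finitely generated module over a finite-dimensional $k$-algebra admits a minimal projective resolution by finitely generated projectives, and this resolution has finite length precisely when the projective dimension is finite.  Applying this to $A$, which is finitely generated as an $A^e$-module, gives (a) $\Leftrightarrow$ (b).  The implication (c) $\Rightarrow$ (b) is trivial, since $A$ is itself an $A^e$-module.

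For (b) $\Rightarrow$ (d), I would proceed by base change.  By (a) $\Leftrightarrow$ (b), there is a finite length resolution $P^\bullet \to A$ by finitely generated projective $A^e$-modules.  For any field extension $K/k$, flatness of $k \to K$ ensures that $P^\bullet \otimes_k K \to A \otimes_k K$ is a resolution of the same finite length by finitely generated projective modules over $A^e \otimes_k K \cong (A \otimes_k K) \otimes_K (A \otimes_k K)\op$.  Hence $A \otimes_k K$ is homologically smooth as a $K$-algebra, and Lemma~\ref{lem:smooth resolution}(1) applied over $K$ yields $\gldim(A \otimes_k K) < \infty$ by producing a finite projective resolution of every left $(A \otimes_k K)$-module of length at most the length of $P^\bullet$.

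The main obstacle will be (d) $\Rightarrow$ (c), which requires extracting information about the two-sided algebra $A^e$ from a condition involving only one-sided global dimensions over base-changed algebras.  The strategy is to pass to the algebraic closure $\bar{k}$.  By hypothesis, $\gldim(A \otimes_k \bar{k}) < \infty$, and $A \otimes_k \bar{k}$ is a finite-dimensional algebra over the algebraically closed field $\bar{k}$.  I would then invoke the classical tensor product formula $\gldim(B \otimes_{\bar{k}} B\op) \leq 2 \gldim(B)$ for any finite-dimensional $\bar{k}$-algebra $B$; this holds because every finite-dimensional semisimple $\bar{k}$-algebra is separable, so that $B \otimes_{\bar{k}} B\op$ modulo its Jacobson radical is semisimple with simple modules realized as external tensor products of simple left and right $B$-modules.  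Setting $B = A \otimes_k \bar{k}$ yields
\[
\gldim(A^e \otimes_k \bar{k}) = \gldim\bigl( (A \otimes_k \bar{k}) \otimes_{\bar{k}} (A \otimes_k \bar{k})\op \bigr) \leq 2 \gldim(A \otimes_k \bar{k}) < \infty.
\]
To descend from $A^e \otimes_k \bar{k}$ to $A^e$, I would use faithful flatness of $\bar{k}$ over $k$: for any finite-dimensional $k$-algebra $R$ and finitely presented $R$-module $M$, flat base change of $\Ext$ combined with faithful flatness gives $\pdim_R(M) = \pdim_{R \otimes_k \bar{k}}(M \otimes_k \bar{k})$.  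Since $\gldim(R) = \pdim_R(R/J(R))$ for an Artin algebra $R$, this implies $\gldim(R) \leq \gldim(R \otimes_k \bar{k})$, and taking $R = A^e$ completes the proof.
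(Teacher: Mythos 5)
Your proof is correct. The equivalence (a)$\iff$(b) is handled exactly as in the paper (minimal resolutions over the finite-dimensional, hence noetherian, algebra $A^e$ have finitely generated terms), and (c)$\implies$(b) is trivial in both treatments. Where you diverge is that the paper simply cites \cite[p.~807]{BGMS} for the equivalence of (b)--(d), whereas you supply a self-contained argument. Your two nontrivial steps both check out: (b)$\implies$(d) is the base-change of a finite bimodule resolution followed by Lemma~\ref{lem:smooth resolution}(1), which is the same mechanism the paper uses elsewhere (Corollary~\ref{cor:smooth extension}); and (d)$\implies$(c) via $\bar{k}$ correctly combines the separability of semisimple algebras over an algebraically closed field (so that $J(B\otimes B\op)=J(B)\otimes B\op+B\otimes J(B\op)$, the quotient is semisimple, and $\gldim(B\otimes_{\bar k}B\op)=\pdim(S\otimes S\op)\le 2\gldim(B)$ by tensoring resolutions as in Remark~\ref{rem:Kunneth}) with faithfully flat descent of projective dimension, which is essentially Proposition~\ref{prop:smooth extension}(2) applied to the trivially graded algebra $A^e$. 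The payoff of your approach is that the lemma becomes independent of the external reference and reuses machinery already present in the paper; the cost is length, and the cycle (b)$\Rightarrow$(d)$\Rightarrow$(c)$\Rightarrow$(b) makes the direct implication (b)$\Rightarrow$(c) (tensor a bimodule resolution of $A$ with one of $A\op$) slightly less visible than it would be in a linear presentation.
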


\begin{proof}
Clearly (a)$\implies$(b); to prove the converse, assume $\pdim({}_{A^e} A) = d < \infty$.
Since $A^e$ is finite-dimensional and consequently noetherian, all terms in the minimal projective
resolution of $A$ in $A^e \lMod$ are finitely generated. Thus (b)$\implies$(a).
The equivalence of (b)--(d) is demonstrated in~\cite[p.~807]{BGMS}.
\end{proof}

In fact, it is known that conditions~(b) and~(c) above are equivalent for \emph{every} $k$-algebra
$A$; see~\cite[Corollary~2.5]{Kra}.

The question of whether a locally finite graded algebra is separable modulo its graded radical 
immediately reduces to the corresponding problem for finite-dimensional algebras, by restricting to the
degree~$0$ part of the algebra. The proofs of Theorem~\ref{thm:Rickard} 
and its supporting Lemma~\ref{lem:Rickard} were communicated to us by Jeremy Rickard. We thank him for his 
permission to include them here; we also thank MathOverflow\footnote{\url{http://mathoverflow.net/q/245764/778}} 
for providing a forum to pose the question. The following is likely a folk result, but we could not
locate a suitable reference.

\begin{lemma}\label{lem:Rickard}
If $S$ is a finite-dimensional semisimple $k$-algebra and $K = \overline{k}$ is an algebraic closure, 
then $S \otimes K$ is a finite direct sum of matrix rings over commutative local $K$-algebras.
\end{lemma}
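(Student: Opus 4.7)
The plan is to reduce via Wedderburn--Artin to the case of a single finite-dimensional division $k$-algebra, and then to analyze the base change $D \otimes_k K$ using the triviality of the Brauer group of $K$ together with idempotent and matrix-unit lifting over a nilpotent ideal. First I apply Wedderburn--Artin to write $S \cong \bigoplus_i \M_{r_i}(D_i)$ with each $D_i$ a finite-dimensional division $k$-algebra. Since tensor product with $K$ distributes over direct sums and commutes with the formation of matrix algebras, it suffices to establish the claim for a single finite-dimensional division $k$-algebra $D$. Setting $Z = Z(D)$, the algebra $D$ is central simple over $Z$ of dimension $n^2$ for some integer $n$. The finite-dimensional commutative $K$-algebra $Z \otimes_k K$ decomposes as a finite product $Z \otimes_k K \cong \prod_j L_j$ of local $K$-algebras, each with residue field $K$ (since $K$ is algebraically closed). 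This yields
\[
D \otimes_k K \cong D \otimes_Z (Z \otimes_k K) \cong \prod_j (D \otimes_Z L_j),
\]
reducing the problem to showing that each factor $A_j := D \otimes_Z L_j$ is isomorphic to $\M_n(L_j)$.

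The core step is this identification $A_j \cong \M_n(L_j)$. Because $Z$ is a field, the ring map $Z \to L_j$ is flat, which combined with the centrality of $D$ over $Z$ yields $Z(A_j) = L_j$ (a coefficient-by-coefficient argument on a $Z$-basis of $D$ transported to $L_j$ by flatness). The maximal ideal $m_j \subset L_j$ is nilpotent, so $m_j A_j$ is a nilpotent two-sided ideal of $A_j$ and must coincide with $J(A_j)$; the quotient is $A_j / J(A_j) \cong D \otimes_Z K$. Since $K$ is algebraically closed, its Brauer group vanishes and $D \otimes_Z K \cong \M_n(K)$. Using the nilpotence of $J(A_j)$, I will lift the standard orthogonal idempotents and matrix units of $\M_n(K)$ to $A_j$ by a Peirce-decomposition argument, producing an isomorphism $A_j \cong \M_n(C_j)$ with $C_j = e_{11} A_j e_{11}$. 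Comparing centers gives $Z(C_j) = Z(A_j) = L_j$, and the rank identity $n^2 = \dim_{L_j} A_j = n^2 \cdot \dim_{L_j} C_j$ then forces $\dim_{L_j} C_j = 1$, so $C_j = L_j$ and $A_j \cong \M_n(L_j)$.

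Assembling these steps produces $D \otimes_k K \cong \prod_j \M_n(L_j)$, and consequently $S \otimes K \cong \bigoplus_{i,j} \M_{r_i n_i}(L_{ij})$, a finite direct sum of matrix rings over commutative local $K$-algebras, as required. I expect the principal technical difficulty to lie in the matrix-unit lifting together with the identification $C_j = L_j$; however, the former is a classical consequence of idempotent lifting modulo a nilpotent ideal applied to the Peirce decomposition of $A_j$, and the latter is forced cleanly once centers and $L_j$-ranks are compared. Note that this argument makes no separability hypothesis on $S/k$: when $Z(D_i)/k$ is inseparable, the local components $L_j$ genuinely carry nontrivial nilpotents, and the statement captures precisely this subtlety.
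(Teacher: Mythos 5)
Your proof is correct, but it takes a genuinely different route from the paper. The paper factors the extension as $k \subseteq k^s \subseteq K$ through the separable closure: it first base-changes to $k^s$, where $S \otimes k^s$ stays semisimple (Bourbaki), notes that the centers of the resulting division algebras are separably closed fields with trivial Brauer group so that $S \otimes k^s$ is already a sum of matrix rings over fields $L_i$, and then the purely inseparable stage only contributes the commutative Artinian algebras $L_i \otimes_{k^s} K$, which split into local pieces. All the noncommutativity is thus disposed of while everything is still semisimple, and no lifting argument is needed. You instead apply Wedderburn--Artin over $k$ itself, reduce to a single division algebra $D$ with center $Z$, split $Z \otimes_k K$ into local factors $L_j$ with nilpotent maximal ideals and residue field $K$, and then prove $D \otimes_Z L_j \cong \M_n(L_j)$ by combining $\mathrm{Br}(K)=0$ with lifting a full set of matrix units modulo the nilpotent ideal $m_j A_j = J(A_j)$, finishing with the center and $L_j$-rank comparison to force $e_{11}A_j e_{11} = L_j$. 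Each step checks out: $Z(D \otimes_Z L_j) = L_j$ follows from freeness of $L_j$ over the field $Z$, matrix-unit lifting modulo a nilpotent ideal is classical (though it does require slightly more than idempotent lifting alone, namely adjusting the off-diagonal units $e_{1i}, e_{i1}$), and the dimension count is clean. In effect you are reproving, in elementary terms, that the Brauer group of an Artinian local ring with algebraically closed residue field is trivial; the paper's detour through $k^s$ buys it the right to quote only field-theoretic Brauer-group facts, while your argument is more self-contained over $k$ at the cost of the lifting step.
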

\begin{proof}
Let $k^s \subseteq K$ denote the separable closure of $k$.  By 
\cite[Section~12.7, Corollaire~1 to Proposition~12.8]{BourbakiVIII}, 
the extension $S^s = S \otimes k^s$ is semisimple. Thus 
$S^s \cong \bigoplus_{i=1}^n \M_{r_i}(D_i)$ for some finite-dimensional division $k^s$-algebras $D_i$.
The centers $L_i = Z(D_i)$ form intermediary fields $k^s \subseteq L_i \subseteq K$.
Since $K/k^s$ is purely inseparable, the same is true for each $K/L_i$. Thus each $L_i$ is separably 
closed and consequently has trivial Brauer group \cite[Corollary 4.6]{FD}.
It follows that in fact $S^s \cong \bigoplus \M_{r_i}(L_i)$ is a sum of matrix algebras over fields.
Now $S^K = S^s \otimes_{k^s} K \cong \bigoplus \M_{r_i}(L_i \otimes_{k^s} K)$.  Each ring $R_i = L_i \otimes_{k^s} K$ 
is a finite-dimensional commutative $K$-algebra and hence is Artinian.  Then $R_i$ is a direct sum 
of finitely many commutative local $K$-algebras, say $R_i = \bigoplus_{j=1}^{n_i} R_{ij}$.  
Thus $S^K = \bigoplus_i \bigoplus_{j=1}^{n_i} \M_{r_i}( R_{ij})$ is a finite direct sum of matrix rings over commutative local $K$-algebras.
\end{proof}

\begin{theorem}[Rickard]\label{thm:Rickard}
Let $A$ be a finite-dimensional $k$-algebra. If $A$ is homologically smooth, then $S = A/J(A)$ is separable over $k$. 
\end{theorem}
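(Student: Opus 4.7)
The plan is to base-change to an algebraic closure, apply the structural decomposition of Lemma~\ref{lem:Rickard}, and reduce to the claim that any finite-dimensional local $\overline{k}$-algebra of finite global dimension is trivial. Homological smoothness is preserved under base change, since tensoring a finite projective $A^e$-resolution of $A$ with $\overline{k}$ yields a finite projective resolution of $B := A \otimes_k \overline{k}$ over $B^{e} = A^e \otimes_k \overline{k}$. So $B$ is homologically smooth over $\overline{k}$, and by Lemma~\ref{lem:fd smooth} it has finite global dimension. By Lemma~\ref{lem:sepdef}, proving $S$ separable over $k$ amounts to showing that $T := S \otimes_k \overline{k}$ is semisimple. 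Now Lemma~\ref{lem:Rickard} writes $T = \bigoplus_{\alpha} \M_{r_\alpha}(R_\alpha)$ where each $R_\alpha$ is a finite-dimensional commutative local $\overline{k}$-algebra, and $T$ is semisimple iff each $R_\alpha$ is a field. I will argue by contradiction: suppose some $R := R_\alpha$ has nonzero maximal ideal.

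The ideal $\overline{J} := J(A) \otimes_k \overline{k}$ is nilpotent in $B$ with $B/\overline{J} = T$, so central idempotents lift from $T$ to $B$. The central idempotent of $T$ selecting the block $\M_r(R)$ (with $r := r_\alpha$) thus lifts to give a decomposition $B = B' \times B''$ with $B'/\overline{J}' \cong \M_r(R)$; moreover $B'$ inherits finite global dimension. Next, lift the primitive idempotent $e_{11} \in \M_r(R)$ to a primitive idempotent $e \in B'$. Observing that $B'/J(B') \cong \M_r(R)/J(\M_r(R)) \cong \M_r(\overline{k})$ has a unique isomorphism class of simple module, the indecomposable projective $B' e$ is a progenerator of $B'\lmod$, so $B'$ is Morita equivalent to $C := e B' e$. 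By Morita invariance, $\gldim(C) = \gldim(B') < \infty$, and $C$ is a local finite-dimensional $\overline{k}$-algebra whose residue field is $\overline{k}$, satisfying $C/(e \overline{J}' e) \cong e_{11} \M_r(R) e_{11} \cong R$.

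The crux, and the step I expect to be the main obstacle, is proving that a finite-dimensional local $\overline{k}$-algebra $C$ of finite global dimension must equal $\overline{k}$. Set $V = C/J(C) \cong \overline{k}$, the unique simple, and consider its minimal projective resolution
\[
0 \to C^{n_d} \xrightarrow{M} C^{n_{d-1}} \to \cdots \to C^{n_0} \to V \to 0,
\]
in which every term is free because $C$ is local and every entry of $M$ lies in $J(C)$ by minimality. If $J(C) \neq 0$ with nilpotency index $n$, choose $0 \neq j \in J(C)^{n-1}$. Then for any standard basis vector $e_i \in C^{n_d}$, the element $j e_i$ is nonzero while $M(j e_i)$ has entries in $J(C)^n = 0$, contradicting injectivity of $M$. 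Hence $J(C) = 0$ and $C = \overline{k}$. Applying this to $C = e B' e$ forces the nonzero ring $R$, as a quotient of $\overline{k}$, to equal $\overline{k}$, contradicting our assumption. Therefore $T$ is semisimple and $S$ is separable.
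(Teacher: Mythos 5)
Your overall strategy (base change to $\overline{k}$, invoke Lemma~\ref{lem:Rickard}, and derive a contradiction from finite global dimension) parallels the paper's, and your final step is correct: a finite-dimensional local algebra of finite global dimension has zero radical, by exactly the minimal-resolution argument you give. The fatal problem is earlier, at the block isolation step. You assert that because $\overline{J}=J(A)\otimes_k\overline{k}$ is nilpotent with $B/\overline{J}\cong T$, the central idempotent of $T$ cutting out the block $\M_r(R)$ lifts to a \emph{central} idempotent of $B$, giving a ring decomposition $B=B'\times B''$. Idempotents do lift modulo a nilpotent ideal, but central idempotents need not lift to central ones: for $B$ the upper triangular $2\times 2$ matrices over $\overline{k}$ and $\overline{J}$ its radical, $B/\overline{J}\cong \overline{k}\times\overline{k}$ has nontrivial central idempotents while $B$ is indecomposable as a ring. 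In general the block decomposition of $B$ can be strictly coarser than that of $B/\overline{J}$, so there is no factor $B'$ with $B'/\overline{J}'\cong\M_r(R)$, and nothing in your setup rules this out when $R$ is not a field.

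This gap is not cosmetic, because everything downstream depends on it. Without the factor $B'$, the primitive idempotent $e$ lifting $e_{11}$ lives in $B$ itself; its block may contain other simple modules, so $e$ need not be full, $eBe$ is not Morita equivalent to anything of finite global dimension, and indeed corner algebras $eBe$ of finite-global-dimension algebras can have infinite global dimension. So the contradiction never materializes. What your local computation actually proves is the easy, one-simple-per-block case of the statement that a simple module over a finite-global-dimension algebra admits no self-extensions; the hard case, where the simple is linked to other simples inside its block, is precisely what the paper handles by citing Igusa's no-loops theorem: a nonsplit self-extension of a simple $T$-module $U$ would give $\Ext^1_{A^K}(U,U)\neq 0$ over the finite-global-dimension algebra $A^K$, which no-loops forbids. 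To repair your argument you would need either that input or a genuinely new idea; the Morita reduction alone cannot supply it. (A minor additional point: Lemma~\ref{lem:sepdef} as stated quantifies over all field extensions, so reducing separability to semisimplicity of $S\otimes\overline{k}$ needs the standard fact, cited in the paper, that the algebraic closure suffices.)
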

\begin{proof}
Set $K = \overline{k}$. It suffices to show that $S^K = S \otimes K$ is semisimple; see~\cite[Theorem~71.2]{CR}
or~\cite[Proposition~II.1.8]{DI}. Thanks to Lemma~\ref{lem:Rickard} we have $\Ext^1_{S^K}(U,V) = 0$ 
for any non-isomorphic simple left $S^K$-modules $U$ and $V$. 
Note that $A^K$ has finite global dimension by Lemma~\ref{lem:fd smooth}.
If there were a non-split extension of a simple left $S^K$-module $U$
by itself, then we would have $\Ext^1_{A^K}(U,U) \neq 0$, contradicting the 
no-loops conjecture~\cite{Igusa}. So all extensions of simple left $S^K$-modules split,
making $S^K$ semisimple.
\end{proof}

\begin{lemma}\label{lem:enveloping radical}
Let $A$ and $B$ be graded algebras such that $A_0$ and $B_0$ are finite-dimensional, and denote
$S = A/J(A) = A_0/J(A_0)$ and $T = B/J(B) = B_0/J(B_0)$. Suppose that the semisimple $k$-algebra
$S$ is separable. Then $J(A \otimes B) = A \otimes J(B) + J(A) \otimes B$, and 
the natural map $(A \otimes B)/J(A \otimes B) \twoheadrightarrow S \otimes_k T$ is an isomorphism. 
\end{lemma}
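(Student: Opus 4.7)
The plan is to reduce the statement to a finite-dimensional identity at the degree-zero level and then invoke separability of $S$. Using the general description of the graded Jacobson radical recalled earlier in the paper, we have $J(A \otimes B) = J(A_0 \otimes B_0) \oplus (A \otimes B)_{\geq 1}$ since $(A \otimes B)_0 = A_0 \otimes B_0$. On the other hand, expanding $J(A) = J(A_0) \oplus A_{\geq 1}$ and $J(B) = J(B_0) \oplus B_{\geq 1}$ and absorbing every positive-degree tensor factor into $(A \otimes B)_{\geq 1}$, one checks directly that
\[
J(A) \otimes B + A \otimes J(B) = \bigl(J(A_0) \otimes B_0 + A_0 \otimes J(B_0)\bigr) + (A \otimes B)_{\geq 1}.
\]
Consequently the claimed radical formula reduces to the finite-dimensional identity $J(A_0 \otimes B_0) = J(A_0) \otimes B_0 + A_0 \otimes J(B_0)$.

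For the inclusion $\supseteq$, a direct computation gives $(J(A_0) \otimes B_0)^n = J(A_0)^n \otimes B_0$, and similarly for $A_0 \otimes J(B_0)$, so each summand is a nilpotent two-sided ideal of $A_0 \otimes B_0$. A standard argument (using that each $I_i$ is absorbing on both sides) shows that a sum of two nilpotent two-sided ideals is again nilpotent, hence contained in the Jacobson radical. The opposite inclusion is where the separability hypothesis enters. The natural surjection $A_0 \otimes B_0 \twoheadrightarrow S \otimes T$ has kernel exactly $J(A_0) \otimes B_0 + A_0 \otimes J(B_0)$, so the corresponding quotient is $S \otimes T$. Since $S$ is separable over $k$ and $T$ is semisimple, Lemma~\ref{lem:sepdef} yields that $S \otimes T$ is semisimple; its Jacobson radical is therefore zero, forcing $J(A_0 \otimes B_0) \subseteq J(A_0) \otimes B_0 + A_0 \otimes J(B_0)$.

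Combining the two inclusions completes the radical identification, and the quotient isomorphism then follows at once: the universal property of the tensor product gives $(A \otimes B)/(J(A) \otimes B + A \otimes J(B)) \cong (A/J(A)) \otimes (B/J(B)) = S \otimes T$, and by the first part this is exactly $(A \otimes B)/J(A \otimes B)$. Most of the work is routine bookkeeping with the graded decomposition; the single substantive input to flag is the appeal to separability via Lemma~\ref{lem:sepdef} to secure semisimplicity of $S \otimes T$, without which the reverse inclusion can fail.
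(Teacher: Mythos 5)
Your proof is correct and follows essentially the same route as the paper: reduce to the degree-zero (finite-dimensional) case using $J(A\otimes B) \supseteq (A\otimes B)_{\geq 1}$, use nilpotency of $J(A_0)\otimes B_0 + A_0\otimes J(B_0)$ for one inclusion, and use separability of $S$ (via Lemma~\ref{lem:sepdef}) to make the quotient $S \otimes T$ semisimple for the reverse inclusion. The only differences are presentational (explicit graded bookkeeping instead of a "without loss of generality" reduction, and treating the two nilpotent ideals separately rather than their sum at once).
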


\begin{proof}
Because $J(A \otimes B) \supseteq (A \otimes B)_{\geq 1} = A \otimes B_{\geq 1} + A_{\geq 1} \otimes B$,
we have $(A \otimes B)/J(A \otimes B) \cong (A \otimes B)_0/J((A \otimes B)_0) = (A_0 \otimes B_0)/J(A_0 \otimes B_0)$. So without loss of
generality, we may replace $A$ and $B$ by their degree-zero parts to assume that $A = A_0$ and $B = B_0$ 
are finite-dimensional algebras.

Let $J' = A \otimes J(B) + J(A) \otimes B$. Because
$A$ and $B$ are finite-dimensional, their respective Jacobson radicals $J(A)$ and $J(B)$ are nilpotent
ideals. It follows that $J'$ is a nilpotent ideal of $A \otimes B$, so that $J' \subseteq J(A \otimes B)$. 

On the other hand, if $V$ and $W$ are finite dimensional vector spaces with subspaces $V'$ and $W'$ respectively, 
then it is a standard fact that $V' \otimes W + V \otimes W'$ is the kernel of the natural map $V \otimes W \to V/V' \otimes W/W'$.
Thus the kernel of the natural map $A \otimes B \twoheadrightarrow S \otimes T$, given by the tensor product of the 
natural surjections $A \twoheadrightarrow A/J(A)$ and $B \twoheadrightarrow B/J(B)$, is equal 
to $J'$.
Because $S$ is separable and $T$ is semisimple, the algebra $S \otimes T \cong (A \otimes B)/J'$
is also semisimple by Lemma~\ref{lem:sepdef}(2).
We deduce (as in~\cite[Exercise~4.11]{FC}) that $J(A \otimes B) \subseteq J'$. So in fact 
$J' = J(A \otimes B)$, and the isomorphism 
$S \otimes T \cong (A \otimes B)/J' = (A \otimes B)/J(A \otimes B)$ follows.
\end{proof}

The next lemma facilitates the passage from modules over a graded algebra $A$ to modules
over its degree zero part.

\begin{lemma}\label{lem:degree zero resolution}
Let $A$ be a graded algebra with $A_0$ finite-dimensional, and let $M$ be a nonnegatively 
graded left $A$-module. 
If $M$ is perfect (resp., has $\pdim(M) \leq d$) over $A$, then $M_0$ is perfect 
(resp., has $\pdim(M_0) \leq d$) as an $A_0$-module.
\end{lemma}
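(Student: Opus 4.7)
The plan is to take a minimal graded projective resolution $P^\bullet \to M \to 0$ of $M$ as a graded left $A$-module and then restrict to degree zero. Such a minimal resolution exists because $M$ is bounded below and $A_0$ is finite-dimensional. Its length equals the graded projective dimension of $M$, which agrees with the ordinary projective dimension by Lemma~\ref{lem:graded equivalence}. Thus if $\pdim_A(M) \leq d$, then the minimal resolution has length at most $d$. When $M$ is perfect, Lemma~\ref{lem:fg term} further guarantees that each $P^{-i}$ is finitely generated, since perfectness forces each $\Tor_i^A(S,M)$ to be finite-dimensional and to vanish for $i \gg 0$.

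Next I would argue that each $P^{-i}$ is nonnegatively graded. Since $M$ is nonnegatively graded and $P^0 \twoheadrightarrow M$ is a projective cover, Lemma~\ref{lem:Nakayama}(2) permits lifting a homogeneous generating set of $M/J(A)M$ (which lives in nonnegative degrees) to a homogeneous generating set of $P^0$ in nonnegative degrees, so $P^0$ is nonnegatively graded. Its graded submodule $\ker(P^0 \to M)$ inherits this property, and induction completes the argument. By the description of bounded-below graded projectives recalled in Section~\ref{sec:preliminary}, each $P^{-i} \cong A \otimes_{A_0} Q^{-i}$ for some nonnegatively graded projective $A_0$-module $Q^{-i}$, which is finitely generated whenever $P^{-i}$ is.

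Finally, nonnegativity of $A$ and $Q^{-i}$ collapses the decomposition $(A \otimes_{A_0} Q^{-i})_n = \bigoplus_{p+q=n} A_p \otimes_{A_0} (Q^{-i})_q$ in degree $0$ to $(A \otimes_{A_0} Q^{-i})_0 = (Q^{-i})_0$. Since the functor $(-)_0 \colon A\lGr \to A_0 \lMod$ is exact (exactness being checked degreewise), applying it to $P^\bullet \to M \to 0$ yields a projective $A_0$-resolution of $M_0$ of length at most $d$, with finitely generated terms in the perfect case, as required. I expect the only real bookkeeping obstacle to be ensuring that the minimal graded projective resolution faithfully records both the projective dimension and the finite-generation property of $M$; this is precisely the content of Lemmas~\ref{lem:graded equivalence} and~\ref{lem:fg term}.
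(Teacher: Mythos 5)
Your proof is correct and follows essentially the same route as the paper's: take the minimal graded projective resolution, write each term as $A \otimes_{A_0} Q^{-i}$ with $Q^{-i}$ nonnegatively graded by minimality, and restrict to degree zero, where $(A \otimes_{A_0} Q^{-i})_0 = (Q^{-i})_0$ is projective (and finitely generated in the perfect case) over $A_0$. One small citation quibble: the equality of graded and ungraded projective dimension for bounded-below modules, which you need to bound the length of the minimal resolution, is Proposition~\ref{prop:global dimension}(1) (resting on the $\Tor$ criterion of Lemma~\ref{lem:fg term}), whereas Lemma~\ref{lem:graded equivalence} only covers the perfect case.
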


\begin{proof}
Let $P^{\bullet} \to M \to 0$ be a minimal projective left $A$-module resolution of $M$.
Recall that every left bounded graded left projective $A$-module is of the form 
$A \otimes_{A_0} Q$ for some graded left projective $A_0$-module $Q$.
Writing $P^i = A \otimes_{A_0} Q^i$ for some graded projective $A_0$-modules 
$Q^i$, because $M$ is nonnegatively graded and the resolution $P^{\bullet}$ is minimal, it 
easily follows that each $Q^i$ is nonnegatively graded.   Take the degree-zero part of the resolution of 
$M$ to obtain $P^{\bullet}_0 \to M_0 \to 0$, which is an exact sequence of $A_0$-modules.
Then $P^i_0 = Q^i_0$, which is projective over $A_0$ for each $i$ as it is a summand
of $Q^i$.
So we have a projective resolution $P_0^\bullet$ of $M$ over $A_0$. 
If $P^\bullet$ is perfect or has length at most $d$, then the same is true of the
$A_0$-resolution $P_0^\bullet$.
%
%
\end{proof}

\begin{lemma}\label{lem:homologically smooth}
Let $A$ be a graded algebra with $S = A/J(A)$. 
\begin{enumerate}
\item $\Tor^A_i(S,S) \cong \Tor^{A^e}_i(S^e,A)$ as graded left $S^e$-modules. 
\item If $A$ is homologically smooth, then so is $A_0$.
\end{enumerate}
\end{lemma}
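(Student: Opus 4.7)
The plan is to handle the two parts independently. Part~(1) is a derived-categorical refinement of the classical Cartan-Eilenberg identity recalled just above Lemma~\ref{lem:derived Tor and Ext}, and follows from Lemma~\ref{lem:derived Tor and Ext}(1) by a direct substitution. Part~(2) is a straightforward application of the degree-zero restriction principle of Lemma~\ref{lem:degree zero resolution} to the enveloping algebra $A^e$.

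For part~(1), I would apply Lemma~\ref{lem:derived Tor and Ext}(1) with $P = A$ viewed as a left $A^e$-module, and with $B = C = S$, taking both $K$ and $N$ to be $S$ regarded respectively as an $(S,A)$-bimodule and an $(A,S)$-bimodule via the quotient map $A \twoheadrightarrow S$. This yields an isomorphism
\[
S \otimes^L_A A \otimes^L_A S \;\cong\; (S \otimes S) \otimes^L_{A^e} A
\]
in the derived category of graded left $S^e$-modules. The left-hand side simplifies to $S \otimes^L_A S$, whose $i$th cohomology is $\Tor^A_i(S,S)$ with its natural graded $(S,S)$-bimodule structure. On the right, $S \otimes S$ is equipped with the inner right $A^e$-action $(s_1 \otimes s_2) \cdot (b \otimes a\op) = s_1 b \otimes a s_2$, which factors through the quotient $A^e \twoheadrightarrow S^e$ and realizes $S \otimes S$ precisely as $S^e$ viewed as a right $A^e$-module via this quotient. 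The $i$th cohomology of the right-hand side is therefore $\Tor^{A^e}_i(S^e, A)$, and the identity follows.

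For part~(2), I would observe that $A^e = A \otimes A\op$ is itself an $\N$-graded algebra whose degree-zero component is $(A^e)_0 = A_0 \otimes A_0\op = A_0^e$, which is finite-dimensional under the standing assumption that $A_0$ is. The bimodule $A$ is a nonnegatively graded left $A^e$-module, and under the hypothesis of homological smoothness it is perfect in $A^e \lMod$. Applying Lemma~\ref{lem:degree zero resolution} with ambient graded algebra $A^e$ and module $M = A$, one concludes that the degree-zero component $A_0$ is perfect as a left $A_0^e$-module, which is exactly the definition of $A_0$ being homologically smooth.

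The main obstacle is essentially bookkeeping in part~(1): one must carefully track the transition between the outer left $A^e$-action and the inner right $A^e$-action on $S \otimes S$, and verify that the induced left $S^e$-module structures on the two sides of the isomorphism agree with the natural bimodule structures on the respective $\Tor$ groups. Once this identification is in hand, both parts reduce to invocations of the preparatory results of Section~\ref{sec:preliminary}.
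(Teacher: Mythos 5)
Your proposal is correct and follows essentially the same route as the paper: part~(1) is exactly the paper's application of Lemma~\ref{lem:derived Tor and Ext}(1) with $K = N = S$ and $P = A$ (the paper writes the identification $S\otimes S \cong S^e$ as right $A^e$-modules implicitly, which you spell out), and part~(2) is the paper's application of Lemma~\ref{lem:degree zero resolution} to the graded algebra $A^e$ with $M = A$, using $(A^e)_0 = (A_0)^e$. Your explicit note that finite-dimensionality of $A_0$ is what licenses the use of Lemma~\ref{lem:degree zero resolution} is a reasonable point the paper leaves tacit.
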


\begin{proof}
(1) We adapt the method of proof from the case of a connected algebra given in~\cite[Lemma~4.3(a)]{YZ}. Using Lemma~\ref{lem:derived Tor and Ext}(1) in the second quasi-isomorphism below, we have 
\[
S \otimes^L_A S \cong S \otimes^L_A (A \otimes^L_A S)  \cong S^e \otimes^L_{A^e} A,
\]
as complexes of left $S^e$-modules.  Taking cohomology of the above yields an isomorphism 
$\Tor^A_i(S,S) \cong \Tor^{A^e}_i(S^e, A)$ as desired.

(2) Because $A$ is perfect as a left $A^e$-module, it follows from 
Lemma~\ref{lem:degree zero resolution} that $A_0$ is perfect as a module over
$(A^e)_0 = (A_0)^e$. Thus $A_0$ is homologically smooth.
\end{proof}

We arrive at the following alternative characterization of homological smoothness for
locally finite graded algebras.

\begin{theorem}\label{thm:homologically smooth}
Let $A$ be a graded $k$-algebra with $A_0$ finite-dimensional, and set $S = A/J(A) = A_0/J(A_0)$.
Then the following conditions are equivalent:
\begin{enumerate}[label=\textnormal{(\alph*)}]
\item $A$ is (graded) homologically smooth over~$k$;
\item $S$ is separable over~$k$ and $S$ is perfect as a left (respectively, right) $A$-module.
\end{enumerate}
If the conditions hold, then $A$ is a finitely generated (hence locally finite) algebra, 
and the canonical surjection $A \twoheadrightarrow S$ is split by an algebra homomorphism 
$S \hookrightarrow A$.
\end{theorem}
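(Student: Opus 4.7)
The plan is to establish the two implications separately, linking the minimal $A^e$-resolution of $A$ to the minimal $A$-resolution of $S$ via the Tor identity of Lemma~\ref{lem:homologically smooth}(1), and to use separability of $S$ in order to identify the semisimple quotient of $A^e$ through Lemma~\ref{lem:enveloping radical}. The additional assertions about local finiteness and the algebra splitting will then follow from Lemma~\ref{lem:affine} and the classical Wedderburn--Malcev theorem.

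For (a)$\Rightarrow$(b), I would first observe that since $S$ is a finite-dimensional graded left $A$-module concentrated in a single degree, Lemma~\ref{lem:smooth resolution}(2) immediately shows that $S$ is perfect as a left $A$-module. Perfectness on the right follows by applying the same argument to $A^{\mathrm{op}}$, which is homologically smooth by Remark~\ref{rem:swap}. For separability, I would invoke Lemma~\ref{lem:homologically smooth}(2) to deduce that $A_0$ is homologically smooth, and then apply Theorem~\ref{thm:Rickard} to the finite-dimensional algebra $A_0$ to conclude that $S = A_0/J(A_0)$ is separable over $k$.

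For (b)$\Rightarrow$(a), the strategy is to apply Lemma~\ref{lem:fg term} with the graded ring $A^e$ playing the role of $A$, and the left $A^e$-module $A$ playing the role of $M$. The ring $A^e$ is graded with $(A^e)_0 = A_0 \otimes A_0^{\mathrm{op}}$ finite-dimensional, and separability of $S$ combined with Lemma~\ref{lem:enveloping radical} yields $A^e/J(A^e) \cong S \otimes S^{\mathrm{op}} = S^e$. Thus Lemma~\ref{lem:fg term} tells us that the terms $P^{-i}$ in a minimal graded projective resolution of $A$ over $A^e$ are finitely generated (respectively, zero) precisely when $\Tor^{A^e}_i(S^e, A)$ is finite-dimensional (respectively, zero). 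Lemma~\ref{lem:homologically smooth}(1) identifies this group with $\Tor^A_i(S, S)$, which in turn is finite-dimensional for every $i$ and vanishes for $i > \pdim_A(S)$ whenever $S$ is perfect as a left $A$-module (by another application of Lemma~\ref{lem:fg term}, now to $M = S$ over $A$). Consequently $P^\bullet$ is a finite resolution of $A$ by finitely generated graded projective $A^e$-modules, i.e., $A$ is homologically smooth. The case in which $S$ is perfect on the right reduces to the left case via Remark~\ref{rem:swap}.

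When the equivalent conditions hold, perfectness of $S$ over $A$ forces $A$ to be finitely generated, and hence locally finite, by Lemma~\ref{lem:affine}. The algebra section $S \hookrightarrow A$ splitting the surjection $A \twoheadrightarrow S$ is a consequence of the Wedderburn--Malcev principal theorem applied to the finite-dimensional algebra $A_0$: separability of $S = A_0/J(A_0)$ guarantees a semisimple subalgebra of $A_0$ mapping isomorphically onto $S$, and this section is composed with the inclusion $A_0 \hookrightarrow A$. I expect the principal technical obstacle to be the careful application of Lemma~\ref{lem:fg term} in the $A^e$-setting, which is precisely where the separability hypothesis on $S$ enters in an essential way; without it, $S^e$ need not be the semisimple quotient of $A^e$, and the minimal-resolution criterion would not directly apply.
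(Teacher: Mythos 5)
Your proposal is correct and follows essentially the same route as the paper: Lemma~\ref{lem:smooth resolution}(2), Lemma~\ref{lem:homologically smooth}(2) and Rickard's Theorem~\ref{thm:Rickard} for (a)$\Rightarrow$(b), and for (b)$\Rightarrow$(a) the identification $A^e/J(A^e)\cong S^e$ from Lemma~\ref{lem:enveloping radical} together with the $\Tor$ comparison of Lemma~\ref{lem:homologically smooth}(1) and the minimal-resolution criterion of Lemma~\ref{lem:fg term} applied over both $A$ and $A^e$, with the final assertions handled exactly as in the paper via Lemma~\ref{lem:affine} and Wedderburn--Malcev.
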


\begin{proof}
Assume~(a) holds. Then $S$ is perfect as both a left and a right $A$-module by 
Lemma~\ref{lem:smooth resolution}(2).
Next, $A_0$ is homologically smooth by Lemma~\ref{lem:homologically smooth}(2).
Now Rickard's Theorem~\ref{thm:Rickard} implies that $S = A_0/J(A_0)$ is separable over $k$.
Thus (a)$\implies$(b).

Next, assume that~(b) holds; we will deduce~(a). Because $S$ is separable we have 
$A^e/J(A^e) \cong S^e$ by Lemma~\ref{lem:enveloping radical}. Then 
Lemma~\ref{lem:homologically smooth}~(1) yields
\[
\Tor^A_i(A/J(A),A/J(A)) \cong \Tor^{A^e}_i(A^e/J(A^e),A).
\]
Applying Lemma~\ref{lem:fg term} (and its right sided variant) for both $A$ and $A^e$, we have that 
$S$ is perfect as a left (respectively, right) $A$-module if and only if the $\Tor$ vector spaces above 
are all finite-dimensional and eventually zero for $i \gg 0$, if and only if $A$ is perfect as a left 
$A^e$-module. Thus (b)$\implies$(a). 

If condition (b) (or equivalently condition (a)) holds, then Lemma~\ref{lem:affine}
implies that $A$ is a finitely generated and locally finite algebra. 
To establish the final claim about the splitting of $A \twoheadrightarrow S$, simply note that
this factors through the split surjection $A \twoheadrightarrow A_0$ via the surjection 
$A_0 \twoheadrightarrow S$, and that the latter is split when $S$ is separable by a classical result 
of Wedderburn and Malcev~\cite[Theorem~72.19]{CR}.
\end{proof}

\subsection{Basic operations preserving homological smoothness}

We next turn our attention to operations that preserve the property of homological smoothness.
We begin with direct sums.

\begin{proposition}\label{prop:smooth product}
For $k$-algebras $A$ and $B$, denote $R = A \oplus B$. Then $R$ is 
homologically smooth over $k$ if and only if both $A$ and $B$ are homologically smooth over $k$.
\end{proposition}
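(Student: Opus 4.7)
The plan is to exploit the block decomposition of $A^e$ given by the central idempotents coming from the direct sum decomposition of $A$. Let $e_i \in A$ denote the identity element of $A_i$ (viewed as an element of $A$), and set $z_i = e_i \otimes e_i\op \in A^e$. Then $z_1, z_2$ are orthogonal central idempotents of $A^e$ with $z_1 + z_2 = 1$, and standard manipulations give ring isomorphisms $z_i A^e \cong A_i^e$; moreover the left $A^e$-module $A$ decomposes as $A = z_1 A \oplus z_2 A$, with $z_i A = A_i$. In particular, the $A^e$-action on $A_i$ factors through the quotient map (and split surjection) $A^e \twoheadrightarrow z_i A^e \cong A_i^e$.

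For the forward direction, suppose $A$ is homologically smooth and pick a resolution $P^{\bullet} \to A$ of finite length by finitely generated projective $A^e$-modules. Multiplication by the central idempotent $z_i$ is an exact functor on $A^e \lMod$, so $z_i P^{\bullet} \to z_i A = A_i$ remains an exact resolution. Each $z_i P^j$ is a summand of the finitely generated projective $A^e$-module $P^j$, hence is a finitely generated projective $z_i A^e$-module; under the identification $z_i A^e \cong A_i^e$, this exhibits $A_i$ as perfect over $A_i^e$.

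For the converse, suppose each $A_i$ is homologically smooth, and fix a finite resolution $Q_i^{\bullet} \to A_i$ by finitely generated projective $A_i^e$-modules. Via the ring surjection $A^e \twoheadrightarrow A_i^e$, each $Q_i^{\bullet}$ becomes a complex of $A^e$-modules. Because $A_i^e \cong z_i A^e$ is itself a direct summand of $A^e$ as a left $A^e$-module, every finitely generated projective $A_i^e$-module is also finitely generated projective as an $A^e$-module. Hence $Q_i^{\bullet}$ is a perfect $A^e$-resolution of $A_i$. Taking the direct sum gives a perfect $A^e$-resolution $Q_1^{\bullet} \oplus Q_2^{\bullet} \to A_1 \oplus A_2 = A$, so $A$ is homologically smooth.

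There is no real obstacle here; the argument is essentially bookkeeping with central idempotents, and one could alternatively package it using Lemma~\ref{lem:central idempotent} to pass between $\RHom_{A^e}$ and $\RHom_{A_i^e}$, but the direct resolution argument above is the cleanest. The same proof extends by induction to any finite direct sum of algebras, and the parallel graded statement is obtained by taking $e_i$ and hence $z_i$ homogeneous of degree zero and working throughout with graded projective resolutions.
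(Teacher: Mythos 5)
Your proof is correct and follows essentially the same route as the paper: for the "if" direction the paper likewise pulls back perfect $A_i^e$-resolutions along the projection $A^e \twoheadrightarrow A_i^e$ and takes their direct sum, and for the "only if" direction it multiplies a perfect $A^e$-resolution of $A$ by the central idempotent $z_i = e_i \otimes e_i\op$ exactly as you do. Your added justification that projective $z_iA^e$-modules remain projective over $A^e$ (since $z_iA^e$ is a summand of $A^e$) is a detail the paper leaves implicit, but the argument is the same.
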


\begin{proof}
First suppose that $A$ and $B$ are homologically smooth. For covenience, denote $A_{(1)} = A$ and $A_{(2)} = B$.
Then $R^e = (A_{(1)} \oplus A_{(2)}) \otimes (A_{(1)} \oplus A_{(2)})\op = \bigoplus_{i,j\in \{1,2\}} A_{(i)} \otimes A_{(j)}\op$ as $k$-algebras. 
Let $P_{(i)}$ denote a perfect left $A_{(i)}^e$-resolution of each $A_{(i)}$; then it is also a perfect
left $R^e$-resolution via the projection $R^e \twoheadrightarrow A_{(i)} \otimes A_{(i)}\op = A_{(i)}^e$.
Thus $P_{(1)} \oplus P_{(2)}$ is a perfect left $A^e$-resolution of $A = A_{(1)} \oplus A_{(2)}$, 
and so $A$ is homologically smooth as desired.

Conversely, suppose that $R$ is homologically smooth.
Consider the central idempotent $z = (1,0) \in A \oplus B = R$.
If $P^{\bullet}$ is a finite 
resolution of $R$ by finitely generated $R^e$-projectives, then $(z \otimes z) P^{\bullet}$ is an 
$A^e$-projective resolution of $A$ with the same properties. Thus $A$ is homologically
smooth, and the same is true of $B$ by symmetry.
\end{proof}

Next we consider extension of scalars. Given a $k$-vector space $V$ and a field extension $K$ of $k$, we
denote the scalar extension of $V$ to a $K$-vector space by $V^K = V \otimes K$. In case
$V$ is a $k$-algebra, $V^K$ becomes a $K$-algebra; if $V$ is a module over a $k$-algebra $A$,
then $V^K$ becomes a module over $A^K$ in the natural way.  The following basic results give   
the interaction between base field extension and Hom and tensor as well as their derived analogues.

\begin{lemma}\label{lem:extension}
Let $A$ be a $k$-algebra.
\begin{enumerate}
\item Let $P^{\bullet}, Q^{\bullet}$ be complexes of left $A$-modules, with $P$ bounded above.  There is a natural 
map 
\[
\Psi: \RHom_A(P,Q)^K \to \RHom_{A^K}(P^K, Q^K),
\]
respecting any module structure obtained if $P$ or $Q$ is a complex of bimodules. The map $\Psi$ is an isomorphism if $P$ is perfect.
\item Let $P^{\bullet}$ be a bounded above complex of right $A$-modules, and $Q^{\bullet}$ a complex 
of left $A$-modules.  There is a natural isomorphism 
\[
\Phi: (P \otimes^L_A Q)^K \to P^K \otimes^L_{A^K} Q^K
\]
which respects any module structure obtained if $P$ or $Q$ is a complex of bimodules.
\end{enumerate}
When $A$ is graded, the same results hold for graded modules and complexes, with $\RHom$ replaced by $\RgrHom$ and $\otimes^L$ replaced by $\grtimes^L$. 
\end{lemma}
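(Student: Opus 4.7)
The plan is to reduce each statement to the module-level analogue and then pass to the derived category using the crucial fact that every $k$-module is flat, so $(-)^K = (-) \otimes_k K$ is exact and preserves quasi-isomorphisms.

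For part (2), I would begin with the standard natural isomorphism of abelian groups
\[
\Phi_0 \colon (M \otimes_A N) \otimes_k K \xrightarrow{\ \sim\ } M^K \otimes_{A^K} N^K, \qquad (m \otimes n)\otimes \lambda \longmapsto (m \otimes 1) \otimes (n \otimes \lambda),
\]
valid for any right $A$-module $M$ and left $A$-module $N$, and verify that it respects whatever bimodule structure is present, since it is given by a canonical formula on simple tensors. Extending termwise gives a natural isomorphism of complexes $(P \otimes_A Q)^K \cong P^K \otimes_{A^K} Q^K$. To pass to the derived category, choose a bounded above flat resolution $P' \to P$ in $\rMod A$. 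Since base change preserves flat modules (as $(P')^i \otimes_k K$ remains flat over $A^K$) and since $K/k$ is flat, $(P')^K$ is a bounded above flat resolution of $P^K$. Hence
\[
(P \otimes^L_A Q)^K \;\cong\; (P' \otimes_A Q)^K \;\cong\; (P')^K \otimes_{A^K} Q^K \;\cong\; P^K \otimes^L_{A^K} Q^K,
\]
which defines $\Phi$ and shows it is an isomorphism.

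For part (1), I would first define $\Psi$ at the module level. For left $A$-modules $M,N$, the natural map
\[
\Psi_0 \colon \Hom_A(M,N) \otimes_k K \longrightarrow \Hom_{A^K}(M^K, N^K), \qquad \phi \otimes \lambda \longmapsto \bigl(m \otimes \mu \mapsto \phi(m) \otimes \lambda\mu\bigr),
\]
is clearly an isomorphism when $M = A$ (both sides equal $N^K$), hence for $M = A^n$ by finite additivity, and hence for any finitely generated projective $M$ by passing to retracts. All of this respects the induced bimodule structures since $\Psi_0$ is given by a canonical formula. Extending by total Hom complexes, if $R^\bullet$ is a bounded complex of finitely generated projective left $A$-modules, then $\Psi_0$ induces a termwise isomorphism of complexes $\Hom_A(R^\bullet, Q)^K \cong \Hom_{A^K}((R^\bullet)^K, Q^K)$, using that each term of the total Hom complex is a finite direct product/sum which commutes with base change.

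Now to establish the derived statement, suppose $P$ is perfect and choose a quasi-isomorphism $R^\bullet \to P$ where $R^\bullet$ is a bounded complex of finitely generated projective left $A$-modules. By flatness of $K$ over $k$, $(R^\bullet)^K \to P^K$ is still a quasi-isomorphism, and $(R^\bullet)^K$ is a bounded complex of finitely generated projective left $A^K$-modules (tensoring with $K$ preserves the direct-summand-of-a-free property). Hence
\[
\RHom_A(P,Q)^K \;=\; \Hom_A(R^\bullet, Q)^K \;\cong\; \Hom_{A^K}((R^\bullet)^K, Q^K) \;=\; \RHom_{A^K}(P^K, Q^K),
\]
defining $\Psi$ and proving it is a quasi-isomorphism. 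The main obstacle is bookkeeping: one must check that the module-level $\Psi_0$ really is natural in the bimodule sense (so all module structures carry through), and that the derived computation uses only bounded finitely generated projective resolutions, which is where the perfectness hypothesis enters.

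For the graded version, the underlying $k$-vector space of $P \grtimes_A Q$ coincides with that of $P \otimes_A Q$, and by Lemma~\ref{lem:graded equivalence}(2), when $P$ is graded perfect, $\RgrHom_A(P,Q)$ agrees with $\RHom_A(P,Q)$. The maps $\Psi$ and $\Phi$ are defined by canonical formulas on homogeneous elements, so they automatically preserve gradings, and the graded statements follow from the ungraded ones.
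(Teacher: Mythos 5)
Your proposal is correct and follows essentially the same route as the paper: define the natural map $\psi_0$ (resp.\ $\Phi_0$) at the module level, check it is an isomorphism for finitely generated projectives (resp.\ all modules), extend termwise to total complexes after replacing $P$ by a projective (or flat) replacement, and use exactness of $-\otimes_k K$ together with the fact that tensoring commutes with the \emph{finite} products appearing when $P$ is a bounded complex of finitely generated projectives. The only piece you leave implicit is the construction of $\Psi$ for a general bounded-above (non-perfect) $P$, where the components of the total Hom complex are infinite products and the natural map exists but need not be an isomorphism; the paper handles this by the same projective replacement, so this is a routine omission rather than a gap.
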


\begin{proof}
(1) For any $M, N \in A \lMod$ there is a natural map $\psi: \Hom_A(M,N)^K \to \Hom_{A^K}(M^K, N^K)$ 
which respects any bimodule structures involved.  The map $\psi$ is clearly an isomorphism when $M = A$, and one can readily deduce that the same is true when $M$ is finitely generated projective.   

Since $P$ is bounded above, we can replace it with a quasi-isomorphic bounded above complex of projective modules.  Then $P^K$ is a  bounded above complex of projective $A^K$-modules, so $\RHom_A(P, Q)^K = \Hom_A(P, Q)^K$ and $\RHom_{A^K}(P^K, Q^K) = \Hom_{A^K}(P^K, Q^K)$.  Thus one defines $\Psi$ to be the map on complexes whose $n$th component is the natural map 
\[
\Psi^n: [\prod_i \Hom_A(P^i, Q^{i+n})] \otimes K \to \prod_i \Hom_{A^K}(P^i \otimes K, Q^{i+n} \otimes K)
\]
induced by $\psi$.  
When $P$ is perfect, we can assume that $P$ is a bounded complex of finitely generated projectives.  In this case
in $\Psi^n$ above the product $\prod_i$ is actually finite and each $P^i$ is finitely generated.  Since tensor products commute with finite 
products (that is, direct sums), we see that $\Psi^n$ is an isomorphism from the first paragraph.

(2)  This is a similar argument as part (1), but no additional hypotheses are needed since tensor products commute with 
arbitrary direct sums.  We leave the details to the reader.

The proofs in the graded case are the same.
%
%
%
%
%
\end{proof}

We now study the behavior of homological smoothness under base field extension.  In fact 
we have the following more general result about how perfect modules behave under extension of the base field.  
We do not know if part (2) of the 
following result also holds without the graded hypothesis.

\begin{proposition}\label{prop:smooth extension}
Let $A$ be a $k$-algebra, and let $K$ be a field extension of $k$.
\begin{enumerate}
\item If $M$ is a perfect left $A$-module, then $M^K$ is a perfect $A^K$-module.
\item Suppose that $A$ is graded with $\dim_k A_0 < \infty$, and let $M \in A \lGr$ be left bounded.  Then $M^K$ is a perfect left $A^K$-module if and only if $M$ is perfect over $A$.  In this case, the length of the minimal graded  projective resolutions of $M$ over $A$ and $M^K$ over $A^K$ are the same.
\end{enumerate}
\end{proposition}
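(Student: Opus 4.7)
The plan for part~(1) is direct and essentially formal. A perfect resolution of $M$ has only finitely many terms, each a direct summand of a finite free module $A^n$. Applying $-\otimes_k K$, which is exact since $K$ is flat over the field $k$, produces a finite resolution of $M^K$ whose terms are direct summands of $(A^K)^n$, hence finitely generated projective $A^K$-modules.

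For part~(2), the forward direction follows from part~(1) (base extension preserves both gradings and finite generation of projectives). For the converse, I would translate perfectness of $M$ into a finiteness condition on the Tor groups against the semisimple quotient $S = A/J(A)$ via Lemma~\ref{lem:fg term}: a bounded-below graded $M$ is perfect over $A$ precisely when each $\Tor_i^A(S, M)$ is finite-dimensional and vanishes for $i \gg 0$. The key input is the base-change isomorphism
\[
\Tor_i^A(S, M) \otimes_k K \;\cong\; \Tor_i^{A^K}(S^K, M^K)
\]
furnished by Lemma~\ref{lem:extension}(2) applied to the modules (hence bounded above complexes) $S$ and $M$. Note that $S^K = S \otimes_k K$ is finite-dimensional over $K$, since $S$ is a quotient of the finite-dimensional algebra $A_0$. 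Assuming $M^K$ is perfect over $A^K$, pick a finite resolution $Q^\bullet \to M^K$ by finitely generated $A^K$-projectives; computing $\Tor^{A^K}_i(S^K, M^K)$ from $S^K \otimes_{A^K} Q^\bullet$ (whose terms are finite-dimensional over $K$) shows these Tor groups are finite-dimensional and vanish for $i \gg 0$. Since $\dim_k V = \dim_K (V \otimes_k K)$ for any $k$-vector space $V$, the same holds for $\Tor_i^A(S, M)$, and Lemma~\ref{lem:fg term} gives perfectness of $M$.

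For the length claim, let $d$ denote the length of the minimal graded projective resolution $P^\bullet$ of $M$. Then $(P^\bullet)^K$ is a projective resolution of $M^K$ of length $d$, so $\pdim_{A^K}(M^K) \leq d$. Conversely, $P^{-d} \neq 0$ and Nakayama (Lemma~\ref{lem:Nakayama}) together with Lemma~\ref{lem:fg term} give $\Tor_d^A(S, M) \neq 0$; the base-change isomorphism then yields $\Tor_d^{A^K}(S^K, M^K) \neq 0$, forcing $\pdim_{A^K}(M^K) \geq d$. The length of a minimal graded projective resolution equals the projective dimension for a left-bounded graded module over a graded algebra with $\dim_k A_0 < \infty$, so the two minimal resolution lengths coincide. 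The main point requiring care is simply verifying the hypotheses of Lemma~\ref{lem:extension}(2) (automatic here) and the finite-dimensionality of $S^K$ (ensured by $\dim_k A_0 < \infty$); beyond this, the proof is a clean reduction to Lemma~\ref{lem:fg term}.
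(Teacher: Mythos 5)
Your proof is correct, but it takes a genuinely different route from the paper's in part~(2). The paper base-changes the minimal graded resolution $P^\bullet \to M$ and shows that $(P^\bullet)^K$ is again a \emph{minimal} resolution of $M^K$, the key point being the inclusion $J(A)\otimes K \subseteq J(A^K)$ (using nilpotence of $J(A_0)$); it then deduces that each $(P^i)^K$ is finitely generated and eventually zero, and descends finite generation from $(P^i)^K$ to $P^i$ by a directed-union/faithful-flatness argument, with the length statement immediate because the same complex serves as the minimal resolution on both sides. You instead reduce perfectness to finite-dimensionality and eventual vanishing of $\Tor_i^A(S,M)$ via Lemma~\ref{lem:fg term}, transported across the base-change isomorphism of Lemma~\ref{lem:extension}(2); this sidesteps both the radical comparison and the descent of finite generation. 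One point worth making explicit (not a gap): $S^K$ need not be the semisimple quotient of $A^K$ unless $S$ is separable, so Lemma~\ref{lem:fg term} is not available over $A^K$ with coefficients $S^K$; but you only use the elementary direction that a perfect $A^K$-module has $\Tor_i^{A^K}(S^K,-)$ finite-dimensional and eventually zero, which holds for any finite-dimensional coefficient algebra, so your argument stands. Your length argument, via $\pdim_{A^K}(M^K)\leq d$ from the extended resolution and $\pdim_{A^K}(M^K)\geq d$ from $\Tor_d^{A^K}(S^K,M^K)\neq 0$, together with the identification of minimal resolution length with projective dimension from Proposition~\ref{prop:global dimension}(1) and Lemma~\ref{lem:fg term}, is also valid. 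The trade-off: the paper's proof yields the extra information that $(P^\bullet)^K$ is itself the minimal resolution of $M^K$, while yours is a cleaner dimension count that avoids proving minimality is preserved and avoids the finite-generation descent step.
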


\begin{proof}
(1) Let $P^{\bullet} \to M$ be a projective resolution of $M$ as a left $A$-module.  If $M$ is perfect, 
then we can take $P^i$ to be finitely generated for all $i$ and zero for $i \ll 0$.  The functor $- \otimes_k K$ is exact, and is 
easily seen to preserve the properties of being projective and being finitely generated as a module.  
Then $(P^{\bullet})^K$ is an $A^K$-projective resolution of $A^K$, for which $(P^i)^K$ is finitely generated for all $i$ and $0$ 
for $i \ll 0$.  Thus $M^K$ is perfect.

(2) By part (1), it suffices to assume that $M^K$ is perfect and prove that $M$ is.  
The conditions on $A$ and $M$ ensure that we can take a minimal graded projective resolution $P^{\bullet} \to M$ of $M$ as a left $A$-module.   Similarly as above, $(P^{\bullet})^K$ is a graded $A^K$-projective resolution of $M^K$.  We claim that this is also a minimal projective resolution.
Suppose that $N, P \in A \lGr$ are left bounded graded projective modules.  Let $f: N \to P$ be a graded $A$-module homomorphism with $\ker f \in J(A) N$.  Then $f \otimes 1: N^K \to P^K$ is a graded $(A^K)$-module homomorphism 
and clearly $\ker (f \otimes 1) = (\ker f) \otimes K$ since $- \otimes_k K$ is exact.   Recall that  $J(A) = A_{\geq 1} \oplus J(A_0)$.
Clearly $A_{\geq 1}^K = (A^K)_{\geq 1} \subseteq J(A^K)$.  Since $J(A_0)$ is nilpotent, so is $J(A_0)^K$ 
and hence $J(A_0)^K \subseteq J(A^K)$ as well.  Thus $J(A)^K \subseteq J(A^K)$.  
This easily extends to projective modules to prove that for any left bounded graded projective $A$-module $Q$, 
$(J(A) Q)^K \subseteq J(A^K)(Q^K)$.
Thus 
\[
\ker (f \otimes 1)  = (\ker f) \otimes K \subseteq (J(A) N)^K \subseteq J(A^K)N^K.
\]
Since the condition that $P^{\bullet}$ be minimal is that for each $d^i: P^i \to P^{i-1}$ we have 
$\ker d^i \subseteq J(A) P^i$, we see that $(P^K)^{\bullet}$ is also minimal, as claimed.
Now suppose that $M^K$ is perfect.  Then since $M^K$ has a finite $(A^K)$-resolution by finitely generated 
projectives, the minimal graded projective resolution of $M^K$ must have this property.  
So each $(P^i)^K$ is a finitely generated $A^K$-module, 
with $(P^i)^K = 0$ for $i \ll 0$.  It follows that $P^i = 0$ for $i \ll 0$.  It is also easy to see that $(P^i)^K$ finitely generated 
implies that $P^i$ is a finitely generated $A$-module, as follows.   Let $Q = P^i$ and suppose that
$Q = \bigcup Q_j$ is a directed union of submodules. Then $Q^K = \bigcup Q_j^K$ is a directed union 
of the extended submodules. By finite generation of $Q^K$, we have some $Q_j^K = Q^K$.
Thus the quotient module $Q^K/Q_j^K \cong (Q/Q_j)^K$ is zero. As $K$ is faithfully flat over $k$
we have $Q/Q_j = 0$, whence $Q = Q_j$. Thus $Q = P^i$ is finitely generated.

We see now that $M$ is perfect.  The final statement on the lengths of the minimal projective resolutions is also clear from the proof above.
\end{proof}

\begin{corollary}\label{cor:smooth extension}
Let $A$ be a $k$-algebra, and let $K$ be a field extension of $k$.
If $A$ is a homologically smooth $k$-algebra, then $A^K$ is a homologically smooth $K$-algebra. 
The converse holds if $A$ is graded with $\dim_k A_0 < \infty$.
\end{corollary}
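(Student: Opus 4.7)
The plan is to reduce this corollary directly to Proposition~\ref{prop:smooth extension} via the observation that forming the enveloping algebra commutes with scalar extension. Specifically, I would first record the natural isomorphism of $K$-algebras
\[
(A^K)^e = (A \otimes_k K) \otimes_K (A \otimes_k K)\op \cong (A \otimes_k A\op) \otimes_k K = (A^e)^K,
\]
under which $A^K$, considered as a left $(A^K)^e$-module, corresponds to the scalar extension of the left $A^e$-module $A$ to a left $(A^e)^K$-module. Since homological smoothness of $A$ (resp.\ $A^K$) is by definition the statement that $A$ is perfect over $A^e$ (resp.\ that $A^K$ is perfect over $(A^K)^e$), the corollary will follow once this identification is in place.

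For the forward direction, I would simply apply Proposition~\ref{prop:smooth extension}(1) with the $k$-algebra $A^e$ in the role of the base algebra and the left $A^e$-module $A$ in the role of $M$: if $A$ is perfect over $A^e$, then $A^K$ is perfect over $(A^e)^K = (A^K)^e$, so $A^K$ is homologically smooth.

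For the converse under the graded hypothesis $\dim_k A_0 < \infty$, I would invoke Proposition~\ref{prop:smooth extension}(2). To apply it I must check its hypotheses for the graded algebra $A^e$ and the graded left $A^e$-module $A$. The algebra $A^e = A \otimes_k A\op$ inherits a natural $\mathbb{N}$-grading from $A$, with degree-zero part $(A^e)_0 = A_0 \otimes_k (A_0)\op = (A_0)^e$, which is finite-dimensional since $\dim_k A_0 < \infty$; and $A$ is itself nonnegatively graded, hence left bounded, as a left $A^e$-module. Thus Proposition~\ref{prop:smooth extension}(2) applies, and perfectness of $A^K = (A)^K$ over $(A^K)^e = (A^e)^K$ forces perfectness of $A$ over $A^e$.

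I expect no serious obstacle: the content is entirely in setting up the base-change identification $(A^K)^e \cong (A^e)^K$ compatibly with the bimodule structure on $A$, and then checking that $A^e$ satisfies the graded finiteness hypothesis of Proposition~\ref{prop:smooth extension}(2). The only point that requires a small amount of care is confirming that the identification above matches the $A^e$-module structure on $A$ with the $(A^K)^e$-module structure on $A^K$, which is a straightforward verification on pure tensors.
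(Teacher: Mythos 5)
Your argument is correct and is essentially identical to the paper's proof, which also observes that $(A\otimes K)^e \cong A^e \otimes K$ and then applies Proposition~\ref{prop:smooth extension} to the algebra $A^e$ and the module $A$. The extra verifications you note (that $(A^e)_0 = (A_0)^e$ is finite-dimensional and that $A$ is left bounded over $A^e$) are exactly the right hypotheses to check for part~(2).
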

\begin{proof}
Note that $(A \otimes K)^e \cong A^e \otimes K$ as (graded) $K$-algebras.   Thus the result 
follows immediately from Proposition~\ref{prop:smooth extension}, applied to the algebra $A^e$ and the module $A$.
\end{proof}

Next we wish to show that homological smoothness is preserved under tensor products of algebras.
For this we require the following observation. 
\begin{remark}\label{rem:Kunneth}
Let $R$ and $S$ be $k$-algebras.   For $M \in R \lMod$ and $N \in S \lMod$, suppose that 
$P^\bullet$ is a projective $R$-module resolution of $M$ and $Q^{\bullet}$ is a projective 
$S$-module resolution of $N$.  As we are tensoring over a field~$k$, the K\"{u}nneth 
formula~\cite[Theorem~3.6.3]{We} yields 
$H^i(X \otimes Y) \cong  \bigoplus_{p+q = i} H^p(X) \otimes H^q(Y)$ for any bounded above 
complexes $X$ and $Y$.  In particular,
the tensor complex $P \otimes Q$ forms a projective resolution of $M \otimes N$ as a left 
$R \otimes S$-module.  
\end{remark}

\begin{proposition}\label{prop:smooth tensor}
If $A$ and $B$ are (graded) homologically smooth $k$-algebras, then so is $A \otimes B$.
\end{proposition}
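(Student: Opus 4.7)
The plan is to build a perfect resolution of $A_1 \otimes A_2$ over $(A_1 \otimes A_2)^e$ by taking the tensor product of perfect resolutions of the factors. First I would exploit the natural algebra isomorphism
\[
(A_1 \otimes A_2)^e \;\cong\; A_1^e \otimes A_2^e,
\]
obtained by permuting the middle two tensor factors (this is an isomorphism of graded algebras in the graded case). Under this identification, the left $(A_1 \otimes A_2)^e$-module $A_1 \otimes A_2$ is precisely the ``outer'' tensor product of $A_1 \in A_1^e\lMod$ with $A_2 \in A_2^e\lMod$.

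Next, by hypothesis we may choose finite resolutions $P_i^\bullet \to A_i \to 0$ in $A_i^e\lMod$ with each $P_i^j$ a finitely generated projective $A_i^e$-module (and graded in the graded setting). Form the tensor complex $P_1^\bullet \otimes P_2^\bullet$. Each term $P_1^a \otimes P_2^b$ is a summand of $(A_1^e)^{m} \otimes (A_2^e)^{n} \cong (A_1^e \otimes A_2^e)^{mn}$, hence is a finitely generated projective left $(A_1 \otimes A_2)^e$-module. The total complex is bounded because both factors are. Finally, since we are tensoring complexes of $k$-vector spaces, the Künneth formula noted in Remark~\ref{rem:Kunneth} gives
\[
H^i(P_1^\bullet \otimes P_2^\bullet) \;\cong\; \bigoplus_{p+q=i} H^p(P_1^\bullet) \otimes H^q(P_2^\bullet),
\]
which vanishes for $i \neq 0$ and equals $A_1 \otimes A_2$ for $i=0$. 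Thus $P_1^\bullet \otimes P_2^\bullet$ is a finite resolution by finitely generated projective $(A_1\otimes A_2)^e$-modules, witnessing that $A_1 \otimes A_2$ is homologically smooth.

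There is no serious obstacle; the main points to verify carefully are the identification of enveloping algebras and the fact that an outer tensor product of finitely generated projective modules over the two factors is still finitely generated projective over the tensor product algebra. In the graded case, one simply records that the same construction goes through with graded projectives and the graded Künneth isomorphism, so $A_1 \otimes A_2$ is graded homologically smooth as well.
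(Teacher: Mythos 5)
Your proposal is correct and follows essentially the same route as the paper: identify $(A_1\otimes A_2)^e \cong A_1^e \otimes A_2^e$, tensor two finite resolutions by finitely generated projectives, and invoke the K\"unneth formula over the base field (the paper's Remark~\ref{rem:Kunneth}) to see that the tensor complex resolves $A_1 \otimes A_2$. The extra details you supply (finite generation and projectivity of each term, boundedness, the graded variant) are exactly the points the paper leaves implicit.
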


\begin{proof}
Denote $R = A \otimes B$, so that $R^e = A^e \otimes B^e$.  If $A$ and $B$ 
are homologically smooth, we may fix finite length resolutions 
$P^{\bullet} \to A \to 0$ and $Q^\bullet \to B \to 0$, whose terms are finitely generated projective left 
modules over $A^e$ and $B^e$, respectively.
By Remark~\ref{rem:Kunneth}, the complex $P \otimes Q$ forms a projective
$R^e$-resolution of $R$, from which we deduce that $R$ is homologically smooth.
\end{proof}

The final operation preserving homological smoothness that we will investigate is 
$k$-linear Morita equivalence. We say that two $k$-algebras are \emph{$k$-linearly
Morita equivalent} if their left module categories are equivalent via a $k$-linear equivalence
of categories; thanks to standard Morita theory, this is clearly induced by a $k$-central
invertible bimodule.

\begin{proposition}\label{prop:smooth Morita}
If $A$ and $B$ are $k$-linearly Morita equivalent $k$-algebras, then
$A$ is a homologically smooth $k$-algebra if and only if $B$ is.
\end{proposition}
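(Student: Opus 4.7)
The plan is to promote the given $k$-linear Morita equivalence between $A$ and $B$ to a $k$-linear equivalence between their bimodule categories (equivalently, the module categories over the enveloping algebras $A^e$ and $B^e$) under which the $B^e$-module $B$ corresponds to the $A^e$-module $A$. Since homological smoothness is precisely the condition that $A$ be perfect as a left $A^e$-module, and perfectness is a categorical invariant of abelian categories, the proposition will follow at once.

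First, I would fix a $k$-central invertible $(A,B)$-bimodule $P$ implementing the equivalence, along with its inverse $(B,A)$-bimodule $Q$, so that there are bimodule isomorphisms $P \otimes_B Q \cong A$ and $Q \otimes_A P \cong B$. Consider the $k$-linear functor
\[
F\colon B^e \lMod \to A^e \lMod, \qquad M \mapsto P \otimes_B M \otimes_B Q,
\]
where the $A^e$-module structure on $F(M)$ arises from the outer left $A$-action on $P$ and the outer right $A$-action on $Q$. The analogous functor $G\colon N \mapsto Q \otimes_A N \otimes_A P$ provides a quasi-inverse, since
\[
F(G(N)) \cong (P \otimes_B Q) \otimes_A N \otimes_A (P \otimes_B Q) \cong A \otimes_A N \otimes_A A \cong N,
\]
and symmetrically $G \circ F \cong \mathrm{id}$. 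In particular, $F(B) \cong P \otimes_B B \otimes_B Q \cong P \otimes_B Q \cong A$ as $A^e$-modules.

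Finally, any equivalence of abelian module categories preserves and reflects the properties of being projective, finitely generated, and admitting a finite-length resolution by finitely generated projectives; in other words, it preserves the property of being perfect. Hence $B$ is perfect as a $B^e$-module if and only if $A \cong F(B)$ is perfect as an $A^e$-module, which proves that $A$ is homologically smooth if and only if $B$ is. The only point requiring care — and the closest thing to an obstacle — is that these categorical properties are genuinely preserved by the two-sided equivalence $F$, rather than merely by a one-sided Morita equivalence of $A$ and $B$; this is automatic from the construction of $F$ and $G$ as mutually inverse $k$-linear equivalences.
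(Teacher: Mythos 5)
Your proposal is correct and follows essentially the same route as the paper: both pass to the equivalence $M \mapsto P \otimes_B M \otimes_B Q$ between $B^e \lMod$ and $A^e \lMod$, observe that $B$ is sent to $A$ (the paper phrases this via monoidality of the equivalence, you verify it directly), and conclude because an equivalence of module categories preserves the property of being perfect. No gaps.
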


\begin{proof}
Fix a $k$-central invertible $(A,B)$-bimodule $U$, so that $U \otimes_B - \colon B \lMod \to A \lMod$
and $- \otimes_A U \colon \rMod A \to \rMod B$ are $k$-linear equivalences.
Then the functor $U \otimes_B - \otimes_B U^{-1}$ gives a $k$-linear equivalence of categories from $k$-central 
$(B,B)$-bimodules to $k$-central $(A,A)$-bimodules; that is, we have a linear equivalence
from $B^e \lMod$ to $A^e \lMod$. This is readily verified to preserve the tensor product of bimodules
up to natural isomorphism, yielding a monoidal equivalence. 
In particular, the tensor units $B$ and $A$ correspond under the equivalence.

It follows that $A$ has a perfect resolution in $A^e \lMod$ if and only if $B$ has a perfect
resolution in $B^e \lMod$, establishing the claim.
\end{proof}


\subsection{Global dimension of homologically smooth algebras}

We now turn our attention to the global dimension of locally finite graded algebras.
Recall that the \emph{graded projective dimension} of a left module $M$ over a graded ring $R$, denoted
$\grpdim(M)$,  is the minimal (possibly infinite) length of all graded projective resolutions of $M$,
and that the \emph{graded left global dimension} of $R$, denoted $\grgldim_l(M)$, is the supremum 
of $\grpdim(M)$ with $M$ ranging over all graded left $R$-modules. 

The next result shows that for the graded algebras which we consider in this paper, one need not 
distinguish between left and right graded global dimensions, nor between graded and ungraded 
global dimensions.  

\begin{proposition}\label{prop:global dimension}
Let $A$ be a graded algebra with $A_0$ finite-dimensional, and set $S = A/J(A)$. 
\begin{enumerate}
\item For any graded left $A$-module $M$ that is bounded below, one has
\[
\pdim(M) = \grpdim(M) = \sup\{n \in \N \mid \Tor^A_n(S,M) \neq 0\}.
\]
Thus $\grgldim_l(A) \leq \gldim_l(A)$.
\item The left and right graded global dimensions of $A$ are equal; in fact,
\[
\grgldim_l(A) = \pdim({}_A S) = \pdim(S_A) = \grgldim_r(A).
\]
\item If $S$ is a separable $k$-algebra, then 
\begin{align*}
\gldim_l(A)  &= \grgldim_l(A) = \pdim({}_A S) = \pdim({}_{A^e} A)\\
&= \gldim_r(A) = \grgldim_r(A)  = \pdim(S_A).
\end{align*}
\end{enumerate}
\end{proposition}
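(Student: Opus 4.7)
For part (1), my plan is as follows. Given a bounded-below graded left $A$-module $M$, take its minimal graded projective resolution $P^\bullet \to M$, whose existence is guaranteed by the discussion after Lemma~\ref{lem:Nakayama}. Its length equals $\grpdim(M)$ by minimality, and by Lemma~\ref{lem:fg term} the vanishing of each $P^{-i}$ is equivalent to the vanishing of $\Tor^A_i(S,M)$; hence this length equals $\sup\{n : \Tor^A_n(S,M) \neq 0\}$. Since any graded resolution is in particular an ungraded one, $\pdim(M) \leq \grpdim(M)$. Conversely, $\Tor^A_n(S,M)$ vanishes for $n > \pdim(M)$ as it can be computed from any ungraded projective resolution of $M$ of length $\pdim(M)$, giving $\sup\{n : \Tor^A_n(S,M) \neq 0\} \leq \pdim(M)$ and completing the three-way equality. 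The inequality $\grgldim_l(A) \leq \gldim_l(A)$ then follows by taking suprema over $M$.

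For part (2), I would specialize part (1) to $M = S$, which is bounded-below as it is concentrated in degree zero, from both sides. This yields
\[
\pdim({}_A S) = \sup\{n : \Tor^A_n(S,S) \neq 0\} = \pdim(S_A),
\]
the middle quantity being independent of which variable one resolves. The equality $\grgldim_l(A) = \pdim({}_A S)$ then follows: the lower bound is trivial, while the upper bound comes from part (1), since every bounded-below graded $M$ satisfies $\grpdim(M) = \sup\{n : \Tor^A_n(S,M) \neq 0\}$, and this sup is bounded above by $\pdim({}_A S)$ upon computing $\Tor$ via a projective resolution of $S$ of that length. The right-handed version is identical.

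For part (3), assume $S$ is separable; I aim to establish the chain
\[
\pdim({}_A S) \leq \grgldim_l(A) \leq \gldim_l(A) \leq \pdim({}_{A^e} A) \leq \pdim({}_A S),
\]
which forces all four quantities to be equal. The first two inequalities come from parts (1) and (2). The third follows from Lemma~\ref{lem:smooth resolution}(1), by which tensoring a projective $A^e$-resolution of $A$ with any left $A$-module gives a projective $A$-resolution of the same length. Closing the loop is the crux: separability of $S$, combined with Lemma~\ref{lem:enveloping radical}, gives $A^e/J(A^e) \cong S^e$, so that part (1) applies to $A$ viewed as a (bounded-below) $A^e$-module. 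Combined with Lemma~\ref{lem:homologically smooth}(1), this yields
\[
\pdim({}_{A^e} A) = \sup\{n : \Tor^{A^e}_n(S^e, A) \neq 0\} = \sup\{n : \Tor^A_n(S,S) \neq 0\} = \pdim({}_A S).
\]
The right-handed equalities then follow by applying the same argument to $A\op$ in place of $A$, using the swap isomorphism of Remark~\ref{rem:swap} to identify $\pdim_{(A\op)^e}(A\op) = \pdim_{A^e}(A)$. The only substantive obstacle is this loop-closing step: without separability of $S$ the identification $A^e/J(A^e) \cong S^e$ is unavailable, and the $\Tor$-based characterization of part (1) cannot be transferred to $A^e$-modules.
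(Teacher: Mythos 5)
Your parts (1) and (3) follow essentially the same route as the paper: part (1) via the minimal graded resolution and Lemma~\ref{lem:fg term}, and part (3) via the chain $\pdim({}_A S) \leq \grgldim_l(A) \leq \gldim_l(A) \leq \pdim({}_{A^e}A)$ closed by applying part (1) to ${}_{A^e}A$ with Lemmas~\ref{lem:enveloping radical} and~\ref{lem:homologically smooth}(1); this is exactly the paper's argument, including the correct identification of where separability enters.

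There is, however, a gap in your part (2). You deduce $\grgldim_l(A) \leq \pdim({}_A S)$ from the statement that \emph{every bounded-below} graded module $M$ satisfies $\grpdim(M) = \sup\{n : \Tor^A_n(S,M) \neq 0\} \leq \pdim(S_A)$. But $\grgldim_l(A)$ is the supremum of $\grpdim(M)$ over \emph{all} graded left $A$-modules, and graded modules over the $\N$-graded algebra $A$ are $\Z$-graded and need not be bounded below (this is precisely why part (1), Lemma~\ref{lem:Nakayama}, and Lemma~\ref{lem:fg term} all carry a bounded-below hypothesis: minimal resolutions and the graded Nakayama lemma are not available otherwise). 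So your argument only bounds the supremum over bounded-below modules. The paper closes this by first invoking the graded analogue of Auslander's theorem, namely $\grgldim_l(A) = \sup\{\pdim(A/I)\}$ with $I$ ranging over graded left ideals; the cyclic modules $A/I$ are nonnegatively graded, hence bounded below, and then your $\Tor$ bound applies. Your proof needs this reduction (or some other device handling unbounded-below modules) to be complete; with it inserted, the rest of part (2), including $\pdim({}_A S) = \pdim(S_A)$ via $\Tor^A_\bullet(S,S)$ and the right-handed symmetry, is fine and agrees with the paper.
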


\begin{proof}
For~(1), note that $\sup\{n \mid \Tor^A_n(S,M) \neq 0\} \leq \pdim(M) \leq \grpdim(M)$. 
Now let $P^\bullet \to M \to 0$ be a minimal graded projective resolution of $M$. Then by Lemma~\ref{lem:fg term}, 
we see that $\Tor^A_n(S,M) = 0$ if and only if the term 
$P^{-n}$ in the complex is zero, giving $\sup\{n \mid \Tor^A_n(S,M) \neq 0\} =\grpdim(M)$.
The inequality $\grgldim_l(A) \leq \gldim_l(A)$ readily follows. 

Part~(2) follows from the results of~\cite[\S5--6]{Eilenberg}. Note that similar results appear in~\cite{Li} and~\cite[Proposition~2.7]{MM}.


To prove~(3), we adapt the argument from the connected graded case given in~\cite{Be}. Note that 
\[
\pdim({}_A S) = \grgldim_l(A) \leq \gldim_l(A) \leq \pdim({}_{A^e} A),
\]
where the first equality follows from part~(2), the middle inequality from part~(1), and the 
final inequality from Lemma~\ref{lem:smooth resolution}. 
Applying~(1) to the left $A^e$-module $A$, along with Lemmas~\ref{lem:enveloping radical}
and~\ref{lem:homologically smooth}(1), we see that
\[
\pdim({}_{A^e} A) = \sup\{n \mid \Tor^{A^e}_n(A^e/J(A^e), A) \neq 0\} 
= \sup\{n \mid \Tor^A_n(S,S) \neq 0\}.
\]
As we saw in the proof of part (2), the quantity on the right-hand-side above is $\pdim({}_A S) = \pdim(S_A)$. 
Thus we obtain
\[
\pdim({}_A S) = \grgldim_l(A) = \gldim_l(A) = \pdim({}_{A^e} A) = \pdim(S_A),
\]
and the rest of the equalities follow by symmetry.
\end{proof}

It is well-known that any finite-dimensional algebra $A$ with $S = A/J(A)$ has 
$\gldim_l(A) = \pdim({}_A S) = \pdim(S_A) = \gldim_r(A)$; for instance, 
see~\cite[Corollary~5.60, Theorem~5.72]{LMR}.
If $S$ is separable, then we can deduce directly from the previous proposition that
this global dimension is further equal to $\pdim({}_{A^e} A)$.

\begin{corollary}\label{cor:separable dimension}
If $A$ is a finite-dimensional $k$-algebra such that $S = A/J(A)$ is separable, then
$\gldim(A) = \pdim({}_{A^e} A)$.
\end{corollary}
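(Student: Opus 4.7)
The plan is to observe that this corollary is an immediate specialization of Proposition~\ref{prop:global dimension}(3), obtained by endowing the finite-dimensional algebra $A$ with the trivial $\N$-grading (concentrating it in degree zero).

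More precisely, I would regard $A$ as a graded $k$-algebra with $A_0 = A$ and $A_n = 0$ for $n \geq 1$. Then $A_0$ is finite-dimensional by hypothesis, and because $A_{\geq 1} = 0$, the graded Jacobson radical coincides with the ordinary Jacobson radical: $J(A) = J(A_0) \oplus A_{\geq 1} = J(A_0)$. Consequently the quotient $S = A/J(A)$ considered in the graded sense agrees with the quotient considered in the ungraded sense, so the separability hypothesis on $S$ carries over unchanged to the graded setting. Applying Proposition~\ref{prop:global dimension}(3) then yields
\[
\gldim_l(A) \;=\; \grgldim_l(A) \;=\; \pdim({}_A S) \;=\; \pdim({}_{A^e}A) \;=\; \pdim(S_A) \;=\; \gldim_r(A).
\]

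To finish, I would invoke the fact (recalled in the paragraph immediately preceding the corollary, see \cite[Corollary~5.60, Theorem~5.72]{LMR}) that any finite-dimensional algebra has $\gldim_l(A) = \gldim_r(A)$, which we may simply denote by $\gldim(A)$. Combining this with the string of equalities above gives $\gldim(A) = \pdim({}_{A^e}A)$, as desired.

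There is really no obstacle here; the content is all in Proposition~\ref{prop:global dimension}(3), and the only thing to check is the routine compatibility between the graded and ungraded points of view for a trivially graded algebra. If a pedantic verification of $\grgldim_l(A) = \gldim_l(A)$ is required in the trivially graded case, one may note that a graded $A$-module is just a $\Z$-indexed family of ordinary $A$-modules and that graded projective resolutions correspond componentwise to ordinary projective resolutions, so projective dimensions (and hence the suprema defining global dimension) coincide.
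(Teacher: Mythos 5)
Your proof is correct and follows exactly the paper's route: the paper likewise treats $A$ as a graded algebra concentrated in degree zero and cites Proposition~\ref{prop:global dimension}(3), together with the standard equality of left and right global dimensions recalled just before the corollary. Your extra remarks on the compatibility of the graded and ungraded notions for a trivially graded algebra are a harmless elaboration of what the paper leaves implicit.
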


\begin{proof}
Considering $A$ as a graded algebra concentrated in degree 
zero, this follows immediately from Proposition~\ref{prop:global dimension}(3).
\end{proof}

The following example shows that the hypothesis that $S = A/J(A)$ is separable cannot simply
be omitted in some of our previous results. 
It makes use of the fact that a module $M$ over a Frobenius algebra $A$ must
have $\pdim(M) \in \{0,\infty\}$. Indeed, because projective and injective $A$-modules coincide,
any finite projective resolution of $M$ of length greater than zero must split at the end,
and therefore can be replaced with a projective resolution of shorter length.
 
\begin{example}
\label{ex:inseparable}
Let $S$ be a finite-dimensional, semisimple $k$-algebra that is not separable. (For instance, $S$
could be a non-separable field extension of a non-perfect field $k$.) Then $S^e = S \otimes S\op$
is a tensor product of Frobenius algebras and thus is Frobenius~\cite[Exercise~3.12]{EMR}.
Because $S$ is not separable, it is not projective as a left $S^e$-module. 
Because $S^e$ is Frobenius, it follows from the remark above that $\pdim({}_{S^e} S) = \infty$.
So $S$ is not perfect as a left $S^e$-module. On the other hand, $\gldim(S) = 0$ and $S$ is 
perfect as a left $S$-module. Taking $A = S$ we see that both the implication (b)$\implies$(a) in 
Theorem~\ref{thm:homologically smooth} as well as part~(3) of 
Proposition~\ref{prop:global dimension} can fail if we omit the assumption that $S$ is separable. 
\end{example}

\begin{example}
For $\mb{Z}$-graded rings, not the $\mb{N}$-graded rings we are restricting to in this paper, 
it is easy to find examples whose graded global dimension and ungraded global dimension are different.
For example, a graded division ring such as the Laurent polynomial ring $k[t, t^{-1}]$ has graded global 
dimension~$0$, but global dimension~$1$.  We are unaware of an $\mb{N}$-graded, locally finite 
algebra $A$ whose global dimension and graded global dimension are different.
\end{example}

The next result provides one further characterization of finite-dimensional homologically smooth 
algebras, in addition to those already given in Theorem~\ref{thm:homologically smooth}.

\begin{corollary}
Let $A$ be a locally finite graded algebra that is left noetherian; this holds, in particular, if $A$ is a
finite-dimensional algebra with trivial grading.
Then $A$ is homologically smooth if and only if $S = A/J(A)$ is separable and $A$ has finite global
dimension.
\end{corollary}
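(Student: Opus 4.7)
The plan is to deduce this essentially as a straightforward corollary of Theorem~\ref{thm:homologically smooth} combined with Proposition~\ref{prop:global dimension}(3), using the noetherian hypothesis only to convert finite projective dimension into perfectness. Recall that Theorem~\ref{thm:homologically smooth} characterizes homological smoothness by the conjunction of two conditions: $S$ is separable and $S$ is perfect as a left $A$-module. Thus, separability of $S$ appears as a hypothesis in both directions of the claimed equivalence, and it suffices to prove that, granted $S$ is separable, the module $S$ is perfect as a left $A$-module if and only if $A$ has finite global dimension.

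For the forward implication, if $S$ is perfect as a left $A$-module, then a fortiori $\pdim({}_A S) < \infty$. Since $S$ is separable, Proposition~\ref{prop:global dimension}(3) applies and gives $\gldim_l(A) = \pdim({}_A S) < \infty$. Conversely, suppose $A$ has finite global dimension, so that $\pdim({}_A S) < \infty$. Here I would use the noetherian hypothesis: because $S$ is finite-dimensional over $k$, it is a finitely generated left $A$-module, so the left noetherian property of $A$ ensures that $S$ admits a projective resolution all of whose terms are finitely generated. Truncating this resolution at the $(\pdim({}_A S)+1)$-st step yields a finite projective resolution of $S$ by finitely generated modules, showing that $S$ is perfect. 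Applying Theorem~\ref{thm:homologically smooth} then yields that $A$ is homologically smooth.

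The remark about finite-dimensional algebras with trivial grading is then immediate, since any such algebra is automatically locally finite graded and left noetherian.

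There is no genuine obstacle here, as the argument is a direct synthesis of already-established results; the only real content of the corollary is the observation that the noetherian hypothesis is exactly what promotes finite projective dimension of the finitely generated module $S$ into the existence of a perfect resolution. One small point to verify carefully is that the global dimension in Proposition~\ref{prop:global dimension}(3) is unambiguous and coincides with both the left and right (graded or ungraded) global dimensions, which is already built into that proposition under the separability hypothesis on $S$.
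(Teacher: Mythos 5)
Your proposal is correct and follows essentially the same route as the paper: reduce via Theorem~\ref{thm:homologically smooth} to showing that, with $S$ separable, perfectness of ${}_A S$ is equivalent to finite global dimension, then use left noetherianity to identify perfectness with finite projective dimension and Proposition~\ref{prop:global dimension}(3) to identify $\pdim({}_A S)$ with $\gldim(A)$. The only difference is that you spell out the truncation/syzygy argument that the paper leaves implicit.
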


\begin{proof}
By Theorem~\ref{thm:homologically smooth} we know that $A$ is homologically smooth if and 
only if $S$ is a separable algebra and $S$ is perfect as a left $A$-module. 
Thus it is enough to show, under the assumption that $S$ is separable, that $A$ has finite global dimension 
if and only if $S$ is perfect.  Because $A$ is left noetherian, the finite-dimensional module ${}_A S$ is perfect if and only if it has 
finite projective dimension.  The latter holds (since $S$ is separable) if and only if $\gldim(A) = \grgldim(A)$
is finite, thanks to Proposition~\ref{prop:global dimension}, establishing the claim.
\end{proof}

It is also useful to note that the global dimension of a graded algebra 
bounds the global dimension of its degree $0$ part.

\begin{lemma}\label{lem:degree zero dimension}
Let $A$ be a graded algebra with $A_0$ finite-dimensional. 
If $A$ has finite graded global dimension~$d$, then the global dimension of $A_0$ is at most $d$.
\end{lemma}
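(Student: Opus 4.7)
The plan is to reduce the statement to Lemma~\ref{lem:degree zero resolution} by viewing $A_0$-modules as graded $A$-modules concentrated in degree $0$.

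First, given any left $A_0$-module $M$, I would view $M$ as a graded left $A$-module concentrated in degree $0$. This uses the algebra surjection $A \twoheadrightarrow A/A_{\geq 1} = A_0$, through which the $A_0$-action on $M$ pulls back to an $A$-action. With the grading placing $M$ in degree $0$, this makes $M$ a nonnegatively graded left $A$-module.

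Next, since $A$ has graded global dimension~$d$, the graded projective dimension of $M$ as an $A$-module is at most $d$. Now I would invoke Lemma~\ref{lem:degree zero resolution} directly: it asserts that for a nonnegatively graded left $A$-module $M$ with $\pdim({}_A M) \leq d$, one has $\pdim({}_{A_0} M_0) \leq d$. In our situation $M = M_0$, so this gives $\pdim({}_{A_0} M) \leq d$. Since $M$ was an arbitrary $A_0$-module, we conclude $\gldim(A_0) \leq d$ (using the standard fact, as in Proposition~\ref{prop:global dimension}(2), that left global dimension is the supremum of projective dimensions of cyclic modules, or simply applying the bound to all left $A_0$-modules directly).

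There is no real obstacle here; the technical work has already been carried out in Lemma~\ref{lem:degree zero resolution}, whose proof extracts the degree-$0$ component of a minimal graded projective resolution over $A$ to obtain a projective resolution of the same length over $A_0$. The only point worth double-checking is that one may indeed regard $M$ as a graded $A$-module via $A \twoheadrightarrow A_0$, which is immediate because $A_{\geq 1}$ is a graded two-sided ideal and the quotient $A/A_{\geq 1}$ is precisely $A_0$ as a graded ring concentrated in degree $0$.
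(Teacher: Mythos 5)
Your proof is correct and is essentially the paper's own argument: view an arbitrary $A_0$-module as a graded $A$-module concentrated in degree $0$, bound its ($A$-)projective dimension by $d$, and apply Lemma~\ref{lem:degree zero resolution} to get $\pdim_{A_0}(M) \leq d$. The only point the paper adds is the remark (via Proposition~\ref{prop:global dimension}(2)) that left and right graded global dimensions agree, so it suffices to treat left modules, which your argument handles implicitly.
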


\begin{proof}
As in Proposition~\ref{prop:global dimension}(2), the left and right graded global dimensions of 
$A$ coincide, and similarly for $A_0$. Thus it suffices to prove that $\gldim_l(A_0) \leq d$.  
Given a left $A_0$-module $M$, consider it as a graded left $A$-module concentrated in degree zero.
Because the projective dimension of $M$ as a left $A$-module is at most~$d$, it follows from
Lemma~\ref{lem:degree zero resolution} that it also has projective dimension at most~$d$ as
a left $A_0$-module, establishing $\gldim_l(A_0) \leq d$.
\end{proof}

\separate

Homologically smooth graded algebras need not have good ring-theoretic properties.  Over a perfect 
field, we have seen that a locally finite graded algebra is homologically smooth if and only if ${}_A S$
is a perfect module, and there are many examples of finite global dimension algebras with finite 
GK-dimension but bad properties.  
One simple such example is $A = k \langle x, y \rangle/(yx)$. This is a connected graded algebra of
GK-dimension~$2$ over which ${}_A S = A/J(A) = k$ is perfect; 
however, it is neither left nor right noetherian, nor is it semiprime.
See~\cite[Example~2.1.7, Exercise~2.4.3]{Rogalski} for details.
On the other hand, twisted Calabi-Yau algebras typically have good ring-theoretic properties when 
they also have finite GK-dimension. The reasons for this are not yet well-understood in general, but
we address the noetherian property in particular for algebras of dimension $d \leq 2$ in subsequent 
sections of this paper.

\section{Twisted Calabi-Yau algebras}
\label{sec:twisted CY}

In this section we provide some basic tools for the study of twisted Calabi-Yau algebras, as we 
defined in Definition~\ref{def:twisted CY}.  

\subsection{Graded twisted Calabi-Yau algebras}
Our main interest is in \emph{graded} twisted Calabi-Yau algebras. The following is the natural
graded analogue of the twisted Calabi-Yau condition.

\begin{definition}\label{def:graded twisted CY}
Let $A$ be an $\N$-graded $k$-algebra. We say that:
\begin{enumerate}[label=\textnormal{(\roman*)}]
\item $A$ is \emph{graded homologically smooth} if it is graded perfect as a left $A^e$-module;
\item $A$ is \emph{graded twisted Calabi-Yau (of dimension $d$)} if it is graded homologically 
smooth and
\[
\grExt^i_{A^e}(A,A^e) \cong 
\begin{cases}
0, & i \neq d \\
U, & i = d
\end{cases}
\]
as graded right $A^e$-modules, for some graded-invertible bimodule $U$.
\end{enumerate}
\end{definition}

Similarly as in Remark~\ref{rem:swap}, since there is an anti-isomorphism of $A^e$ it is 
easy to see that $A$ is  twisted Calabi-Yau of dimension $d$ if and only if $A\op$ is.

As is turns out, the graded and ungraded versions of the twisted Calabi-Yau property are equivalent.

\begin{theorem}\label{thm:graded versus ungraded}
Let $A$ be a graded algebra.   
Then $A$ is twisted Calabi-Yau of dimension~$d$ if and only if $A$ is graded twisted Calabi-Yau 
of dimension~$d$.
\end{theorem}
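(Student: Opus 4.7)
The plan is to reduce both directions of the equivalence to two key inputs already in the paper: Lemma~\ref{lem:graded equivalence} (for the smoothness condition and the identification of Ext-groups) and Lemma~\ref{lem:graded invertible} (for the condition on the Nakayama bimodule). Since $A^e$ is naturally a graded $k$-algebra with $A$ a graded left $A^e$-module, the graded and ungraded versions of the hypotheses should correspond almost tautologically; the only nonformal input is translating between graded-invertibility and invertibility of the bimodule $U = \Ext^d_{A^e}(A,A^e)$.

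First I would handle the smoothness conditions. Applying Lemma~\ref{lem:graded equivalence}(1) to the graded left $A^e$-module $A$ gives immediately that $A$ is perfect as an $A^e$-module if and only if it is graded perfect; in other words, homological smoothness and graded homological smoothness are the same condition for $A$.

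Next, assuming smoothness in either sense, fix a bounded-above resolution $P^\bullet \to A$ by finitely generated graded projective $A^e$-modules (which exists by the previous step). By Lemma~\ref{lem:graded equivalence}(2), the natural comparison map
\[
\grExt^i_{A^e}(A,A^e) \hookrightarrow \Ext^i_{A^e}(A,A^e)
\]
is an equality for every $i$, and the identification is compatible with the right $A^e$-module (hence $(A,A)$-bimodule) structure induced from $A^e$. In particular, $\Ext^i_{A^e}(A,A^e) = 0$ if and only if $\grExt^i_{A^e}(A,A^e) = 0$, and for $i = d$ the bimodule $U := \Ext^d_{A^e}(A,A^e)$ carries a canonical grading making it equal to $\grExt^d_{A^e}(A,A^e)$ as a graded bimodule.

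The last step is to match the invertibility conditions imposed on $U$. One direction is essentially formal: if $U \grtimes_A V \cong A \cong V \grtimes_A U$ as graded bimodules for some graded $V$, then forgetting the grading yields $U \otimes_A V \cong A \cong V \otimes_A U$, so $U$ is invertible. The converse --- that an invertible graded bimodule is automatically graded-invertible --- is precisely the content of Lemma~\ref{lem:graded invertible}. This converse is the one substantive point in the whole argument, and I would expect it to be the main potential obstacle; however, it is already established in the cited lemma, so what remains for the theorem is essentially bookkeeping to assemble these ingredients.
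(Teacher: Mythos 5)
Your proposal is correct and follows essentially the same route as the paper: the paper's proof likewise invokes Lemma~\ref{lem:graded equivalence} to identify homological smoothness with graded homological smoothness and to equate $\grExt^i_{A^e}(A,A^e)$ with $\Ext^i_{A^e}(A,A^e)$, and then Lemma~\ref{lem:graded invertible} to pass between invertibility and graded-invertibility of $U = \Ext^d_{A^e}(A,A^e)$. Nothing is missing; your identification of the graded-invertibility step as the only substantive point matches the structure of the paper's argument.
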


\begin{proof}
First note that by Lemma~\ref{lem:graded equivalence}, $A$ is homologically smooth if and 
only if it is graded homologically smooth, and in case either condition is satisfied the inclusions 
$\grExt^i_{A^e}(A, A^e) \subseteq \Ext^i_{A^e}(A,A^e)$ of right $A^e$-modules are equalities for 
all $i$ by Lemma~\ref{lem:graded equivalence}. For homologically smooth $A$, the graded bimodule $U = \Ext^d_{A^e}(A, A^e)$ is invertible 
if and only if it is graded-invertible, by Lemma~\ref{lem:graded invertible}. It follows immediately
that $A$ is graded twisted Calabi-Yau algebra of dimension~$d$ if and only if it is (ungraded) twisted 
Calabi-Yau of dimension~$d$.
\end{proof}

The following general lemma will be useful in translating from conditions involving $\Ext$ to conditions 
in the derived category.
\begin{lemma}
\label{lem:extderived}
Let $A$ be a $k$-algebra.
\begin{enumerate}
\item Let $0 \neq M \in A \lMod$ be perfect.  Then $\pdim(M) = \max \{ i | \Ext^i_A(M, A) \neq 0 \}$.
\item If $M \in A \lMod$ is perfect and $\Ext^i_A(M, A) = 0$ for $i \neq d$, then setting $N = \Ext^d_A(M, A)$
we have $\RHom_A(M, A) \cong N[-d]$ in $D^b(\rMod A)$.
\item Suppose that $A$ is graded with $\dim_k A_0 < \infty$.  If $M \in A \lGr$ is a bounded below module with 
$\pdim(M) \leq d$, $\Ext^i_A(M, A) = 0$ for $i \neq d$, and such that $N = \Ext^d_A(M, A)$ satisfies $\dim_k N < \infty$, then $M$ is perfect 
and part (2) applies to show $\RHom_A(M,A) = \RgrHom_A(M, A) \cong N[-d]$ in $D^b(\rGr A)$.
\end{enumerate}
\end{lemma}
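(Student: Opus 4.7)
My plan proceeds in three steps, one per part.

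For part (1), I would establish both inequalities. The direction $\pdim(M) \geq \max\{i : \Ext^i(M,A) \neq 0\}$ is immediate, since $\Ext^i$ vanishes beyond the projective dimension. For the reverse, I would fix a perfect resolution $0 \to P^{-n} \to \cdots \to P^0 \to M \to 0$ of minimum possible length $n$, and argue by contradiction: if $\Ext^n(M, A) = 0$, then in the complex $\Hom_A(P^\bullet, A)$ of finitely generated projective right $A$-modules, the final map $\Hom(P^{-n+1}, A) \to \Hom(P^{-n}, A)$ is a surjection onto a projective, hence split. Applying $\Hom_{A\op}(-, A)$, which is an equivalence on finitely generated projectives, converts the splitting into one for the original map $P^{-n} \hookrightarrow P^{-n+1}$; hence the resolution can be shortened, contradicting the minimality of $n$.

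For part (2), I would invoke (1) to get a perfect resolution $P^\bullet$ of length $d$. Then $\Hom_A(P^\bullet, A)$ is a bounded complex supported in cohomological degrees $[0, d]$, with cohomology equal to $N$ concentrated in degree $d$. Any bounded complex whose cohomology is concentrated in a single degree is quasi-isomorphic in the derived category to that cohomology object shifted, via its canonical truncation, yielding $\RHom_A(M, A) \cong N[-d]$ in $D^b(\rMod A)$.

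For part (3), the only substantive task is to show $M$ is perfect; granting this, part (2) combined with Lemma \ref{lem:graded equivalence}(2) identifies the ungraded and graded $\RHom$ and gives $\RgrHom_A(M, A) \cong N[-d]$ in $D^b(\rGr A)$. Taking the minimal graded projective resolution $P^\bullet \to M$ of length $\leq d$, Lemma \ref{lem:fg term} reduces the problem to showing that each $\Ext^i(M, S)$ is finite-dimensional. The long exact sequence for $0 \to J(A) \to A \to S \to 0$, together with $\Ext^{d+1}(M,-) = 0$, immediately shows that $\Ext^d(M, S)$ is a quotient of $N$, hence finite-dimensional, so $P^{-d}$ is finitely generated. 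To extend this finiteness to all $\Ext^i(M, S)$, my strategy would be to iterate the long exact sequence in tandem with the power filtration $J \supseteq J^2 \supseteq \cdots$, whose successive quotients $J^k/J^{k+1}$ are $(S,S)$-bimodules whose $\Ext$ groups with $M$ are controlled by $\Ext^\bullet(M, S)$, and to exploit the bounded-below hypothesis on $M$ to ensure that only finitely many filtration layers contribute in each internal degree. The main obstacle I anticipate is precisely this propagation: the naive downward induction on $d$ via the syzygy $\Omega(M)$ fails, because passing to $\Omega(M)$ reintroduces a potentially infinite-dimensional $\Hom(\Omega(M), A) = \Hom(P^0, A)$ that breaks the hypothesis on $A$-coefficients, so the argument must work directly with the coefficient filtration rather than shift dimensions in $M$.
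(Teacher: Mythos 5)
Parts (1) and (2) of your proposal are correct. For (1) you argue by dualizing the last map of a minimal-length finitely generated resolution and splitting it, whereas the paper instead notes that, computed from a resolution with finitely generated terms, $\Ext^c_A(M,-)$ commutes with direct sums and is surjective onto $\Ext^c_A(M,L)$ for $c=\pdim(M)$ and any witness module $L$, so $\Ext^c_A(M,A)\neq 0$; both routes are fine (you should dispose of the degenerate case $n=0$, where there is no ``last map,'' separately). Part (2) is the same canonical-truncation argument as the paper's.

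Part (3) contains a genuine gap. The hypothesis does not assume $M$ is finitely generated, so you must show that \emph{every} term $P^{-i}$, $0\le i\le d$, of the minimal graded resolution is finitely generated --- equivalently, by Lemma~\ref{lem:fg term}, that every $\Ext^i_A(M,S)$ is finite-dimensional; already the case $i=0$ asserts that $M$ itself is finitely generated. Your long exact sequence for $0\to J(A)\to A\to S\to 0$ settles only the top case $i=d$, and you explicitly leave the remaining cases as an unexecuted strategy. As sketched, that strategy is moreover circular: to control $\Ext^{i+1}_A(M,J^k/J^{k+1})$ in terms of $\Ext^\bullet_A(M,S)$ you would need $\Ext^{i+1}_A(M,-)$ to commute with the generally infinite direct sums of shifted graded simples constituting each layer, which is exactly what a resolution of $M$ by finitely generated projectives would provide --- i.e., the conclusion being proved --- and one would additionally need a convergence argument over the non-terminating filtration $J\supseteq J^2\supseteq\cdots$. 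The paper does not prove this finiteness from scratch: it cites \cite[Proposition~3.4]{MM}, which says precisely that $\Ext^i_A(M,A)=0$ for $i\neq c$ together with $\dim_k\Ext^c_A(M,A)<\infty$ forces the minimal graded resolution of a bounded-below graded module to have finitely generated terms. Without invoking that result (or supplying a complete substitute for it), your proof of (3) is incomplete; once perfectness is granted, your remaining reduction via Lemma~\ref{lem:graded equivalence} matches the paper.
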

\begin{proof}
(1)  This is a standard result.  Assume that $M \neq 0$.   By definition we have 
a finite projective resolution $P^{\bullet} \to M$, with all $P^i$ finitely generated.  Let $c = \pdim_{A}(M)$.  We have 
\[
c = \max \{ i | \Ext^i_{A}(M, L) \neq 0\ \text{for some}\ A\text{-module}\ L \}.
\]
If $F$ is a free $A$-module surjecting onto $L$, from the long exact sequence we obtain $\Ext^c_{A}(M, F) \neq 0$.
Since the terms $P^i$ in the resolution of $M$ are finitely generated, it follows from computing $\Ext$ with the resolution $P$ of $M$ 
that $\Ext^c_A$ commutes with direct sums in the second coordinate, so that $\Ext^c_{A}(M, A) \neq 0$.  Since $\Ext^i_A(M, A) = 0$ for $i \geq \pdim_A(M)$, the result follows.

(2)  The result is trivial if $M = 0$, so assume that $M \neq 0$.  By part (1), we have $\pdim(M) = d$.  Thus we can choose a projective resolution of $M$ of the form $P^{\bullet} \to M$, where 
\[
P^{\bullet} =  P^{-d} \to \dots \to P^0.
\]
Then $\RHom_A(M, A)= \Hom_A(P, A)$ is a complex which lives in complex degrees $0$ to $d$.  By assumption, $\RHom_A(M, A)$ 
has cohomology only in degree $d$, where the cohomology is $N$.  It is now easy to see that 
there is a quasi-isomorphism $\RHom_A(M, A) \cong N[-d]$.

(3) We may again assume that $M \neq 0$. Then $\pdim(M) = d$, and it follows from~\cite[Proposition 3.4]{MM} that $M$ is (graded) perfect. 
Now $\RHom_A(M,A) = \RgrHom_A(M,A)$ by Lemma~\ref{lem:graded equivalence} so that the quasi-isomorphism in part~(2) automatically preserves the grading.  
%
\end{proof}

Using the preceding lemma, it is convenient to reformulate the second condition in the definition 
of (graded) twisted Calabi-Yau in terms of the derived category.

\begin{lemma}
\label{lem:derived form}
A (graded) $k$-algebra $A$ is twisted Calabi-Yau if and only if $A$ is homologically smooth and 
$\RHom_{A^e}(A,A^e) \cong U[-d]$ in $D^b(A^e \lMod)$ (respectively, in $D^b(A^e \lGr)$) for some (graded) invertible bimodule $U$.  Moreover, in this case $d =  \pdim_{A^e}(A)$.
\end{lemma}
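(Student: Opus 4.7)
The plan is to apply Lemma~\ref{lem:extderived} to the ring $A^e$ and the left $A^e$-module $A$, which provides a direct dictionary between the Ext-vanishing conditions in Definition~\ref{def:twisted CY} (or Definition~\ref{def:graded twisted CY}) and a derived-category statement of the form $\RHom_{A^e}(A, A^e) \cong U[-d]$.

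For the forward implication, assume $A$ is (graded) twisted Calabi-Yau of dimension $d$. By definition $A$ is (graded) homologically smooth, so $A$ is perfect as a left $A^e$-module, and we have $\Ext^i_{A^e}(A,A^e) = 0$ for $i \neq d$ and $\Ext^d_{A^e}(A,A^e) \cong U$ for some (graded) invertible bimodule $U$. Applying Lemma~\ref{lem:extderived}(2) with ring $A^e$ and module $M = A$ yields the quasi-isomorphism $\RHom_{A^e}(A,A^e) \cong U[-d]$ in $D^b(\rMod A^e)$, which we identify with $D^b(A^e \lMod)$ via the standard swap anti-isomorphism $A^e \cong (A^e)\op$. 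In the graded case the same argument works verbatim in $D^b(A^e \lGr)$: graded perfectness of $A$ (equivalent to ordinary perfectness by Lemma~\ref{lem:graded equivalence}(1)) lets us replace $A$ with a bounded complex of finitely generated graded projectives, and Lemma~\ref{lem:graded equivalence}(2) ensures that $\RgrHom$ and $\RHom$ agree.

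For the converse, suppose $A$ is (graded) homologically smooth and $\RHom_{A^e}(A,A^e) \cong U[-d]$ for some (graded) invertible bimodule $U$. Taking $i$-th cohomology on both sides gives $\Ext^i_{A^e}(A,A^e) \cong H^i(U[-d])$, which vanishes for $i \neq d$ and is isomorphic to $U$ for $i = d$, as bimodules. This is exactly the condition defining twisted Calabi-Yau of dimension $d$.

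For the final claim, we have that $A$ is a nonzero perfect left $A^e$-module, so Lemma~\ref{lem:extderived}(1) (applied to the ring $A^e$) gives
\[
\pdim_{A^e}(A) = \max\{i : \Ext^i_{A^e}(A,A^e) \neq 0\}.
\]
Since $U$ is invertible it is in particular nonzero, so $\Ext^d_{A^e}(A,A^e) \cong U \neq 0$, and combined with vanishing in all other degrees this maximum equals $d$. There is no serious obstacle in this proof; the main thing to be careful about is tracking the left/right bimodule structure, since $\RHom_{A^e}(A,A^e)$ naturally carries its $A^e$-module structure via the right action on the second argument, and the identifications with $A^e \lMod$ and $A^e \lGr$ proceed through the swap anti-automorphism of $A^e$.
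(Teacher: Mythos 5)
Your proposal is correct and follows essentially the same route as the paper: one direction by applying Lemma~\ref{lem:extderived}(2) to the perfect $A^e$-module $A$, the converse by taking cohomology, and the equality $d = \pdim_{A^e}(A)$ from Lemma~\ref{lem:extderived}(1). The extra care you take with the left/right $A^e$-module structures via the swap anti-automorphism is a harmless refinement of what the paper leaves implicit.
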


\begin{proof}
We prove the ungraded version; the proof in the graded case is the same.  It is enough to assume that $A$ is homologically smooth 
and prove that under this assumption, for an invertible bimodule $U$, the following conditions are equivalent:
\begin{itemize}
\item[(ii)] $\Ext^i_{A^e}(A,A^e) \cong \begin{cases} 0, & i \neq d \\ U, & i = d \end{cases}$ 
\item[(ii$'$)] $\RHom_{A^e}(A,A^e) \cong U[-d]$ in $D^b(A^e \lMod)$.  
\end{itemize}
If (ii$'$) holds, 
then by taking cohomology we obtain (ii).   If (ii) holds, then since $A$ is a perfect $A^e$-module we obtain (ii$'$) from Lemma~\ref{lem:extderived}(2).

The last statement follows from Lemma~\ref{lem:extderived}(1).
\end{proof}
\noindent From now on we will use the derived category formulation of the definition of twisted Calabi-Yau without further comment.

\subsection{Basic operations preserving the twisted Calabi-Yau property}

Next we show that the twisted Calabi-Yau property is stable under several basic operations.

Let $Z(A)$ denote the center of an algebra $A$. An $(A,A)$-bimodule $U$ is said to be 
\emph{$Z(A)$-central} if, for all $z \in Z(A)$ and all $u \in U$, we have $zu = uz$. 

\begin{lemma}
\label{lem:Ucentral}
Let $A$ be a twisted Calabi-Yau algebra with Nakayama bimodule $U$.
Then $U$ is a $Z(A)$-central $(A,A)$-bimodule. 
In particular, if $A$ has a Nakayama automorphism $\mu$, then the restriction of $\mu$ to the center 
$Z(A)$ is the identity.
\end{lemma}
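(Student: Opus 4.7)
The strategy is to leverage the observation that when $z \in Z(A)$, both elements $z \otimes 1$ and $1 \otimes z\op$ lie in $Z(A^e)$, and both act as multiplication by $z$ on the left $A^e$-module $A$ (using centrality of $z$ in $A$). By functoriality of $\Ext$, these two central elements must induce the same endomorphism of $U = \Ext^d_{A^e}(A, A^e)$; once I identify these induced maps in terms of the $(A,A)$-bimodule structure on $U$, they will be precisely left and right multiplication by $z$, giving the desired equality.

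First I will verify that $z \otimes 1$ and $1 \otimes z\op$ are central in $A^e$ and that both act on $A$ as $a \mapsto za = az$. I will then appeal to the standard principle that for any ring $R$ and any $r \in Z(R)$, multiplication by $r$ is a natural endomorphism of the identity functor on $R\lMod$; consequently, the endomorphism of $\Ext^n_R(M,N)$ obtained by acting on the source $M$ coincides with the one obtained by acting on the target $N$. On $\Hom_R(M,N)$ both are simply the scalar action of $r$ (by centrality, $\phi \circ (r \cdot _M) = r \phi = (r \cdot_N) \circ \phi$), and this equality extends to $\Ext$ by computing via a projective resolution of $M$.

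Next I will translate what these actions look like on $U$. Recall that $U$ inherits its bimodule structure from the right $A^e$-module structure on $A^e$ via the convention $a \cdot m \cdot b = m \cdot (b \otimes a\op)$. Under this convention, the right $A^e$-action on $U$ by the central element $z \otimes 1$ coincides with right bimodule multiplication by $z$, while the right $A^e$-action by $1 \otimes z\op$ coincides with left bimodule multiplication by $z$. Combined with the preceding paragraph, both central elements induce the same endomorphism of $U$, yielding $z \cdot u = u \cdot z$ for all $u \in U$ and $z \in Z(A)$, i.e., $Z(A)$-centrality.

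For the ``in particular'' statement, suppose that a Nakayama automorphism $\mu$ exists, so $U \cong {}^1 A^\mu$ with bimodule structure $b \cdot a \cdot c = ba\mu(c)$. Applying $Z(A)$-centrality to the element $1 \in {}^1 A^\mu$ gives $z = z \cdot 1 = 1 \cdot z = \mu(z)$, so $\mu$ fixes $Z(A)$ pointwise. The argument presents no serious obstacle; the only point demanding care is the bookkeeping between the left $A^e$-module structure used for the source $A$ and the right $A^e$-module structure used for the target $A^e$, and the translation of the latter into the bimodule conventions on the Ext group.
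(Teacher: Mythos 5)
Your proposal is correct and is essentially the paper's own argument: the paper packages your two central elements into the single element $r = z \otimes 1 - 1 \otimes z\op \in Z(A^e)$, notes that left multiplication by $r$ on $A$ is the zero map, and invokes the same standard functoriality principle (source versus target action of a central element on $\Ext$) to conclude that $r$ acts as zero on $U$, i.e.\ $zu = uz$. The bookkeeping you flag (that $u \cdot (z \otimes 1) = uz$ and $u \cdot (1 \otimes z\op) = zu$ under the paper's convention $m \cdot (b \otimes a\op) = amb$) is handled correctly, as is the immediate deduction $\mu(z) = z$ from $U \cong {}^1 A^\mu$.
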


\begin{proof}
Fix $z \in Z(A)$ and denote $r = z \otimes 1 - 1 \otimes z\op \in Z(A^e)$. Let $\rho_r \colon A \to A$
denote the $A^e$-module homomorphism given by (left) multiplication by $r$. Because $A$ is
a central $(A,A)$-bimodule, we have $\rho_r = 0$.
It follows from~\cite[Proposition~6.18]{Rotman} that the $Z(A^e)$-module endomorphism of 
$\Ext^i_{A^e}(A,A^e)$ given by multiplication by $r$ is equal to
\[
\Ext^i_{A^e}(\rho_r, A^e) \colon \Ext^i_{A^e}(A,A^e) \to \Ext^i_{A^e}(A,A^e).
\]
Because $\rho_r = 0$, this is the zero morphism. Therefore $U = \Ext^d_{A^e}(A,A^e)$ is 
a central $(A,A)$-bimodule.
\end{proof}

We will discuss a number of operations that preserve the twisted Calabi-Yau property. The first
such operation is the direct sum. 

\begin{proposition}\label{prop:twisted CY product}
For algebras $A$ and $B$, denote $R = A \oplus B$. Then $R$ is twisted Calabi-Yau
of dimension~$d$ if and only if both $A$ and $B$ are twisted Calabi-Yau of dimension~$d$,
in which case the Nakayama bimodule of $R$ is the direct sum of the Nakayama bimodules of 
$A$ and $B$ (and if $A$ and $B$ have Nakayama automorphisms $\mu_A$ and $\mu_B$, 
then $R$ has Nakayama automorphism $\mu_R = \mu_A \oplus \mu_B$).
\end{proposition}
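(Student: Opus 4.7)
The plan is to combine Proposition~\ref{prop:smooth product} (equivalence of homological smoothness under direct sums) with Lemma~\ref{lem:product RHom}(2) (decomposition of the derived Hom complex that controls the Nakayama bimodule). The first reduces the homological smoothness half of the statement immediately, so I would assume throughout that all three algebras are homologically smooth. Then Lemma~\ref{lem:product RHom}(2) supplies a quasi-isomorphism
\[
\RHom_{A^e}(A, A^e) \cong \RHom_{A_1^e}(A_1, A_1^e) \oplus \RHom_{A_2^e}(A_2, A_2^e)
\]
in $D(\rMod A^e)$. Taking cohomology and using that cohomology commutes with direct sums yields $\Ext^n_{A^e}(A, A^e) \cong \Ext^n_{A_1^e}(A_1, A_1^e) \oplus \Ext^n_{A_2^e}(A_2, A_2^e)$ as right $A^e$-modules, where each summand on the right is supported on its ``diagonal'' block $A_i^e \subseteq A^e$ via the central idempotent $z_i = e_i \otimes e_i^{\op}$ ($e_i$ being the identity of $A_i$). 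Hence vanishing of $\Ext^n_{A^e}(A, A^e)$ for $n \neq d$ is equivalent to the same vanishing for each $A_i$ over $A_i^e$; setting $U_i := \Ext^d_{A_i^e}(A_i, A_i^e)$, the candidate Nakayama bimodule of $A$ is $U = U_1 \oplus U_2$, which is ``block-diagonal'' in the sense that $e_i U e_j = 0$ for $i \neq j$ and $e_i U e_i \cong U_i$.

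The main technical step will then be to check that $U$ is invertible as an $(A,A)$-bimodule if and only if each $U_i$ is invertible as an $(A_i,A_i)$-bimodule. In one direction, given inverse bimodules $V_i$ of $U_i$ over $A_i^e$, the block-diagonal bimodule $V = V_1 \oplus V_2$ inverts $U$: cross tensor products vanish because $e_1 e_2 = 0$, and for any $(A,A)$-bimodules $M, N$ one has $M \otimes_A N \cong (e_1 M \otimes_{A_1} e_1 N) \oplus (e_2 M \otimes_{A_2} e_2 N)$. Conversely, if $U$ is invertible then the Morita self-equivalence $U \otimes_A -\colon A \lMod \to A \lMod$ respects the decomposition $A \lMod \cong (A_1 \lMod) \times (A_2 \lMod)$ arising from $e_1, e_2$, because the block structure of $U$ forces $U_i \otimes_A M_j = 0$ for $i \neq j$. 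It therefore restricts to equivalences $U_i \otimes_{A_i} -\colon A_i \lMod \to A_i \lMod$, and standard Morita theory shows each $U_i$ is invertible over $A_i^e$.

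The Nakayama automorphism statement then follows immediately: if $U_i \cong {}^1 A_i^{\mu_i}$, the block-diagonal bimodule $U_1 \oplus U_2$ is canonically identified with ${}^1 A^{\mu_1 \oplus \mu_2}$, where $\mu_1 \oplus \mu_2$ is the evident algebra automorphism of $A = A_1 \oplus A_2$. I expect the bookkeeping for invertibility of block-diagonal bimodules to be the only step requiring genuine argument; the cohomological vanishing and the identification of the Nakayama bimodule are direct consequences of Lemma~\ref{lem:product RHom}, and homological smoothness is handled by Proposition~\ref{prop:smooth product}.
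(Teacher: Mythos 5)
Your proof is correct, and your forward direction is exactly the paper's: Proposition~\ref{prop:smooth product} for homological smoothness, Lemma~\ref{lem:product RHom} to identify $\RHom_{A^e}(A,A^e)$ with $U_1[-d]\oplus U_2[-d]$, and the same bookkeeping for the Nakayama automorphism. Where you genuinely diverge is the converse. The paper first invokes Lemma~\ref{lem:Ucentral} to see that the central idempotent $z_1=(1,0)$ centralizes $U$, splits $U = z_1U \oplus (1-z_1)U$ into invertible bimodules over $A_1$ and $A_2$, and then computes $\RHom_{A_1^e}(A_1,A_1^e) \cong z\RHom_{A^e}(A,A^e) = U_1[-d]$ via Lemma~\ref{lem:central idempotent} with $z = z_1\otimes z_1\op$. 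You instead reuse Lemma~\ref{lem:product RHom}(2) in both directions, which already yields the vanishing statements and the block-diagonality of $U=U_1\oplus U_2$ (each $U_i$ is a right $A_i^e$-module, so $e_iUe_j=0$ for $i\neq j$) without appealing to $Z(A)$-centrality of $U$ at all; you then spell out the invertibility transfer that the paper leaves essentially implicit, by restricting the Morita self-equivalence $U\otimes_A-$ along $A\lMod \simeq A_1\lMod\times A_2\lMod$. Both routes are valid; yours trades the centrality lemma for a little more Morita bookkeeping, and a quicker finish to your converse is to observe that $U^{-1}\cong\Hom_A(U,A)$ is also block-diagonal and to cut the bimodule isomorphisms $U\otimes_AU^{-1}\cong A\cong U^{-1}\otimes_AU$ by the central idempotents, giving $U_i\otimes_{A_i}(U^{-1})_i\cong A_i$ directly. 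One small correction: your auxiliary formula should read $M\otimes_AN\cong (Me_1\otimes_{A_1}e_1N)\oplus(Me_2\otimes_{A_2}e_2N)$; with $e_1M$ in place of $Me_1$ the first factor need not be a right $A_1$-module for a general bimodule $M$, although for the block-diagonal bimodules you actually apply it to one has $e_iM=Me_i$, so nothing in your argument is affected.
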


\begin{proof}
First, suppose that both $A$ and $B$ are twisted Calabi-Yau of dimension~$d$ with respective Nakayama bimodules
$U_1$ and $U_2$. Then $R$ is homologically smooth by Proposition~\ref{prop:smooth product}.
Note that $U_1 \oplus U_2$ is an invertible $(R,R)$-bimodule with inverse
$U_1^{-1} \oplus U_2^{-1}$.
Applying Lemma~\ref{lem:product RHom} we have 
\begin{align*}
\RHom_R(R, R^e) &\cong \RHom_{A^e}(A, A^e) \oplus \RHom_{B^e}(B, B^e) \\
&\cong U_1[-d] \oplus U_2[-d] \\
&\cong (U_1 \oplus U_2)[-d],
\end{align*}
so that $R$ is twisted Calabi-Yau of dimension~$d$ with Nakayama bimodule $U_1 \oplus U_2$.
In case $U_1 = {}^1 A^{\mu_A}$ and $U_2 = {}^1 B^{\mu_B}$ for Nakayama automorphisms $\mu_A$ and $\mu_B$, then
we obtain $U_1 \oplus U_2 = {}^1 R^{\mu_R}$ for $\mu_R = \mu_A \oplus \mu_B$.

Conversely, suppose that $R$ is twisted Calabi-Yau of dimension~$d$ with $U = \Ext^d_{R^e}(R,R^e)$;
it follows from Proposition~\ref{prop:smooth product} that both $A$ and $B$ are homologically smooth. 
Lemma~\ref{lem:Ucentral} implies that the central idempotent $z_1 = (1,0) \in A \oplus B = R$ centralizes
$U$, so that also $U \cong U_1 \oplus U_2$ for $U_1 = z_1U$ and $U_2 = (1-z_1)U$, which are 
respectively invertible bimodules over $A$ and $B$. 
Now, using Lemma~\ref{lem:central idempotent}, we have for the central idempotent $z = z_1 \otimes z_1\op \in R^e$ 
that  \[
\RHom_{A^e}(A,A^e) \cong \RHom_{R^e}(zR,R^e) \cong z\RHom_{R^e}(R,R^e) = zU[-d] = U_1[-d].
\]
Thus $A$ is twisted Calabi-Yau of dimension~$d$, and the same is true of $B$ by symmetry.
\end{proof}

We next show that the twisted Calabi-Yau property is a ``geometric'' property, being preserved under 
extension of scalars. (This terminology is as in~\cite[Exercise~II.3.15]{Hartshorne}, for instance.)
Recall the notation $M^K =  M \otimes K$ for an $A$-module $M$.

\begin{proposition}\label{prop:extension of scalars}
Let $A$ be a $k$-algebra and $K/k$ a field extension. 
\begin{enumerate}
\item If $A$ is twisted Calabi-Yau of dimension $d$, then so is $A^K$.
\item Suppose that  $A$ is graded with $\dim_k A_0 < \infty$.  Then $A^K$ is (graded) twisted Calabi-Yau of dimension $d$ if and only if $A$ is.
\end{enumerate}
\end{proposition}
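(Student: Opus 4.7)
For part~(1), the plan is to transport the Nakayama bimodule through base change. Applying Lemma~\ref{lem:extension}(1) over the algebra $A^e$, with $P = A$ (which is perfect as a left $A^e$-module by hypothesis) and $Q = A^e$, and using the natural identification $(A^K)^e \cong (A^e)^K$, yields a quasi-isomorphism of complexes of right $(A^K)^e$-modules
\[
\RHom_{A^e}(A, A^e)^K \;\cong\; \RHom_{(A^K)^e}(A^K, (A^K)^e).
\]
If $\RHom_{A^e}(A,A^e) \cong U[-d]$ for an invertible bimodule $U$, then the left side equals $U^K[-d]$; homological smoothness of $A^K$ is Corollary~\ref{cor:smooth extension}. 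It only remains to check $U^K$ is invertible over $A^K$, which follows by base-changing the evaluation isomorphisms $U \otimes_A U^{-1} \cong A \cong U^{-1} \otimes_A U$ using Lemma~\ref{lem:extension}(2).

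For part~(2), the forward direction is part~(1), so assume $A^K$ is graded twisted Calabi-Yau of dimension~$d$. The graded analogue of Corollary~\ref{cor:smooth extension} shows that $A$ is graded homologically smooth. Set $V = \grExt^d_{A^e}(A, A^e)$. The graded version of Lemma~\ref{lem:extension}(1) gives $\grExt^i_{A^e}(A,A^e)^K \cong \grExt^i_{(A^K)^e}(A^K,(A^K)^e)$, which vanishes for $i \neq d$ and equals the Nakayama bimodule $U'$ of $A^K$ for $i = d$. Faithful flatness of $K/k$ therefore forces $\grExt^i_{A^e}(A,A^e) = 0$ for $i \neq d$ and $V^K \cong U'$. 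Lemma~\ref{lem:extderived}(3) then gives $\RgrHom_{A^e}(A,A^e) \cong V[-d]$, reducing the proof to showing $V$ is graded-invertible.

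The main obstacle is this descent of invertibility from $V^K \cong U'$ down to $V$. First, since $U'$ is invertible over $A^K$ it is finitely generated projective on each side; a standard faithful flatness argument—lifting finitely many $A^K$-generators of $V^K$ to elements of $V$ and noting that the quotient of $V$ by the $A$-submodule they generate has zero base change—shows $V$ is finitely generated on each side as an $A$-module. Since $\pdim_{A^K}(V^K) = 0$, Proposition~\ref{prop:smooth extension}(2) applied to $V$ on each side (which is bounded below as $V$ sits in the cohomology of a bounded complex of bounded-below graded modules) yields that $V$ is finitely generated graded projective on each side. Now let $W = \grHom_{A\op}(V,A)$: since $V$ is perfect on the right, Lemma~\ref{lem:extension}(1) identifies $W^K \cong \grHom_{(A^K)\op}(V^K, A^K) \cong (U')^{-1}$, and Lemma~\ref{lem:extension}(2) identifies the base change of the evaluation map $V \grtimes_A W \to A$ with the evaluation $V^K \grtimes_{A^K} W^K \to A^K$, which is an isomorphism by invertibility of $V^K$. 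Faithful flatness descends this to $V \grtimes_A W \cong A$, and the symmetric argument gives $W \grtimes_A V \cong A$, so $V$ is graded-invertible and $A$ is graded twisted Calabi-Yau of dimension~$d$.
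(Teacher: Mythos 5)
Part~(1) of your proposal is correct and is essentially the paper's argument: base-change $\RHom_{A^e}(A,A^e)$ via Lemma~\ref{lem:extension}(1) and base-change the invertibility isomorphisms of $U$ via Lemma~\ref{lem:extension}(2). For part~(2), your overall route also matches the paper's: descend homological smoothness, use exactness and faithful flatness of $-\otimes K$ to get concentration of $\Ext_{A^e}(A,A^e)$ in degree~$d$ with $V^K$ isomorphic to the Nakayama bimodule of $A^K$, then descend invertibility of $V$ by base-changing duals and evaluation maps. (Two minor points: you cite Lemma~\ref{lem:extderived}(3), whose hypothesis $\dim_k N<\infty$ fails for $N=V$; part~(2) of that lemma is the one that applies, since $A$ is already known to be perfect over $A^e$ --- and in fact the derived reformulation is not strictly needed, as the definition is stated in terms of $\Ext$ groups. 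Also, your detour through finite generation and projectivity of $V$ is more than is required: perfectness of $V$ on each side, which Proposition~\ref{prop:smooth extension}(2) gives directly, is all that Lemma~\ref{lem:extension}(1) needs.)

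The genuine defect is in the final step. With $W=\grHom_{A\op}(V,A)$ (homomorphisms of \emph{right} modules), the only natural evaluation is $W\grtimes_A V\to A$, $f\otimes v\mapsto f(v)$; the same formula is not $A$-balanced in the order $V\grtimes_A W$, since $f(va)=f(v)a$ while $(af)(v)=af(v)$. So the map ``$V\grtimes_A W\to A$'' that you propose to base-change does not exist (nor is there such an evaluation over $A^K$ to compare it with), and the isomorphism $V\grtimes_A W\cong A$ is left unproved; the one-sided isomorphism $W\grtimes_A V\cong A$ does not by itself yield invertibility. The repair is exactly the device the paper uses: pair each side with the appropriate dual, i.e.\ consider both $\phi_r\colon \grHom_{A\op}(V,A)\grtimes_A V\to A$ and $\phi_l\colon V\grtimes_A\grHom_A(V,A)\to A$ (left-module homomorphisms acting on the right, as in Lemma~\ref{lem:graded invertible}). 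Both base-change, via Lemma~\ref{lem:extension}(1)(2), to the corresponding evaluations for the invertible bimodule $V^K$, hence become isomorphisms after $-\otimes K$, hence are isomorphisms by faithful flatness; and $V$ is invertible precisely when both $\phi_l$ and $\phi_r$ are bijective (the two duals are then automatically isomorphic, so your $W$ is indeed the inverse). With that substitution your argument coincides with the paper's proof.
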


\begin{proof}
(1) Suppose that $A$ is twisted Calabi-Yau of dimension~$d$. From Corollary~\ref{cor:smooth extension} we see
that $A^K$ is homologically smooth.
Set $U = \Ext^d_{A^e}(A,A^e)$.  Then using Lemma~\ref{lem:extension}(2), since $A$ is perfect as an $A^e$ module we have
\begin{align*}
\RHom_{(A^K)^e}(A^K,(A^K)^e) &\cong \RHom_{(A^e) \otimes K}(A \otimes K, (A^e) \otimes K) \\
&\cong \RHom_{A^e}(A,A^e) \otimes K \quad \\
&\cong U \otimes K[-d].
\end{align*}
It follows from taking $0$th cohomology in Lemma~\ref{lem:extension}(2) that $(M \otimes_A N)^K \cong M^K \otimes_{A^K} N^K$ 
for any modules $M \in \rMod A$ and $N \in A \lMod$.  In particular, $U^K$ is an invertible $A^K$-bimodule (with inverse $(U^{-1})^K$), so that $A^K$ is twisted Calabi-Yau of dimension~$d$. 

(2) One direction is a special case of part (1).  To prove the converse, suppose that $A^K$ is twisted Calabi-Yau of dimension~$d$. 
Again, Proposition~\ref{prop:smooth extension} tells us that $A$ is homologically smooth.
Now the argument above shows that 
\[
\RHom_{(A^K)^e}(A^K,(A^K)^e) \cong \RHom_{A^e}(A,A^e) \otimes K, 
\]  
which by assumption is isomorphic to $V[-d]$ for a graded invertible $A^K$-bimodule $V$.  In 
particular, taking cohomology we see that $\RHom_{A^e}(A, A^e) \otimes K$ has cohomology only in 
degree $d$, so the same is true of $\RHom_{A^e}(A, A^e)$ since the functor $- \otimes K$ is exact.  
Thus we have $\Ext_{A^e}^i(A, A^e) = 0$ for $i \neq d$ and since $A$ is perfect as an $A^e$-module, 
Lemma~\ref{lem:extderived}(2) implies that $\RHom_{A^e}(A, A^e) \cong U[-d]$ for some 
$(A, A)$-bimodule $U$.  Clearly $U \otimes K \cong V$.  

Now there are natural evaluation maps of bimodules 
$\phi_l: U \otimes_A \Hom_A(U, A) \to A$ and $\phi_r: \Hom_{A\op}(U, A) \otimes_A U \to A$, and 
$U$ is an invertible bimodule if and only if both $\phi_l$ and $\phi_r$ are bijections.   Since $U^K = V$ is an invertible 
$A^K$-module, it is finitely generated and projective on both sides, and so is certainly perfect as a left and right $A^K$-module.
Then $U$ is perfect as a left and right $A$-module, by Proposition~\ref{prop:smooth extension}(2).  By taking $0$th cohomology 
in Lemma~\ref{lem:extension}(1), we get that the natural map $(\Hom_A(U, A))^K \to \Hom_{A^K}(U^K, A^K)$ is an isomorphism, and similarly on the right.
Using Lemma~\ref{lem:extension}(1)(2) again, extending the base field in $\phi_l$ and $\phi_r$ now yields 
the morphisms of $(A^K, A^K)$-bimodules 
$U^K \otimes_{A^K} \Hom_{A^K}(U^K, A^K) \to A^K$ and 
$\Hom_{(A\op)^K}(U^K, A^K) \otimes_{A^K} U^K \to A^K$, which are bijections since $U^K = V$ is invertible.
By the exactness of base field extension, $\phi_l$ and $\phi_r$ are also bijections, 
so that $U$ is invertible as required.  Thus $A$ is also (graded) twisted Calabi-Yau. 
\end{proof}

Note in the proof above that if $A$ has (graded) Nakayama automorphism $\mu$, then $A \otimes K$
has Nakayama automorphism $\mu \otimes \id_K$.

Another operation that preserves the twisted Calabi-Yau property is the tensor product of algebras.  
Before proving this we need the following technical lemma.
\begin{lemma}
\label{lem:tensor}
Let $R$ and $S$ be $k$-algebras.   For complexes $P \in  D^-(R \lMod), N \in D(R \lMod), Q \in D^-(S \lMod), J \in D(S \lMod)$, 
there is a natural map of complexes of abelian groups
\[
\Phi: \RHom_R(P, N) \otimes \RHom_S(Q, J) \to \RHom_{R \otimes S}(P \otimes Q, N \otimes J), 
\]
respecting any extra module structure obtained if $M, N, H$, or $J$ is a complex of bimodules.  The map $\Phi$ is an 
isomorphism if $P$ and $Q$ are perfect.
\end{lemma}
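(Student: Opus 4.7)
My plan is to define the map at the module level, extend it termwise to a chain map of total Hom complexes, and then pass to the derived category using projective resolutions. The bulk of the work lies in the module-level construction; the derived upgrade is routine.

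First, for single modules $P, N \in R \lMod$ and $Q, J \in S \lMod$, I would define a $k$-bilinear pairing sending $(\phi,\chi)$ to the map $p \otimes q \mapsto \phi(p) \otimes \chi(q)$. This descends to a natural $(R \otimes S)$-linear map
\[
\psi \colon \Hom_R(P,N) \otimes \Hom_S(Q,J) \to \Hom_{R \otimes S}(P \otimes Q,\, N \otimes J)
\]
that respects any additional bimodule structure carried by $N$ or $J$. When $P = R$ and $Q = S$, both sides collapse to $N \otimes J$ and $\psi$ is the identity. Since finite direct sums commute with tensor products and with $\Hom$ in the first variable, $\psi$ is an isomorphism when $P, Q$ are finitely generated free, and hence (as retracts) when $P, Q$ are finitely generated projective.

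Next, I would extend $\psi$ to complexes using the standard Koszul sign conventions for total Hom and total tensor product complexes; the result is a chain map $\Phi$. To promote this to a map in the derived category, replace $P$ and $Q$ by bounded above complexes of projectives $\widetilde P \to P$ and $\widetilde Q \to Q$. The key point is that because we are tensoring over a \emph{field}, a tensor product of projectives is again projective: if $\widetilde P^i$ is a summand of $R^{(I)}$ and $\widetilde Q^j$ is a summand of $S^{(J)}$, then $\widetilde P^i \otimes \widetilde Q^j$ is a summand of $(R \otimes S)^{(I \times J)}$. Thus $\widetilde P \otimes \widetilde Q$ is a bounded above complex of projective $R \otimes S$-modules quasi-isomorphic to $P \otimes^L Q = P \otimes Q$, and $\Phi(\widetilde P, \widetilde Q, N, J)$ gives the desired map in the derived category. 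Independence of the resolution choice follows from naturality of $\psi$ together with the usual homotopy-uniqueness of bounded above projective resolutions.

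Finally, I would verify the isomorphism claim in the perfect case. Here $\widetilde P$ and $\widetilde Q$ can be taken to be \emph{bounded} complexes of finitely generated projectives, so $\widetilde P \otimes \widetilde Q$ is itself a bounded complex of finitely generated projective $R \otimes S$-modules. In each cohomological degree of both sides, the defining products over $i$ (and over $i,j$ on the right) reduce to finite direct sums, and in each bidegree $\Phi$ restricts to $\psi$ on finitely generated projectives, which is an isomorphism by the first step. Hence $\Phi$ is an isomorphism of complexes, and in particular a quasi-isomorphism.

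The main obstacle is the passage to the derived category: one must ensure that $\widetilde P \otimes \widetilde Q$ is genuinely K-projective over $R \otimes S$ so that it computes $\RHom_{R \otimes S}(P \otimes Q, -)$ correctly. This is exactly where the hypothesis that the base is a field is indispensable, as it guarantees termwise projectivity of the tensor product and exactness of base-tensoring; combined with bounded-aboveness, K-projectivity is automatic.
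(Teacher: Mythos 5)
Your proposal is correct and follows essentially the same route as the paper: define the natural map on modules, check it is an isomorphism for finitely generated projectives, extend termwise to total complexes after replacing $P$ and $Q$ by bounded above complexes of projectives (using that tensoring over the field $k$ preserves projectivity), and observe that in the perfect case all products become finite sums so the map is an isomorphism of complexes. The only difference is presentational: the paper writes out the degree-$n$ component $\Phi^n$ explicitly and isolates where bounded-aboveness and finite generation are used to commute tensor products and Hom with the relevant products and sums, which you compress into the phrase ``standard Koszul sign conventions,'' but the substance is the same.
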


\begin{proof}
For left $R$-modules $M$ and $N$ and $S$-modules $H$ and $J$, it is easy to see that there is a natural map 
\[
\phi: \Hom_R(M, N) \otimes \Hom_S(H, J) \to \Hom_{R \otimes S}(M \otimes H, N \otimes J), 
\]
respecting any bimodule structures that arise.  A similar argument as in the first paragraph of the proof of Lemma~\ref{lem:extension}
shows that $\phi$ is an isomorphism in case $M$ and $H$ are finitely generated projective modules. 

One may now replace $P$ and $Q$ by bounded above complexes of projectives and proceed in the same spirit as the proof Lemma~\ref{lem:extension}, obtaining the desired isomorphism if $P$ and $Q$ are perfect. We omit the details.
\end{proof}

\begin{proposition}\label{prop:twisted CY tensor}
If $A$ and $B$ are (graded) twisted Calabi-Yau algebras of respective dimensions~$d_1$ 
and~$d_2$, then $A \otimes B$ is a (graded) twisted Calabi-Yau algebra of 
dimension~$d_1 + d_2$ whose Nakayama bimodule is the tensor product of the 
Nakayama bimodules of $A$ and $B$.
If $A$ and $B$ have Nakayama automorphisms $\mu_1, \mu_2$, respectively, then 
$A \otimes B$ has Nakayama automorphism $\mu_1 \otimes \mu_2$.
\end{proposition}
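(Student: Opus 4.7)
The plan is to reduce the statement to the product formula for $\RHom$ proved in Lemma~\ref{lem:tensor}, after making the standard identification $A^e \cong A_1^e \otimes A_2^e$ of enveloping algebras. Denote $A = A_1 \otimes A_2$. Homological smoothness of $A$ is already handled by Proposition~\ref{prop:smooth tensor}, so the real work is computing the bimodule $\Ext^*_{A^e}(A, A^e)$.

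First I would observe that the ``swap the middle tensorands'' map gives a $k$-algebra isomorphism $A^e \cong A_1^e \otimes A_2^e$ under which the left $A^e$-module $A = A_1 \otimes A_2$ corresponds to $A_1 \otimes A_2$ with the outer $A_1^e \otimes A_2^e$-action. Since each $A_i$ is perfect as a left $A_i^e$-module by the twisted Calabi--Yau hypothesis, Lemma~\ref{lem:tensor} applies with $R = A_1^e$, $S = A_2^e$, $P = A_1$, $Q = A_2$, $N = A_1^e$, $J = A_2^e$, and yields a quasi-isomorphism
\[
\RHom_{A^e}(A, A^e) \;\cong\; \RHom_{A_1^e}(A_1, A_1^e) \;\otimes\; \RHom_{A_2^e}(A_2, A_2^e)
\]
in $D(\rMod A^e)$, where the right $A^e$-structure on the target comes from the right $A_i^e$-structures on each $A_i^e$. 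Using the derived form of the twisted Calabi--Yau hypothesis (Lemma~\ref{lem:derived form}), the right-hand side is quasi-isomorphic to $U_1[-d_1] \otimes U_2[-d_2] \cong (U_1 \otimes U_2)[-(d_1 + d_2)]$, where $U_i$ is the Nakayama bimodule of $A_i$.

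Next I would check that $U_1 \otimes U_2$ is an invertible $(A,A)$-bimodule, with inverse $U_1^{-1} \otimes U_2^{-1}$: this follows from the compatibility of $\otimes_A$ with the factorization $A = A_1 \otimes A_2$, since
\[
(U_1 \otimes U_2) \otimes_A (U_1^{-1} \otimes U_2^{-1}) \;\cong\; (U_1 \otimes_{A_1} U_1^{-1}) \otimes (U_2 \otimes_{A_2} U_2^{-1}) \;\cong\; A,
\]
with the reverse composition handled symmetrically. Invoking Lemma~\ref{lem:derived form} in the other direction then shows that $A$ is twisted Calabi--Yau of dimension $d_1 + d_2$ with Nakayama bimodule $U_1 \otimes U_2$. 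If $U_i = {}^1 A_i^{\mu_i}$, the evident identity map gives a bimodule isomorphism ${}^1 A_1^{\mu_1} \otimes {}^1 A_2^{\mu_2} \cong {}^1 A^{\mu_1 \otimes \mu_2}$, yielding the Nakayama automorphism claim. The graded case is identical, using the graded version of Lemma~\ref{lem:tensor} (or, alternatively, invoking Theorem~\ref{thm:graded versus ungraded} to pass between the two settings).

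I expect the main technical obstacle to be keeping careful track of the various bimodule structures when invoking Lemma~\ref{lem:tensor}: one must verify that the natural map $\Phi$ intertwines the right $A_1^e \otimes A_2^e$-action coming from the right factor of each $\RHom$ with the right $A^e$-action on $\RHom_{A^e}(A, A^e)$ induced by the right factor of $A^e$, under the identification $A^e \cong A_1^e \otimes A_2^e$. Once this bookkeeping is done, the remainder is a straightforward chain of canonical isomorphisms.
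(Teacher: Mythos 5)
Your proposal is correct and follows essentially the same route as the paper's own proof: homological smoothness via Proposition~\ref{prop:smooth tensor}, the identification $A^e \cong A_1^e \otimes A_2^e$, an application of Lemma~\ref{lem:tensor} to split $\RHom_{A^e}(A,A^e)$ as the tensor product of the two one-factor $\RHom$ complexes, and the identification of the resulting invertible bimodule $U_1 \otimes U_2$. The extra care you flag about bimodule bookkeeping and the explicit verification that $U_1 \otimes U_2$ is invertible are points the paper leaves implicit, but the argument is the same.
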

\begin{proof}
Proposition~\ref{prop:smooth tensor} implies that $R = A \otimes B$ is homologically smooth.
Let $U_1$ and $U_2$ respectively denote the Nakayama bimodules of $A$ and $B$,
which are invertible bimodules and therefore projective on each side. Now applying Lemma~\ref{lem:tensor} in the second isomorphism below, 
which gives an isomorphism since $A$ is perfect over $A^e$ and $B$ is perfect over $B^e$, 
we obtain
\begin{align*}
\RHom_{R^e}(R,R^e) &\cong \RHom_{A^e \otimes B^e}(A \otimes B, A^e \otimes B^e) \\
&\cong \RHom_{A^e}(A, A^e) \otimes \RHom_{B^e}(B, B^e) \\
&\cong U_1[-d_1] \otimes U_2[-d_2] = (U_1 \otimes U_2)[-(d_1 + d_2)], 
\end{align*}
where these isomorphisms hold as complexes of right $A^e$-modules.  
Thus we see that $A$ is twisted Calabi-Yau of the desired dimension, and it is easy to verify 
the behavior of Nakayama automorphisms from the above.
\end{proof}

A fundamental tool for the study of twisted Calabi-Yau algebras, and a key motivation behind their
definition, is Van den Bergh duality~\cite{V, V:erratum}. Readers may find an excellent survey of this 
topic in~\cite{Kra}. We record the derived version of this duality below.

\begin{lemma}\label{lem:derived VdB}
Suppose $A$ is (graded) twisted Calabi-Yau of dimension~$d$, with $U = \Ext^d_{A^e}(A,A^e)$. 
Then for any complex of (graded) left $(A^e, C)$-bimodules $M$, one has  a (graded) isomorphism
\[
\RHom_{A^e}(A,M) \cong (U \otimes^L_{A^e} M)[-d].
\]
as complexes of right $C$-modules.  
\end{lemma}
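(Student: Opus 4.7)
The plan is to derive this essentially as a formal consequence of Lemma~\ref{lem:moving tensor}(1) (``moving tensor'') together with the derived-categorical reformulation of the twisted Calabi-Yau condition in Lemma~\ref{lem:derived form}. First, I would observe that since $A$ is twisted Calabi-Yau of dimension~$d$, it is in particular homologically smooth, so $A$ is perfect as a left $A^e$-module, and moreover Lemma~\ref{lem:derived form} gives a quasi-isomorphism $\RHom_{A^e}(A,A^e) \cong U[-d]$ in $D^b(\rMod A^e)$ (respectively, in the graded derived category, invoking Theorem~\ref{thm:graded versus ungraded} to pass between graded and ungraded).

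Next, I would apply Lemma~\ref{lem:moving tensor}(1) with the base algebra taken to be $A^e$, with the perfect complex of left $A^e$-modules being $A$ itself, and with $M$ in the role of the complex of left $A^e \otimes C\op$-bimodules. (Here the hypothesis that $M$ is an $(A^e, C)$-bimodule makes it exactly a left $A^e \otimes C\op$-module, so the output of the lemma is naturally a complex of right $C$-modules.) This yields the natural isomorphism
\[
\RHom_{A^e}(A, A^e) \otimes_{A^e}^L M \;\cong\; \RHom_{A^e}(A, M)
\]
as complexes of right $C$-modules. Substituting the twisted Calabi-Yau quasi-isomorphism on the left-hand side and moving the shift out of the derived tensor product gives
\[
\RHom_{A^e}(A, M) \;\cong\; U[-d] \otimes_{A^e}^L M \;\cong\; (U \otimes_{A^e}^L M)[-d],
\]
which is the desired conclusion.

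For the graded statement, the exact same argument applies: one uses the graded form of Lemma~\ref{lem:moving tensor}(1) (whose proof, as indicated in that lemma, carries over verbatim) together with the graded derived form of the twisted Calabi-Yau condition, and one uses that $U$ is then a graded-invertible bimodule so that the shift and tensor operate on a legitimate graded object.

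There is essentially no serious obstacle here, since both ingredients are already in place: the content is entirely about bookkeeping the bimodule structures. The only point requiring a bit of care is tracking that the various module actions (the right $A^e$-action on $\RHom_{A^e}(A,A^e)$ used to define $U$, and the remaining right $C$-action coming from the bimodule structure of $M$) are the ones appearing in Lemma~\ref{lem:moving tensor}(1), but this is guaranteed by the naturality clause in that lemma.
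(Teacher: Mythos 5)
Your proposal is correct and follows exactly the same route as the paper: the paper's proof likewise applies Lemma~\ref{lem:moving tensor}(1) with base algebra $A^e$ and the perfect complex $A$ to get $\RHom_{A^e}(A,M) \cong \RHom_{A^e}(A,A^e) \otimes^L_{A^e} M$, then substitutes $U[-d]$ via the derived form of the twisted Calabi-Yau condition and pulls the shift out of the tensor product.
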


\begin{proof}
This is the same as in~\cite[Theorem~1]{V}, but without passing to cohomology. 
Beginning with Lemma~\ref{lem:moving tensor}, we have:
\begin{align*}
\RHom_{A^e}(A,M) &\cong \RHom_{A^e}(A,A^e) \otimes^L_{A^e} M \\
&\cong U[-d] \otimes^L_{A^e} M \\
&\cong (U \otimes^L_{A^e} M)[-d]. \qedhere
\end{align*}
\end{proof}

Adapting Ginzburg's observation in~\cite[Remark~3.4.2]{G}, we may apply Van den Bergh duality to
show that the property of being twisted Calabi-Yau of dimension~$d$ is Morita invariant, by providing
an alternate characterization of such algebras.

\begin{proposition}
\label{prop:CY Morita}
Let $A$ be an algebra and $U$ an invertible $(A,A)$-bimodule. Then $A$ is twisted Calabi-Yau of 
dimension~$d$ with Nakayama bimodule $U$ if and only if $A$ is homologically smooth and, for 
every algebra $C$ and for $0 \leq i \leq d$, one has natural isomorphisms of functors
\[
\Tor_i^{A^e}(U,-) \cong \Ext^{d-i}_{A^e}(A,-) \colon (A^e \otimes C\op) \lMod \to \rMod C.
\]
Consequently, the property of being twisted Calabi-Yau of dimension~$d$ is preserved under
$k$-linear Morita equivalence.
\end{proposition}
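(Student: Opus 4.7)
My plan is to establish the biconditional characterization first and then deduce Morita invariance directly from it.

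For the forward implication, I would invoke Van den Bergh duality (Lemma~\ref{lem:derived VdB}): if $A$ is twisted Calabi-Yau of dimension $d$ with Nakayama bimodule $U$, then for any complex $M$ of $(A^e, C)$-bimodules we have $\RHom_{A^e}(A, M) \cong (U \otimes^L_{A^e} M)[-d]$ in $D(\rMod C)$. Taking $(d-i)$-th cohomology of both sides yields the natural isomorphisms $\Ext^{d-i}_{A^e}(A, M) \cong \Tor^{A^e}_i(U, M)$ of right $C$-modules, as claimed.

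For the converse, assume $A$ is homologically smooth and the $\Tor$-$\Ext$ isomorphisms hold. Specializing to $M = A^e$ (with $C = A^e$ so that the right $A^e$-action is captured), and using that $A^e$ is free over itself so that $\Tor^{A^e}_i(U, A^e)$ is $U$ for $i = 0$ and zero for $i > 0$, one obtains $\Ext^j_{A^e}(A, A^e) = 0$ for $0 \leq j < d$ and $\Ext^d_{A^e}(A, A^e) \cong U$ as bimodules. The main obstacle is to establish the remaining vanishing $\Ext^j_{A^e}(A, A^e) = 0$ for $j > d$, since the hypothesis only supplies information in the range $0 \leq i \leq d$. My approach is to leverage the $i = 0$ case: the natural isomorphism $\Ext^d_{A^e}(A, -) \cong U \otimes_{A^e} -$ shows that $\Ext^d_{A^e}(A, -)$ is right exact, since tensor product is always right exact. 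A standard long-exact-sequence argument — embedding an arbitrary module $N$ in an injective $I$ with cokernel $L$ and chasing the LES of $\Ext^\bullet_{A^e}(A, -)$, using that $\Ext^{d+1}_{A^e}(A, I) = 0$ — shows that right exactness of $\Ext^d_{A^e}(A, -)$ forces $\Ext^{d+1}_{A^e}(A, -) = 0$, and dimension shifting then yields $\pdim_{A^e}(A) \leq d$. Combined with $\pdim_{A^e}(A) \geq d$ from $U \neq 0$ and Lemma~\ref{lem:extderived}(1), this gives $\pdim_{A^e}(A) = d$, confirming that $A$ is twisted Calabi-Yau of dimension $d$ with Nakayama bimodule $U$.

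For the Morita invariance statement, I would argue that the characterization just proved is manifestly invariant under $k$-linear Morita equivalence. Recall from the proof of Proposition~\ref{prop:smooth Morita} that such an equivalence $A \sim B$ induces a $k$-linear monoidal equivalence between the categories of bimodules, sending $A$ to $B$ (as tensor units) and preserving invertibility. Extending to the derived category and to $(A^e, C)$-bimodules for any $C$ (where the right $C$-action is preserved), this equivalence transforms the isomorphisms $\Tor_i^{A^e}(U, -) \cong \Ext^{d-i}_{A^e}(A, -)$ for $A$ into the corresponding isomorphisms for $B$ with transformed Nakayama bimodule. Combined with the Morita invariance of homological smoothness from Proposition~\ref{prop:smooth Morita}, this shows that twisted Calabi-Yau of fixed dimension $d$ is Morita invariant.
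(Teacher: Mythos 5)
Your proof follows the paper's route: Van den Bergh duality (Lemma~\ref{lem:derived VdB}) plus passing to cohomology for the forward direction, evaluation of the natural isomorphisms at the $(A^e,A^e)$-bimodule $A^e$ together with flatness of $A^e$ for the converse, and the induced monoidal equivalence of bimodule categories (as in Proposition~\ref{prop:smooth Morita}) for Morita invariance. The one place you go beyond the paper is the right-exactness/long-exact-sequence/dimension-shifting argument forcing $\pdim_{A^e}(A) \leq d$, which explicitly supplies the vanishing of $\Ext^{j}_{A^e}(A,A^e)$ for $j > d$ demanded by Definition~\ref{def:twisted CY} — a point the paper's converse does not spell out — so this is a correct and welcome addition rather than a deviation.
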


\begin{proof}
If $A$ is twisted Calabi-Yau of dimension~$d$, then $A$ is homologically smooth by definition, 
and satisfies the above isomorphisms by passing to cohomology in Lemma~\ref{lem:derived VdB}.  
Conversely, suppose that $A$ is perfect as an $A^e$-module and satisfies the isomorphisms above. 
Evaluating these functors at the $(A^e,A^e)$-bimodule
$A^e$ yields isomorphisms $\Ext^{i}_{A^e}(A,A^e) \cong \Tor_{d-i}^{A^e}(U,A^e)$ as right
$A^e$-modules for $0 \leq i \leq d$.
For $i < d$ one has 
\[
\Ext^i_{A^e}(A,A^e) \cong \Tor_{d-i}^{A^e}(U,A^e) = 0
\]
by flatness of $A^e$. For $i = d$ we have
\[
\Ext^d_{A^e}(A,A^e) \cong \Tor_0^{A^e}(U,A^e) = U \otimes_{A^e} A^e \cong U, 
\]
as right $A^e$-modules.
Thus $A$ is twisted Calabi-Yau of dimension~$d$ with Nakayama bimodule $U$.

Now suppose that $A$ is twisted Calabi-Yau of dimension~$d$ with Nakayama bimodule $U$,
and suppose that an algebra $B$ is $k$-linearly Morita equivalent to $A$. Then $B$ is homologically
smooth by Proposition~\ref{prop:smooth Morita}. As discussed in the proof of that proposition,
a $k$-linear equivalence of categories $A \lMod \to B \lMod$ induces a monoidal $k$-linear 
equivalence $F \colon A^e \lMod \to B^e \lMod$. 
Because $A$ and $B$ are the respective tensor units, we have $F(A) \cong B$; also, 
$V = F(U)$ must be a $k$-central invertible $(B,B)$-bimodule.
Given an algebra $C$, $F$ also induces an equivalence of categories
$(A^e \otimes C\op) \lMod \to (B^e \otimes C\op) \lMod$ by simply transporting the right 
$C$-action on a left $A^e$-module $X$ via $F$ as an algebra morphism 
$C\op \to \End_{A^e}(X) \to \End_{B^e}(F(X))$; with a slight abuse of notation we use $F$
to denote this equivalence as well.
Now the isomorphism in the statement induces a natural isomorphism
\[
\Tor_i^{A^e}(U,F^{-1}(-)) \cong \Ext^{d-i}_{A^e}(A,F^{-1}(-)) \colon (B^e \otimes C\op) \lMod \to \rMod C.
\]
Because $F$ is an equivalence, it also preserves the construction of $\Ext$ and $\Tor$ spaces, so that
applying $F$ to the previous isomorphism yields a natural isomorphism of functors
\[
\Tor_i^{B^e}(V,-) \cong \Ext^{d-i}_{B^e}(B,-) \colon (B^e \otimes C\op) \lMod \to \rMod C.
\]
Thus $B$ is twisted Calabi-Yau of dimension~$d$, as desired.
\end{proof}

\subsection{Homological tools for finite-dimensional modules}
This final subsection records some results that are of use for handling finite-dimensional modules
over (graded) twisted Calabi-Yau algebras. It concludes with a characterization of twisted Calabi-Yau
algebras of dimension~0.

The following result is a type of Serre duality formula. The idea of the next result and its 
corollary were extracted from the proof of~\cite[Lemma~4.1]{Keller}.
We will use the notation $X^* = \RHom_k(X,k) = \Hom_k(X,k)$ below to denote the extension
of the $k$-dual to the derived category of $k$-vector spaces.  

\begin{proposition}\label{prop:duality}
Let $A$ be a (graded) twisted Calabi-Yau algebra of dimension~$d$, and set $U = \Ext^d_{A^e}(A,A^e)$.
Let $M$ be a (graded) $(A, B)$-bimodule and $N$ a (graded) $(A, C)$-bimodule, with $M$ finite-dimensional. Then there are
(graded) quasi-isomorphisms
\[
\RHom_A(M,N) \cong M^* \otimes^L_A (U \otimes_A N)[-d] \cong (M^* \otimes_A U) \otimes^L_A N[-d],
\]
and, for each integer $i$, we have
\[
\Ext^i_A(M,N) \cong \Tor_{d-i}^A(M^*, U \otimes_A N) \cong \Tor_{d-i}^A(M^* \otimes_A U, N)
\]
as (graded) $(B, C)$-bimodules.  
\end{proposition}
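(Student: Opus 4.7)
The plan is to derive the quasi-isomorphisms by chaining together Van den Bergh duality (Lemma~\ref{lem:derived VdB}) with the adjunction identities of Lemma~\ref{lem:derived Tor and Ext}. First, since $M$ is finite-dimensional over $k$, the canonical evaluation map $M^* \otimes N \to \Hom_k(M,N)$ is a bijection that respects all relevant bimodule structures, exhibiting $\Hom_k(M,N)$ as an $(A^e \otimes (B^{\op} \otimes C^{\op}))$-module. Applying Lemma~\ref{lem:derived Tor and Ext}(2) with $P = A$ (so that $P \otimes_A^L M = M$) yields the identification
\[
\RHom_A(M,N) \cong \RHom_{A^e}(A, \Hom_k(M,N)) \cong \RHom_{A^e}(A, M^* \otimes N)
\]
as complexes of $(B,C)$-bimodules.

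Next I would apply Van den Bergh duality in its derived form (Lemma~\ref{lem:derived VdB}) to the $A^e$-module $M^* \otimes N$, which gives
\[
\RHom_{A^e}(A, M^* \otimes N) \cong \bigl(U \otimes_{A^e}^L (M^* \otimes N)\bigr)[-d]
\]
as complexes of $(B,C)$-bimodules. The third step is to convert the tensor product over $A^e$ into an iterated tensor product over $A$, using Lemma~\ref{lem:derived Tor and Ext}(1) with $P = U$, $K = M^*$ (viewed as a $(B,A)$-bimodule), and $N$ as an $(A,C)$-bimodule:
\[
U \otimes_{A^e}^L (M^* \otimes N) \cong M^* \otimes_A^L U \otimes_A^L N.
\]
Since $U$ is an invertible bimodule, it is finitely generated projective as both a left and a right $A$-module, hence flat on both sides. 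Consequently $U \otimes_A^L N = U \otimes_A N$ and $M^* \otimes_A^L U = M^* \otimes_A U$, and the triple tensor product may be computed in either associative order:
\[
M^* \otimes_A^L U \otimes_A^L N \cong M^* \otimes_A^L (U \otimes_A N) \cong (M^* \otimes_A U) \otimes_A^L N.
\]
Splicing these isomorphisms together yields the asserted quasi-isomorphisms in the derived category.

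The $\Ext/\Tor$ version then follows by taking cohomology in degree $i$ of the resulting complexes, using the sign convention $H^i(\RHom_A(M,N)) = \Ext^i_A(M,N)$ together with $\Tor_j^A(-,-) = H^{-j}(- \otimes_A^L -)$ and the cohomological shift $H^i(X[-d]) = H^{i-d}(X)$. The graded case is proved in exactly the same way, using the graded analogues of Lemmas~\ref{lem:derived Tor and Ext} and~\ref{lem:derived VdB} noted in those statements.

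The main obstacle, as with most derived-category bimodule arguments of this kind, is bookkeeping: one must consistently track how the $(B,C)$-bimodule structure on each complex arises, and in particular verify that the inner versus outer $A^e$-actions on $M^* \otimes N$ (cf.\ the discussion preceding the proof of Lemma~\ref{lem:derived Tor and Ext}) match up correctly when Lemma~\ref{lem:derived Tor and Ext}(1) is invoked. None of this is substantive, but it is what a careful write-up must check.
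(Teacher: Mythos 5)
Your proposal is correct and follows essentially the same route as the paper's proof: identify $\Hom_k(M,N)$ with $N\otimes M^*$ using finite-dimensionality of $M$, rewrite $\RHom_A(M,N)$ as $\RHom_{A^e}(A,\Hom_k(M,N))$ via Lemma~\ref{lem:derived Tor and Ext}(2), apply Van den Bergh duality (Lemma~\ref{lem:derived VdB}), unwind the $A^e$-tensor with Lemma~\ref{lem:derived Tor and Ext}(1), and use projectivity of the invertible bimodule $U$ on both sides to drop the derived tensors. The only detail worth adding in the graded case is the paper's remark that $M$, being finite-dimensional over a homologically smooth algebra, is perfect (Lemma~\ref{lem:smooth resolution}), so $\RHom$ and $\RgrHom$ agree and the statement can be made uniformly.
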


\begin{proof} 
Because $M$ is finite-dimensional over $k$, there is  a natural isomorphism of $(A^e, B\op \otimes C)$-bimodules
$\Hom_k(M,N) \cong N \otimes M^*$. 
Combining this observation with Lemma~\ref{lem:derived Tor and Ext} and 
Lemma~\ref{lem:derived VdB} yields 
\begin{align*}
\RHom_A(M,N) &\cong \RHom_{A^e}(A, \Hom_k(M,N))  \qquad ( \text{by Lemma}~\ref{lem:derived Tor and Ext}(2))  \\
&\cong \RHom_{A^e}(A, N \otimes M^*) \\
& \cong U \otimes^L_{A^e} (N \otimes M^*)[-d] \qquad (\text{by Lemma}~\ref{lem:derived VdB})\\
&\cong M^* \otimes^L_A (U \otimes^L_A N)[-d] \qquad (\text{by Lemma}~\ref{lem:derived Tor and Ext}(1)), 
\end{align*}
and these are quasi-isomorphisms of complexes of right $B\op \otimes C$-modules.
By the definition of twisted Calabi-Yau, $U$ is invertible and hence projective as both a left and a right $A$-module, so we obtain
\[
M^* \otimes^L_A (U \otimes_A N) \cong M^* \otimes^L_A U \otimes^L_A N 
\cong (M^* \otimes_A U) \otimes^L_A N. 
\]
The isomorphisms between $\Ext$ and $\Tor$ are obtained by taking cohomology.

The proof in the graded case is exactly the same.  Note that because $M$ is finite-dimensional and $A$ is homologically smooth, $M$ is perfect by Lemma~\ref{lem:smooth resolution}.  Thus in the graded case there is no difference betweeen $\RHom_A(M,N)$ and $\RgrHom_A(M,N)$, justifying the uniform statement of the result.
\end{proof}

In case both $M$ and $N$ above are finite-dimensional, we obtain the following, more conventional
Serre duality result. 

\begin{corollary}\label{cor:Serre duality}
Keeping the hypotheses of Proposition~\ref{prop:duality}, assume additionally that $N$ is
finite-dimensional. Then we have (graded) quasi-isomorphisms
\[
\RHom_A(M,N)^* \cong \RHom_A(U \otimes_A N, M)[d]
\]
and, for each integer $i$,
\[
\Ext^i_A(M,N)^* \cong \Ext^{d-i}_A(U \otimes_A N, M)
\]
as (graded) $(C, B)$-bimodules.  
\end{corollary}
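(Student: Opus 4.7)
The approach is to deduce the corollary directly from Proposition~\ref{prop:duality} by applying the $k$-dual functor $(-)^* = \RHom_k(-,k)$ and invoking derived tensor-Hom adjunction. Proposition~\ref{prop:duality} supplies the quasi-isomorphism
\[
\RHom_A(M,N) \cong M^* \otimes^L_A (U \otimes_A N)[-d]
\]
of complexes of $(B,C)$-bimodules. Dualizing both sides and absorbing the shift gives
\[
\RHom_A(M,N)^* \cong \bigl(M^* \otimes^L_A (U \otimes_A N)\bigr)^*[d],
\]
and the derived adjunction
\[
(X \otimes^L_A Y)^* \cong \RHom_A(Y, X^*),
\]
applied with $X = M^*$ and $Y = U \otimes_A N$, together with the reflexivity isomorphism $M^{**} \cong M$ (which holds because $M$ is finite-dimensional), identifies the right-hand side with $\RHom_A(U \otimes_A N, M)[d]$. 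This yields the first claim, and a direct check of the bimodule actions confirms the identification is as $(C,B)$-bimodules.

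The main technical point I expect to require care is justifying the derived adjunction rigorously. The cleanest route is to observe that since $A$ (and hence $A\op$) is homologically smooth and $M^*$ is a finite-dimensional right $A$-module, Lemma~\ref{lem:smooth resolution}(2) implies that $M^*$ is perfect as a right $A$-module. One may therefore replace $M^*$ by a bounded complex $P$ of finitely generated projective right $A$-modules. For such $P$, the underived identity $\Hom_k(P^i \otimes_A Y^j, k) \cong \Hom_A(Y^j, \Hom_k(P^i, k))$ is termwise, and $P^* = \Hom_k(P, k)$ is a bounded complex of finitely generated projective left $A$-modules, so the adjunction passes to the derived category computation of $\RHom_A(U \otimes_A N, M)$ without further hypotheses.

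Finally, to extract the statement for individual $\Ext$ groups I would pass to cohomology. Each $\Ext^i_A(M,N)$ is finite-dimensional, since $M$ is perfect and $N$ is finite-dimensional, and similarly each $\Ext^j_A(U \otimes_A N, M)$ is finite-dimensional; note here that $U \otimes_A N$ is finite-dimensional because $U$ is finitely generated projective on each side as an invertible bimodule. Over the field $k$, the dual functor satisfies $H^i(C^*) \cong H^{-i}(C)^*$ for any complex $C$ with finite-dimensional cohomology, so taking $H^i$ of both sides of the first isomorphism and reindexing gives $\Ext^i_A(M,N)^* \cong \Ext^{d-i}_A(U \otimes_A N, M)$ as $(C,B)$-bimodules, as desired.
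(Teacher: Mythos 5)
Your argument is essentially the paper's: dualize the first quasi-isomorphism of Proposition~\ref{prop:duality}, apply tensor--Hom adjunction over $k$, identify $\RHom_k(M^*,k)\cong M^{**}\cong M$ using $\dim_k M<\infty$, and pass to cohomology; the paper simply invokes the general derived adjunction $\RHom_k(X\otimes^L_A Y,k)\cong \RHom_A(Y,\RHom_k(X,k))$ where you verify it by hand. One slip in your verification: if $P$ is a bounded complex of finitely generated projective right $A$-modules, then $\Hom_k(P,k)$ is a bounded complex of \emph{injective} left $A$-modules, not projective ones (for instance $\Hom_k(A,k)=A^*$ is injective as a left $A$-module and is projective only when $A$ is self-injective), so the claim ``$P^*$ is a bounded complex of finitely generated projective left $A$-modules'' is false --- and projectivity of the second Hom-variable would not compute $\RHom$ anyway. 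The step is nonetheless fine for the right reason: since each $P^i$ is flat, $\Hom_k(P,k)$ is a bounded (hence K-injective) complex of injectives quasi-isomorphic to $M^{**}\cong M$, so applying $\Hom_A(U\otimes_A N,-)$ to it already computes $\RHom_A(U\otimes_A N,M)$; with that repair (or by citing the standard derived adjunction directly, as the paper does) your proof is complete. A small further remark: the finite-dimensionality of the $\Ext$ groups is not needed at the last step, since $\Hom_k(-,k)$ is exact and hence $H^i(C^*)\cong H^{-i}(C)^*$ for any complex $C$; and the graded case requires only the observation, as in the paper, that all the canonical maps used preserve the grading.
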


\begin{proof}
Invoking Proposition~\ref{prop:duality}, we compute as follows:
\begin{align*}
\RHom_A(M,N)^* &\cong \RHom_k(\RHom_A(M,N),k) \\
&\cong \RHom_k(M^* \otimes^L_A (U \otimes_A N), k)[d] \\
&\cong \RHom_A(U \otimes_A N, \RHom_k(M^*,k))[d] \\
&\cong \RHom_A(U \otimes_AN, M)[d].
\end{align*}
As before, the isomorphism relating the $\Ext$ groups is obtained by passing to cohomology.

Again, the same proof applies in the graded case.
\end{proof}

We may apply the above to compute the global dimension of a twisted Calabi-Yau algebra 
that has a nontrivial finite-dimensional representation, including locally finite graded algebras.

\begin{corollary}
\label{cor:twisted CY global dimension}
Let $A$ be a twisted Calabi-Yau algebra of dimension~$d$. If $A$ has a nonzero finite-dimensional 
module, then $\gldim_l(A) = \gldim_r(A) = d$.  
In particular, this holds if $A$ is graded with $\dim_k A_0 < \infty$.
\end{corollary}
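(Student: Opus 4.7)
The plan is to prove the global dimension equals $d$ by establishing matching upper and lower bounds, both of which follow cleanly from the tools already built. For the upper bound, I would use that $A$ being twisted Calabi-Yau of dimension~$d$ forces $\pdim_{A^e}(A) = d$ by Lemma~\ref{lem:derived form}. Fixing a projective resolution $P^{\bullet} \to A$ in $A^e \lMod$ of length at most $d$, Lemma~\ref{lem:smooth resolution}(1) says that for any left $A$-module $N$, the tensor product $P^{\bullet} \otimes_A N$ is a projective resolution of $N$ in $A \lMod$, and clearly of length at most~$d$. This gives $\gldim_l(A) \leq d$; the analogous argument on the right (or an invocation of Remark~\ref{rem:swap} to pass to $A^{\op}$) gives $\gldim_r(A) \leq d$.

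For the lower bound, the key is Serre duality in the form of Corollary~\ref{cor:Serre duality}. Let $M$ be a nonzero finite-dimensional left $A$-module, and take $N = M$. Setting $i = 0$ in the corollary produces an isomorphism
\[
\Hom_A(M,M)^* \cong \Ext^d_A(U \otimes_A M,\, M).
\]
The left-hand side is nonzero, as it contains the dual of $\id_M$. Hence $\Ext^d_A(U \otimes_A M, M) \neq 0$, which forces $\pdim_A(U \otimes_A M) \geq d$ and therefore $\gldim_l(A) \geq d$. Combined with the upper bound, this yields $\gldim_l(A) = d$.

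To conclude the right-sided equality, I would simply transport the lower-bound argument to the opposite algebra. By Remark~\ref{rem:swap}, $A^{\op}$ is twisted Calabi-Yau of dimension~$d$, and the $k$-dual $M^* = \Hom_k(M,k)$ is a nonzero finite-dimensional left $A^{\op}$-module; applying the previous paragraph to $A^{\op}$ with this module gives $\gldim_l(A^{\op}) = \gldim_r(A) = d$. For the final assertion in the graded locally finite case, I would note that $S = A/J(A) \cong A_0/J(A_0)$ is a finite-dimensional quotient of $A_0$, which is nonzero as long as $A$ itself is nonzero, so this supplies the required finite-dimensional representation. There is no serious obstacle here: the real content lives in Corollary~\ref{cor:Serre duality} and Lemma~\ref{lem:smooth resolution}, and the present corollary is a direct harvest.
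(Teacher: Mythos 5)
Your proof is correct and follows essentially the same route as the paper: the upper bound via $\pdim_{A^e}(A)=d$ together with Lemma~\ref{lem:smooth resolution}(1) is identical, and your lower bound via Corollary~\ref{cor:Serre duality} (showing $\Ext^d_A(U\otimes_A M,M)\neq 0$ from the nonvanishing of $\Hom_A(M,M)$) is only a minor variant of the paper's, which applies Proposition~\ref{prop:duality} directly with $N=A$ to get $\Ext^d_A(M,A)\cong M^*\otimes_A U\neq 0$. Both are direct harvests of the same Van den Bergh duality, and your passage to $A\op$ for the right-sided statement matches the paper's symmetry argument.
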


\begin{proof}
Recall from Lemma~\ref{lem:derived form} that $\pdim({}_{A^e} A) = d$, so that the left and right
global dimensions of $A$ are both at most~$d$ by Lemma~\ref{lem:smooth resolution}(1).
If $A$ has a finite-dimensional left module $M \neq 0$, then it also has a finite-dimensional right 
module $M^* \neq 0$. So by symmetry, it suffices to show that a finite-dimensional left $A$-module 
$M \neq 0$ has projective dimension~$d$.
But this follows from Proposition~\ref{prop:duality}, since $U = \Ext^d_{A^e}(A,A^e)$ is invertible
and
\[
\Ext^d_A(M,A) \cong \Tor_0^A(M^*, U) = M^* \otimes_A U \neq 0. 
\]
If $A$ is graded with $\dim_k A_0 < \infty$, then $A_0 = A/A_{\geq 1}$ is a nonzero finite-dimensional 
module and the argument above applies.  Alternatively, one may conclude that 
$d = \pdim({}_{A^e} A) = \gldim_l(A) = \gldim_r(A)$ from Proposition~\ref{prop:global dimension}(3), 
since $S = A/J(A)$ must be separable by Theorem~\ref{thm:Rickard}.
\end{proof}

The conclusion of the corollary above may fail if $A$ has no finite-dimensional representations.

\begin{example}
For an integer $n \geq 1$, let $A_n$ denote the $n$th Weyl algebra over a field $k$ of 
characteristic zero. Then $A_n$ is a Calabi-Yau algebra of dimension $d = 2n$ as explained 
in~\cite[Exercise~3.7.11]{Schedler}, 
while the left and right global dimensions of $A_n$ are $n$~\cite[Theorem~5.8]{MR}.
Furthermore, let $D_n$ denote the division algebra of quotients of $A_n$.
Then $D_n$ is a Calabi-Yau algebra of dimension $d = 2n$ as discussed 
in~\cite[Example~1.9(e) and p.~115]{YZ}, while $\gldim(D_n) = 0$.
\end{example}

The following standard result will be useful in the applications of the 
duality results above.
\begin{lemma}
\label{lem:Sstar}
Let $S$ be a finite-dimensional semisimple $k$-algebra.  Then $S$ is a symmetric Frobenius algebra; that is, 
$S^* = \Hom_k(S, k) \cong S$ as $(S, S)$-bimodules.
\end{lemma}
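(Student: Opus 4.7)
The plan is to produce a \emph{symmetric nondegenerate trace} $\lambda \colon S \to k$, meaning a $k$-linear functional satisfying $\lambda(xy) = \lambda(yx)$ for all $x,y \in S$ and such that the bilinear form $(x,y) \mapsto \lambda(xy)$ is nondegenerate. Given such a $\lambda$, the map $S \to S^*$ sending $x \mapsto \lambda(x \cdot -)$ is a morphism of $(S,S)$-bimodules (symmetry of $\lambda$ converts the right action into the correct left action on $S^*$), and it is an isomorphism by nondegeneracy together with a dimension count. I would carry out the construction of $\lambda$ in three successive reduction steps.

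First, by the Wedderburn structure theorem (as in Lemma~\ref{lem:sepdef}(5), but without the separability hypothesis on the centers), I write $S \cong S_1 \times \cdots \times S_n$ with each $S_i = \M_{r_i}(D_i)$ simple Artinian over a finite-dimensional division $k$-algebra $D_i$. Because the factors $S_i$ are orthogonal two-sided ideals, both $S$ and $S^*$ split compatibly as $(S,S)$-bimodules, so it suffices to produce a symmetric nondegenerate trace on each $S_i$ separately. Next, for a simple factor $\M_r(D)$, I invoke the $k$-algebra isomorphism $\M_r(D) \cong \M_r(k) \otimes_k D$ and the easy fact that the tensor product of two symmetric nondegenerate traces is again such. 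Since the ordinary matrix trace $\mathrm{tr} \colon \M_r(k) \to k$ is visibly symmetric and nondegenerate (e.g.\ from $\mathrm{tr}(e_{ij} e_{kl}) = \delta_{il}\delta_{jk}$), this reduces the problem to constructing a symmetric nondegenerate $k$-trace on the division algebra $D$.

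For the final step, let $L = Z(D)$, a finite field extension of $k$. The reduced trace $\mathrm{Trd} \colon D \to L$ is $L$-linear and symmetric in the sense $\mathrm{Trd}(xy) = \mathrm{Trd}(yx)$, and after base change to an algebraic closure of $L$ it becomes the usual matrix trace, so it induces a nondegenerate $L$-bilinear form on $D$. I would then pick any nonzero $k$-linear functional $\tau \colon L \to k$ and define $\lambda = \tau \circ \mathrm{Trd} \colon D \to k$. Symmetry of $\lambda$ is immediate; the work lies in checking nondegeneracy over $k$.

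The main obstacle is precisely this last verification in the case where $L/k$ is inseparable, where the field-theoretic trace $\mathrm{Tr}_{L/k}$ vanishes identically and so cannot be used for $\tau$. The key observation dissolving this obstacle is that for \emph{any} nonzero $k$-linear $\tau \colon L \to k$, the form $(x,y) \mapsto \tau(xy)$ on $L$ is automatically nondegenerate: its radical $\{x \in L : \tau(xy) = 0 \text{ for all } y\}$ is an ideal of $L$ by commutativity, hence is either $0$ or all of $L$, and the latter is ruled out by $\tau \neq 0$ (take $y = 1$). Combining this with the nondegeneracy of $\mathrm{Trd}$ over $L$ gives nondegeneracy of $\lambda$ over $k$, completing the construction and hence the proof.
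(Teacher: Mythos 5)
Your argument is correct. Note that the paper does not actually prove Lemma~\ref{lem:Sstar} at all: its ``proof'' is a citation to \cite[\S 16F]{LMR}, so what you have written is a self-contained argument for a fact the paper delegates to the literature (and it is essentially the standard textbook proof one finds there). The reductions are all sound: the Wedderburn decomposition splits $S$ and $S^*$ compatibly as bimodules; $\M_r(D)\cong \M_r(k)\otimes_k D$ together with the fact that a tensor product of symmetric nondegenerate trace forms is again one reduces to division algebras; and your handling of the only delicate point --- an inseparable center $L = Z(D)$, where $\mathrm{Tr}_{L/k}\equiv 0$ --- is exactly right: the reduced trace form is nondegenerate over $L$ in every characteristic (its radical is a two-sided ideal of the central simple algebra $D$, and $\mathrm{Trd}$ is not identically zero since over $\bar L$ it is the matrix trace), and composing with \emph{any} nonzero $k$-functional $\tau$ on $L$ preserves nondegeneracy, since for $x\neq 0$ the set $\{\mathrm{Trd}(xy): y\in D\}$ is a nonzero $L$-subspace of $L$, hence all of $L$, on which $\tau$ cannot vanish. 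The concluding bimodule isomorphism $x\mapsto \lambda(x\cdot -)$ does use the trace property precisely where you say it does. The one cosmetic remark is that your ``key observation'' about the pairing $(x,y)\mapsto\tau(xy)$ on $L$ is slightly more than you need --- $\tau\neq 0$ alone suffices, by the subspace argument just described --- but this does not affect correctness.
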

\begin{proof}
See~\cite[\S 16F]{LMR}.
\end{proof}

We may use the duality results above to obtain information about the socle of a graded module.
For a graded left $A$-module $M$ we let $\soc(M)$ denote the \emph{graded socle} of $M$, 
the largest graded semisimple submodule of $M$. Note that $\soc(M) = \{m \in M \mid J(A)m = 0\}$
is the annihilator in $M$ of the graded Jacobson radical $J(A)$.

\begin{proposition}
\label{prop:socle}
Let $A$ be a graded algebra with $\dim_k A_0 < \infty$, which is a twisted Calabi-Yau $k$-algebra of dimension $d$ with Nakayama bimodule $U$. Set $S = A/J(A)$, and let 
${}_A M$ be a graded left $A$-module. Then there is an isomorphism of graded left $S$-modules 
(hence of left $A$-modules)
\[
\Tor_d^A(S, M) \cong U^{-1} \otimes_A \soc(M).
\]
\end{proposition}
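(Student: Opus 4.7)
The plan is to apply the Serre duality in Proposition~\ref{prop:duality} with the finite-dimensional module $S$ in order to express $\soc(M)$ as a $\Tor$ group, and then translate the result using the invertibility of $U$. The key ingredients are Proposition~\ref{prop:duality} itself, the self-duality $S^* \cong S$ of the semisimple algebra $S$ (Lemma~\ref{lem:Sstar}), and the bimodule identity $S \otimes_A U \cong U \otimes_A S$ from Lemma~\ref{lem:invertible commutes}(1).

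First, I would note that $\dim_k A_0 < \infty$ forces $\dim_k S < \infty$, and identify $\Hom_A(S, M) = \{m \in M : J(A)m = 0\} = \soc(M)$ as graded left $S$-modules in the standard way. Then I would apply Proposition~\ref{prop:duality} with the $(A,S)$-bimodule $S$ in the role of their $M$ and with our $M$ in the role of their $N$; taking $i = 0$ yields an isomorphism of graded left $S$-modules
\[
\soc(M) \cong \Hom_A(S, M) \cong \Tor_d^A(S^* \otimes_A U, M).
\]
Invoking $S^* \cong S$ from Lemma~\ref{lem:Sstar} simplifies this to $\soc(M) \cong \Tor_d^A(S \otimes_A U, M)$.

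Next, I would record the $(A, A)$-bimodule isomorphism
\[
U^{-1} \otimes_A (S \otimes_A U) \cong U^{-1} \otimes_A U \otimes_A S \cong S,
\]
where the first step applies Lemma~\ref{lem:invertible commutes}(1) to move $S$ past $U$. Since $U^{-1}$ is invertible, hence flat as a right $A$-module, tensoring with $U^{-1}$ on the left is exact, commutes with the formation of $\Tor$, and may be absorbed into the left slot. Applying it to the previous isomorphism yields
\[
U^{-1} \otimes_A \soc(M) \cong \Tor_d^A(U^{-1} \otimes_A S \otimes_A U, M) \cong \Tor_d^A(S, M),
\]
which is the desired formula.

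There is no substantive obstacle; the main technical care is in tracking bimodule structures, especially to confirm that the left $S$-module structures produced on each side agree, and that the Morita-style manipulations $U^{-1} \otimes_A (-) \otimes_A U$ respect the graded structure. Both points are handled routinely by the machinery already developed in Sections~\ref{sec:preliminary} and~\ref{sec:twisted CY}, in particular Lemmas~\ref{lem:invertible commutes} and~\ref{lem:derived Tor and Ext}.
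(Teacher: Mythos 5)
Your proof is correct and follows essentially the same route as the paper's: both identify $\soc(M)$ with $\Hom_A(S,M)$ and apply the Serre duality of Proposition~\ref{prop:duality} together with $S^* \cong S$ (Lemma~\ref{lem:Sstar}). The only difference is cosmetic: you apply the duality to the pair $(S,M)$ and then strip the $U$-twist from the first Tor argument using $S \otimes_A U \cong U \otimes_A S$ (Lemma~\ref{lem:invertible commutes}(1)) and exactness of $U^{-1} \otimes_A -$, whereas the paper applies the duality to $(S^*, U^{-1}\otimes_A M)$ and finishes by noting that the graded autoequivalence $U^{-1}\otimes_A -$ preserves graded socles.
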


\begin{proof}
The finite-dimensional semisimple algebra $S$ satisfies $S^* \cong S$ as an $(S,S)$-bimodule by 
Lemma~\ref{lem:Sstar}, hence also as a graded left $A$-module. 
Because ${}_A S$ is annihilated by $J(A)$, any graded module homomorphism $S \to M$ has
image annihilated by $J(A)$, and therefore has image in $\soc(M)$. Thus we
have $\Hom_A(S, M) = \Hom_S(S, \soc(M)) = \soc(M)$. 
Now applying Proposition~\ref{prop:duality}, we have
\begin{align*}
\Tor^A_d(S_A , {}_A M) &\cong \Ext^0_A(S^*, U^{-1} \otimes_A M) \\ 
&\cong \Ext^0_A(S, U^{-1} \otimes_A M) \\
&= \soc(U^{-1} \otimes_A M) \\
&= U^{-1} \otimes_A \soc(M)
\end{align*}
as left $S$-modules, where we use in the last step that the graded autoequivalence $U^{-1} \otimes_A -$ must 
preserve the graded socle of a module.  
\end{proof}

In particular, it is rare for a twisted Calabi-Yau algebra to have a socle.
\begin{corollary}\label{cor:fd CY}
Let $A$ be a twisted Calabi-Yau algebra of dimension~$d$.
\begin{enumerate}
\item Suppose that $A$ is graded with $\dim_k A_0 < \infty$. If $d > 0$, then $\soc(A) = 0$.
\item If $A$ is a finite-dimensional algebra, then $d = 0$.
\end{enumerate}
\end{corollary}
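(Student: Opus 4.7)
The plan is to derive both parts as direct consequences of the socle formula in Proposition~\ref{prop:socle}. For part~(1), apply Proposition~\ref{prop:socle} to the module $M = {}_A A$. This gives
\[
\Tor_d^A(S, A) \cong U^{-1} \otimes_A \soc(A)
\]
as graded left $A$-modules. Since $A$ is free (hence flat) as a left module over itself, the left-hand side vanishes whenever $d \geq 1$. Because $U^{-1}$ is an invertible bimodule, the functor $U^{-1} \otimes_A -$ is an autoequivalence of $A \lGr$ and hence is faithful, so we conclude that $\soc(A) = 0$.

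For part~(2), I would reduce to part~(1) by regarding the finite-dimensional algebra $A$ as a graded algebra concentrated in degree~$0$. Then $A_0 = A$ is finite-dimensional, so $A$ is graded twisted Calabi-Yau of dimension~$d$ by Theorem~\ref{thm:graded versus ungraded}. If $d > 0$, part~(1) forces $\soc(A) = 0$. But a nonzero finite-dimensional $k$-algebra is left Artinian and therefore has nonzero socle as a left module over itself, a contradiction (and the case $A = 0$ is excluded since the Nakayama bimodule is required to be invertible). Hence $d = 0$.

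I do not anticipate any genuine obstacles here: the two parts are immediate corollaries of the socle identity, once one recognizes that $\Tor_d^A(S,A)$ vanishes for $d>0$ and that invertibility of $U$ makes $U^{-1}\otimes_A -$ conservative. The only minor subtlety is the reduction in part~(2) to the graded setting, which is justified by Theorem~\ref{thm:graded versus ungraded}.
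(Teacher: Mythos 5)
Your proposal is correct and follows essentially the same route as the paper: part (1) is exactly the paper's application of Proposition~\ref{prop:socle} with $M = A$, using flatness of $A$ and invertibility of $U$, and part (2) is the paper's reduction to the trivially graded case plus the observation that an artinian ring has nonzero socle. The appeal to Theorem~\ref{thm:graded versus ungraded} in part (2) is harmless but not needed, since part (1) already applies to any twisted Calabi-Yau algebra that happens to be graded with $\dim_k A_0 < \infty$.
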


\begin{proof}
(1) Invoking Proposition~\ref{prop:socle} in the case where $d > 0$ yields
$U^{-1} \otimes_A \soc(A) \cong \Tor^A_d(S,A) = 0$. Because $U$ is invertible, we obtain $\soc(A) = 0$.

(2) If $A$ is finite-dimensional, then we may consider $A$ as a graded algebra with $A = A_0$. 
Because $A$ is artinian, its (graded) socle is nonzero. It follows from part~(1) above that $d = 0$.
\end{proof}

We conclude this section by characterizing twisted Calabi-Yau algebras of dimension~0.

\begin{theorem}\label{thm:CY 0}
For a $k$-algebra $A$, the following are equivalent:
\begin{enumerate}[label=\textnormal{(\alph*)}]
\item $A$ is twisted Calabi-Yau of dimension~0;
\item $A$ is twisted Calabi-Yau and has finite $k$-dimension;
\item $A$ is Calabi-Yau of dimension~0;
\item $A$ is a separable $k$-algebra.
\end{enumerate}
\end{theorem}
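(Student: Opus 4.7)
The plan is to verify the implications (c) $\Rightarrow$ (a), (a) $\Rightarrow$ (b) and (a) $\Rightarrow$ (d), (b) $\Rightarrow$ (a), and (d) $\Rightarrow$ (c). The implication (c) $\Rightarrow$ (a) is immediate, since being Calabi-Yau of dimension~$0$ is by definition a special case of the twisted Calabi-Yau property. The implication (b) $\Rightarrow$ (a) follows at once from Corollary~\ref{cor:fd CY}(2), which forces any finite-dimensional twisted Calabi-Yau algebra to have dimension zero.

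For (a) $\Rightarrow$ (b) and (a) $\Rightarrow$ (d), suppose $A$ is twisted Calabi-Yau of dimension~$0$. Lemma~\ref{lem:derived form} gives $\pdim_{A^e}(A) = 0$, so $A$ is projective (and, being cyclic over $A^e$, finitely generated) as a left $A^e$-module. The equivalence (3) $\Leftrightarrow$ (1) in Lemma~\ref{lem:sepdef} then shows that $A$ is a separable $k$-algebra, which is (d); since separable algebras are finite-dimensional by the classification in Lemma~\ref{lem:sepdef}(5), this also gives (b).

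The substantive step is (d) $\Rightarrow$ (c). Assume that $A$ is separable. Then by Lemma~\ref{lem:sepdef}(3), $A$ is a finitely generated projective left $A^e$-module, so it is homologically smooth with a trivial projective resolution. It remains to show that the Nakayama bimodule $\Hom_{A^e}(A, A^e)$ is isomorphic to $A$ itself as $(A, A)$-bimodules (i.e., as right $A^e$-modules). The plan is to factor this identification through the $k$-linear dual. Since $A$ is separable, $A^e = A \otimes A\op$ is semisimple by Lemma~\ref{lem:sepdef}(2), and so Lemma~\ref{lem:Sstar} yields an isomorphism $A^e \cong \Hom_k(A^e, k)$ of $(A^e, A^e)$-bimodules. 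Combined with the standard tensor-Hom adjunction, this gives
\[
\Hom_{A^e}(A, A^e) \;\cong\; \Hom_{A^e}(A, \Hom_k(A^e, k)) \;\cong\; \Hom_k(A^e \otimes_{A^e} A, k) \;=\; A^*
\]
as right $A^e$-modules. A second application of Lemma~\ref{lem:Sstar}, now to the finite-dimensional semisimple algebra $A$ itself, gives $A^* \cong A$ as $(A, A)$-bimodules. Chaining these isomorphisms completes the identification $\Hom_{A^e}(A, A^e) \cong A$.

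The one point I expect to need care with is tracking the right $A^e$-module structures on $\Hom_k(A^e, k)$ and on $A^*$ through the tensor-Hom adjunction, to confirm that they agree with the bimodule structure claimed at the end. This is a routine diagram chase, but it must be verified explicitly since the whole point is that $A$ is Calabi-Yau and not merely twisted Calabi-Yau.
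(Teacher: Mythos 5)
Your proposal is correct and follows essentially the same route as the paper: dimension-zero twisted CY gives $\pdim_{A^e}(A)=0$ and hence separability via Lemma~\ref{lem:sepdef}(3), finite-dimensionality via Lemma~\ref{lem:sepdef}(5), the finite-dimensional case via Corollary~\ref{cor:fd CY}(2), and for (d)$\implies$(c) the same computation $\Hom_{A^e}(A,A^e)\cong\Hom_{A^e}(A,\Hom_k(A^e,k))\cong\Hom_k(A,k)\cong A$ using Lemma~\ref{lem:Sstar} for $A^e$ and for $A$. The bimodule bookkeeping you flag is exactly what the paper's proof also relies on, so no gap remains.
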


\begin{proof}
Clearly (c)$\implies$(a). For (a)$\implies$(d), note that if $A$ is twisted Calabi-Yau
of dimension~0, then $\pdim({}_{A^e} A) = 0$ by Lemma~\ref{lem:derived form},
making $A$ separable by Lemma~\ref{lem:sepdef}(3).

To see that (d)$\implies$(c), suppose that $A$ is separable.  Then ${}_{A^e} A$ is projective, and so $A$ is certainly 
homologically smooth.   As noted earlier, Lemma~\ref{lem:sepdef}(5) shows that a separable algebra must be 
finite-dimensional.  Then we also know that $A^e$ is a finite-dimensional semisimple $k$-algebra, by 
Lemma~\ref{lem:sepdef}(4).  Since a separable algebra must be semisimple by definition, we have $A^* \cong A$ as $(A, A)$-bimodules in Lemma~\ref{lem:Sstar}.
We now calculate that 
\begin{align*}
\Hom_{A^e}(A,A^e) &\cong \Hom_{A^e}(A, \Hom_k(A^e,k)) \\
&\cong \Hom_k(A^e \otimes_{A^e} A, k) \\
&\cong \Hom_k(A,k) \cong A
\end{align*}
as right $A^e$-modules.
Furthermore, $\Ext_{A^e}^i(A,A^e) = 0$ for $i > 0$ as ${}_{A^e} A$ is projective.
Thus $A$ is Calabi-Yau of dimension~0, as desired.  Since we already recalled that a separable 
algebra is finite-dimensional, (d)$\implies$(b) as well.
Finally, (b)$\implies$(a) follows from Corollary~\ref{cor:fd CY}(2). 
\end{proof}

Graded twisted Calabi-Yau algebras of dimension~0 are essentially the same as
(ungraded) Calabi-Yau algebras of dimension~0, since the former must be trivially graded.

\begin{corollary}\label{cor:graded CY 0}
A graded algebra $A$ is (twisted) Calabi-Yau of dimension~0 if and only if $A = A_0$
and $A$ is separable.
\end{corollary}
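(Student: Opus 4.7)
The plan is to reduce this corollary directly to Theorem~\ref{thm:CY 0} combined with Theorem~\ref{thm:graded versus ungraded}, with the only nontrivial task being to verify that a finite-dimensional separable graded algebra must be concentrated in degree zero.

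For the ``only if'' direction, I would start by assuming that $A$ is graded (twisted) Calabi-Yau of dimension~$0$. By Theorem~\ref{thm:graded versus ungraded}, the graded and ungraded notions coincide, so $A$ is (ungraded) twisted Calabi-Yau of dimension~$0$. Theorem~\ref{thm:CY 0} then tells us that $A$ is separable, and we recalled in the discussion following Lemma~\ref{lem:sepdef} that every separable $k$-algebra is finite-dimensional and semisimple. The remaining step is to deduce $A = A_0$ from the fact that $A$ is finite-dimensional semisimple and $\mathbb{N}$-graded. Since $\dim_k A < \infty$, there exists $N$ with $A_n = 0$ for $n > N$, so $(A_{\geq 1})^{N+1} = 0$; hence $A_{\geq 1}$ is contained in the (ungraded) Jacobson radical of $A$, which vanishes by semisimplicity. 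Thus $A_{\geq 1} = 0$, i.e., $A = A_0$.

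For the ``if'' direction, assume $A = A_0$ and $A$ is separable. By Theorem~\ref{thm:CY 0}, $A$ is Calabi-Yau (equivalently, twisted Calabi-Yau) of dimension~$0$ as an ungraded algebra. Applying Theorem~\ref{thm:graded versus ungraded} once more, the trivially graded algebra $A$ is graded twisted Calabi-Yau of dimension~$0$; and since the Nakayama bimodule is $U \cong A$, it is also graded Calabi-Yau of dimension~$0$.

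The only real content here is the observation that $A_{\geq 1}$ is nilpotent when $A$ is finite-dimensional, and I do not anticipate any obstacle: the entire argument is a bookkeeping exercise combining two results already proved above.
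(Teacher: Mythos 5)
Your proof is correct and follows essentially the same route as the paper: both directions reduce to Theorem~\ref{thm:CY 0} via Theorem~\ref{thm:graded versus ungraded}, and your nilpotence argument for $A_{\geq 1}$ is just a slightly more explicit version of the paper's observation that $A_{\geq 1} \subseteq J(A) = 0$ once $A$ is known to be semisimple.
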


\begin{proof}
If $A = A_0$ is separable, then it is Calabi-Yau of dimension~0 by Theorem~\ref{thm:CY 0};
because the grading of $A$ is trivial, it is clear that $A$ is graded twisted Calabi-Yau of dimension~0.

Conversely, suppose that $A$ is graded twisted Calabi-Yau of dimension~0.
Theorem~\ref{thm:graded versus ungraded}(1)  shows that $A$ is twisted Calabi-Yau
of dimension~0.
It follows from Theorem~\ref{thm:CY 0} that $A$ is separable, hence a finite-dimensional semisimple 
$k$-algebra. Thus $J(A) = 0$; because $A_{\geq 1} \subseteq J(A)$ for an $\N$-graded algebra, we 
obtain $A = A_0$. 
\end{proof}

\section{Artin-Schelter regularity for locally finite algebras}
\label{sec:AS regular}

In this section, we study the relationship between the twisted Calabi-Yau property and certain
generalizations of the Artin-Schelter regular property for locally finite graded algebras.
It was shown in~\cite[Lemma~1.2]{RRZ1} that a connected graded algebra $A$ is graded
twisted Calabi-Yau if and only if it is Artin-Schelter regular (not necessarily of finite GK dimension).  
Several possible generalizations of Artin-Schelter regularity to the context of locally finite algebras 
have been proposed in the literature.  
In this section we recall some of these and show that several of the most natural generalizations are 
in fact equivalent.
We then show that for any locally finite graded $k$-algebra $A$, under a mild technical condition 
(that $A$ is separable modulo its graded Jacobson radical), these notions of regularity are further
equivalent to the twisted Calabi-Yau condition.

\subsection{Defining generalized Artin-Schelter regular algebras}

For a graded algebra $B$, we write $B \lgr$ for the category of finitely generated graded left $B$-modules; similarly, $\rgr B$ is 
the corresponding category of finitely generated graded right $B$-modules.  

We begin with a technical lemma that will help us to relate the different notions of regularity.  
\begin{lemma}
\label{lem:contraequiv}
Let $A$ be a locally finite graded $k$-algebra with $\grgldim(A) = d$. 
Let  $\X$ be the full subcategory of $A_0 \lgr$ consisting of modules $M$ such that 
$\Ext_A^i(M, A) = 0$ for $i \neq d$ and $\Ext^d_A(M, A) \cong N$ for some $N \in \rgr A_0$. 
Similarly, let $\Y$ be the full subcategory of $\rgr A_0$ consisting of  modules $N$ such 
that $\Ext_{A\op}^i(N, A) = 0$ for $i \neq d$ and $\Ext_{A\op}^d(N, A) = M$ for some $M \in A_0 \lgr$.  
\begin{enumerate}
\item  $\X$  can also be described as the full subcategory of $A_0 \lgr$ consisting of modules $M$ such that 
$\RHom_A(M, A)[d] \cong N$ for some $N \in \rgr A_0$.  Similarly, $\Y$ can be described as the 
full subcategory of $\rgr A_0$ consisting of modules $N$ such that   $\RHom_{A\op}(N, A)[d] \cong M$ for some $M \in A_0 \lgr$.
The objects in $\X$ and $\Y$ are perfect. 
\item $\X$ and $\Y$ are closed under extensions and direct summands.
\item The functors 
\begin{align*}
\RHom_A(-, A)[d] &\colon \X\op \to \Y \quad \mbox{and} \\
\RHom_{A\op}(-, A)[d] &\colon \Y\op \to \X
\end{align*}
are mutually inverse, yielding a contravariant equivalence of categories between $\X$ and $\Y$. 
\item If $M \in \X$ is an $(A_0, C)$-bimodule for some $k$-algebra $C$, then we have $\RHom_{A\op}(\RHom_A(M, A),A) \cong M$ as 
$(A_0, C)$-bimodules.  
\end{enumerate}
\end{lemma}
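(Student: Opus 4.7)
The unifying idea is that every object of $X$ and $Y$ is perfect, after which the lemma reduces to biduality for perfect complexes; the key technical input is Lemma~\ref{lem:extderived}. For part~(1), if $M \in A_0 \lgr$ satisfies $\Ext^i_A(M,A) = 0$ for $i \neq d$ with $\Ext^d_A(M,A) \cong N \in \rgr A_0$, then $M$ is finite-dimensional, bounded below, and has $\pdim(M) \leq \grgldim(A) = d$, so Lemma~\ref{lem:extderived}(3) says $M$ is perfect and $\RHom_A(M,A) \cong N[-d]$; equivalently, $\RHom_A(M,A)[d] \cong N$. The converse direction is just taking cohomology, and the same reasoning handles $Y$, with perfectness as a byproduct.

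Part~(2) is routine. Closure of $X$ under extensions uses the long exact sequence for $\Ext_A(-,A)$ applied to $0 \to M' \to M \to M'' \to 0$: the hypothesis $\grgldim(A) = d$ kills $\Ext^{d+1}_A(M'',A)$, which combined with the vanishing assumptions on $M'$ and $M''$ forces $\Ext^i_A(M,A) = 0$ for $i \neq d$ and presents $\Ext^d_A(M,A)$ as an extension of objects of $\rgr A_0$, hence itself in $\rgr A_0$. Closure under direct summands is immediate from additivity of $\Ext$, and the parallel arguments cover $Y$.

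For parts~(3) and~(4), perfectness from part~(1) lets me represent $M \in X$ by a bounded complex $P$ of finitely generated graded projective left $A$-modules, so $\RHom_A(M,A) \simeq \Hom_A(P,A)$ is a perfect complex of right $A$-modules. The termwise biduality $Q \cong \Hom_{A\op}(\Hom_A(Q,A), A)$ for finitely generated projective $Q$ extends to a natural quasi-isomorphism $M \cong \RHom_{A\op}(\RHom_A(M,A), A)$. In particular, $N = \RHom_A(M,A)[d]$ satisfies $\RHom_{A\op}(N,A)[d] \cong M \in A_0 \lgr$, placing $N$ in $Y$ and exhibiting the two $\RHom$ functors as mutually inverse contravariant equivalences between $X$ and $Y$. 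Naturality of biduality in $M$ then yields part~(4): any right $C$-action commuting with the left $A$-action transports through the two applications of $\RHom(-,A)$ and returns unchanged, giving the claimed bimodule isomorphism.

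The main obstacle is tracking bimodule structures cleanly through the derived biduality argument, in particular ensuring that the projective replacement of $M$ can be chosen to respect the right $C$-action. This is standard but delicate: one either chooses a bi-projective resolution of $M$ as an $(A,C)$-bimodule whose terms remain projective on the left, or invokes the fact that the biduality natural transformation, defined at the level of termwise evaluation, is an isomorphism on perfect complexes and automatically respects all compatible module structures on either side.
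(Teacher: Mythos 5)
Your proposal is correct and follows essentially the same route as the paper: part (1) via Lemma~\ref{lem:extderived}(3) after bounding $\grpdim(M)$ by $d$, part (2) via the long exact sequence, and parts (3)--(4) via termwise biduality on a bounded complex of finitely generated graded projectives, with the bimodule structure in (4) handled by naturality of the biduality (the paper phrases this as lifting each right multiplication $r_c$ to a chain endomorphism of $P^\bullet$ and observing that $\Hom_{A\op}(\Hom_A(-,A),A)$ returns the same chain map). No gaps.
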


\begin{proof}
(1)  Since $M$ is bounded below, we have $\pdim(M) = \grpdim(M) \leq d$ by 
Proposition~\ref{prop:global dimension}.  This now follows directly from Lemma~\ref{lem:extderived}(3) and its right-sided analog.

(2)  This is an easy consequence of the definitions of $\X$ and $\Y$ and the long exact sequence in Ext.

(3) Let $M \in \X$.    Since $M$ is perfect of projective dimension at most $d$ by part (1), $M$ is quasi-isomorphic to a complex of graded $A$-modules $P^{\bullet}$ of the form $0 \to P^{-d} \to \dots \to P^{-1} \to P^0 \to 0$, where each $P^i$ is a finitely generated graded projective 
$A$-module.  Then $N = \RHom_A(M, A)[d]$ is identified with a complex $Q^\bullet$ of the form 
$0 \to Q^0 \to Q^1 \to \dots \to Q^d \to 0$, where $Q^i = \Hom_A(P^i, A)$.  Since the $P^i$ are finitely generated, it readily follows that 
$\RHom_{A\op}(\RHom_A(P^{\bullet},A), A)) \cong P^{\bullet}$, which means that 
$\RHom_{A\op}(N[-d], A) \cong M$, or equivalently  $\RHom_{A\op}(N, A)[d] \cong M$. 
Thus we have $N \in \Y$.  

The whole argument can be repeated starting with $N \in \Y$ and $Q^\bullet$ 
to get that $M = \RHom_{A\op}(N, A)[d]$ is in $\X$, with $\RHom_A(M, A)[d] \cong N$.
It is now easy to see that the functors $\RHom_A(-, A)[d]$ and $\RHom_{A\op}(-, A)[d]$
yield a contravariant equivalence between $\X$ and $\Y$ as claimed.  

(4) Given $c \in C$, one way to calculate the left action of $c$ on $\Ext^d_A(M, A)$ is as follows.  Right multiplication by $c$ on $M$ gives a morphism $r_c: M \to M$ in $A \lGr$.  Let $P^{\bullet}$ be the graded perfect complex quasi-isomorphic to $M$ as in part (3).  Then $r_c$ lifts to a morphism of complexes $\widehat{r_c}: P^{\bullet} \to P^{\bullet}$.  
Applying $\Hom_A( - , A)$ to the morphism of complexes, we get a morphism of complexes $\widehat{r_c}^*: Q^{\bullet} \to Q^{\bullet}$ where $Q^{\bullet} = \Hom_A(P^{\bullet}, A)$; then taking cohomology induces a morphism of right $A$-modules which is $l_c: N \to N$, the left action of $c$ on the Ext group 
$N = \Ext^d_A(M, A)$.  Of course, the analogous process on the other side shows how to calculate the right action of $c$ on $\Ext^d_{A\op}(N, A)$.  Since applying $\Hom_{A\op}(\Hom_A(-, A), A)$ to the morphism of complexes $\widehat{r_c}: P^{\bullet} \to P^{\bullet}$ gives the same morphism of complexes back, we see 
that the right action of $c$ on $\RHom_{A\op}(\RHom_A(M, A), A)$ is the same as the original right action on $M$, as required.
\end{proof}

We now show that a number of conditions that are natural possible generalizations of the AS~regular condition to the non-connected graded case are in fact equivalent.
\begin{theorem}
\label{thm:reg char}
Let $A$ be a locally finite graded $k$-algebra with $\grgldim A = d$ and let $S = A/J$ where $J = J(A)$ is the graded 
Jacobson radical.   
The following conditions on $A$ are equivalent: 
\begin{enumerate}[label=\textnormal{(\alph*)}]
\item $\RHom_A(-, A)[d]$ gives a bijection from the set of graded simple left $A$-modules up to isomorphism to 
the set of graded simple right $A$-modules up to isomorphism;
\item $\RHom_A(-,A)[d]$ gives a contravariant equivalence from $A_0 \lgr$ to $\rgr A_0$; 
\item $\RHom_A(-,A)[d]$ gives a contravariant equivalence from $S \lgr$ to $\rgr S$; 
\item $\RHom_A(S,A)[d] \cong V$ as right $S$-modules, for some invertible graded $(S,S)$-bimodule $V$;
\item[\textnormal{(d$'$)}] $\RHom_A(S,A)[d] \cong V$ as $(S, S)$-bimodules, for some invertible graded $(S,S)$-bimodule $V$;
\item $\RHom_A(A_0, A)[d] \cong (A_0^* \otimes_{A_0} W)$ as right $A_0$-modules, for some invertible graded $(A_0, A_0)$-bimodule $W$;
\item[\textnormal{(e$'$)}]  $\RHom_A(A_0, A)[d] \cong (A_0^* \otimes_{A_0} W)$ as $(A_0, A_0)$-bimodules, for some invertible graded $(A_0, A_0)$-bimodule $W$.
\end{enumerate}
\end{theorem}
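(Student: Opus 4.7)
The plan is to prove all seven conditions equivalent in three stages, using Lemma~\ref{lem:contraequiv} as the central tool.  First, conditions (a), (b), (c) will be shown equivalent as statements about containment of the categories of graded simples, of $S \lgr$, and of $A_0 \lgr$ inside the full subcategory $X$ of Lemma~\ref{lem:contraequiv}.  Second, (c), (d), (d$'$) will be linked via the standard adjunction for the ring quotient $\pi\colon A \to S$, namely
\[
\RHom_A(M, A) \cong \RHom_S(M, \RHom_A(S, A))
\]
valid for any left $S$-module $M$.  Third, (b), (e), (e$'$) will be linked by the analogous adjunction for the quotient $A \to A_0$.

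For (a) $\iff$ (b) $\iff$ (c): Using that $X$ is closed under direct summands and extensions (Lemma~\ref{lem:contraequiv}(2)), condition (a) --- that every graded simple left $A$-module (equivalently, simple left $S$-module) lies in $X$ with simple image in $Y$ --- upgrades by summand closure to $S \lgr \subseteq X$, which is (c), since every object of $S \lgr$ is a finite direct sum of simples.  Using further that every $M \in A_0 \lgr$ admits the $J(A_0)$-adic filtration whose successive quotients are graded $S$-modules, extension closure of $X$ upgrades this to $A_0 \lgr \subseteq X$, which is (b).  The reverse implications are immediate from the inclusions of categories, together with the fact that a contravariant equivalence of module categories preserves simplicity.

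For (c) $\iff$ (d) $\iff$ (d$'$): Setting $V = \RHom_A(S, A)[d]$, the adjunction displayed above gives $\RHom_A(M, A)[d] \cong \Hom_S(M, V)$ for every $M \in S \lgr$, where we have used semisimplicity of $S$ to collapse higher $\Ext$ groups.  Decomposing $\Hom_S(-, V) \cong \Hom_S(-, S) \otimes_S V$ and recognizing the first factor as the standard duality equivalence $S \lgr \to \rgr S$, the restricted functor is a contravariant equivalence if and only if $- \otimes_S V$ is a covariant self-equivalence of $\rgr S$, if and only if $V$ is an invertible graded $(S,S)$-bimodule.  This yields (c) $\iff$ (d$'$).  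The implication (d$'$) $\Rightarrow$ (d) is trivial.  For (d) $\Rightarrow$ (d$'$), condition (d) supplies $\Ext^i_A(S, A) = 0$ for $i \neq d$ together with $\Ext^d_A(S, A) \in \rgr A_0$, so that $S \in X$ by Lemma~\ref{lem:contraequiv}(1); summand closure then places every graded simple left $A$-module in $X$, which is (a), hence (c), hence (d$'$).

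The argument for (b) $\iff$ (e) $\iff$ (e$'$) runs in parallel.  The key computation is that, writing $Z = \RHom_A(A_0, A)[d]$ and assuming (e$'$), the projectivity of $W$ on each side together with the injectivity of $A_0^*$ as a left and right $A_0$-module make $A_0^* \otimes_{A_0} W$ injective on each side; the adjunction then collapses to $\RHom_A(M, A)[d] \cong \Hom_{A_0}(M, A_0^*) \otimes_{A_0} W = M^* \otimes_{A_0} W$, the composition of the $k$-duality with the self-equivalence $- \otimes_{A_0} W$, which gives (b).  For the converse, (b) gives the contravariant $k$-linear equivalence $\Phi = \RHom_A(-, A)[d]\colon A_0 \lgr \to \rgr A_0$, whose composition with the $k$-dual $(-)^*$ yields a covariant self-equivalence of $\rgr A_0$; by Morita theory for finite-dimensional algebras this is of the form $- \otimes_{A_0} W$ for an invertible bimodule $W$, and evaluating at $A_0^*$ recovers $\Phi(A_0) \cong A_0^* \otimes_{A_0} W$, which is (e$'$).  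The passage (e) $\Rightarrow$ (e$'$) proceeds in the same spirit as (d) $\Rightarrow$ (d$'$) but is the most delicate step: unlike the case of $S$, the simple $A_0$-modules are not direct summands of $A_0$, so one must instead promote the one-sided isomorphism of $Z$ to a bimodule isomorphism directly, using the reflexivity provided by Lemma~\ref{lem:contraequiv}(4) together with a dimension count on the natural structure map $A_0 \to \End_{A_0^{\mathrm{op}}}(Z)$ coming from the $k$-linear isomorphism of endomorphism rings.
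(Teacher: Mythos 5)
Your handling of condition (d) contains a genuine gap. In the step ``(d) $\Rightarrow$ (d$'$)'' you argue: (d) gives $S \in X$, summand closure places every graded simple left $A$-module in $X$, ``which is (a).'' But membership of all graded simple left modules in $X$ is \emph{not} condition (a): it only says that $\RHom_A(-,A)[d]$ carries each simple into the subcategory $Y$, not that the images are simple right modules, nor that every graded simple right module is hit. Since $Y$ is only closed under extensions and summands (not under subquotients), an object can be simple in $Y$ without being simple over $A_0\op$, and $Y$ could a priori be a proper subcategory of $\rgr A_0$; this is exactly the subtlety the paper's length argument in (e) $\Rightarrow$ (b) is designed to handle. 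Notice also that your argument for this step uses only ``$\Ext^i_A(S,A)=0$ for $i\neq d$ and $\Ext^d_A(S,A)$ finite-dimensional,'' i.e.\ it never uses invertibility of $V$ --- but that hypothesis is essential, since ``$S$ perfect with $\Ext$ concentrated in one degree'' is strictly weaker than regularity. The paper's proof of (d) $\Rightarrow$ (b) closes this hole by observing that $V = \RHom_A(S,A)[d]$ lies in $Y$ and, being invertible, is a right generator over $S$, so every simple right $A_0$-module is a direct summand of $V$ and hence lies in $Y$; extension closure then gives $Y = \rgr A_0$ (and $X = A_0\lgr$), after which Lemma~\ref{lem:contraequiv}(3) yields (b). Without an argument of this kind your chain (d) $\Rightarrow$ (d$'$) breaks, and condition (d) is not tied to the others. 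The same conflation of ``$X$ contains the relevant category'' with ``the functor is an equivalence onto $\rgr A_0$'' appears in your Stage 1 (``$A_0\lgr \subseteq X$, which is (b)''), though there it is easily repaired: under (a) the bijectivity on simples puts every graded simple right module in $Y$, and extension closure of $Y$ finishes the job, as in the paper.

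The rest of your route is sound and genuinely different from the paper's in an attractive way. Using the coinduction adjunction $\RHom_A(M,A) \cong \RHom_S(M, \RHom_A(S,A))$ (and its analogue over $A_0$) to identify the restricted functor with $\Hom_S(-,S)\otimes_S V$, resp.\ $(-)^*\otimes_{A_0}W$, gives clean proofs of (c) $\iff$ (d$'$) and (e$'$) $\Rightarrow$ (b), and your naturality argument for (b) $\Rightarrow$ (e$'$) (the left $A_0$-action on $\Ext^d_A(A_0,A)$ is induced functorially by right multiplications on $A_0$, so a natural isomorphism of functors automatically respects it) even bypasses the paper's detour through the one-sided statement (e) followed by the torsionfree/automorphism-twist upgrade. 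Your sketch of (e) $\Rightarrow$ (e$'$) is essentially the paper's argument via Lemma~\ref{lem:contraequiv}(4) and the structure map $A_0 \to \End_{A\op}(Z)$, and is completable, though you should note that one also needs the final rewriting of the twist ${}^{\sigma}(A_0^*\otimes_{A_0}W)$ in the form $A_0^*\otimes_{A_0}W'$ with $W'$ invertible. The one step you must actually fix before the proof stands is (d) $\Rightarrow$ (d$'$), using the invertibility of $V$ as indicated above.
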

\begin{proof}
(a) $\implies$ (b):  This follows from Lemma~\ref{lem:contraequiv}(3) if we can show that in the notation of that lemma, 
$\X = A_0 \lgr$ and $\Y = \rgr A_0$.  Condition (a) gives that $M \in \X$ for each graded simple left module $M$.  
Now $\X$ is closed under extensions as noted in Lemma~\ref{lem:contraequiv}(2).  Since $A_0$ is 
artinian, the objects in $A_0 \lgr$ have finite length, so it follows that $\X = A_0 \lgr$.  By hypothesis every 
graded simple right module $N$ is of the form $\RHom_A(M, A)[d]$ for some simple graded left module $M$, and 
as saw in the proof of Lemma~\ref{lem:contraequiv}, this implies that $\RHom_{A\op}(N, A) \cong M$ and so $N \in \Y$.  
Since $\Y$ is also closed under extensions, similarly we get $\Y = \rgr A_0$ as required.

(b) $\implies$ (a): this is obvious since a contravariant equivalence of Abelian categories preserves simple modules.
 
(b) $\implies$ (c):  This is immediate since $S \lgr$ is the full subcategory of semisimple objects of $A_0 \lgr$, $\rgr S$ is the full subcategory of semisimple objects of $\rgr A_0$, and a contravariant equivalence preserves semisimple objects.

(b) $\implies$ (e):  As is well known, since $A_0$ is a finite-dimensional $k$-algebra, the functor $G = \Hom_k(-, k) = (-)^*$ gives a contravariant equivalence from $A_0 \lgr$ to $\rgr A_0$, with inverse $G^{-1} = \Hom_k(-, k): \rgr A_0 \to A_0 \lgr$.
By (b), the functor $F = \RHom_A(-,A)[d]$ also gives such a contravariant equivalence.  Thus $F \circ G^{-1}: \rgr A_0 \to \rgr A_0$ is 
a (covariant) equivalence of categories.  As such, it must be of the form $- \otimes_{A_0} W$ for some graded invertible $(A_0, A_0)$-bimodule $W$, by Morita theory.  Applying this to the object $A_0^*$, (e) follows.

(c) $\implies$ (d):  This is virtually the same as the proof of (b) $\implies$ (e), working over the ring $S$ instead.  In this case we obtain $\RHom_A(S, A)[d] \cong S^* \otimes_S V$ for some invertible $(S, S)$-bimodule $V$.  However, since $S$ is 
semisimple, we have $S^* \cong S$ by Lemma~\ref{lem:Sstar},
and so $S^* \otimes_S V \cong V$ in $\rgr S$.

(d) $\implies$ (b):  Suppose that $\RHom_A(S,A)[d] \cong V$ as right $S$-modules, where $V$ is an invertible graded $(S,S)$-bimodule.  Let $1 = e_1+ \dots + e_n$ be a decomposition of $1$ as a sum of primitive orthogonal idempotents $e_i \in A_0$.   Then 
$S = \bigoplus_{i=1}^n Se_i$ decomposes $S$ as a direct sum of simple graded left modules.  Consider the subcategories 
$\X \subseteq A_0 \lgr$ and $\Y \subseteq \rgr A_0$ of Lemma~\ref{lem:contraequiv}.   By hypothesis, $S \in \X$.
Since $\X$ is closed under direct summands by Lemma~\ref{lem:contraequiv}(2), we get that all simple left $A_0$-modules are in $\X$.  
As in the proof of Lemma~\ref{lem:contraequiv}, we also get $V \in \Y$.  Since $V$ is invertible, it must be a right generator over $S$, 
which forces it to include each indecomposable projective right $S$-module as a summand. This is equivalent to
saying that it must contain every simple right $A_0$-module as a direct summand.
Since $\Y$ is also closed under summands, every simple right $A_0$-module is in $\Y$.  Now as in the argument for 
(a) $\implies$ (b), since $\X$ and $\Y$ are closed under extensions we get $\X = A_0 \lgr$ and $\Y = \rgr A_0$ and 
(b) follows.

(e) $\implies$ (b):  In this case we have $\RHom_A(A_0, A)[d] \cong A_0^* \otimes_{A_0} W$, as right $A_0$-modules, for some 
invertible graded $(A_0, A_0)$-bimodule $W$. Again we consider the subcategories $\X \subseteq A_0 \lgr$ and 
$\Y \subseteq \rgr A_0$ of Lemma~\ref{lem:contraequiv}.   In this case the hypothesis implies that $A_0 \in \X$.  
Since $\grgldim A = d$, we have $\gldim A_0 \leq d$ by Lemma~\ref{lem:degree zero dimension}.  We claim 
now that every $M \in A_0 \lgr$ is in $\X$.  We prove the claim by induction on projective 
dimension over $A_0$.    Since every finitely generated graded projective is a direct summand of a finite rank graded free module, 
and $\X$ is closed under graded shifts, direct sums and direct summands, from $A_0 \in \X$ we get $P \in \X$ for each graded projective $P \in A_0 \lgr$.  If all $M \in A_0 \lgr$ of projective dimension $\leq e$ are in $\X$, with $e < \gldim A_0$, suppose that  $M' \in A_0 \lgr$ has $\pdim M' = e +1$.  Consider the short exact sequence of graded modules 
$0 \to K \to P \to M' \to 0$, where $P \to M'$ is a projective cover of $M'$ in $A_0 \lgr$.  Then $\pdim K \leq e$ and so 
$K \in \X$; since $P \in \X$ also, now the long exact sequence in Ext easily implies that $M' \in \X$, completing the induction step.  Thus $\X = A_0 \lgr$ as claimed.   

Now Lemma~\ref{lem:contraequiv} gives an equivalence of categories 
$F: A_0 \lgr \to \Y$, where $F = \RHom_A(-, A)[d]$.  In particular, $\Y$ must be an Abelian category.  
For any object $N \in \Y$, since 
it has finite length as an $A_0$-module, it must have finite length in the category $\Y$, and clearly $\on{length}_\Y(N) \leq \on{length}_{A_0\op}(N)$.
Now note that setting $N = F(A_0) = A_0^* \otimes_{A_0} W$, we have 
\[
\on{length}_{A_0}(A_0) = \on{length}_{A_0\op}(A_0^*) = \on{length}_{A_0\op}(N) \geq \on{length}_\Y(N),
\]
using that taking duals preserves length, as does the autoequivalence $- \otimes_{A_0} W$.
On the other hand, since $F$ is a contravariant equivalence, it also preserves length and so $\on{length}_{A_0}(A_0) = \on{length}_\Y(N)$.  Thus all terms in the displayed equation are equal, and in particular $\on{length}_{A_0\op}(N) = \on{length}_\Y(N)$.  
This means that $N$ has the same composition series over $A_0\op$ as it does in $\Y$, so all of the simple $\Y$-objects occurring 
as composition factors of $N$ are also simple over $A_0\op$.  
Finally, since each of the $n$ simple $A_0$-modules up to isomorphism is a composition factor of $A_0$, 
each of the $n$ simple $\Y$-objects is a composition factor of $F(A_0) = N$.  Thus every simple $\Y$-object 
is also simple over $A_0\op$.
Since $A_0\op$ also has $n$ simple objects, we conclude that $\Y = \rgr A_0$, and (b) follows.

(e$'$) $\implies$ (e) and (d$'$) $\implies$ (d) are obvious.  

(e) $\implies$ (e$'$):  We have $\RHom_A(A_0, A) \cong A_0^* \otimes_{A_0} W[-d]$ as right modules, for some invertible $(A_0, A_0)$-bimodule $W$.  Let $U =  \Ext^d_A(A_0, A)$, which is an $(A, A)$-bimodule isomorphic on the right to $A_0^* \otimes_{A_0} W$.  We know that (b) holds since (e) $\implies$ (b).  By the proof of Lemma~\ref{lem:contraequiv}, the inverse of the contravariant 
equivalence $\RHom_A(-, A)[d]: A_0 \lgr \to \rgr A_0$ is $\RHom_{A\op}(-, A)[d]: \rgr A_0 \to A_0 \lgr$.  
Moreover, taking $C = A_0$ in Lemma~\ref{lem:contraequiv}(4), we obtain $\Ext^d_{A\op}(U, A) \cong A_0$ 
as $(A_0, A_0)$-bimodules.  
Suppose that $x \in A_0$ satisfies $xU = 0$.  Inspecting the manner in which the induced right action of $x$ 
on $\Ext^d_{A\op}(U, A)$ is obtained in the proof of Lemma~\ref{lem:contraequiv}(4), we find that 
right multiplication by $x$ on $\Ext^d_{A\op}(U, A) \cong A_0$ is also $0$.
It follows that $x = 1x = 0$.  So $U$ is a torsionfree left $A_0$-module.  

Thus $U$ is an $(A_0, A_0)$-bimodule that is isomorphic as a right $A_0$-module to
$A_0^* \otimes_{A_0} W$, and which is torsionfree on the left. We may view the left $A_0$-module structure 
on $U$ as being given by an algebra homomorphism 
\[
A_0 \to \End_{A\op}(U_A) \cong \End_{A\op}(A_0^* \otimes_{A_0} W).  
\]
Since $W$ is invertible, clearly 
$\End_{A\op}(A_0^* \otimes_{A_0} W) \cong \End_{A\op}(A_0^*)$.  Since $( - )^*$ is a contravariant 
equivalence $A_0 \lgr \to \rgr A_0$, we have 
$\End_{A\op}(A_0^*) \cong (\End_{A_0}(A_0))\op \cong ((A_0)\op)\op \cong A_0$.  Thus 
the left structure of $U$ is given by an algebra homomorphism $\sigma : A_0 \to A_0$, and the fact that $U$ is torsionfree on the left implies that $\ker \sigma = 0$.  Since $A_0$ is a finite-dimensional algebra, $\sigma$ is an isomorphism.  We conclude from this that 
$U \cong {}^{\sigma} (A_0^* \otimes_{A_0} W)$ as $(A_0, A_0)$-bimodules.  This is the same as $^{\sigma}(A_0^*)^1 \otimes_{A_0} W$.
Now it is easy to check that $^{\sigma}(A_0^*)^1 \cong {}^1(A_0^*)^{\sigma^{-1}}$, so our bimodule is isomorphic 
to $A_0^* \otimes_{A_0} {}^{\sigma^{-1}}(W)^1$.  Letting $W' = {}^{\sigma^{-1}}(W)^1$, we have that 
$U \cong A_0^* \otimes_{A_0} W'$ as $(A_0, A_0)$-bimodules, where $W'$ is graded invertible.  Thus (e$'$) holds.

(d) $\implies$ (d$'$): This is analogous to the proof of (e) $\implies$ (e$'$), but a bit easier since $S^* \cong S$; we leave it to the reader.
\end{proof}

\begin{definition}
\label{def:AS regular}
Let $A$ be a locally finite graded $k$-algebra.  If $A$ satisfies the equivalent conditions in 
Theorem~\ref{thm:reg char}, we say that $A$ is a \emph{generalized AS~regular algebra of dimension~$d$}, or 
sometimes just \emph{AS~regular}.  If $A \otimes K$ is a (generalized) AS~regular $K$-algebra of dimension~$d$ 
for all field extensions $K$ of $k$, we call $A$ \emph{geometrically AS~regular of dimension~$d$}. 
\end{definition}

\begin{corollary}
\label{cor:otherside}
Let $A$ be a locally finite graded $k$-algebra.  Then $A$ is (geometrically) AS~regular if and only if 
$A\op$ is (geometrically) AS~regular.  In particular, all of the opposite-sided versions of the properties (a)--(e$'$) in 
Theorem~\ref{thm:reg char} are also equivalent characterizations of AS~regularity.
\end{corollary}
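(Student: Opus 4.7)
The plan is to leverage Theorem~\ref{thm:reg char} by exhibiting one of its equivalent characterizations of AS regularity as manifestly left-right symmetric. The cleanest choice is condition~(a), which asks that $\RHom_A(-,A)[d]$ induce a bijection from isomorphism classes of simple graded left $A$-modules to isomorphism classes of simple graded right $A$-modules. First, Proposition~\ref{prop:global dimension}(2) gives $\grgldim(A) = \grgldim(A\op)$, so the finite graded global dimension hypothesis of Theorem~\ref{thm:reg char} is met for one algebra if and only if it is met for the other with the same value $d$.

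Next I would translate condition~(a) applied to $A\op$ into a statement about $A$: under the identification $A\op \lMod = \rMod A$, and using that $A\op$ as a right module over itself is just $A$ as a left module over itself, condition~(a) for $A\op$ reads as: $\RHom_{A\op}(-,A)[d]$ induces a bijection from simple graded right $A$-modules to simple graded left $A$-modules. Lemma~\ref{lem:contraequiv}(3) shows that $\RHom_A(-,A)[d]$ and $\RHom_{A\op}(-,A)[d]$ restrict to mutually inverse contravariant equivalences between the subcategories $X \subseteq A_0\lgr$ and $Y \subseteq \rgr A_0$. Since (a) for $A$ precisely says that $X$ and $Y$ contain all simple modules on their respective sides, Lemma~\ref{lem:contraequiv} immediately implies that the first functor gives a bijection on simples exactly when the second does. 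This establishes (a) for $A$ if and only if (a) for $A\op$, and hence by Theorem~\ref{thm:reg char} that $A$ is AS regular if and only if $A\op$ is.

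For the geometric version, I would simply note that $(A \otimes_k K)\op \cong A\op \otimes_k K$ as graded $K$-algebras for every field extension $K/k$; applying the previous paragraph to $A \otimes_k K$ as a $K$-algebra then yields that $A$ is geometrically AS regular if and only if $A\op$ is.

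For the final ``in particular'' statement, the opposite-sided versions of (a)--(e$'$) for $A$ are by definition the statements obtained by interchanging left and right throughout, and these coincide with conditions (a)--(e$'$) applied directly to $A\op$ (after the identifications of the previous paragraphs). By Theorem~\ref{thm:reg char} applied to $A\op$, each of these opposite-sided conditions is equivalent to $A\op$ being generalized AS regular of dimension $d$, which by the first half of the corollary is equivalent to $A$ being AS regular of dimension $d$. The argument is essentially a bookkeeping one; the only nontrivial input is Lemma~\ref{lem:contraequiv}, which supplies the key observation that an AS regularity condition for $A$ automatically produces its left-right dual via the inverse functor $\RHom_{A\op}(-,A)[d]$, so there is no real obstacle beyond carefully tracking how left/right structures translate between $A$ and $A\op$.
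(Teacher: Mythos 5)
Your proof is correct and follows essentially the same route as the paper: both rest on Lemma~\ref{lem:contraequiv}(3), which identifies $\RHom_{A\op}(-,A)[d]$ as the quasi-inverse of $\RHom_A(-,A)[d]$, and both dispose of the geometric statement via $(A\otimes_k K)\op \cong A\op \otimes_k K$. The only cosmetic differences are that you pivot on condition~(a) of Theorem~\ref{thm:reg char} where the paper pivots on condition~(b), and that you make explicit the symmetry $\grgldim(A)=\grgldim(A\op)$ from Proposition~\ref{prop:global dimension}(2), which the paper leaves implicit.
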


\begin{proof}
Assume that condition (b) of Theorem~\ref{thm:reg char} holds, so $F = \RHom_A(-, A)[d]$ gives a contravariant 
equivalence from $A_0 \lgr$ to $\rgr A_0$.   Thus in Lemma~\ref{lem:contraequiv}, we must have 
$\X = A_0 \lgr$ and $\Y = \rgr A_0$.   From the proof of the lemma, it is clear that the quasi-inverse of $F$ 
is given by $G = \RHom_{A\op}(-, A)[d]$, so that $G$ gives a contravariant equivalence from $\rgr A_0 \to A_0 \lgr$; 
that is, condition~(2) also holds for $A\op$.  This shows that if $A$ is generalized AS~regular, then so is $A\op$, and the converse is immediate.  Then for every field extension $k \subseteq K$, $A \otimes_k K$ is generalized AS~regular if and only if 
$(A \otimes_k K)\op \cong A\op \otimes_k K$ is.  Thus $A$ is geometrically AS~regular if and only if $A\op$ is 
geometrically AS~regular.
\end{proof}

\begin{remark}\label{rem:reg affine}
If a locally finite graded algebra $A$ is generalized AS~regular, then it is finitely generated as a $k$-algebra.
Indeed, as discussed in the proof of Theorem~\ref{thm:reg char}, the semisimple left $A$-module $S = A/J(A)$
is perfect. It follows from Lemma~\ref{lem:affine} that $A$ is a finitely generated algebra.
\end{remark}

There are a few other existing generalized notions of AS~regular algebras in the literature, which
we now compare to Definition~\ref{def:AS regular}.

\begin{remark}\label{rem:MV def}
A notion of ``generalized AS~regular algebra" is given by Minamoto and Mori in~\cite[Definition~3.15]{MM} as follows: 
a locally finite graded algebra $A$ is generalized AS~regular of dimension $d$ if $\gldim A = d$, for any simple graded left $A$-module $M$ we have 
$\Ext^i_A(M, A) =0$ for $i \neq d$, and the functors $\Ext^d_A(-, A)$ and $\Ext^d_{A\op}(-, A)$ give inverse bijections 
between the set of simple graded left $A$-modules and simple graded right $A$-modules.  
This property originated in work of Martinez-Villa for graded quotient algebras of path algebras 
in~\cite{MV} (see also his work with Solberg \cite{MVS}).   Using Lemma~\ref{lem:contraequiv}(1), 
it is easy to see that this definition is equivalent to condition~(a) of Theorem~\ref{thm:reg char}, except 
that we assume the potentially weaker condition $\grgldim A = d$ rather than $\gldim A = d$.  There may in fact 
be no examples where these numbers are different, and as we saw in Proposition~\ref{prop:global dimension}(3),
they are the same if $S = A/J$ is separable. We prefer to assume the weaker condition.
Morally, this shows that our definition of generalized AS~regular and the one in \cite[Definition~3.15]{MM} 
are essentially the same.
\end{remark}

\begin{remark}
\label{rem:MM def}
In \cite{MM}, Minamoto and Mori also define a locally finite graded algebra $A$ to be ``AS~regular 
over $A_0$" if $\gldim A  = d$, $\gldim A_0 < \infty$, and one has $\RHom_{A_0}(A_0, A)[d] \cong {}^{\mu} (A_0^*)(\ell)$ 
as complexes of $(A, A)$-bimodules, for some $\ell \in \mb{Z}$ and automorphism $\mu$ of $A$.
See~\cite[Definition 3.1]{MM} and the comments following the definition.  Note that Lemma~\ref{lem:degree zero dimension} shows that the hypothesis $\gldim A_0 < \infty$ is a consequence of $\grgldim A < \infty$.
Thus for $A$ to be AS regular over $A_0$ of dimension~$d$ in the sense of Minamoto and Mori
is equivalent to $A$ being generalized AS~regular in our sense, together with the potentially stronger condition $\gldim A = d$ (rather than our assumption $\grgldim A = d$)
and the condition that the invertible bimodule in Theorem~\ref{thm:reg char}(e$'$) is of the particular form $W = {}^{\mu} A_0(\ell)$.  As we discussed in the 
previous remark, the difference between assuming $\gldim A = d$ or $\grgldim A = d$ is minor and in most cases of interest irrelevant.  
However, it is a further restriction to assume that $W$ has the particular form  ${}^{\mu} A_0(\ell)$.   In Example~\ref{ex:skew group}
below, we provide an instance where the bimodule $W$ that actually occurs for an algebra satisfying Theorem~\ref{thm:reg char}(e$'$) 
is not of this form. Thus our notion of generalized AS~regular is less restrictive than AS~regularity over $A_0$.  

Condition~(b) of Theorem~\ref{thm:reg char} was also shown by Minamoto-Mori to be a consequence of their 
definition of AS~regularity over $A_0$ in~\cite[Proposition 3.5]{MM}; our theorem gives another proof of this.
We also note that there is a notion of \emph{ASF-regular algebra} in~\cite{MM},  defined in terms of graded local cohomology, which Ueyama recently showed~\cite[Corollary~2.11]{Ueyama} to be equivalent for noetherian algebras $A$ to the condition of being AS~regular 
over $A_0$. It seems possible that a suitable ``invertible bimodule twist'' of this property could be equivalent
to the generalized AS regular property, at least for noetherian algebras, but we do not pursue that
possibility here.
\end{remark}

We have introduced conditions~(c), (d), and~(d$'$) of Theorem~\ref{thm:reg char} because we have found
that certain formalisms are easier to handle over the semisimple algebra $S = A/J$ rather than over 
$A_0$ as in~(b), (e), (e$'$).  Thus it is useful to know that the analogous conditions defined relative to $S$ 
still produce equivalent notions.  For this reason we also took condition (d$'$) as the ``official" definition of 
generalized AS~regular in the introduction.

\begin{remark}\label{rem:AS reg Morita}
The generalized AS~regular property is preserved by $k$-linear graded Morita equivalence~\cite[Section~1]{Sierra}.
The proof of this fact is similar to that of Proposition~\ref{prop:CY Morita};
for instance, one can show that either condition~(b) or~(c)  from Theorem~\ref{thm:reg char}
is preserved under a graded $k$-linear equivalence between graded module categories.
However, we do not include the proof here. 
On the other hand, Example~\ref{ex:skew group} below illustrates that the property
of AS~regularity over $A_0$ is not preserved under such an equivalence.
\end{remark}

\subsection{Characterizing twisted Calabi-Yau algebras}

The major goal of this section is to relate the generalized AS regular property to the twisted Calabi-Yau
property for locally finite graded algebras. This will be achieved in Theorem~\ref{thm:twisted CY equivalence} below, 
where we show that the twisted Calabi-Yau property is equivalent to the geometrically AS~regular property.

We require several technical lemmas in preparation for that theorem.

\begin{lemma}
\label{lem:technical}
Let $A$ be a homologically smooth (graded) algebra, and let $X$ be a finite-dimensional $(A, A)$-bimodule.  
\begin{enumerate}
\item $\RHom_{A^e}(A, A^e) \otimes^L_A X \cong \RHom_{A\op}(X^*, A)$ as complexes of (graded) $(A, A)$-bimodules.
\item $X \otimes^L_A  \RHom_{A^e}(A, A^e) \cong \RHom_A(X^*, A)$ as complexes of (graded) $(A, A)$-bimodules.
\end{enumerate}
\end{lemma}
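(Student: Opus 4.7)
The plan is to prove both parts by reducing each side to a common expression of the form $\RHom_{A^e}(A, \Hom_k(X^*, A))$, exploiting the fact that $A$ is perfect over $A^e$ (by homological smoothness) and that $X$ is finite-dimensional over $k$.

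For part~(1), I would first apply Lemma~\ref{lem:moving tensor}(2) with the algebra taken to be $A^e$, $L = A$, $M = A^e$, $B = A$, $N = X$, viewing $A^e$ as a left $(A^e, A)$-bimodule via left multiplication together with the right $A$-action given by right multiplication on the first tensor factor of $A \otimes A\op$.  This yields the quasi-isomorphism
\[
\RHom_{A^e}(A, A^e) \otimes^L_A X \cong \RHom_{A^e}(A, A^e \otimes^L_A X),
\]
where the tensor product on the right may be taken ordinary (rather than derived) because $A^e$ is projective over $A$ on the relevant side.  Next, since $X$ is finite-dimensional over $k$, there is a natural isomorphism of $(A, A)$-bimodules
\[
A^e \otimes_A X \cong \Hom_k(X^*, A),
\]
given on elements by $(c \otimes d\op) \otimes x \mapsto (f \mapsto f(cx)\,d)$; this is a version of the standard identification $R \otimes_k V \cong \Hom_k(V^*, R)$ for a finite-dimensional vector space $V$ over any algebra $R$.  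Finally, applying Lemma~\ref{lem:derived Tor and Ext}(2) with $A$ there replaced by $A\op$ and taking $P = A\op$, $M = X^*$, $N = A$, together with the natural identification $(A\op)^e \cong A^e$ under which the bimodule $A\op$ corresponds to $A$, gives the Hochschild-style quasi-isomorphism
\[
\RHom_{A^e}(A, \Hom_k(X^*, A)) \cong \RHom_{A\op}(X^*, A).
\]
Concatenating these three isomorphisms proves part~(1).

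Part~(2) follows by a symmetric argument: using Lemma~\ref{lem:moving tensor}(2) on the opposite side to obtain $X \otimes^L_A \RHom_{A^e}(A, A^e) \cong \RHom_{A^e}(A, X \otimes_A A^e)$, identifying $X \otimes_A A^e \cong \Hom_k(X^*, A)$ via the analogous natural pairing, and applying Lemma~\ref{lem:derived Tor and Ext}(2) directly (without the opposite substitution) with $P = A$, $M = X^*$, $N = A$ to conclude that $\RHom_{A^e}(A, \Hom_k(X^*, A)) \cong \RHom_A(X^*, A)$.  The main technical obstacle throughout is the bookkeeping of the various $A$-actions in play: on $A^e$ one must distinguish the outer bimodule structure (from the left $A^e$-multiplication, used by $\RHom_{A^e}$) from the inner bimodule structure (from the right $A^e$-multiplication, providing the $(A, A)$-bimodule structure on $\RHom_{A^e}(A, A^e)$); similarly, on $\Hom_k(X^*, A)$ there are two commuting $(A, A)$-bimodule structures (coming respectively from the outer $A^e$-action and from the residual right actions of $A$ on $X^*$ and on $A$), and one must verify that the identifications above pair up the correct structures on each side.
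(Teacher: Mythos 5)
Your proposal is correct, and its skeleton overlaps with the paper's argument at the first step: both begin by using Lemma~\ref{lem:moving tensor}(2) (with perfectness of $A$ over $A^e$ from homological smoothness) to rewrite $\RHom_{A^e}(A,A^e)\otimes^L_A X$ as $\RHom_{A^e}(A, A^e\otimes_A X)$, the inner tensor being underived since $A^e$ is free as a right $A$-module via the first factor. After that the routes diverge. The paper identifies $A^e\otimes_A X\cong X\otimes A$, then rewrites $X\otimes A\cong \RHom_{A\op}(X^*\otimes A,A)$ using finite-dimensionality of $X$, and finishes with an explicit derived tensor-Hom adjunction together with Lemma~\ref{lem:derived Tor and Ext}(1) to collapse $(X^*\otimes A)\otimes^L_{A^e}A$ to $X^*$. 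You instead identify $A^e\otimes_A X\cong \Hom_k(X^*,A)$ directly (your elementwise formula is well defined over $\otimes_A$ and is the correct map) and then quote Lemma~\ref{lem:derived Tor and Ext}(2) for the algebra $A\op$, transported through the swap isomorphism $A^e\cong (A\op)^e$, to conclude $\RHom_{A^e}(A,\Hom_k(X^*,A))\cong \RHom_{A\op}(X^*,A)$; this is legitimate and non-circular, since that lemma is proved earlier and independently. In effect you reuse the derived Hochschild-cohomology identity as a black box where the paper re-runs the underlying adjunction by hand, which makes your argument shorter; the price is exactly the bookkeeping you flag, namely checking that the left $A^e$-structure on $\Hom_k(X^*,A)$ coming from your identification of $A^e\otimes_A X$ (where $u\otimes v\op$ sends $\phi$ to $f\mapsto \phi(fu)v$) agrees with the one appearing in Lemma~\ref{lem:derived Tor and Ext}(2) after the swap, and that the residual $(A,A)$-structures match the natural ones on $\RHom_{A\op}(X^*,A)$. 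These verifications do go through, so there is no gap; for part~(2) both you and the paper appeal to the symmetric (opposite-sided) versions of the same ingredients.
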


\begin{proof}
We prove part (1), the proof of part (2) being symmetric.   
Consider the bimodule $A \otimes A$ as a left $A^e$-module via the ``outer'' structure $(c \otimes d\op)(a_1 \otimes a_2)
= ca_1 \otimes a_2 d$ and as a right $A^e$-module via the ``inner'' structure $(a_1 \otimes a_2)(c \otimes d\op)
= a_1 c \otimes d a_2$. In this way, we have an isomorphism $A \otimes A \cong A^e$ of $(A^e,A^e)$-bimodules.
Restricting scalars along the natural homomorphism $\id_A \otimes 1 \colon A \to A \otimes A\op = A^e$, this makes
the right $A^e$-module $A \otimes A$ into a right $A$-module under the action 
$(a_1 \otimes a_2) \cdot c = a_1 c \otimes a_2$. Then we have the quasi-isomorphism 
\begin{equation}
\label{eq:tech1}
A^e \otimes^L_A X \cong (A \otimes A) \otimes^L_A X = (A \otimes^L_A X) \otimes A \cong X \otimes A
\end{equation}
as complexes of $(A^e, A^e)$-bimodules.  

Next, we consider the $(A^e,A^e)$-bimodule $X^* \otimes A$ with similar ``outer'' left $A^e$-action and
``inner'' right $A^e$-action. We have the following quasi-isomorphisms of $(A^e, A^e)$-bimodules, where  
the first is an adjoint isomorphism:
\begin{align}
\label{eq:tech2}
\RHom_{A\op}(X^* \otimes A,  A) &\cong \RHom_k( X^*, \RHom_{A\op}(A, A))  \nonumber \\
&\cong \RHom_k( X^*, A) \\
&\cong  X^{**} \otimes A \cong  X \otimes  A. \nonumber
\end{align}
From Lemma~\ref{lem:derived Tor and Ext}(1) we further obtain quasi-isomorphisms
\begin{equation}
\label{eq:tech3}
(X^* \otimes A) \otimes^L_{A^e} A \cong X^* \otimes^L_A A \otimes^L_A A \cong X^*
\end{equation}
as complexes of $(A, A)$-bimodules.  

Finally, since $A$ is homologically smooth, it is a perfect object in the derived category of graded left $A^e$-modules.  
Thus using Lemma~\ref{lem:moving tensor}(2) along with the preceding observations, we have 
isomorphisms of $(A, A)$-bimodules as follows:
\begin{align*}
\RHom_{A^e}(A, A^e) \otimes^L_A X &\cong \RHom_{A^e}(A,  A^e \otimes^L_A X) & \text{by Lemma}\ \ref{lem:moving tensor}(2) \\
& \cong \RHom_{A^e}(A,  X \otimes  A)  & \text{by}\ \eqref{eq:tech1} \\
&\cong \RHom_{A^e}(A, \RHom_{A\op}(X^* \otimes A, A)) & \text{by}\ \eqref{eq:tech2}  \\
&\cong \RHom_{A\op}((X^* \otimes A) \otimes^L_{A^e} A, A)   & \text{by adjointness} \\
&\cong \RHom_{A\op}(X^*, A).  & \text{by}\ \eqref{eq:tech3}
\end{align*} 
\noindent Note that in the first line, Lemma~\ref{lem:moving tensor}(2) as stated only gives an isomorphism of right $A$-modules.  
The left $A$-module structure comes from the right $A\op$ structure of $A^e$, and it is easy to see that it is also preserved 
by the given isomorphism.

In the graded case, since $A$ is perfect as an $A^e$-module and $X^*$ is perfect as a right or left $A$-module by 
Lemma~\ref{lem:smooth resolution}, there is no difference between $\RHom$ and $\RgrHom$ in the statement, and it is routine to see that all of the canonical isomorphisms used above preserve grading.
\end{proof}

\begin{lemma}\label{lem:invertible}
Let $A$ be a graded algebra with $A_0$ finite-dimensional, and denote $J = J(A)$ and $S = A/J$. Let $U$ be a graded 
$k$-central $(A,A)$-bimodule such that both ${}_A U$ and $U_A$ are projective.  Suppose 
that $(S \otimes_A U)J = 0$ and $J(U \otimes_A S) = 0$. 
If $V = U \otimes_A S$ is an invertible $(S, S)$-bimodule, then $U$ is an invertible $(A,A)$-bimodule.
\end{lemma}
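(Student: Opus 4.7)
The two hypotheses $(S \otimes_A U)J = 0$ and $J(U \otimes_A S) = 0$ translate directly to $JU \subseteq UJ$ and $UJ \subseteq JU$, giving $JU = UJ$. Consequently $S \otimes_A U = U/JU = U/UJ = U \otimes_A S$ as graded $(S,S)$-bimodules, and this common quotient is the invertible bimodule $V$.

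The plan is to verify the standard Morita-theoretic criteria for $U$ to be an invertible $(A,A)$-bimodule, namely that $U$ is a finitely generated projective generator on each side and that the natural map $\alpha : A \to \End_{A\op}(U)^{\mathrm{op}}$ given by left multiplication is a ring isomorphism. For finite generation, note that $V$ is finite-dimensional over $k$, being invertible over the finite-dimensional algebra $S$. I would choose a finite homogeneous $k$-basis of $V$, lift it to homogeneous elements of $U$, and apply the graded Nakayama lemma (Lemma~\ref{lem:Nakayama}) to show these elements generate $U$ as a right $A$-module; a symmetric argument handles the left side. I expect the principal obstacle here to be verifying that $U$ is bounded below (a hypothesis needed to apply Nakayama), which should follow from the projectivity of $U$ combined with the finite-dimensionality of $V = U/UJ$ via a careful analysis of the structure of graded projective modules when $A_0$ is finite-dimensional.

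Once $U$ is known to be finitely generated projective on each side, the final stage is to reduce the remaining Morita criteria modulo $J$. The construction of $\End_{A\op}(U) \cong U \otimes_A \Hom_{A\op}(U,A)$ is compatible with tensoring by $S$ in this setting, so reducing $\alpha$ modulo $J$ yields the natural map $\bar\alpha : S \to \End_{S\op}(V)^{\mathrm{op}}$, which is an isomorphism precisely because $V$ is invertible over $S$. Since $\End_{A\op}(U)$ is itself finitely generated on each side over $A$ (as a summand of a matrix ring over $A$), the graded Nakayama lemma applies to the kernel and cokernel of $\alpha$ to lift the fact that $\bar\alpha$ is an isomorphism to the statement that $\alpha$ itself is an isomorphism. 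An analogous Nakayama argument shows that $U$ is a generator as a right $A$-module, using that $V$ is a generator over $S$. Together these complete the Morita-theoretic verification that $U$ is invertible, with inverse bimodule $\Hom_{A\op}(U,A)$.
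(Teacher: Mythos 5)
Your overall route is essentially the paper's: reduce the Morita criteria for $U$ modulo $J$ and lift them back with the graded Nakayama lemma, the key technical input being the natural isomorphism $S \otimes_A \End_{A\op}(U) \cong \End_{S\op}(V)$. The paper proves that isomorphism by a short derived-category computation using Lemma~\ref{lem:moving tensor} together with projectivity of $U_A$; your route through $\End_{A\op}(U) \cong U \otimes_A \Hom_{A\op}(U,A)$ rests on exactly the same fact ($U_A$ finitely generated projective) and is fine, though you assert the compatibility with $S \otimes_A -$ rather than check it. Finite generation of $U$ and the generator property are obtained in the paper just as you propose (Nakayama applied to lifts of generators of $V$; each indecomposable graded projective $e_iA$ appears as a summand of $U$ because each simple appears in $V$). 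Your worry about $U$ being bounded below is reasonable but is not where the difficulty lies: the paper applies graded Nakayama without comment, and in the place where the lemma is used (Theorem~\ref{thm:twisted CY equivalence}) $U$ arises as a term of a minimal complex of projectives, hence is left bounded and locally finite.

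The genuine flaw is the final step: you cannot apply Nakayama ``to the kernel'' of $\alpha \colon A \to \End_{A\op}(U)$. The functor $S \otimes_A -$ is only right exact, so $\ker(\bar\alpha)$ is not the reduction of $\ker(\alpha)$ modulo $J$; injectivity of $\bar\alpha$ gives no statement of the form $\ker(\alpha) = J\ker(\alpha)$, and Nakayama lifts surjectivity, not injectivity. The paper avoids this entirely: having shown $U$ is a progenerator on both sides, ${}_A U$ is faithful, so left multiplication $\phi \colon A \to \End_{A\op}(U)$ is injective outright, and only surjectivity is deduced from $\bar\phi$ via Nakayama applied to the cokernel. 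Your argument can be repaired---for instance, $\End_{A\op}(U)$ is a direct summand of ${}_A U^{\oplus n}$ as a left $A$-module, hence left projective, so once $\alpha$ is surjective it splits, and the complementary summand $K$ of $A$ satisfies $K = JK$ by a dimension count modulo $J$ (using $\End_{S\op}(V) \cong S$) and therefore vanishes by Lemma~\ref{lem:Nakayama}---but some such additional ingredient is needed; as written, the injectivity claim does not follow from the stated Nakayama argument.
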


\begin{proof}
As in the proof of Lemma~\ref{lem:invertible commutes}(1), from the hypothesis that
$(S \otimes_A U)J = 0 = J(U \otimes_A S)$ we may deduce that 
$UJ = JU$ and that $S \otimes_A U \cong 
U \otimes_A S = V$
as $(A, A)$-bimodules.   
Since $V$ is invertible, it is a finitely generated module over $S$; thus $U$ is a finitely generated 
$A$-module by Nakayama's Lemma.  

Note that, as complexes of $(A, A)$-bimodules, we have
\begin{align*}
S \otimes_A^L  \RHom_{A\op}(U, U)   
&\cong \RHom_{A\op}(U,S \otimes^L_A U ) \\
&\cong  \Hom_{A\op}(U, S \otimes_A U) \\
&\cong \Hom_{A\op}(S \otimes_A U, S \otimes_A U),
\end{align*}
where the first isomorphism follows from Lemma~\ref{lem:moving tensor} (applied to the opposite side)
since $U_A$ is (finitely generated projective and hence) perfect, the second isomorphism follows from 
projectivity of $U_A$, and the last isomorphism holds since $JU$ is in the kernel of any homomorphism 
$U \to S \otimes_A U \cong U/JU$.  Taking $0$th cohomology we obtain an isomorphism 
\[
S \otimes_A \Hom_{A\op}(U, U)  \cong \Hom_{A\op}(S \otimes_A U, S \otimes_A U) = \Hom_{S\op}(V, V)
\]
which is given by the natural map.

Now that we know $S \otimes_A \End_{A\op}(U) \cong \End_{S\op}(V)$ in the natural way, we obtain a 
commutative diagram
\[
\xymatrix{ A \ar[r]^-{\phi} \ar[d]^{\pi} & \End_{A\op}(U) \ar[d]^{\rho} \\
S \ar[r]_-{\overline{\phi}} & \End_{S\op}(V)}
\] 
where $\phi(a)$ is left multiplication by $a$, the bottom row is formed from applying $S \otimes_A -$ to the top row and using the isomorphism above, and the vertical arrows are the natural quotient maps.

We claim that the projective right module $U_A$ is a generator, or equivalently, that each indecomposable graded projective of $A$ 
occurs as a summand of $U_A$.   Recall that the indecomposable projective graded right $A$-modules are the modules $e_i A$, where 
$1 = e_1 + \dots + e_n$ is a decomposition of $1$ as a sum of primitive orthogonal idempotents in $A_0$.  Each lies over 
a simple $S$-module $e_i S = e_i A \otimes_A S$, and $e_i A \cong e_j A$ if and only if $e_i S \cong e_j S$.   
Since $U \otimes_A S = V$ is a generator over $S$, each of the distinct simple right modules up to isomorphism 
occurs as a summand.   Thus each of the distinct indecomposable graded projective right $A$-modules occurs as 
a summand of $U$.  So $U$ is a progenerator on the right.
Since $V = S \otimes_A U$ as well, by symmetry, $U$ must also be a progenerator on the left.

Now by standard Morita theory, to see that $U$ is invertible it suffices to show that the map $\phi$ in the diagram above is an isomorphism. 
Since $U$ is a left progenerator, $_A U$ is torsionfree and so the map $\phi$ is injective.   The map $\overline{\phi}$ is an isomorphism since $V$ is an invertible $(S, S)$-bimodule; in particular, $\overline{\phi}$ is surjective.  Then $\phi$ is surjective by Nakayama's Lemma.  Hence $\phi$ is an isomorphism as required.
\end{proof}

We need the following fact concerning minimal complexes of projectives.  Similar results are well-known for complexes over 
certain kinds of rings but we are unaware of a reference that works in the generality we need here.
If $P^{\bullet}$ is a complex of graded projective left modules over a locally finite graded algebra $A$ with graded Jacobson radical $J = J(A)$, we say $P$ is \emph{minimal} if $\im d_n \subseteq J P^{n-1}$ for all $n$.  This is equivalent to the complex $S \otimes_A P^{\bullet}$ having all of its differentials equal to $0$, where $S= A/J$.  Note that if $P^{\bullet} \to M$ is a graded projective resolution of the module $M$, then 
it is a minimal graded projective resolution if and only if $P^{\bullet}$ is a minimal complex.

\begin{lemma}\label{lem:Cartan-Eilenberg}
Let $A$ be a locally finite graded algebra, with graded Jacobson radical $J = J(A)$.  Let $S= A/J$ and assume that $S$ is separable.
Let $P^\bullet$  be a bounded complex of graded projective left $A$-modules, where each $P^i$ is left bounded and locally finite, but 
not necessarily finitely generated as an $A$-module.  Then  $P^{\bullet}$ is quasi-isomorphic to 
a bounded minimal complex of projectives $Q^{\bullet}$.
\end{lemma}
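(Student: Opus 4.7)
The plan is to split off a contractible direct summand of $P^{\bullet}$ until only a minimal complex remains. First, pass to the reduction $\overline{P}^{\bullet} := S \otimes_A P^{\bullet}$, a bounded complex of locally finite semisimple graded left $S$-modules. Working internal-degree by internal-degree (each $(\overline{P}^n)_j$ is a finite-dimensional $S$-module), semisimplicity of $S$ yields a canonical graded decomposition $\overline{P}^{\bullet} = \overline{Q}^{\bullet} \oplus \overline{C}^{\bullet}$, where $\overline{Q}^{\bullet}$ has zero differentials and $\overline{C}^{\bullet}$ is a direct sum of two-term contractible complexes of the form $0 \to X \xrightarrow{\sim} X \to 0$.

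Next, I lift this decomposition to the level of graded $A$-modules. Because $S$ is separable, the Wedderburn-Malcev theorem provides an algebra splitting $\iota \colon S \hookrightarrow A_0 \hookrightarrow A$ of the augmentation $A \twoheadrightarrow S$. Via $\iota$, the functor $A \otimes_S -$ sends a left-bounded locally finite graded semisimple $S$-module to its graded projective cover over $A$, and this functor is additive. Since $P^n$ is itself the graded projective cover of $\overline{P}^n$, uniqueness of projective covers produces a graded isomorphism $P^n \cong \widetilde{Q}^n \oplus \widetilde{C}^n$ with $\widetilde{Q}^n = A \otimes_S \overline{Q}^n$ and $\widetilde{C}^n = A \otimes_S \overline{C}^n$. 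At this stage the differentials of $P^{\bullet}$ need not respect this splitting of the underlying graded modules.

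Finally, I modify the lift so that the differentials become block-diagonal, exhibiting a contractible direct summand $C^{\bullet}$ of $P^{\bullet}$ with minimal complement $Q^{\bullet}$. This proceeds by splitting off one finitely generated two-term contractible summand at a time, using the following Nakayama-type observation: whenever $\overline{d}^n$ carries a simple summand $\overline{L}$ of $\overline{P}^n$ isomorphically onto a simple summand $\overline{M}$ of $\overline{P}^{n+1}$, the corresponding component of $d^n$ between their graded projective covers $L$ and $M$ (both finitely generated) reduces to an isomorphism modulo $J$ and is therefore itself an isomorphism by the graded Nakayama lemma. A standard $2 \times 2$ change of basis at positions $n$ and $n+1$ then eliminates the off-diagonal components of $d^n$ while leaving the adjacent differentials $d^{n-1}$ and $d^{n+1}$ automatically compatible, thanks to the identity $d \circ d = 0$. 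The projection $P^{\bullet} \to Q^{\bullet}$ onto the residual summand is then the desired quasi-isomorphism.

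The main obstacle is termination of the inductive removal of contractible summands, since the $P^n$ may be infinite direct sums of indecomposable graded projectives. Boundedness of $P^{\bullet}$ combined with local finiteness and left boundedness of each $P^n$ resolves this: for any fixed internal degree $j$, the total contribution of $\overline{C}^{\bullet}$ in internal degree $j$ is finite-dimensional, so only finitely many removals affect position $j$, and assembling the limiting decomposition across all internal degrees produces a well-defined minimal complex $Q^{\bullet}$ with bounded contractible complement $C^{\bullet}$.
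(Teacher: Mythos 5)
Your strategy is sound but it is genuinely different from the paper's. You remove two-term contractible summands one at a time by Gaussian elimination (using that the relevant component of $d^n$ is an isomorphism) and then cope with the fact that the $P^i$ need not be finitely generated via a degreewise limiting argument. The paper instead works in a single pass: using the Wedderburn--Malcev splitting $S \hookrightarrow A_0$ it views each $P^i$ itself as a graded $S$-module, lifts a chosen splitting $P^i/JP^i = B^i \oplus H^i \oplus C^i$ of the split reduced complex to an $S$-module decomposition $P^i = JP^i \oplus \widehat{B}^i \oplus \widehat{H}^i \oplus \widehat{C}^i$ by induction on the cohomological position (setting $\widehat{B}^{i+1} = d^i(\widehat{C}^i)$), and then observes that $E^i = A \otimes_S (\widehat{B}^i \oplus \widehat{C}^i)$ is a split exact subcomplex whose quotient $Q^\bullet = P^\bullet/E^\bullet$ is minimal. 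Since $S$ is semisimple, arbitrary (possibly infinite-dimensional) graded $S$-modules split, so no convergence argument is needed and the induction runs only over the finitely many cohomological degrees. Your route gives a very concrete picture of what is being discarded, at the price of a limit; the paper's route trades that for a one-step quotient construction.

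Two compressed steps in your write-up need to be filled in. First, graded Nakayama only yields that the component $L \to M$ is surjective; injectivity follows because the surjection splits ($M$ is projective) and $L \cong M$ are left-bounded and locally finite, so the kernel has zero Hilbert series. Second, the finiteness of removals affecting a fixed internal degree $j$ is not a consequence merely of $(\overline{C}^{\bullet})_j$ being finite-dimensional: you must also argue that eliminating a summand generated in internal degree $j_0 > j$ leaves degree $j$ untouched. This does hold, because the Schur-complement correction $e - c a^{-1} b$ and the change-of-basis maps factor through the removed projectives, which are concentrated in degrees at least $j_0$; combined with left-boundedness this gives degreewise stabilization, and $A$-linearity of the limiting projection (hence that $Q^\bullet$ is a direct summand complex of projectives, homotopy equivalent to $P^\bullet$ and minimal) follows because any relation $a \cdot x$ involves only two internal degrees, both stable at some finite stage. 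With these points made explicit your proof is complete.
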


This result is well known in the connected graded case. We omit the proof, but the reader is referred to~\cite[Proposition~13.2.6]{Ye} for a proof of a dual version regarding minimal injective complexes.

We are now ready to relate the twisted Calabi-Yau condition to the generalized AS~regular condition 
for locally finite graded algebras. 
Our arguments follow the precedent set in ~\cite{YZ} and~\cite[Lemma~1.2]{RRZ1}.  

\begin{theorem}
\label{thm:twisted CY equivalence}
Let $A$ be a locally finite graded $k$-algebra and set $S = A/J(A)$. 
Then the following are equivalent:
\begin{enumerate}[label=\textnormal{(\alph*)}]
\item $A$ is graded twisted Calabi-Yau of dimension~$d$;
\item $A$ is generalized AS~regular of dimension~$d$ and $S$ is a separable $k$-algebra;
\item $A$ is geometrically AS~regular of dimension~$d$.
\end{enumerate}
Such an algebra has left and right global and graded global dimensions equal to~$d$,
and is a finitely generated $k$-algebra. 
\end{theorem}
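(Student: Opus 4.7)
The plan is to establish $(a) \Leftrightarrow (b)$ and $(a) \Leftrightarrow (c)$, then derive the ``furthermore'' claims from earlier results in the paper.

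For $(a) \Rightarrow (b)$, I would use the Serre-duality formula of Proposition~\ref{prop:duality} with $M = S$ (a finite-dimensional $(A,A)$-bimodule) and $N = A$; the identification $S^* \cong S$ from Lemma~\ref{lem:Sstar} produces a quasi-isomorphism $\RHom_A(S,A) \cong (S \otimes_A U)[-d]$, where $U$ is the Nakayama bimodule.  Lemma~\ref{lem:invertible commutes} then gives that $V := S \otimes_A U$ is a graded-invertible $(S,S)$-bimodule, verifying condition~(d$'$) of Theorem~\ref{thm:reg char}, while separability of $S$ follows from Theorem~\ref{thm:homologically smooth}.

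The hard part will be $(b) \Rightarrow (a)$: showing $U^\bullet := \RHom_{A^e}(A,A^e)$ is concentrated in cohomological degree $d$ with invertible cohomology.  Homological smoothness comes from Theorem~\ref{thm:homologically smooth}, since Theorem~\ref{thm:reg char}(c) together with Lemma~\ref{lem:extderived}(3) makes $S$ perfect over $A$.  Choosing a minimal bounded graded $A^e$-projective resolution $P^\bullet \to A$, I set $U^\bullet = \Hom_{A^e}(P^\bullet, A^e)$, and obtain from Lemma~\ref{lem:technical}(2) with $X = S$ the crucial identification $S \otimes^L_A U^\bullet \cong \RHom_A(S,A) \cong V[-d]$.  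Each $U^i$ is projective, bounded-below, and locally finite as a graded left $A$-module, so Lemma~\ref{lem:Cartan-Eilenberg} (which requires $S$ separable) replaces $U^\bullet$ up to quasi-isomorphism by a minimal complex $Q^\bullet$ of graded projective left $A$-modules.  Minimality and the graded Nakayama lemma then force $Q^i = 0$ for $i \neq d$ using the computation above, so $\Ext^i_{A^e}(A,A^e) = 0$ for $i \neq d$, and $U := \Ext^d_{A^e}(A,A^e) \cong Q^d$ is finitely generated projective as a left $A$-module (Nakayama again, since $S \otimes_A U = V$ is finite-dimensional), and likewise on the right via Corollary~\ref{cor:otherside}.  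To upgrade $U$ to an invertible bimodule, I would invoke Lemma~\ref{lem:invertible}: the hypotheses $(S \otimes_A U) J = 0$ and $J (U \otimes_A S) = 0$ are automatic since $V = S \otimes_A U$ and $V' = U \otimes_A S$ are $(S,S)$-bimodules, and $V$ is already graded-invertible.

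For $(a) \Leftrightarrow (c)$: given (a), Proposition~\ref{prop:extension of scalars}(1) propagates twisted Calabi-Yau to every $A^K$, and $(a) \Rightarrow (b)$ applied to $A^K$ yields (c).  Conversely, assuming (c), I take $K = \overline{k}$; then $A^K$ is AS regular, and because $\overline{k}$ is perfect the finite-dimensional semisimple algebra $A^K/J(A^K)$ is automatically separable over $K$.  Applying $(b) \Rightarrow (a)$ to $A^K$ shows it is twisted Calabi-Yau, and Proposition~\ref{prop:extension of scalars}(2) descends this to $A$.  The final statements then follow from prior results: Corollary~\ref{cor:twisted CY global dimension} applied to the finite-dimensional module $A_0$ gives left and right global dimension~$d$, Proposition~\ref{prop:global dimension}(3) (with $S$ separable) equates graded and ungraded versions, and Lemma~\ref{lem:affine} yields finite generation of $A$ over $k$ since $S$ is perfect.
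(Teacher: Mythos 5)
Your proposal is correct, and for the two substantive directions it follows the paper's own argument almost verbatim: (a)$\Rightarrow$(b) via Proposition~\ref{prop:duality} with $M=S$, $S^*\cong S$, and Lemma~\ref{lem:invertible commutes}(2) to verify condition~(d$'$) of Theorem~\ref{thm:reg char}; and (b)$\Rightarrow$(a) via Lemma~\ref{lem:technical}, a minimal replacement of $\RHom_{A^e}(A,A^e)$ furnished by Lemma~\ref{lem:Cartan-Eilenberg}, graded Nakayama to force concentration in degree~$d$, and Lemma~\ref{lem:invertible} to upgrade $U$ to an invertible bimodule (you run the one-sided argument on the left first and then cite Corollary~\ref{cor:otherside} for the other side, whereas the paper starts on the right; this is immaterial). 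Where you genuinely diverge is the leg involving~(c): the paper proves (a)$\Rightarrow$(c)$\Rightarrow$(b)$\Rightarrow$(a), handling (c)$\Rightarrow$(b) by observing that $(A_0)^K$ has finite global dimension for every $K$ (Lemma~\ref{lem:degree zero dimension} plus Lemma~\ref{lem:fd smooth}) and then invoking Rickard's Theorem~\ref{thm:Rickard} for separability of $S$; you instead prove (a)$\Leftrightarrow$(b) self-contained (getting separability from Theorem~\ref{thm:homologically smooth}) and obtain (c)$\Rightarrow$(a) by specializing to $K=\overline{k}$, using perfectness of $\overline{k}$ to get separability of $A^K/J(A^K)$ for free, applying (b)$\Rightarrow$(a) over $\overline{k}$, and descending with Proposition~\ref{prop:extension of scalars}(2). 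Both routes are sound and non-circular; yours is arguably slicker but leans on the descent result (which is one of the harder preparatory propositions), while the paper's avoids descent entirely at the cost of the Rickard/no-loops input. One point to make explicit when writing this up: applying condition~(d$'$) of Theorem~\ref{thm:reg char} in (a)$\Rightarrow$(b) presupposes $\grgldim A=d$, which you should note follows from Corollary~\ref{cor:twisted CY global dimension} together with Proposition~\ref{prop:global dimension}(3) (the same facts you already cite for the final statements).
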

\begin{proof}
(a) $\implies$ (c): If $A$ is graded twisted Calabi-Yau of dimension~$d$, then any base field 
extension $A \otimes K$ is twisted Calabi-Yau over $K$ by Proposition~\ref{prop:extension of scalars}. 
Thus to establish~(c), it suffices to assume that $A$ is a locally finite twisted Calabi-Yau algebra $A$ of dimension~$d$
and to show that $A$ is generalized AS~regular of dimension~$d$. 
Let $U = \Ext^d_{A^e}(A, A^e)$, which is 
an invertible $(A, A)$-bimodule by assumption. Recall from Lemma~\ref{lem:Sstar} that
$S \cong S^*$. 
Using Proposition~\ref{prop:duality}, we compute
\[
\Ext^i_A(S,A) \cong \Tor_{d-i}^A(S^* \otimes_A U, A) \cong \Tor_{d-i}(S \otimes_A U, A).
\]
This is equal to zero for $i \neq d$ because $A$ is flat, while for $i = d$ we have the isomorphism of
graded $(A,A)$-bimodules
\[
\Ext^d_A(S,A) \cong \Tor_{0}^A(S \otimes_A U, A) = S \otimes_A U.
\]
By Lemma~\ref{lem:extderived}, we obtain $\RHom_A(S, A) \cong  S \otimes_A U[-d]$ as 
complexes of graded $(A,A)$-bimodules.  
By Lemma~\ref{lem:invertible commutes}(2), $W = S \otimes_A U$ is a graded-invertible
$(S,S)$-bimodule.
Thus $A$ satisfies condition (d$'$) of Theorem~\ref{thm:reg char}, so $A$ is generalized 
AS~regular of dimension $d$. 

(c)$\implies$(b): If $A$ is a geometrically AS~regular $k$-algebra, then clearly it is generalized AS~regular.
For any field extension $K/k$, because the AS~regular algebra $A^K = A \otimes K$ has graded global 
dimension~$d$, it follows from Lemma~\ref{lem:degree zero dimension} that the finite-dimensional 
$K$-algebra $(A^K)_0 = (A_0)^K$ has finite global dimension (at most $d$). Lemma~\ref{lem:fd smooth} implies that 
$A_0$ is homologically smooth over $k$, so that Rickard's Theorem~\ref{thm:Rickard} implies that 
$A_0/J(A_0) = A/J(A) = S$ is separable.

(b)$\implies$(a):
Let $A$ be generalized AS~regular of dimension~$d$, and assume that $S$ is separable over $k$.
By condition~(d$'$) of Theorem~\ref{thm:reg char}, there is an  invertible $(S, S)$-bimodule $V$ such that
$\RHom_A(S, A) \cong V[-d]$ as complexes of $(S, S)$-bimodules.   This implies that $S$ is perfect as a left 
$A$-module by Lemma~\ref{lem:extderived}.  Then by Theorem~\ref{thm:homologically smooth}, $A$ is graded homologically smooth.
Similarly, since we know that the opposite-sided version of condition~(d$'$) of Theorem~\ref{thm:reg char} also holds 
by Corollary~\ref{cor:otherside}, we also have $\RHom_{A\op}(S, A) \cong W[-d]$ as complexes of $(S, S)$-bimodules, for 
some invertible $(S, S)$-bimodule $W$.  

Now applying Lemma~\ref{lem:technical}(1) with $X = S$, recalling that $S \cong S^*$ by Lemma~\ref{lem:Sstar},
we obtain quasi-isomorphisms
\[
\RHom_{A^e}(A, A^e) \otimes_A^L S \cong \RHom_{A\op}(S^*, A) \cong \RHom_{A\op}(S, A) \cong  W[-d]
\]
as complexes of graded $(A, A)$-bimodules.  
Similarly, applying Lemma~\ref{lem:technical}(2) with $X = S$ yields 
$S \otimes_A^L \RHom_{A^e}(A, A^e) \cong  V[-d]$ as complexes of graded $(A, A)$-bimodules. 

Now since $A$ is homologically smooth, $P^{\bullet} = \RHom_{A^e}(A, A^e)$ is a bounded complex, consisting of finitely generated graded 
projective right $A^e$-modules.  In particular, each $P^d$ is left bounded and locally finite.  
Now by a right-sided version of Lemma~\ref{lem:Cartan-Eilenberg}, as a complex of graded right $A$-modules 
$P^{\bullet}$ is quasi-isomorphic to a minimal complex of graded projective right $A$-modules $Q^{\bullet}$.  
In particular, the complex $Q^{\bullet} \otimes_A S$ has zero differentials. 
Since this complex is quasi-isomorphic to $W[-d]$ as complexes of graded right $A$-modules, 
comparing cohomology gives $Q^d \otimes_A S \cong W$ and $Q^i \otimes_A S  = 0$ 
for $i \neq d$, so that $Q^i = 0$ for $i \neq d$ by Nakayama's Lemma.  In particular, $Q^{\bullet}$ 
and consequently $\RHom_{A^e}(A, A^e)$ have cohomology only in degree $d$.  
By Lemma~\ref{lem:extderived}, $\RHom_{A^e}(A, A^e) \cong U[-d]$ for the $(A, A)$-bimodule $U = H^d(P^{\bullet}) = \Ext^d_{A^e}(A, A^e)$.  
The quasi-isomorphisms above show that $U \otimes_A S \cong W$ as $(A, A)$-bimodules.

A symmetric argument shows that $S \otimes_A U \cong V$ as $(A, A)$-bimodules; 
in particular, $J(U \otimes_A S) = 0$ and $(S \otimes_A U)J= 0$.
We know that $U$ is projective as a right $A$-module, since $U \cong Q^d$ as right modules, in the notation of the previous paragraph; by symmetry, it is also projective as a left $A$-module.  Thus the hypotheses of Lemma~\ref{lem:invertible} hold, and we conclude that $U$ is an invertible $(A, A)$-bimodule (and, of course, that $V \cong W$).   This establishes~(a).

Finally, when~(a) holds, then since $A$ is finitely generated as an algebra by Lemma~\ref{lem:smooth resolution}(3), 
its graded and ungraded global dimensions are equal by Proposition~\ref{prop:global dimension},
and these dimensions are in fact equal to~$d$ by Corollary~\ref{cor:twisted CY global dimension}.
\end{proof}

\begin{remark}
If $A$ is graded and locally finite, then the twisted Calabi-Yau property is certainly stricter than the generalized
AS regular property in general.
For if $A = S$ is a finite-dimensional semisimple $k$-algebra that is not separable, 
considered as a graded algebra concentrated in degree zero, then $A$ is easily seen to be generalized 
AS~regular; but Corollary~\ref{cor:graded CY 0} shows that $A$ is not graded twisted Calabi-Yau
of dimension~0, for it fails to be homologically smooth over $k$, as shown in
Example~\ref{ex:inseparable}.  On the other hand, if the field $k$ is perfect, then every finite-dimensional semisimple $k$-algebra is
separable, which makes the twisted Calabi-Yau and generalized AS regular properties equivalent in
Theorem~\ref{thm:twisted CY equivalence}.
\end{remark}

In~\cite{Bo}, Bocklandt takes the approach of defining Calabi-Yau algebras in terms of derived 
categories of finite-dimensional modules being Calabi-Yau triangulated categories~\cite{Keller}.
One hopes that the statement of Theorem~\ref{thm:twisted CY equivalence} could be extended to
include a further equivalent condition that the derived category of finite-dimensional graded left 
$A$-modules forms a ``twisted Calabi-Yau triangulated category.'' An attempt to define such 
categories was made in~\cite{RRZ2}, but further work to develop the theory of such categories must
be done before such an equivalence can be carried out. This is partly due to the fact that~\cite{RRZ2}
worked only in the setting of twisted Calabi-Yau algebras that possess a Nakayama automorphism.
But there is still a more fundamental question of whether that definition could or should be modified
to ensure uniqueness of the Nakayama autoequivalence; see~\cite[Section~2.5]{RRZ2}.


\subsection{Operations preserving generalized Artin-Schelter regularity}

We showed earlier that the tensor product of two twisted Calabi-Yau algebras is again twisted Calabi-Yau 
in Proposition~\ref{prop:twisted CY tensor}.  The analogous statement for generalized AS~regular algebras 
is of course false; if $E$ is a finite-dimensional non-separable extension of $k$, then $E$ is generalized AS~regular 
of dimension $0$, while $E \otimes_k E$ has infinite global dimension and so certainly cannot be generalized AS~regular.
However, by analogy with the fact that the tensor product of a separable algebra with a semisimple algebra is semisimple,
one might suspect that the tensor product a locally finite graded twisted CY algebra with a generalized AS regular 
algebra is again generalized AS regular. 
This is indeed the case, and can be established using a suitable adaptation of the method of proof used in
Proposition~\ref{prop:twisted CY tensor}.

\begin{proposition}
Let $A_1$ and $A_2$ be graded algebras. If $A_1$ is generalized AS~regular of dimension~$d_1$
and $A_2$ is twisted Calabi-Yau of dimension~$d_2$, then $A_1 \otimes A_2$ is generalized
AS~regular of dimension $d_1 + d_2$.
\end{proposition}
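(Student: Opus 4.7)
The plan is to verify condition (d$'$) of Theorem~\ref{thm:reg char} for $A = A_1 \otimes A_2$ with dimension $d_1 + d_2$. First I will establish the basic setup: both $A_1$ and $A_2$ are locally finite (the former by Remark~\ref{rem:reg affine}, the latter by Theorem~\ref{thm:homologically smooth} via Theorem~\ref{thm:twisted CY equivalence}), so $A$ is too. Since $A_2$ is twisted Calabi-Yau, Theorem~\ref{thm:homologically smooth} gives that $S_2 := A_2/J(A_2)$ is separable, which by Lemma~\ref{lem:enveloping radical} identifies $S := A/J(A) \cong S_1 \otimes S_2$ where $S_1 = A_1/J(A_1)$. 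Moreover, each $S_i$ is perfect as a left $A_i$-module: for $S_1$ this uses the generalized AS regularity of $A_1$ (applying Theorem~\ref{thm:reg char}(c) and Lemma~\ref{lem:extderived}(3)), and for $S_2$ this follows from homological smoothness via Theorem~\ref{thm:homologically smooth}.

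The heart of the proof is to compute $\RHom_A(S, A)$ using Lemma~\ref{lem:tensor}. Writing $\RHom_{A_i}(S_i, A_i) \cong V_i[-d_i]$ with $V_i$ an invertible graded $(S_i, S_i)$-bimodule (from condition (d$'$) of Theorem~\ref{thm:reg char} applied to each $A_i$, invoking Theorem~\ref{thm:twisted CY equivalence} for $A_2$), Lemma~\ref{lem:tensor} applies precisely because $S_1$ and $S_2$ are perfect, yielding
\begin{align*}
\RHom_A(S, A) &\cong \RHom_{A_1}(S_1, A_1) \otimes \RHom_{A_2}(S_2, A_2) \\
&\cong (V_1 \otimes V_2)[-(d_1 + d_2)]
\end{align*}
as complexes of graded $(S, S)$-bimodules. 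The tensor product $V_1 \otimes V_2$ is an invertible graded $(S, S)$-bimodule with inverse $V_1^{-1} \otimes V_2^{-1}$, by a routine computation with tensor products over $S_1 \otimes S_2$.

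Finally I verify the global dimension hypothesis required by Theorem~\ref{thm:reg char}: the Künneth tensor product of minimal graded projective resolutions (Remark~\ref{rem:Kunneth}) shows $\pdim_A(S) \leq d_1 + d_2$, while the non-vanishing of $\Ext^{d_1+d_2}_A(S,A) \cong V_1 \otimes V_2$ forces equality, and Proposition~\ref{prop:global dimension}(2) then gives $\grgldim A = d_1 + d_2$. Having verified condition (d$'$) of Theorem~\ref{thm:reg char}, the conclusion follows. The main technical hurdle is securing the simultaneous perfectness of $S_1$ and $S_2$ needed to invoke Lemma~\ref{lem:tensor}; the separability of $S_2$ plays a dual role, allowing the identification of $J(A)$ and $S$ and ensuring the tensor factorization of $\RHom$ produces a genuine invertible bimodule over the correct ring.
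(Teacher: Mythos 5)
Your proof is correct and follows essentially the same route as the paper: identify $S \cong S_1 \otimes S_2$ via Lemma~\ref{lem:enveloping radical} (using separability of $S_2$ from the twisted Calabi-Yau hypothesis), then apply Lemma~\ref{lem:tensor} — justified by perfectness of each $S_i$ — to obtain $\RHom_A(S,A) \cong (V_1 \otimes V_2)[-(d_1+d_2)]$ with $V_1 \otimes V_2$ invertible, verifying the relevant condition of Theorem~\ref{thm:reg char}. Your explicit check that $\grgldim(A_1 \otimes A_2) = d_1 + d_2$ (via the K\"unneth resolution and Proposition~\ref{prop:global dimension}(2)) is a welcome extra step that the paper's proof leaves implicit.
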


\begin{proof}
Denote $J_i = J(A_i)$ and $S_i = A_i/J_i$ for $i = 1,2$, as well as $J = J(A_1 \otimes A_2)$
and $S = (A_1 \otimes A_2)/J$.
By Theorem~\ref{thm:twisted CY equivalence}, $A_2$ is generalized AS~regular and $S_2$ is separable. 
It follows from Lemma~\ref{lem:enveloping radical} that $J = J_1 \otimes A_2 + A_1 \otimes J_2$ and $S \cong S_1 \otimes S_2$.

From Theorem~\ref{thm:reg char}(d) we have invertible graded $(S_i,S_i)$-bimodules
$V_i$ such that $\RHom_{A_i}(S_i,A)[d_i] \cong V_i$. Note that each $S_i$ is perfect
as a right $A_i$-module by Lemma~\ref{lem:contraequiv}(1). Thus we may apply
Lemma~\ref{lem:tensor} in the following sequence of quasi-isomorphisms:
\begin{align*}
\RHom_{A_1 \otimes A_2}(S, A_1 \otimes A_2) &\cong \RHom_{A_1 \otimes A_2}(S_1 \otimes S_2, A_1 \otimes A_2) \\
&\cong \RHom_{A_1}(S_1, A_1) \otimes \RHom_{A_2}(S_2, A_2) \\
&\cong V_1[-d_1] \otimes V_2[-d_2] = (V_1 \otimes V_2)[-(d_1 + d_2)].
\end{align*}
Since $V_1 \otimes V_2$ has inverse bimodule $V_1^{-1} \otimes V_2^{-1}$ over
$A_1 \otimes A_2$, we find that $A_1 \otimes A_2$ is generalized AS~regular
of dimension $d_1 + d_2$, as desired.
\end{proof}

This directly applies to show, as one might expect, that generalized AS~regularity is preserved
when passing to polynomial rings.

\begin{corollary}
If a graded algebra $A$ is generalized AS~regular of dimension~$d$, then $A[t]$ is generalized
AS~regular of dimension $d+1$.
\end{corollary}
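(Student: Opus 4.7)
The plan is to apply the preceding Proposition to the factorization $A[t] \cong A \otimes_k k[t]$ of graded $k$-algebras, where $k[t]$ carries the standard grading with $\deg(t) = 1$. Since $A$ is assumed generalized AS~regular of dimension $d$, it suffices to show that $k[t]$ is graded Calabi-Yau of dimension $1$; the Proposition will then yield that $A \otimes_k k[t] \cong A[t]$ is generalized AS~regular of dimension $d + 1$.

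To verify the Calabi-Yau property for $k[t]$, I would invoke the standard Koszul bimodule resolution
$$0 \longrightarrow k[t]^e(-1) \xrightarrow{\ t \otimes 1 \, - \, 1 \otimes t\ } k[t]^e \longrightarrow k[t] \longrightarrow 0,$$
in which the second map is multiplication. This is a finite resolution by finitely generated graded projective $k[t]^e$-modules, so $k[t]$ is graded homologically smooth. Applying $\grHom_{k[t]^e}(-, k[t]^e)$ to the deleted resolution gives a two-term complex whose cohomology vanishes in degree $0$, since $t \otimes 1 - 1 \otimes t$ is a non-zero-divisor in the commutative ring $k[t]^e$, and is isomorphic as a graded $(k[t], k[t])$-bimodule to $k[t](1)$ in degree $1$, using the identification $k[t]^e / (t \otimes 1 - 1 \otimes t)\, k[t]^e \cong k[t]$ coming from the multiplication map. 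Since $k[t](1)$ is graded-invertible with inverse $k[t](-1)$, this establishes that $k[t]$ is graded Calabi-Yau of dimension $1$.

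With this input in hand, the preceding Proposition applied with $A_1 = A$ and $A_2 = k[t]$ immediately produces the conclusion. There is no substantial obstacle in the argument; the only delicate point is tracking the correct grading shift in the Nakayama bimodule of $k[t]$, which is routine.
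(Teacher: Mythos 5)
Your proof is correct and follows essentially the same route as the paper: both apply the preceding proposition to the factorization $A[t] \cong A \otimes k[t]$, using that $k[t]$ is Calabi-Yau of dimension~$1$. The only difference is that you verify this last fact explicitly via the Koszul bimodule resolution (correctly, with Nakayama bimodule $k[t](1)$), whereas the paper simply cites it as well known, pointing to Theorem~\ref{thm:CY1} with $S = k$ and $V = k(-1)$.
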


\begin{proof}
As it is well-known that $k[t]$ is Calabi-Yau of dimension~1 (see also Theorem~\ref{thm:CY1}
with $S = k$ and $V = k(-1)$), it follows from the previous proposition that $A[t] \cong A \otimes k[t]$
is generalized AS~regular of dimension $d+1$.
\end{proof}

As mentioned in Section~\ref{sec:intro} and as discussed in~\cite{MM}, regularity properties in the context 
of non-connected locally finite graded algebras provide an avenue to connect noncommutative algebraic geometry
with the study of finite-dimensional algebras. To encourage further investigation into this interesting connection, we pose
the following question that is of a fundamental nature, but which remains largely open.

\begin{question}
For which finite-dimensional algebras $B$ is there a locally finite generalized AS~regular algebra $A$ of 
dimension~$d$ such that $B \cong A_0$?
\end{question}

While the answer for $d = 0$ is obviously the class of semisimple algebras $B$, we do not even
know the answer in case $d = 1$.

Note that the analogous question for twisted Calabi-Yau algebras of dimension~$d$ would follow from
an answer to the above by imposing the extra condition that $B/J(B)$ is separable, thanks to 
Theorem~\ref{thm:twisted CY equivalence}.
In the twisted Calabi-Yau case, the answer for $d = 0$ is given in Theorem~\ref{thm:CY 0} and for
$d = 1$ is given in Theorem~\ref{thm:CY1} below.   See Example~\ref{ex:bigA0} below for a class of examples $A$ 
which are Calabi-Yau of dimension $2$ and have a more interesting $A_0$.

\section{Applications to algebras of dimension at most~2}
\label{sec:applications}

In this section, we will apply the tools developed in preceding sections to the the study of
twisted Calabi-Yau algebras of dimension~$1$ and~$2$. (Recall that the twisted Calabi-Yau algebras of
dimension~0 are fully characterized as separable algebras by Theorem~\ref{thm:CY 0}.)
We will show that a locally finite graded twisted Calabi-Yau algebra $A$ of 
dimension~$d$ is noetherian in case either $d = 1$, or $d = 2$ and $A$ has finite 
GK dimension. 
By establishing this result before attempting any classification of low-dimensional twisted 
Calabi-Yau algebras, we emphasize that this result is of a structural nature, rather than by 
exhaustively listing all possible algebras.
We then conclude with a characterization of graded twisted Calabi-Yau algebras of 
dimension~1 as certain tensor algebras.


\subsection{The noetherian property in dimension $d \leq 2$}

Our approach to showing that a graded algebra is noetherian is to \emph{show that
every graded noetherian module is finitely presented.} This is inspired by Cohen-type
arguments, using the key idea from~\cite[Theorem~4.5]{R}.

\begin{lemma}
\label{lem:noetherian test}
A graded $k$-algebra $A$ is left noetherian if and only if every graded noetherian left $A$-module
is finitely presented.
\end{lemma}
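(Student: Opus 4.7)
One direction is straightforward. If $A$ is left noetherian, then any graded noetherian left $A$-module $M$ is a fortiori finitely generated by homogeneous elements, giving a graded surjection $F \twoheadrightarrow M$ from a finitely generated graded free module. The kernel is a graded submodule of the noetherian module $F$, hence finitely generated, so $M$ is finitely presented.

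For the converse my plan is a Cohen-type argument by contradiction. I would assume every graded noetherian module is finitely presented, but that some graded left ideal of $A$ fails to be finitely generated, so that the poset $\mathcal{F}$ of non-finitely-generated graded left ideals (ordered by inclusion) is nonempty. Any chain $\{I_\alpha\} \subseteq \mathcal{F}$ has an upper bound in $\mathcal{F}$: if the union $\bigcup I_\alpha$ were finitely generated, finitely many homogeneous generators would already lie in some $I_{\alpha_0}$, forcing $I_{\alpha_0} = \bigcup I_\alpha$ to be finitely generated, a contradiction. By Zorn's lemma $\mathcal{F}$ admits a maximal element $I$.

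I would then verify that $M = A/I$ is graded noetherian: its graded submodules are exactly the quotients $I'/I$ for graded left ideals $I' \supseteq I$, and the maximality of $I$ in $\mathcal{F}$ forces any $I' \supsetneq I$ to be finitely generated, so $I'/I$ is too; the case $I' = I$ is trivial. By hypothesis $M$ is then finitely presented, yielding a graded exact sequence $0 \to K \to F \to M \to 0$ with $F$ a finitely generated graded free module and $K$ finitely generated. Comparing with the tautological presentation $0 \to I \to A \to M \to 0$ via the graded Schanuel lemma produces a graded isomorphism $K \oplus A \cong I \oplus F$. The left-hand side is finitely generated, so its direct summand $I$ is as well, contradicting $I \in \mathcal{F}$.

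The plan has no serious obstacle; the argument is a standard Cohen-type maneuver. The only points requiring care are the verification of the chain condition for $\mathcal{F}$ under unions (which rests on the finiteness of a putative generating set for the union), and the invocation of the graded version of Schanuel's lemma, which applies because graded free modules are projective in the graded module category.
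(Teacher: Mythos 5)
Your proposal is correct and follows essentially the same route as the paper's proof: use Zorn's lemma to pick a graded left ideal $I$ maximal among those that are not finitely generated, observe that maximality makes $A/I$ a graded noetherian module, hence finitely presented by hypothesis, and then apply (graded) Schanuel's lemma to force $I$ to be finitely generated, a contradiction. The only point you leave implicit is the standard reduction to graded ideals --- that a graded ring is left noetherian as soon as all of its graded left ideals are finitely generated --- which is exactly the fact the paper covers with its citation to \cite[II.3]{NV1} and \cite[Theorem~5.4.7]{NV2}, and which your argument needs in order to pass from your conclusion about graded left ideals to left noetherianity of $A$.
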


\begin{proof}
Suppose that every graded noetherian left $A$-module is finitely presented, and assume 
for contradiction that $A$ is not left noetherian. Then there exists a graded left ideal 
$I \subseteq A$ that is not finitely generated; see~\cite[II.3]{NV1} or~\cite[Theorem~5.4.7]{NV2}. 
Using Zorn's Lemma, we may pass to a 
maximal such $I$. Then $A/I$ is a graded noetherian left $A$-module and thus is finitely 
presented. But then $I$ is finitely generated (by Schanuel's Lemma, for instance),
a contradiction. The converse is clear.
\end{proof}

Now assume that $A$ is a graded algebra of finite graded global dimension $d$ with $\dim_k A_0 < \infty$, and let $M$
be a noetherian graded left module. Consider the minimal graded projective resolution of $M$,
\begin{equation}
\label{projective resolution}
0 \to P^{-d} \to \cdots \to P^{-1} \to P^0 \to M.
\end{equation}
If $A$ is to be left noetherian, then one expects all of the $P^i$ to be finitely generated.
Thus a valid approach to proving that $A$ is left noetherian would be to prove by
descending induction that the $P^i$ are finitely generated.
The next result illustrates how the twisted Calabi-Yau property takes care of the ``base case''
of this proposed inductive argument. 

\begin{proposition}
\label{prop:base case}
Let $A$ be a locally finite graded twisted Calabi-Yau algebra of dimension~$d$, and let $_A M$ be a graded left $A$-module. 
If $\soc(M)$ is a finitely generated module (for example, if $M$ is noetherian), then the term 
$P^{-d}$ in the minimal projective resolution~\eqref{projective resolution} of $M$ is finitely 
generated.
\end{proposition}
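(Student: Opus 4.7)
The plan is to combine the Van den Bergh-style socle formula of Proposition~\ref{prop:socle} with the minimal-resolution criterion of Lemma~\ref{lem:fg term}. By Lemma~\ref{lem:fg term}, the term $P^{-d}$ in the minimal graded projective resolution of $M$ is finitely generated if and only if the left $S$-module $\Tor_d^A(S,M)$ is finite-dimensional over $k$. So the entire task reduces to bounding the $k$-dimension of $\Tor_d^A(S,M)$.

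First I would invoke Proposition~\ref{prop:socle}, which provides an isomorphism
\[
\Tor_d^A(S,M) \;\cong\; U^{-1} \otimes_A \soc(M)
\]
of graded left $S$-modules, where $U$ is the Nakayama bimodule of $A$. This converts the question about $\Tor$ into a question about the socle and the invertible bimodule. Next I would observe that if $M$ is noetherian (or merely has finitely generated socle), then $\soc(M)$ is a finitely generated submodule of $M$; being semisimple and annihilated by $J(A)$, it is in fact a finitely generated module over the finite-dimensional semisimple algebra $S = A/J(A)$, and hence has finite $k$-dimension.

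The main remaining point is to see that tensoring with $U^{-1}$ over $A$ preserves finite-dimensionality of such a module. Since $J(A) \soc(M) = 0$, the natural map
\[
U^{-1} \otimes_A \soc(M) \;\longrightarrow\; (U^{-1} \otimes_A S) \otimes_S \soc(M)
\]
is an isomorphism of $(S,S)$-bimodules. By Lemma~\ref{lem:invertible commutes}(2), the bimodule $U^{-1} \otimes_A S \cong S \otimes_A U^{-1}$ is a graded-invertible $(S,S)$-bimodule, hence finitely generated on each side over the finite-dimensional algebra $S$, and therefore finite-dimensional over $k$. The tensor product over $S$ of two finite-dimensional $S$-modules is finite-dimensional, giving $\dim_k \Tor_d^A(S,M) < \infty$ and thus the finite generation of $P^{-d}$.

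I do not foresee a serious obstacle: the real work has been done upstream in Proposition~\ref{prop:socle} and Lemma~\ref{lem:invertible commutes}, and the present proposition is essentially a packaging of those facts through Lemma~\ref{lem:fg term}. The one place requiring a small care is the parenthetical claim that noetherianness of $M$ implies finite generation of $\soc(M)$, which is immediate since submodules of noetherian modules are finitely generated.
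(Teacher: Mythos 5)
Your proof is correct and follows essentially the same route as the paper: both reduce, via Lemma~\ref{lem:fg term}, to showing $\Tor_d^A(S,M)$ is finite-dimensional, and both get this from the socle formula of Proposition~\ref{prop:socle}. The only difference is cosmetic — the paper argues that the Morita equivalence $U^{-1}\otimes_A -$ preserves finite generation and then uses semisimplicity, while you factor through $(U^{-1}\otimes_A S)\otimes_S \soc(M)$ and cite Lemma~\ref{lem:invertible commutes}(2); both are fine (though note your displayed map is an isomorphism of left $S$-modules, not of $(S,S)$-bimodules, since $\soc(M)$ carries no right structure).
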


\begin{proof}
Set $S = A/J(A)$ and let $U$ be the Nakayama bimodule of $A$.
From Proposition~\ref{prop:socle} we have
\[
\Tor^A_d(S,M) \cong U^{-1} \otimes_A \soc(M).
\]
Because $U^{-1}$ is invertible, $\soc(M)$ is finitely generated as a module if and only if 
$U^{-1} \otimes_A \soc(M)$ is.  Also, the semisimple left $A$-module $U^{-1} \otimes_A \soc(M)$ is a direct sum of simple modules 
over the finite-dimensional algebra $A/J(A) = S$, each of which is finite-dimensional.
Thus $U^{-1} \otimes_A \soc(M)$ is finitely generated if and only if it is finite-dimensional, so that
the claim follows from Lemma~\ref{lem:fg term}.
\end{proof}

This allows us to show that graded twisted Calabi-Yau algebras of dimension~1 are noetherian.

\begin{corollary}\label{cor:noetherian dim 1}
Let $A$ be a locally finite graded twisted Calabi-Yau algebra of dimension~1.
Then $A$ is noetherian.
\end{corollary}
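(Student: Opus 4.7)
The plan is to apply the noetherian test of Lemma~\ref{lem:noetherian test}, namely, to show that every graded noetherian left $A$-module $M$ is finitely presented; then the right-sided statement will follow by the left-right symmetry of the twisted Calabi-Yau property (so that $A\op$ is also twisted Calabi-Yau of dimension~$1$ and the same argument applies).

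First I would record that, by Corollary~\ref{cor:twisted CY global dimension} and Proposition~\ref{prop:global dimension}, the graded global dimension of $A$ equals~$1$. Consequently, every graded left $A$-module $M$ admits a minimal graded projective resolution of the form
\[
0 \to P^{-1} \to P^{0} \to M \to 0.
\]
If in addition $M$ is noetherian, then it is finitely generated, so $P^0$ (a projective cover of $M$) is finitely generated.

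The substantive input is Proposition~\ref{prop:base case}, which is the ``base case'' provided by the twisted Calabi-Yau structure: it says that the top nonzero term $P^{-d}$ in the minimal resolution is finitely generated whenever $\soc(M)$ is finitely generated. Here $d=1$, so I just need $\soc(M)$ finitely generated; and since $\soc(M)$ is a submodule of the noetherian module $M$, it is automatically finitely generated. Thus $P^{-1}$ is finitely generated, and $M$ is finitely presented. Lemma~\ref{lem:noetherian test} then gives that $A$ is left noetherian, and applying the same reasoning to $A\op$ (which is again twisted Calabi-Yau of dimension~$1$, as noted after Definition~\ref{def:graded twisted CY}) shows that $A$ is also right noetherian.

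There is essentially no obstacle at this stage: all the serious work has already been done in establishing Proposition~\ref{prop:socle}, Proposition~\ref{prop:base case}, and the noetherian test Lemma~\ref{lem:noetherian test}. In dimension~$1$ the resolution is so short that the ``base case'' of Proposition~\ref{prop:base case} is simultaneously the ``inductive step,'' and the argument closes immediately. (The analogous argument for dimension~$2$, carried out later in the paper, will need a genuinely new idea to handle the middle term $P^{-1}$, which is presumably where the finite GK-hypothesis of Theorem~\ref{thm:CY2 intro} will enter.)
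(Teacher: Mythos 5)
Your proposal is correct and follows essentially the same route as the paper's own proof: finitely generated $P^0$ from finite generation of $M$, finitely generated $P^{-1}$ from Proposition~\ref{prop:base case} (whose socle hypothesis is automatic for noetherian $M$), then Lemma~\ref{lem:noetherian test} and left-right symmetry. The extra remarks on graded global dimension~$1$ just make explicit what the paper leaves implicit.
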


\begin{proof}

Let $_A M$ be a graded noetherian left module with minimal projective resolution
\[
0 \to P^{-1}\to P^0 \to M.
\]
Certainly $P^0$ is finitely generated because $M$ is. Proposition~\ref{prop:base case} implies that
$P^{-1}$ is finitely generated. Hence $M$ is finitely presented.  It follows from Lemma~\ref{lem:noetherian test} that $A$ is left noetherian. By symmetry, $A$ is also right noetherian.
\end{proof}

The dimension~2 case requires us to know a bit more information about the GK~dimension
of graded projective modules.
In the companion paper~\cite{RR2}, we study in detail the basic properties of the Gelfand-Kirillov dimension 
of graded twisted Calabi-Yau algebras. There we establish the following fact.

\begin{lemma} 
\label{lem:same GK}
(See \cite[Proposition 7.4]{RR2}.)
Let $A$ be a graded, locally finite, twisted Calabi-Yau $k$-algebra of dimension~2.  Suppose that $\GK(A) < \infty$.  Let 
\[
P_1 \overset{d_1}{\longrightarrow} P_2 \overset{d_2}{\longrightarrow} P_3
\]
 be an exact sequence of projectives in $\rGr A$. If $P_1$ and
$P_3$ are finitely generated, then so is $P_2$.
\end{lemma}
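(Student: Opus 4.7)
The plan is to peel off a finitely generated direct summand of $P_2$ containing $\ker(d_2)$, reducing the statement to showing that any graded projective right $A$-module embedding in a finitely generated projective must itself be finitely generated. Under the dimension-2 twisted Calabi-Yau hypothesis with finite GK-dimension, I would deduce this from a matrix-Hilbert-series pole-order comparison as $t \to 1^-$, invoking a key input from \cite{RR2}.

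First I would set up the splitting. The module $K := \operatorname{im}(d_1) = \ker(d_2)$ is a quotient of the finitely generated $P_1$, hence is itself finitely generated. Writing $P_2 = \bigoplus_{\ell} R_\ell$ as a direct sum of indecomposable graded projectives, any finite homogeneous generating set of $K$ uses only finitely many of the summands $R_\ell$, so there is a finite index set $\mathcal{F}$ with $K \subseteq P_2' := \bigoplus_{\ell \in \mathcal{F}} R_\ell$. Setting $P_2'' := \bigoplus_{\ell \notin \mathcal{F}} R_\ell$, we have $P_2 = P_2' \oplus P_2''$ with $P_2'$ finitely generated, and since $\ker(d_2) \subseteq P_2'$ the restriction $d_2|_{P_2''}\colon P_2'' \hookrightarrow P_3$ is injective. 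So it is enough to show $P_2''$ is finitely generated.

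Next I would run the Hilbert-series argument. Fix a complete set $e_1, \dots, e_r \in A_0$ of primitive orthogonal idempotents, so that the indecomposable graded projective right $A$-modules are (up to shift) the $e_i A$. Decompose $P_2'' \cong \bigoplus_{j \in J} e_{i_j} A(-n_j)$, and assume for contradiction that $J$ is infinite. By pigeonhole some index $i_0$ has $J_{i_0} := \{j \in J : i_j = i_0\}$ infinite, so the power series $a(t) := \sum_{j \in J_{i_0}} t^{n_j} \in \mathbb{N}[[t]]$ has infinitely many nonzero coefficients and $\lim_{t \to 1^-} a(t) = +\infty$ by monotone convergence. Writing $h_{i,k}(t) := \sum_n \dim_k(e_i A e_k)_n\, t^n$, restricting the inclusion $P_2'' \hookrightarrow P_3$ to the $e_k$-component of the Hilbert series yields, for each $k$ and each $t \in (0,1)$,
\[
a(t)\, h_{i_0,k}(t) \;\leq\; \sum_{j \in J} t^{n_j} h_{i_j,k}(t) \;=\; H_{P_2'' e_k}(t) \;\leq\; H_{P_3 e_k}(t).
\]
The key input from \cite{RR2} is that over a dimension-$d$ twisted Calabi-Yau algebra with finite GK-dimension, every indecomposable graded projective has GK-dimension equal to $d = \GK(A)$. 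In particular $\GK(e_{i_0} A) = d$, so one can choose an index $k$ for which $h_{i_0,k}(t)$ has a pole of order exactly $d$ at $t = 1$. Since $\GK(P_3) \leq \GK(A) = d$, the entry $H_{P_3 e_k}(t)$ has pole order at most $d$ at $t=1$, so $H_{P_3 e_k}(t)/h_{i_0,k}(t)$ remains bounded as $t \to 1^-$. Combined with $a(t) \leq H_{P_3 e_k}(t)/h_{i_0,k}(t)$ on $(0,1)$, this contradicts $a(t) \to +\infty$, forcing $J$ to be finite.

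The main obstacle is the cited input from \cite{RR2} that all indecomposable graded projectives have the common GK-dimension $d$. Morally this follows because the Nakayama bimodule $U$ induces a self Morita equivalence of $A \lMod$ that permutes the isomorphism classes of indecomposable projectives up to shift and preserves GK-dimension; since the maximum over such classes equals $\GK(A)$, the common value must be $d$. This is the point at which the twisted Calabi-Yau hypothesis (as opposed to merely homological smoothness with finite GK-dimension) enters in an essential way. With that ingredient in hand, the rest of the argument is a short rational-function analysis at $t = 1$.
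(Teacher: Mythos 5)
You should note first that the paper does not actually prove this lemma: it is quoted verbatim from the companion paper (\cite[Proposition~7.4]{RR2}), so there is no in-paper argument to compare against, and what you are attempting is to prove the black box itself. Your reduction step is fine: $K=\ker d_2=\operatorname{im} d_1$ is finitely generated; $P_2$ is automatically bounded below and locally finite (it is an extension of the submodule $K$ of itself by $P_2/K\hookrightarrow P_3$, both bounded below and locally finite), so it decomposes into shifts of the $e_iA$; a finite homogeneous generating set of $K$ lies in a finite subsum, giving $P_2=P_2'\oplus P_2''$ with $P_2'$ finitely generated and $d_2$ injective on $P_2''$. The coefficientwise Hilbert-series inequality $a(t)\,h_{i_0,k}(t)\le H_{P_3e_k}(t)$ and the fact that $a(t)\to\infty$ when $J_{i_0}$ is infinite are also correct.

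The genuine gap is the ``key input'' you rely on, namely that every indecomposable graded projective $e_iA$ has GK-dimension equal to $\GK(A)=d$ (really what you need: for your $i_0$ there is a column $k$ with $h_{i_0,k}$ dominating, up to a constant as $t\to 1^-$, every entry $h_{l,k}$ coming from the finitely many indecomposables in $P_3$). This is precisely the structural information about matrix Hilbert series that the present paper deliberately outsources to \cite{RR2}; it is not the statement of \cite[Proposition~7.4]{RR2} (that proposition \emph{is} the lemma you are proving), so invoking \cite{RR2} for it is circular unless you supply a proof. Your heuristic justification does not work: the Nakayama bimodule does induce a graded Morita self-equivalence that permutes the indecomposable projectives up to shift and preserves GK-dimension, but a GK-preserving permutation only shows that GK-dimension is constant on its orbits; it in no way forces every class to attain the maximal value $\GK(A)$ (the identity permutation satisfies all your hypotheses and yields nothing). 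In addition, your argument tacitly assumes the entries $h_{i,k}(t)$ have genuine pole behavior (rationality) at $t=1$: finite GK-dimension by itself only gives bounds of the form $C_\epsilon(1-t)^{-d-\epsilon}$ above and $c_\epsilon(1-t)^{-d+\epsilon}$ below, which is not enough to bound $H_{P_3e_k}(t)/h_{i_0,k}(t)$ near $t=1$. Both missing facts --- rationality of the matrix Hilbert series of a dimension-$2$ twisted Calabi-Yau algebra and the comparability of growth among its indecomposable projectives --- are the real content behind the cited proposition, so as written your proof replaces one unproved black box with another, together with an invalid argument for it.
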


We now proceed to show that graded twisted Calabi-Yau algebras of global dimension~2 with finite GK
dimension are noetherian. 

\begin{theorem}\label{thm:noetherian dim 2}
Let $A$ be a locally finite graded twisted Calabi-Yau algebra of dimension~2. 
Then $A$ is noetherian if and only if it has finite GK dimension. 
\end{theorem}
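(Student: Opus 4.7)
The plan is to establish the two implications separately, with the forward (``finite GK $\Rightarrow$ noetherian'') direction being a clean application of the machinery already assembled, and the reverse direction requiring a Hilbert-series input.

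For the implication ``$\GK(A) < \infty$ implies $A$ is noetherian'', I would apply Lemma~\ref{lem:noetherian test}: it suffices to show every graded noetherian left $A$-module $M$ is finitely presented. Since $A$ has graded global dimension $2$ by Corollary~\ref{cor:twisted CY global dimension}, the minimal graded projective resolution of $M$ has the form
\[
0 \to P^{-2} \to P^{-1} \to P^0 \to M \to 0.
\]
The term $P^0$ is finitely generated because $M$ is. For $P^{-2}$, note that $\soc(M)$ is a submodule of $M$, hence noetherian; as a semisimple module over the finite-dimensional algebra $S = A/J(A)$ it must therefore be finite-dimensional, hence finitely generated. Proposition~\ref{prop:base case} then forces $P^{-2}$ to be finitely generated. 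Finally, Lemma~\ref{lem:same GK} (the technical input imported from \cite{RR2}, which requires $\GK(A) < \infty$) applied to the exact sequence $P^{-2} \to P^{-1} \to P^0$ of projectives with finitely generated outer terms yields that $P^{-1}$ is finitely generated as well. Thus $M$ is finitely presented, and Lemma~\ref{lem:noetherian test} gives left noetherianity. By the left-right symmetry of the twisted Calabi--Yau property (Remark~\ref{rem:swap}), applying the same argument to $A\op$ gives right noetherianity.

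For the reverse implication ``noetherian implies $\GK(A) < \infty$'', I would use the noetherian hypothesis to force the minimal graded projective bimodule resolution of $A$ over $A^e$ to have finitely generated terms in every degree. More concretely, applying noetherianity to the minimal graded projective resolutions of the simple module $S$ over $A$ on both sides, together with the Van den Bergh-type duality of Lemma~\ref{lem:derived VdB} and Proposition~\ref{prop:duality}, one obtains that $A$ admits a finite-type graded bimodule resolution of length $2$ with an explicit shape controlled by the Nakayama bimodule. This translates directly into a matrix-polynomial relation for the matrix Hilbert series $h_A(t)$ of $A$ over $S$, of the schematic form ``$h_A(t)^{-1}$ is a finite-degree matrix polynomial in $t$''. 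From such an identity the coefficients of $h_A(t)$ grow at most polynomially, giving $\GK(A) < \infty$.

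The main obstacle is the reverse direction: the forward direction is essentially bookkeeping once Proposition~\ref{prop:base case} and Lemma~\ref{lem:same GK} are available, but translating the qualitative noetherian hypothesis into quantitative growth control on $h_A(t)$ requires the matrix Hilbert-series machinery for dimension-$2$ twisted Calabi--Yau algebras developed in the companion paper \cite{RR2}. The delicate point is obtaining the precise shape of the bimodule resolution — in particular, verifying that its middle term is finite-dimensional over $k$ under the noetherian hypothesis — so that a polynomial, rather than merely a rational function, identity for $h_A(t)^{-1}$ can be extracted.
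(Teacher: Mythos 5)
Your forward implication ($\GK(A)<\infty$ implies noetherian) is exactly the paper's argument: the noetherian test of Lemma~\ref{lem:noetherian test}, a length-two minimal graded resolution, finite generation of $P^0$ from $M$ and of $P^{-2}$ from $\soc(M)$ via Proposition~\ref{prop:base case}, Lemma~\ref{lem:same GK} for the middle term $P^{-1}$, and left--right symmetry. No complaints there.

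The gap is in the reverse implication. The paper disposes of it with a one-line citation to \cite[Proposition~4.4]{RR2}, so deferring to the companion paper is not in itself the problem; the problem is that the route you sketch cannot be made to work, because it never genuinely uses the noetherian hypothesis and its final inference is false. The ``finite-type'' resolutions you want are automatic from the twisted Calabi--Yau hypothesis alone: homological smoothness already says $A$ is perfect over $A^e$, equivalently $S$ is perfect over $A$ (Theorem~\ref{thm:homologically smooth}), so the minimal resolutions of $S$ over $A$ and of the bimodule $A$ over $A^e$ have terms $A\otimes_S W_i$, respectively $A\otimes_S W_i\otimes_S A$, with $\dim_k W_i<\infty$ (Lemma~\ref{lem:fg term}); the ``delicate point'' you flag is a non-issue and requires no noetherian input. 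Consequently the identity ``$h_A(t)^{-1}$ is a degree-two matrix polynomial'' holds for \emph{every} locally finite graded twisted Calabi--Yau algebra of dimension~$2$. But that identity does not bound growth: connected AS~regular algebras of global dimension~$2$ on $n\ge 3$ degree-one generators (a single nondegenerate quadratic relation) are twisted Calabi--Yau of dimension~$2$ by \cite{RRZ1}, yet have Hilbert series $(1-nt+t^2)^{-1}$, whose coefficients grow exponentially. If your argument were valid, it would therefore show that every locally finite graded twisted Calabi--Yau algebra of dimension~$2$ has finite GK-dimension, which is false, and would render the theorem's equivalence contentless. The noetherian hypothesis must enter in an essentially different, quantitative way; that is exactly the content of \cite[Proposition~4.4]{RR2}, which your proposal neither reproduces nor correctly anticipates.
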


\begin{proof}
Assume $\GK(A) < \infty$.
Let $M$ be a noetherian graded left $A$-module with minimal projective resolution
\[
0 \to P^{-2} \to P^{-1} \to P^0 \to M.
\]
Because $M$ is finitely generated, so is $P^0$.   By Proposition~\ref{prop:base case}, we see that
$P^{-2}$ is also finitely generated.   Now by Lemma~\ref{lem:same GK}, $P^{-1}$ is
finitely generated, so that $M$ is finitely presented. Thus $A$ is left noetherian by
Lemma~\ref{lem:noetherian test}; by symmetry, $A$ is also right noetherian.

Conversely, if $A$ is noetherian, then $\GK(A) < \infty$ by~\cite[Proposition 4.4]{RR2}.
\end{proof}

\begin{remark}
A direct consequence of Theorem~\ref{thm:noetherian dim 2} and
Proposition~\ref{prop:extension of scalars} is that if $A$ is a locally finite graded twisted Calabi-Yau
$k$-algebra of dimension $d \leq 2$ having finite GK dimension, then for every field extension $K/k$
the algebra $A \otimes K$ is noetherian.  Following our earlier terminology, this 
property could be called ``geometrically noetherian", but the term ``stably noetherian" has also been used 
in the literature~\cite{Bell}.  This makes it tempting to posit that such algebras are in fact strongly noetherian, that is, 
that $A \otimes R$ is noetherian for all commutative noetherian $k$-algebras $R$.   
To this end, it would also be interesting to study graded twisted Calabi-Yau $R$-algebras over a 
general commutative ring $R$, under the assumption that $A = \bigoplus_{n=0}^\infty A_n$
is graded with each $A_n$ a finitely generated projective $R$-module. It seems likely that a 
number of results proved in the preceding sections could generalize (with suitable 
modification) to this general setting. 
\end{remark}

We also wonder whether the previous theorem can be extended to the ungraded case.

\begin{question}
Let $A$ be a \emph{(not necessarily graded)} twisted Calabi-Yau algebra of dimension $d \leq 2$. 
If $A$ has finite GK dimension, must $A$ be noetherian?  
\end{question}


\subsection{Twisted CY-1 algebras are tensor algebras}

Next, we give a structural characterization of locally finite graded twisted Calabi-Yau 
algebras of dimension $d=1$ as certain tensor algebras.
Given a semisimple algebra $S$ and a finite-dimensional $(S, S)$-bimodule $V$, the tensor algebra is
$T_S(V) = \bigoplus_{n \geq 0} V^{\otimes n}$, where
$V^{\otimes n} = V \otimes_S V \otimes_S \dots \otimes_S V$ is the $n$-fold tensor power over $S$
with the usual convention $V^{\otimes 0} = S$, and the multiplication is induced by the tensor product 
over $S$.   
Note that if $V$ is positively graded, then $A = T_S(V)$ is locally finite; conversely, if $A = T_S(V)$ is 
($\mb{N}$)-graded and locally finite, then $V$ must at least be nonnegatively graded.  
In fact, it is a straightforward exercise to see that $T_S(V)$ is nonnegatively graded if and only if
$V^{\otimes N}$ is positively graded for some $N \geq 1$. 
Any graded $A$-module is also a graded $S$-module via the inclusion 
$S = V^{\otimes 0} \subseteq A$.  Furthermore, we claim that
\[
J(A) = \bigoplus_{n \geq 1} V^{\otimes n} = V \otimes_S A = A \otimes_S V.
\]
Indeed, because $S$ is semisimple and $A/(V \otimes_S A) \cong S$, we must have 
$J(A) \subseteq V \otimes_S A$. On the other hand, from $V^{\otimes N} \subseteq A_{\geq 1}$
we have $(V \otimes_S A)^N = V^{\otimes N} \otimes_S A \subseteq A_{\geq 1} \subseteq J(A)$.
As $J(A)$ is semiprime, it follows that $V \otimes_S A \subseteq J(A)$, proving the desired containment.

We have the following technical lemma concerning the structure of $\Ext^i_A(S, A)$ for an algebra $A = T_S(V)$.

\begin{lemma}\label{lem:tensor algebra ext}
Let $S$ be a finite-dimensional semisimple $k$-algebra, let $V$ be a nonnegatively graded finite-dimensional $(S,S)$-bimodule,
and denote $A = T_S(V)$.  Assume that $A$ is locally finite.  Let $\Vhat = \Hom_S(V,S)$ denote the $S$-dual of $V$ as a left module, and
let the image of the natural right-multiplication map $\rho: S \to \End({}_S V)$ be denoted by $S'$.
Then we have the following isomorphisms of $(S,S)$-bimodules:
\begin{enumerate}
\item $\Ext_A^0(S,A) = \Hom_A(S,A) \cong \ann_r(J(A))$. 
\item $\Ext_A^1(S,A) \cong \Vhat \oplus ((\End_S(V)/S') \otimes_S A)$.
\end{enumerate}
\end{lemma}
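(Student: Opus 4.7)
The plan is to apply $\Hom_A(-, A)$ to the standard length-one projective resolution of the left $A$-module $S$, namely
\[
0 \to A \otimes_S V \xrightarrow{\mu} A \to S \to 0,
\]
where $\mu(a \otimes v) = av$. This sequence is exact because the multiplication map realizes the natural vector-space identification $A \otimes_S V = V \oplus V \otimes_S V \oplus \cdots = J(A)$ grade by grade, and both terms are projective left $A$-modules because $V$ is finitely generated projective over the semisimple algebra $S$. Via the adjoint identification $\Hom_A(A \otimes_S V, A) \cong \Hom_S(V, A)$, this produces an exact sequence of $(S,S)$-bimodules
\[
0 \to \Hom_A(S, A) \to A \xrightarrow{\phi} \Hom_S(V, A) \to \Ext^1_A(S, A) \to 0,
\]
where $\phi(a)(v) = va$.

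For part~(1), the kernel of $\phi$ consists of those $a \in A$ with $Va = 0$, and this is easily seen to be equivalent to $J(A) \cdot a = 0$, using that every element of $J(A)$ is a sum of products $v_1 \cdots v_n$ with $v_i \in V$, together with the iterated identity $v_1 \cdots v_n \cdot a = v_1 \cdots v_{n-1} \cdot (v_n a)$. Evaluation at $1 \in S$ then yields the isomorphism $\Hom_A(S, A) \cong \ann_r(J(A))$ of $(S,S)$-bimodules.

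For part~(2), the idea is to decompose the target using the $(S,S)$-bimodule splitting $A = S \oplus J(A)$ coming from the canonical embedding $S \hookrightarrow A$, which yields $\Hom_S(V, A) = \Vhat \oplus \Hom_S(V, J(A))$ as $(S, S)$-bimodules. Because $\phi(a)(v) = va \in V \cdot A \subseteq J(A)$, the image of $\phi$ lies entirely in the second summand, so
\[
\Ext^1_A(S, A) \cong \Vhat \oplus \coker(\phi'),
\]
where $\phi' \colon A \to \Hom_S(V, J(A))$ is the restricted map. Combining the $(S, A)$-bimodule isomorphism $J(A) \cong V \otimes_S A$ with the standard isomorphism $\Hom_S(V, V \otimes_S A) \cong \End_S(V) \otimes_S A$ (valid because $V$ is finitely generated projective over $S$) presents $\phi'$ as the map $a \mapsto \id_V \otimes a$.

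The main technical step, and the one I expect to be most delicate, is identifying the image $\{\id_V \otimes a : a \in A\}$ of $\phi'$ with the subbimodule $S' \otimes_S A$ of $\End_S(V) \otimes_S A$. This reduces to the identity $\rho_s = \rho_1 \cdot s$ in $\End_S(V)$ (where the right $S$-action is $(f \cdot s)(v) = f(vs)$), which gives $\rho_s \otimes a = \id_V \otimes sa$ in the tensor product over $S$; letting $s$ and $a$ vary recovers every element of the form $\id_V \otimes a'$. Flatness of $A$ as a left $S$-module (from the semisimplicity of $S$) then yields
\[
\coker(\phi') \cong (\End_S(V) \otimes_S A)/(S' \otimes_S A) \cong (\End_S(V)/S') \otimes_S A,
\]
completing the identification. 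The remaining verifications are routine checks that each of the identifications above respects the ambient $(S, S)$-bimodule structures.
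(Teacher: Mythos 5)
Your proposal is correct and follows essentially the same route as the paper: the projective resolution $0 \to A \otimes_S V \to A \to S \to 0$, adjunction plus the decomposition $A = S \oplus J(A)$ to identify $\Hom_A(J(A),A) \cong \Vhat \oplus (\End_S(V)\otimes_S A)$, and the identification of the restriction map with $a \mapsto \id_V \otimes a$ whose cokernel gives $(\End_S(V)/S')\otimes_S A$. The only difference is that you spell out the paper's ``one may verify'' step (that the image is $S'\otimes_S A$, using $\rho_s = \id_V\cdot s$), which is a welcome addition rather than a deviation.
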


\begin{proof}
For~(1), it is clear that $\Hom_A(S,A) = \Hom_A(A/J(A),A)$ is naturally identified with the right annihilator
of $J(A)$ in $A$. 

We now establish~(2).   As a graded left $A$-module, we have $J(A) \cong A \otimes_S V$, 
which is projective because ${}_S V$ is projective. Thus ${}_A S$ has a graded projective resolution 
given by 
\[
0 \to J(A) \to A \to S \to 0.
\]
In order to compute $\Ext^1_A(S,A)$, we apply the functor $\Hom_A(-,A)$ to the deleted 
resolution, so that we must examine
\[
\Hom_A(A,A) \overset{\phi}{\longrightarrow} \Hom_A(J(A),A) \to 0, 
\]
where $\phi$ is given by restriction of the right-multiplication morphisms in $A \cong \Hom_A(A,A)$ to
$J(A)$. Then $\Ext^1_A(S,A)$ will be the cokernel of $\phi$.

We now have 
\begin{align*}
\Hom_A(J(A),A) & \cong \Hom_A(A \otimes_S V, A) \\
&\cong \Hom_S(V, \Hom_A(A ,A))  \ \ \text{(by adjointness)} \\
&\cong \Hom_S(V,A) \\
&\cong \Hom_S(V, S \oplus (V \otimes_S A) \\
&\cong \Hom_S(V, S) \oplus \Hom_S(V, V \otimes_S A) \\
&\cong \Hom_S(V, S) \oplus (\Hom_S(V, V) \otimes_S A) \ \ \text{(by Lemma~\ref{lem:moving tensor})} \\
&= \Vhat \oplus (\End_S(V) \otimes_S A),
\end{align*}
as $(S, S)$-bimodules.

One may verify that under this isomorphism, 
the morphism 
\[
\phi \colon A = \Hom_A(A,A) \to \Hom_A(J(A),A)
\]
 corresponds to the map
$A \cong S \otimes_S A \overset{\rho \otimes 1}{\to} \End_S(V) \otimes_S A$ composed with the coordinate
inclusion into $\Vhat \oplus (\End_S(V) \otimes_S A)$.
Taking the cokernel of this map yields the desired $(S,S)$-bimodule isomorphism
$\Ext^1_A(S,A) \cong \Vhat \oplus (\End_S(V)/S') \otimes_S A$.
\end{proof}

This allows us to characterize those tensor algebras $T_S(V)$ as above that are generalized 
AS~regular.

\begin{theorem}\label{thm:tensor algebra}
Let $S$ be a finite-dimensional semisimple $k$-algebra, and let $0 \neq V$ be a nonnegatively graded
finite-dimensional $(S,S)$-bimodule such that $A = T_S(V)$ is locally finite.  Then $A$ is generalized 
AS~regular of dimension $d$ if and only if $V$ is an invertible bimodule, in which case $d = 1$.
\end{theorem}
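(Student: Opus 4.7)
The plan is to combine Lemma~\ref{lem:tensor algebra ext} with characterization~(d) of Theorem~\ref{thm:reg char}, and to exploit the canonical projective resolution $0 \to A \otimes_S V \to A \to S \to 0$ of $S$ as a left $A$-module. This resolution has length $\leq 1$, so $\grgldim A \leq \pdim_A(S) \leq 1$ by Proposition~\ref{prop:global dimension}(2). Since $V \neq 0$, the sequence cannot split: a splitting would yield an $A$-submodule of $A$ isomorphic to $S$ and annihilated by $J(A)$, forcing the right $S$-action on $V$ to vanish. Hence $\pdim_A(S) = 1$, so any dimension $d$ for which $A$ is generalized AS regular must equal~$1$.

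For the forward direction, assume $V$ is invertible. Then the right-multiplication map $\rho: S \to \End_S(V)$ is an isomorphism, so $\End_S(V)/S' = 0$, and Lemma~\ref{lem:tensor algebra ext}(2) gives $\Ext^1_A(S, A) \cong \Vhat$, which is the inverse bimodule $V^{-1}$ and hence invertible. Moreover, because $V \otimes_S -$ is an equivalence of module categories, any $a \in A$ with $V \cdot a = 0$ has each homogeneous component $a_n \in V^{\otimes n}$ satisfying $V \otimes_S a_n = 0$, so $a_n = 0$; thus $\ann_r J(A) = 0$ and $\Ext^0_A(S,A) = 0$ by Lemma~\ref{lem:tensor algebra ext}(1). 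Condition~(d) of Theorem~\ref{thm:reg char} then gives AS regularity of dimension~$1$.

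For the reverse direction, assume $A$ is AS regular (necessarily with $d = 1$). Theorem~\ref{thm:reg char}(d) gives $\Ext^0_A(S,A) = 0$ and $\Ext^1_A(S,A) \cong V'$ for some invertible $(S,S)$-bimodule $V'$, while Lemma~\ref{lem:tensor algebra ext} yields $\ann_r J(A) = 0$ together with the bimodule decomposition $V' \cong \Vhat \oplus (X \otimes_S A)$ for $X = \End_S(V)/S'$. The main obstacle is to deduce $X = 0$; my strategy is as follows. By Corollary~\ref{cor:otherside} the opposite-sided version also gives $\ann_l J(A) = 0$, and together these imply $V e_i \neq 0$ and $e_i V \neq 0$ for every primitive idempotent $e_i$. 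An induction on $n$ then shows that $e_j V^{\otimes n} \neq 0$ for all $j$ and $n \geq 0$: factor $e_j V^{\otimes (n+1)} = (e_j V) \otimes_S V^{\otimes n}$, choose $k$ with $V_{jk} = e_j V e_k \neq 0$, and note that $V_{jk} \otimes_{D_k} (e_k V^{\otimes n})$ is a tensor of two nonzero modules over the division algebra $D_k = e_k S e_k$. Hence each $e_j A$ is infinite-dimensional, and writing $X \otimes_S A \cong \bigoplus_{i,j} X_{ij} \otimes_{D_j} (e_j A)$ together with the finite-dimensionality of this module (as a summand of $V'$) forces $X_{ij} = 0$ for every $i, j$. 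Once $X = 0$, then $V' \cong \Vhat$ is invertible; surjectivity of $\rho$ (from $X = 0$) and injectivity (otherwise $\ker \rho$ would contain a central idempotent $z_j$, giving $V z_j = 0$ and contradicting $V e_i \neq 0$) make $\rho$ an isomorphism. Combined with $V$ being finitely generated projective over the semisimple $S$ and the dimension equality $\dim V = \dim \Vhat = \dim V' = \dim S$, Wedderburn theory shows $V$ is a left progenerator, which together with $\End_S(V) \cong S$ means $V$ is a Morita self-equivalence of $S$, i.e., an invertible $(S, S)$-bimodule.
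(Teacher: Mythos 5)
Your proof follows the paper's strategy almost exactly: both directions hinge on Lemma~\ref{lem:tensor algebra ext} together with criterion~(d) of Theorem~\ref{thm:reg char}, and the key step in the reverse direction is in both cases to kill the summand $(\End_S(V)/S')\otimes_S A$ of $\Ext^1_A(S,A)$ by playing the finite-dimensionality of an invertible $(S,S)$-bimodule against the infinite-dimensionality of each $e_jA$. Your route to ``$e_jA$ is infinite-dimensional'' (induction on tensor powers, using $e_jV\neq 0$ extracted from $\ann_l(J(A))=0$) is a concrete variant of the paper's more abstract observation that $\Hom_{A\op}(S,A)=0$ forces $A$ to have no nonzero finite-dimensional right ideals; both are fine.

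One claim in your last step is false, though fortunately dispensable: the ``dimension equality'' $\dim_k V' = \dim_k S$ for an invertible $(S,S)$-bimodule $V'$ does not hold in general. The paper's own Example~\ref{ex:weirdbimod} gives an invertible bimodule $U = Ae_1\oplus Ae_1\oplus Ae_2$ over $A = k\oplus \M_2(k)$ with $\dim_k U = 4 \neq 5 = \dim_k A$; invertible bimodules over a semisimple algebra need only be progenerators with endomorphism ring $S$, not modules of the same length as $S$. So you cannot use that equality to see that ${}_SV$ is a progenerator. But you do not need it: you have already shown $e_iV\neq 0$ for every primitive idempotent $e_i$, which says exactly that every simple left $S$-module occurs in ${}_SV$, i.e.\ ${}_SV$ is a generator (and it is automatically finitely generated projective). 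Alternatively, once $X=0$ you know $\Vhat=\Hom_S(V,S)$ is an invertible $(S,S)$-bimodule, and the natural double-dual map gives $V\cong \Hom_S(\Vhat,S)\cong \Vhat^{-1}$, which is the paper's one-line finish and avoids the Morita-theoretic detour entirely. With that repair your argument is complete; a further cosmetic point is that your non-splitting argument for $0\to J(A)\to A\to S\to 0$ is more easily phrased by noting the resolution is minimal with $P^{-1}=A\otimes_SV\neq 0$.
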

\begin{proof} 
Suppose that $A$ is generalized AS regular of dimension $d$.  Since $V$ is nonzero,
we see that $\widehat{V} = \Hom_S(V, S) \neq 0$ and so $\Ext^1_A(S, A) \neq 0$ by Lemma~\ref{lem:tensor algebra ext}(2).
Using condition~(d) of Theorem~\ref{thm:reg char}, we see that necessarily $d = 1$; in addition, $\Hom_{A}(S, A) = 0$ and $\Ext_{A}^1(S, A)$ is an invertible $(S, S)$-bimodule.  By Corollary~\ref{cor:otherside}, 
the right-sided versions of these conditions also hold.  In particular, $\Hom_{A\op}(S, A) = 0$ and 
so $A$ has no right socle by Lemma~\ref{lem:tensor algebra ext}(1).  
Then $A$ has no nonzero finite-dimensional right ideals.  In particular, every indecomposable 
graded projective left $A$ module $e_i A$ satisfies $\dim_k e_i A = \infty$, where $1 = e_1 + \dots + e_n$ is a decomposition 
of $1$ into primitive orthogonal idempotents $e_i \in S$.  Any nonzero right $S$-module $M$ is a direct sum of simple modules $e_i S$, 
and $e_i S \otimes_S A = e_i A$.  Thus $\dim_k M \otimes_S A = \infty$ as well.
On the other hand, $W = \Ext_A^1(S, A)$ is an invertible $(S, S)$-bimodule and hence since $\dim_k S < \infty$, 
we have $\dim_k W < \infty$.   Now Lemma~\ref{lem:tensor algebra ext}(2) forces $\End_S(V)/S' = 0$, 
so that $W = \Vhat = \Hom_S(V, S)$ is an invertible $(S, S)$-bimodule.   But then 
$W^{-1} = \Hom_S(W, S) \cong V$ is also invertible.  

Conversely, suppose that $V$ is an invertible $(S, S)$-bimodule.   Then the natural right multiplication map $\rho: S \to \End_S(V)$ is 
an isomorphism.  In particular, its image $S'$ equals $\End_S(V)$ and so $\End_S(V)/S' = 0$.  By Lemma~\ref{lem:tensor algebra ext}(2), 
we have $\Ext_A^1(S, A) \cong \Vhat = \Hom_S(V, S)$, which is an invertible $(S, S)$-bimodule isomorphic to $V^{-1}$.

Next, we have the short exact sequence 
\[
0 \to J(A) \to A \to S \to 0, 
\]
which is a graded minimal projective resolution of $S$, since $J(A) \cong A \otimes_S V$ is projective as noted in the proof of Lemma~\ref{lem:tensor algebra ext}.  Then by Proposition~\ref{prop:global dimension}, 
we have $\grgldim(A) = \pdim({}_A S) = 1$.
Applying $\Hom_A(-, A)$ to the short exact sequence we obtain 
\[
0 \to \Hom_A(S, A) \to \Hom_A(A, A) \to \Hom_A(J(A), A) \to \cdots  
\]
and we claim that the map $A \cong \Hom_A(A,A) \to \Hom_A(J(A),A)$ is injective.  Indeed,
if the restriction of a right multiplication map $\rho_x \colon A \to A$ to $J(A)$ is zero,
then since $J(A) = V \otimes_S A$ we have $V \otimes_S Ax = 0$.  Applying $V^{-1} \otimes_S - $ we get $Ax = 0$ and 
hence $x = 0$.  This proves the claim, and thus $\Hom_A(S, A) = 0$.  Now since $\grgldim(A) = 1$, we 
also have $\Ext^i_A(S, A) = 0$ if $i > 1$.  Using Lemma~\ref{lem:extderived}(3), we find
that $\RHom_A(S, A)[1] \cong V^{-1}$ is an invertible bimodule. Thus by condition~(d) of 
Theorem~\ref{thm:reg char}, $A$ is generalized AS~regular of dimension $1$. 
\end{proof}

We can now characterize locally finite graded twisted Calabi-Yau algebras of dimension $1$.

\begin{theorem}
\label{thm:CY1}
Let $A$ be a graded $k$-algebra with $S = A/J(A)$.  Then the following are equivalent:
\begin{enumerate}
\item $A$ is locally finite twisted Calabi-Yau of dimension $1$.
\item $A \cong T_S(V)$ for a separable $k$-algebra $S$ and an invertible, nonnegatively graded $(S, S)$-bimodule $V$
such that $V^{\otimes N}$ is positively graded for some integer $N \geq 1$.
\end{enumerate}
\end{theorem}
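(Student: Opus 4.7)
The plan is to handle the two directions separately. For $(2) \Rightarrow (1)$: since $V$ is finite-dimensional and $V^{\otimes N}$ is positively graded for some $N$, the algebra $A = T_S(V)$ is locally finite; Theorem~\ref{thm:tensor algebra} makes $A$ generalized AS~regular of dimension~$1$, and since $S$ is separable, Theorem~\ref{thm:twisted CY equivalence} upgrades this to twisted Calabi-Yau of dimension~$1$. For $(1) \Rightarrow (2)$, assume $A$ is locally finite twisted Calabi-Yau of dimension~$1$. By Theorem~\ref{thm:twisted CY equivalence}, $A$ is generalized AS~regular of dimension~$1$ and $S$ is separable; by Corollary~\ref{cor:twisted CY global dimension}, $\gldim A = 1$. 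Hence the sequence $0 \to J \to A \to S \to 0$ (with $J = J(A)$) is a graded projective resolution of $S$, and $J$ is a graded projective left $A$-module. By Wedderburn-Malcev~\cite[Theorem~72.19]{CR}, fix an embedding $S \hookrightarrow A_0 \hookrightarrow A$. Set $V = J/J^2$; this is a finite-dimensional $(S,S)$-bimodule by Lemma~\ref{lem:affine} together with Remark~\ref{rem:reg affine}. Since $S$ is separable, $S^e$ is semisimple by Lemma~\ref{lem:sepdef}, so the surjection $J \twoheadrightarrow V$ splits as $(S, S)$-bimodules; fix such a splitting $\iota : V \hookrightarrow J \subseteq A$.

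The key step is to show that the $(A,S)$-bimodule map $\tilde\iota: A \otimes_S V \to J$ given by $a \otimes v \mapsto a \iota(v)$ is an isomorphism. It is surjective by the graded Nakayama lemma, since its image maps onto $V = J/J^2$. Both source and target are graded projective left $A$-modules (the source because $V$ is projective over $S$) whose tops are isomorphic to $V$, so by uniqueness of graded projective covers they have the same graded Hilbert series; a graded surjection between locally finite graded modules of equal Hilbert series is an isomorphism. Consequently the resolution $0 \to A \otimes_S V \to A \to S \to 0$ produces the matrix Hilbert series identity $H_A = (I - H_V)^{-1}$, which matches $H_{T_S(V)} = \sum_n H_V^n$.

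By the universal property of the tensor algebra, $\iota$ extends uniquely to a graded $S$-algebra map $\phi: T_S(V) \to A$; let $B = \phi(T_S(V))$. Using $J = A\iota(V)$, one iteratively obtains $A = B + J^n$ for every $n \geq 1$; since $\bigcap_n J^n = 0$ in any locally finite graded algebra (because $J(A_0)$ is nilpotent and $A_m$ is finite-dimensional for each $m$), we conclude $B = A$, so $\phi$ is surjective. Matching Hilbert series then forces $\phi$ to be an isomorphism of graded $S$-algebras. With $A \cong T_S(V)$ established, Theorem~\ref{thm:tensor algebra} applied in reverse (using that $A$ is generalized AS~regular of dimension~$1$) forces $V$ to be invertible as an $(S,S)$-bimodule; local finiteness of $A_0 = \bigoplus_n V_0^{\otimes n}$ then forces $V_0^{\otimes N} = 0$ for some $N$, equivalent to $V^{\otimes N}$ being positively graded. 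The noetherian claim follows from Corollary~\ref{cor:noetherian dim 1}. The principal obstacle is establishing the key isomorphism $J \cong A \otimes_S V$, which depends crucially on separability of $S$ to supply the bimodule splitting $\iota$ and on uniqueness of graded projective covers to compare Hilbert series.
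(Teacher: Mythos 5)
Your overall architecture coincides with the paper's: Wedderburn--Malcev to embed $S$ in $A_0$, semisimplicity of $S^e$ to split off $V \cong J/J^2$ inside $J$, the universal property of $T_S(V)$ to get a graded surjection $\phi$, and the identification $A \otimes_S V \cong J$ via the projective-cover argument (which is correct as you state it, using $\grgldim(A)=1$ to see that $J$ is projective). The surjectivity argument via $A = B + J^n$ and $\bigcap J^n = 0$ is also fine.

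The gap is in the step ``matching Hilbert series forces $\phi$ to be an isomorphism.'' The identity $H_{T_S(V)} = \sum_n H_V^n = (I-H_V)^{-1}$ requires the series $\sum_n H_V^n$ to converge coefficientwise, i.e.\ requires $H_V(0)$ to be nilpotent, which is exactly the statement that $V^{\otimes N}$ is positively graded for some $N$ --- but you only deduce that fact \emph{after} establishing $A \cong T_S(V)$. Without it, $T_S(V)$ need not even be locally finite a priori ($V_0 = J(A_0)/J(A_0)^2$ can have nonvanishing tensor powers even though $J(A_0)$ is nilpotent, since the multiplication $V_0^{\otimes n} \to J(A_0)^n/J(A_0)^{n+1}$ need not be injective), so comparing Hilbert series of $T_S(V)$ and $A$ is circular. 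Moreover the matrix Hilbert series formalism you invoke is developed only in the companion paper, not here. The repair is the route the paper takes, and it uses only what you have already proved: writing $\phi_n = \phi|_{V^{\otimes n}}$, one has $\phi_n = d_1 \circ (\phi_{n-1} \otimes_S \mathrm{id}_V)$, where $d_1 \colon A \otimes_S V \to A$ is the multiplication map, injective by your isomorphism $A \otimes_S V \cong J$; since $S$ is semisimple, $V$ is flat over $S$, so $\phi_{n-1} \otimes \mathrm{id}_V$ is injective by induction, hence every $\phi_n$ is injective and (together with the directness of the sum $\bigoplus \iota(V)^n$ in $A$, which follows from $J^n = \iota(V)^n \oplus J^{n+1}$) $\phi$ is an isomorphism. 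Once that is fixed, your remaining steps --- invertibility of $V$ from Theorem~\ref{thm:tensor algebra} and positivity of $V^{\otimes N}$ from local finiteness of $A_0$ --- go through as the paper does them.
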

\begin{proof}
$(1) \implies (2)$:  Let $J = J(A)$. Since $A$ is twisted Calabi-Yau of dimension~$1$, 
Theorem~\ref{thm:twisted CY equivalence} implies that $A$ is generalized AS~regular of dimension $1$ and 
that $S$ is separable. Then $S^e$ is also semisimple by Lemma~\ref{lem:sepdef}(2). Thus there exists a
graded left $S^e$-submodule $V$ of $J$ such that $J = V \bigoplus J^2$ as graded $S^e$-modules, that is, as 
graded $(S, S)$-bimodules.  
Similarly, there is a copy of $S \subseteq A$ such that $S \oplus J \cong A$ as graded $(S, S)$-bimodules, 
and we use this to identify $S$ with a graded subalgebra of $A$.  Note that $V \cong J/J^2$ and so $\dim_k V < \infty$ by 
Lemma~\ref{lem:affine} and Lemma~\ref{lem:smooth resolution}(3).  
Clearly $V$ is nonnegatively graded.  

By the universal property of the tensor algebra, there is a unique algebra homomorphism 
$\phi: T_S(V) \to A$ that maps $S$ isomorphically to the given fixed copy of $S$ in $A$ and maps $V$ 
isomorphically to the fixed complement of $J^2$ in $J$. By the choice of $V$ and $S$ as graded
submodules, this is a graded algebra homomorphism. 
By the proof of Lemma~\ref{lem:affine}, we see that $\phi$ is surjective.  

We claim that $\phi$ is an isomorphism. Indeed, denoting $\phi_n = \phi \vert_{V^{\otimes n}}: V^{\otimes n} \to A$,
because $\phi$ is a graded homomorphism it suffices to show by induction that each $\phi_n$ is 
injective. This is easily verified for $n \leq 1$ by the definition of $\phi$. Now let $n \geq 2$ and
suppose that $\phi_{n-1}$ is injective.
As we saw in Theorem~\ref{thm:tensor algebra}, 
the minimal projective graded left resolution of $S$ looks like   
\[
0 \to A \otimes_S V \overset{d_1}{\to}  A \to S \to 0,
\]
where $d_1$ is the natural multiplication map (after identifying $V$ with a subset of $A$ as above).  
  Now $\phi_n$ decomposes as the 
composite
\[
V^{\otimes n} = V^{\otimes (n-1)} \otimes_S V \overset{\phi_{n-1} \otimes 1}{\to} A \otimes_S V \overset{d_1}{\to} A.
\]
Now $\phi_{n-1}$ is injective by the inductive hypothesis. Because $V$ is a flat $S$-module (as $S$
is semisimple), it follows that $\phi_{n-1} \otimes 1$ is injective.  Thus $\phi_n$ is injective as desired.
So we find that $\phi$ is an isomorphism. 

Thus $T_S(V) \cong A$, so that that this tensor algebra is locally finite; as mentioned above, this implies
that $V^{\otimes N}$ is positively graded for some $N \geq 1$.
Now since $A$ is twisted Calabi-Yau of dimension $1$, Theorem~\ref{thm:twisted CY equivalence} implies
that it is also generalized AS~regular of dimension $1$ and that $S$ is separable.  
Finally, $V$ must be invertible by Theorem~\ref{thm:tensor algebra}.

$(2) \implies (1)$:  Because $V^{\otimes N}$ is positively graded, the algebra $A \cong T_S(V)$ is
locally finite. The hypothesis now implies that $A$ is generalized AS~regular of dimension $1$, by Theorem~\ref{thm:tensor algebra}.
Since we also assume that $S$ is separable, $A$ is twisted Calabi-Yau of dimension $1$ by Theorem~\ref{thm:twisted CY equivalence}.
\end{proof}

The proof of $(1) \implies (2)$ above uses separability of $S$ in an essential way.  We don't know the 
answer to the following.
\begin{question}
If $A$ is locally finite graded and generalized AS~regular of dimension $1$, must $A$ be isomorphic to a tensor algebra $T_S(V)$ for 
some graded finite-dimensional $(S, S)$-bimodule $V$?
\end{question}

\section{Examples}

As mentioned earlier, a graded invertible bimodule over a locally finite graded algebra $A$ need 
not have the form  ${}^1 A^\sigma$.  Here is a simple example.

\begin{example}
\label{ex:weirdbimod}
Let $k$ be a field and let $A = k \oplus \M_2(k)$, considered as a graded algebra with $A = A_0$.  
Decompose $1 = e_1 + e_2 + e_3$ as a sum of primitive orthogonal idempotents, where $e_1 \in k$ and $e_2, e_3 \in \M_2(k)$.
Then $A e_1$ is a simple module of $k$-dimension $1$, and $A e_2 \cong A e_3$ are 
simple modules of $k$-dimension $2$.  

Let $U = A e_1 \oplus A e_1 \oplus A e_2$.  Then $U$ is a projective left $A$-module which is 
obviously also a generator, so that  $B = \End_A(U)$ is Morita equivalent to $A$. Since
$\Hom_A(A e_1, A e_2) = 0 = \Hom_A(A e_2, A e_1)$ we in fact have
\[
B = \End_A(U) \cong \End_A((Ae_1)^{\oplus 2}) \oplus \End_A( A e_2) \cong M_2(k) \oplus k \cong A.
\]  
Thus $U$ carries the structure of a graded invertible $(A, A)$-bimodule.
On the other hand, $U$ and $A$ are not isomorphic as left $A$-modules because they have different 
composition factors (even different dimensions over $k$).  Thus $U$ cannot be of the form 
$^1 A^{\sigma}$ for any automorphism $\sigma$.   
\end{example} 

We can use a similar idea as in the previous example to construct an example of a 
twisted Calabi-Yau algebra $A$ whose Nakayama bimodule $U$ is not of the form ${}^1 A^\sigma$
for any automorphism $\sigma$ of $A$.

\begin{example}\label{ex:skew group}
Recall that if $G$ is a finite group acting via automorphisms on an algebra $A$, 
then we can construct the skew group algebra $A \rtimes G$:  as a vector space this is $A \otimes kG$, 
but the multiplication is defined by $(a \otimes g)(b \otimes h) = ag(b) \otimes gh$ for $a, b \in A$ and $g, h \in G$.
When $A$ is locally finite graded and $G$ acts by graded automorphisms, then $A \rtimes G$ is again locally finite graded, 
where the elements of $G$ have degree $0$.

Assume that $k$ has characteristic not equal to $2$.  Let $A = k[x,y] \rtimes \mb{Z}_2$, where the 
non-identity element of $\mb{Z}_2$ acts via the  automorphism $\alpha$ with $\alpha(x) = x, \alpha(y) = -y$.  
The structure of $A$ can be seen to be a quiver algebra modulo certain relations using the McKay
correspondence; we refer readers to~\cite[Corollary 4.1]{BSW} for details.
It is known that $A = kQ/I$, where $Q$ is the McKay quiver of the action; in this case $Q$ has two vertices, one 
loop $x_i$ at each vertex $i$, an arrow $y_1$ from vertex 1 to vertex 2, and an arrow $y_2$ from 
vertex 2 to vertex 1.  The ideal $I$ is generated by $x_1 y_1 - y_2 x_1$ and $x_2 y_2 - y_1 x_2$, where 
we compose arrows from left to right.  We also have $A_0 = ke_1 + ke_2$ where $e_1$ and $e_2$ are the 
trivial paths.  
By \cite[Theorem 4.1]{RRZ1}, $A$ is twisted 
Calabi-Yau of dimension $2$ (the term skew Calabi-Yau is used there), and its Nakayama bimodule is $U = {}^1 A^{\mu}(2)$, 
where the Nakayama automorphism $\mu$ is also calculated by that result.  Explicitly, $\mu$ acts trivially 
on $x$ and $y$, while $\mu(g) = \hdet(g) g$ for $g \in \mb{Z}_2$.  Here, $\hdet(g)$ is simply the determinant 
of the action of $g$ on $kx + ky$.  Thus if $\mb{Z}_2 = \{1, a\}$ then $\mu(a) = -a$.  Since $e_1 = (1 + a)/2$ and 
$e_2 = (1-a)/2$, we see that $\mu$ switches the two idempotents in $A_0$.

Now let $M_2(A)$ be the 2 by 2 matrix ring over $A$.  By Proposition~\ref{prop:CY Morita} and its proof, we have 
that $M_2(A)$ is twisted Calabi-Yau with Nakayama bimodule $M_2(U)$.  Write $1 = f_1 + f_2 + f_ 3 + f_4$ 
as a sum of primitive orthogonal idempotents in $M_2(A)$, where 
\[
f_1 = \begin{pmatrix} e_1 & 0 \\ 0 & 0 \end{pmatrix}, f_2 = \begin{pmatrix} e_2 & 0 \\ 0 & 0 \end{pmatrix}, 
f_3 = \begin{pmatrix} 0 & 0 \\ 0 & e_1 \end{pmatrix}, f_4 = \begin{pmatrix} 0 & 0 \\ 0 & e_2 \end{pmatrix}.
\]
Then $f_1 M_2(A) \cong f_3 M_2(A) $ and $f_2 M_2(A)  \cong f_4 M_2(A)$ as right $M_2(A)$-modules.   
Thus the idempotent $g = f_1 + f_2 + f_3$ is full, so that the algebra $B = g M_2(A) g$ is Morita equivalent
to $A$. By Proposition~\ref{prop:CY Morita} and its proof, $B$ is also twisted Calabi-Yau of dimension~$2$ 
with Nakayama bimodule $V = g M_2(U) g$.

Consider the right $B$-module structure of $V$.  Note that the Morita equivalence $M \mapsto Mg$ from graded right 
$M_2(A)$-modules to graded right $B$-modules sends $gM_2(U)$ to $V$.  We have 
$gM_2(U) = f_1 M_2(U) \oplus f_2 M_2(U) \oplus f_3 M_2(U)$, where 
$f_1 M_2(U) \cong (e_1 U, e_1 U) \cong f_3 M_2(U)$, and $f_2 M_2(U) \cong (e_2U, e_2 U)$, as modules
given by row vectors.  Moreover, as (ungraded) right $A$-modules, 
we have $e_1 U = (e_1 A)^{\mu}$; since $\mu$ switches the idempotents it is easy to see that in fact 
$e_1 U \cong e_2 A$ as right modules.  Similarly, $e_2 U \cong e_1 A$. Thus in fact the right $M_2(A)$-module
$g M_2(U)$ is, up to isomorphism, a direct sum of 2 copies of $(e_2 A, e_2 A)$ and one copy of $(e_1 A, e_1 A)$.  
On the other hand, $gM_2(A)$ is, up to isomorphism, a direct sum of 1 copy of $(e_2 A, e_2 A)$ 
and 2 copies of $(e_1 A, e_1 A)$.  These same properties pass via the Morita equivalence to the ring $B$; 
and so $B$ has two indecomposable projective right modules $P$ and $Q$ up to isomorphism, such that 
$B \cong P^{\oplus 2} \oplus Q$ and $V \cong P \oplus Q^{\oplus 2}$.  So $V$ is not a free right $B$-module 
and hence it cannot be of the form ${}^1 B^{\sigma}$ for an automorphism $\sigma$.
\end{example}

\noindent The previous example also shows that there really is a difference between the twisted Calabi-Yau property 
for locally finite graded algebras $A$ and Artin-Schelter regularity over $A_0$ as defined by Minamoto and Mori 
(see Remark~\ref{rem:MM def}).  For if $B$ is the algebra constructed in the example, then since $B$ is twisted Calabi-Yau 
it is also generalized AS~regular.  Considering condition (e$'$) in Theorem~\ref{thm:reg char}, 
we have $\RHom_B(B_0, B)[2] \cong (B_0^* \otimes_{B_0} W)$ as $(B_0, B_0)$-bimodules, for some invertible graded $(B_0, B_0)$-bimodule $W$.  By the proof of Theorem~\ref{thm:twisted CY equivalence}, if $V$ is the Nakayama bimodule of $B$ then we 
have $W = B/B_{\geq 1} \otimes V$.  Since $V$ is not a free $B$-module on the right, it is easy to see that 
$W$ is not a free $B_0$-module on the right, and hence $W$ is not of the form ${}^1 B_0^{\sigma}$.
 As mentioned in Remark~\ref{rem:AS reg Morita}, this also shows that the 
property of Artin-Schelter regularity over $A_0$ is not preserved under Morita equivalence, since $B$ is 
Morita equivalent to the algebra $A = k[x,y] \rtimes \Z_2$ above and $A$ is AS~regular over $A_0$ 
with $W = A/A_{\geq 1} \otimes_A U \cong {}^1 A_0^\mu(2)$.

Throughout the paper we have allowed arbitrary locally finite graded algebras $A$, without any 
assumption on the structure of $A_0$.  The most common examples of (twisted) Calabi-Yau algebras 
occur as $A = kQ/I$ for a finite quiver $Q$, where the relations generating $I$ come from taking derivatives 
of a superpotential; see, for example, \cite{BSW}. In these examples, $A$ has a natural grading where 
the arrows in $Q$ have degree $1$, and hence $A_0 \cong k^{\oplus n}$ is semisimple, where $n$ is the 
number of vertices in the quiver.  In many cases, however, it is possible to grade $Q$ in a different manner 
so that the arrows have degrees possibly different from~$1$, in a way that is compatible with the relations.
Choosing some arrows to have degree $0$, one obtains examples with more interesting $A_0$. 
The following is one special case.

\begin{example}
\label{ex:bigA0}
Let $Q$ be a finite connected quiver.  Let $\overline{Q}$ be the double of $Q$, obtained by 
adding an arrow $\alpha^*$ in the opposite direction for each arrow $\alpha$ in $Q$.  The corresponding \emph{preprojective algebra} is $A = k\overline{Q}/(r)$, where $r = \sum_{\alpha} \alpha \alpha^* - \alpha^* \alpha$, the sum over all arrows $\alpha$.  
As long as $Q$ is not a Dynkin quiver, it is known that $A$ is Calabi-Yau of dimension~$2$.  This follows 
because the matrix Hilbert series of $A$ is as expected \cite[Theorem 3.4.1]{EE}, \cite[Lemma 7.6]{RR2}.

If one regrades $A$ by choosing all non-starred arrows to have degree $0$, then the relation 
$r$ is still homogeneous (now of degree $1$), so $A$ obtains a different grading where $A_0 \cong kQ$.  In case $Q$ has 
no oriented cycles, this regraded $A$ is still locally finite, and consequently it is still (graded) Calabi-Yau since this property
does not depend on the grading thanks to Theorem~\ref{thm:graded versus ungraded}.
%
Thus for connected $Q$ which are not Dynkin and have no oriented cycles, the preprojective 
algebra $A$ regraded with non-starred arrows having degree $0$ gives a locally finite graded Calabi-Yau 
algebra with $A_0 = kQ$. 
\end{example}


\bibliographystyle{amsplain}
\bibliography{twistedcyareasregular-arxiv-v2}

\end{document}